\documentclass{amsart}

\usepackage[utf8]{inputenc}
\usepackage{amsmath}
\usepackage{amssymb}
\usepackage{amsthm}
\usepackage{amscd}
\usepackage{amsfonts}
\usepackage{graphicx}
\usepackage{fancyhdr}
\usepackage{mathrsfs}
\usepackage{verbatim}
\usepackage{array}
\usepackage{booktabs}
\usepackage{enumitem}
\usepackage{xr-hyper}
\usepackage[colorlinks=true, linkcolor=blue, citecolor=blue, urlcolor=blue]{hyperref}
\usepackage{tikz-cd}
\usepackage[all,cmtip]{xy}
\usepackage{wasysym}
\usepackage{commath}
\usepackage{mathtools}
\usepackage{regexpatch,fancyvrb,xparse}
\usepackage{rotating}
\usepackage[margin=1.0in]{geometry}
\usepackage{csquotes}

\title{Pinned Jordan Decomposition of Characters and Depth-Zero Hecke Algebras}
\subjclass{20C33}
\keywords{finite reductive groups, Jordan decomposition of characters, Harish-Chandra induction}

\date{}

\theoremstyle{plain}
\numberwithin{equation}{section}
\newtheorem{theorem}{Theorem}[section]
\newtheorem{corollary}[theorem]{Corollary}

\newtheorem{lemma}[theorem]{Lemma}
\newtheorem{proposition}[theorem]{Proposition}
\theoremstyle{definition}
\newtheorem{definition}[theorem]{Definition}
\newtheorem{hypothesis}[theorem]{Hypothesis}

\newtheorem{remark}[theorem]{Remark}

\renewcommand{\dim}{\operatorname{dim}}

\newcommand{\op}{\operatorname{op}}
\newcommand{\Ad}{\operatorname{Ad}}
\newcommand{\cP}{\mathcal{P}}
\newcommand{\cE}{\mathcal{E}}

\newcommand{\id}{\mathrm{id}}
\newcommand{\Out}{\operatorname{Out}}
\newcommand{\ff}{\mathfrak{f}}

\newcommand{\Ind}{\operatorname{Ind}}
\newcommand{\End}{\operatorname{End}}

\newcommand{\Qlcl}{\overline{\mathbb{Q}}_{\ell}}

\newcommand{\Hom}{\operatorname{Hom}}

\newcommand{\Res}{\operatorname{Res}}

\newcommand{\Irr}{\operatorname{Irr}}

\newcommand{\fq}{\mathbb{F}_{q}}
\newcommand{\Id}{\operatorname{Id}}
\newcommand{\infl}{\operatorname{Inf}}
\newcommand{\Uch}{\operatorname{Uch}}
\newcommand{\J}{\mathbb{J}}

\newcommand{\Stab}{\operatorname{Stab}}

\newcommand{\ad}{\operatorname{ad}}
\newcommand{\der}{\operatorname{der}}
\newcommand{\cusp}{\operatorname{cusp}}

\newcommand{\GL}{\operatorname{GL}}

\newcommand{\Aut}{\operatorname{Aut}}
\DeclareMathOperator{\Reg}{Reg}

\newcommand{\cA}{\mathcal{A}}
\newcommand{\cW}{\mathcal{W}}
\newcommand{\cH}{\mathcal{H}}

\newcommand{\bG}{\mathbf{G}}
\newcommand{\bL}{\mathbf{L}}
\newcommand{\bP}{\mathbf{P}}
\newcommand{\bM}{\mathbf{M}}
\newcommand{\bT}{\mathbf{T}}
\newcommand{\bB}{\mathbf{B}}
\newcommand{\bH}{\mathbf{H}}
\newcommand{\bX}{\mathbf{X}}
\newcommand{\bK}{\mathbf{K}}
\newcommand{\bU}{\mathbf{U}}
\newcommand{\bS}{\mathbf{S}}

\newcommand{\A}{\mathsf{A}}

\newcommand{\C}{\mathbb{C}}

\newcommand{\fE}{\mathsf{E}}
\newcommand{\F}{\mathbb{F}}

\begin{document}

\author{Prashant Arote}
\email{prashantarote123@gmail.com}
\author{Manish Mishra}
\email{manish@iiserpune.ac.in}
\address{
Department of Mathematics \\
Indian Institute for Science Education and Research \\
Dr.\ Homi Bhabha Road, Pashan \\
Pune 411 008 \\
India
}
\thanks{The second named author was partially supported by SERB Core Research Grant
(CRG/2022/000415) and  ANRF MATRICS Grant ANRF/ARGM/2025/000365/MTR.}
\begin{abstract}
We construct a pinned canonical Jordan decomposition of characters for finite
reductive groups in situations where the dual centralizers may be disconnected.
For a connected reductive group \(\bG\) over a finite field, equipped with a
pinning, and for a semisimple element \(s\in G^*\), we construct a uniquely
determined bijection
\[
\J_s:\cE(G,s)\xrightarrow{\sim}\Uch\bigl(C_{\bG^*}(s)^{F^*}\bigr).
\]
This refines Lusztig's orbit-valued Jordan decomposition for groups with
disconnected centre, and is characterized by compatibility with the
Deligne--Lusztig character formula and with Harish--Chandra series.  We then
extend the construction to the class of possibly disconnected reductive groups
with abelian component group whose rational components admit pinning-preserving
representatives.  For this class the correct unconditional statement is an
enriched disconnected Jordan decomposition: the target is the connected
unipotent Jordan datum together with the source Clifford class and the
corresponding projective Clifford label.  When the transported source Clifford
classes agree with the ordinary Clifford classes on the dual-centralizer side,
this enriched target specializes to the usual set of unipotent characters of the
corresponding disconnected dual centralizer.

The main technical input is a pinned-normalized construction of preferred
extensions of cuspidal unipotent characters to their inertia groups.  The
extension is not forced by abstract Clifford theory alone: after the pinning
fixes the relevant component action, Lusztig's preferred-extension convention
removes the remaining scalar ambiguity.  The construction then uses Clifford
theory, relative Weyl group comparison, Malle's matching, and connected and
disconnected forms of Howlett--Lehrer theory to extend the cuspidal Jordan
decomposition functorially to all Harish--Chandra series.

As an application, we give a pinned canonical form of the finite-field input in
Ohara's comparison of depth-zero Hecke algebra parameters with the unipotent
case.  More precisely, for a depth-zero Bernstein type
\((K_{x_0},\rho_{x_0})\), the affine Hecke algebra factor in the
AFMO--Morris presentation is canonically identified, after fixing the same
pinning, with the affine Hecke algebra factor attached to Ohara's corresponding
unipotent type.  
\end{abstract}

\maketitle

\tableofcontents
\section{Introduction}

The Jordan decomposition of characters is one of the central organizing
principles in the representation theory of finite reductive groups.  Let
\(\bG\) be a connected reductive group over \(\F_q\), with Frobenius \(F\), and
write \(G=\bG^F\).  If \((\bG^*,F^*)\) is dual to \((\bG,F)\), then
Deligne--Lusztig theory partitions \(\Irr(G)\) into Lusztig series
\[
\Irr(G)=\bigsqcup_{(s)}\cE(G,s),
\]
where \(s\) runs over semisimple conjugacy classes in \(G^*=\bG^{*F^*}\).
The Jordan decomposition problem asks for a natural parametrization of
\(\cE(G,s)\) by unipotent characters of the centralizer
\[
H=C_{\bG^*}(s)^{F^*}.
\]
Thus one seeks bijections
\[
J_s:\cE(G,s)\xrightarrow{\sim}\Uch(H),
\]
compatible with the character formulae coming from Deligne--Lusztig induction.

When \(Z(\bG)\) is connected, the centralizer \(C_{\bG^*}(s)\) is connected.
In this case Lusztig constructed the Jordan decomposition of characters
\cite{Lus84}, and Digne--Michel proved a uniqueness theorem for a normalized
family of such bijections \cite{DM90}.  Their normalization includes
compatibility with central quotients, products, and certain induction
properties, and it gives a canonical way to remove the ambiguity present in
Lusztig's original parametrization.

For groups with disconnected centre the situation is subtler.  In this case
\(C_{\bG^*}(s)\) need not be connected.  Lusztig proved in \cite{Lus88} that
one still has a natural orbit-valued Jordan decomposition
\[
\cE(G,s)\longrightarrow
\Uch\bigl(C_{\bG^*}(s)^{\circ F^*}\bigr)/
\bigl(C_{\bG^*}(s)^{F^*}/C_{\bG^*}(s)^{\circ F^*}\bigr).
\]
The fibres on the \(G\)-side are orbits under diagonal automorphisms, and the
orbits on the dual side are orbits of the component group of the centralizer.
This result is fundamental, but it does not by itself produce a bijection with
the irreducible unipotent characters of the disconnected finite group
\(C_{\bG^*}(s)^{F^*}\).  Passing from orbit data to actual irreducible
characters requires Clifford-theoretic choices, namely choices of extensions of
unipotent characters from the identity component to their stabilizers.

The canonical endoscopic Jordan decomposition of Lusztig and Yun
\cite{LusztigYun2020,LusztigYunCorrigendum2021} gives a different and more
geometric treatment of the same general problem.  Their construction uses
monodromic Hecke categories and character sheaves, and relates a fixed
semisimple block to unipotent objects for a corresponding endoscopic group.  The
present paper has a more representation-theoretic and application-driven aim:
we retain the dual-centralizer language used in depth-zero local Langlands
applications, fix a pinning to normalize the connected Jordan decomposition,
and record the necessary Clifford-theoretic data when parahoric finite
reductive quotients are disconnected.  Thus the two constructions have
different targets and different normalizations, and the results below should be
viewed as a pinned finite-field package tailored to the depth-zero applications,
not as a substitute for the Lusztig--Yun endoscopic construction.

The purpose of this paper is to make these choices canonical in a precise
pinned-normalized sense.  After a pinning has been fixed, the action of the
component group on the based root datum is fixed.  The remaining ambiguity is
the familiar Clifford-theoretic scalar ambiguity in extending invariant
characters; it is removed by Lusztig's preferred-extension convention, and by
Malle's compatible matching on the cuspidal side.  Thus the construction below
involves no further auxiliary choices.  We do not mean that the abstract
disconnected group alone forces a unique extension before these normalizations
are specified.  With this convention, we construct a pinned Jordan decomposition
of characters which refines Lusztig's orbit-valued decomposition and is
compatible with Harish--Chandra induction.

Our first main result concerns connected reductive groups, but allows the
dual centralizer \(C_{\bG^*}(s)\) to be disconnected.

\medskip
\noindent
\textbf{The connected-group theorem.}
Let \(\bG\) be connected reductive over \(\F_q\), equipped with a pinning
\(\cP\), and let \(s\in G^*\) be semisimple.  Put
\[
\bH=C_{\bG^*}(s),\qquad H=\bH^{F^*}.
\]
Theorem~\ref{thm:JD-connected-groups} constructs a uniquely determined
bijection
\[
\J_s:\cE(G,s)\xrightarrow{\sim}\Uch(H),
\]
depending only on the pinning, such that for every \(F^*\)-stable maximal torus
\(\bT^*\subseteq \bG^*\) containing \(s\) one has
\[
\bigl\langle R_{\bT^*}^{\bG}(s),\rho\bigr\rangle_G
=
\epsilon_{\bG}\epsilon_{\bH}
\bigl\langle R_{\bT^*}^{\bH}(1),\J_s(\rho)\bigr\rangle_H.
\]
Moreover, this bijection is compatible with Harish--Chandra series.  If
\((L,\tau)\) is a cuspidal pair with \(\tau\in\cE(L,s)\), and if
\[
H_L=C_{\bL^*}(s)^{F^*},
\qquad
u_\tau=\J_s^L(\tau)\in\Uch(H_L)_{\cusp},
\]
then \(\J_s\) restricts to a canonical bijection
\[
\Irr(G,(L,\tau))
\xrightarrow{\sim}
\Irr(H,(H_L,u_\tau)).
\]
Thus the Jordan decomposition is not merely a set-theoretic parametrization of
Lusztig series; it is compatible with the Harish--Chandra decomposition inside
each Lusztig series.

Our second main result extends this construction to a class of disconnected
finite reductive groups, but in an enriched form.  Let \(\bG\) be a possibly
disconnected reductive group over \(\F_q\), fix an \(F\)-stable pinning \(\cP\) of
\(\bG^\circ\), and assume that the finite component group is abelian and
satisfies the rational pinned-component condition of
Hypothesis~\ref{hyp:rational-pinned-component-condition}.  For
\[
s\in(\bG^*)^{\circ F^*}
\]
semisimple, we define a disconnected Lusztig series \(\cE(G,s)\) by restriction
to the identity component.  If
\[
H=C_{\bG^*}(s)^{F^*},
\]
then Theorem~\ref{thm:disc-JD-bijection} gives a canonical enriched bijection
\[
J^{\mathrm{enh}}_{G,s}:\cE(G,s)
\xrightarrow{\sim}
\Uch^{\mathrm{enh}}_{G,s}(H).
\]
On the cuspidal part, an irreducible character \(\rho\) lying above
\(\rho^\circ\in\cE(G^\circ,s)_{\cusp}\) is sent to
\[
\left[
\J_s^{G^\circ}(\rho^\circ),
[\alpha_{\rho^\circ}],
E_\rho
\right],
\]
where \([\alpha_{\rho^\circ}]\) and \(E_\rho\) are the source Clifford class and
source Clifford label.  On a general Harish--Chandra series the construction
preserves the corresponding relative Howlett--Lehrer label.  Thus the theorem
no longer requires an unproved equality between source and target Clifford
cocycles; that equality becomes precisely the extra condition under which the
enriched target reduces to the ordinary set \(\Uch(H)\).

Although the main application carried out in this paper concerns depth-zero
Hecke algebras, the disconnected counterpart of Jordan decomposition also has a
more direct Langlands-theoretic application.  In the second named author's
construction of a pinned local Langlands correspondence for depth-zero
supercuspidal representations of a connected reductive group over a
non-archimedean local field \(F_{\mathrm{loc}}\)
\cite{Mishra2026PinnedLLCDepthZero}, the finite representation attached to a
depth-zero type at a vertex \(x\) lives naturally on the full parahoric quotient
\[
G(F_{\mathrm{loc}})_x/G(F_{\mathrm{loc}})_{x,0+},
\]
which is in general a possibly disconnected finite reductive group.  The
pinned disconnected Jordan decomposition developed here supplies the finite
unipotent label on the dual-centralizer side.  In that construction it is the
finite step which separates the finite representation in a depth-zero type from
the toral part of the parameter and makes the passage to a cuspidal enhanced
unramified parameter canonical.  The inverse construction uses the same
finite-field bijection in the opposite direction.  Thus the enriched
Clifford-theoretic bookkeeping in Theorem~\ref{thm:disc-JD-bijection} is not
merely a technical refinement of Lusztig's orbit-valued decomposition: it is the
finite mechanism which makes the depth-zero supercuspidal LLC reversible after a
pinned normalization has been fixed.

The third main result is an application to Hecke algebras attached to
depth-zero Bernstein blocks of \(p\)-adic groups.  Let \(F_{\mathrm{loc}}\) be
a non-archimedean local field with residue field \(\mathfrak f\), and let
\(G\) be a connected reductive group over \(F_{\mathrm{loc}}\).  For a depth-zero
\(\mathfrak s\)-type \((K,\rho)=(K_{x_0},\rho_{x_0})\), the Hecke algebra
\(\cH_{\mathfrak s}(G)=\mathcal H(G(F),(K,\rho))\) admits the AFMO--Morris
presentation as an affine Hecke algebra crossed with a finite twisted group
algebra; see \cite{AFMO24a}.  Ohara's construction \cite{ohara2025} associates
to the same depth-zero datum a unipotent type \((K_{\theta,x_0},u_{x_0})\) for a
related group \(G_\theta\).  Ohara proves that the affine Hecke algebra factor in
\(\cH_{\mathfrak s}(G)\) is isomorphic to the affine Hecke algebra factor
appearing in the Hecke algebra of this unipotent type.  Theorem~\ref{thm:padic-depth0-to-unipotent-hecke}
records the pinned version of this affine-factor comparison.  Equivalently, one
may rewrite \(\cH_{\mathfrak s}(G)\) as its original source-side twisted group
algebra crossed with the corresponding unipotent affine Hecke factor.  We do
not claim a general isomorphism from \(\cH_{\mathfrak s}(G)\) to the full Hecke
algebra of the unipotent type, because the finite stabilizer group and its
cocycle in the crossed-product presentation need not agree with the unipotent
ones.  The point of the present paper is that the finite-field Jordan
decomposition used in the comparison of the affine parameters is no longer an
auxiliary non-canonical choice: it is the pinned canonical bijection constructed
here.

We now describe the strategy of the proofs.

The construction of Theorem~\ref{thm:JD-connected-groups} begins with the
cuspidal case.  Lusztig's orbit-valued map gives, for each cuspidal
\(\rho\in\cE(G,s)\), an orbit
\[
O\subseteq \Uch(H^\circ)_{\cusp},
\qquad H^\circ=(C_{\bG^*}(s)^\circ)^{F^*}.
\]
For \(u^\circ\in O\), let \(I_H(u^\circ)\) be its stabilizer in \(H\).  Since
the relevant component groups are abelian, Clifford theory says that the
irreducible characters of \(H\) lying above \(O\) are obtained from extensions
of \(u^\circ\) to \(I_H(u^\circ)\), twisted by characters of
\(I_H(u^\circ)/H^\circ\), and then induced to \(H\).  The main difficulty is
to choose these extensions canonically.

This is done using Lusztig's preferred-extension convention for Weyl-group
representations, together with the interpretation of unipotent characters via
generalized Springer theory and character sheaves.  We first handle cyclic
component groups, then glue the resulting preferred extensions for abelian
component groups, and finally pass to products and repeated quasi-simple
factors by a wreath-product argument.  This gives preferred extensions of
cuspidal unipotent characters to their inertia groups.

On the \(G\)-side, the fibre of Lusztig's orbit-valued map over \(O\) is an
orbit under the diagonal automorphism group
\[
A=G_{\ad}/G.
\]
Lusztig's Clifford-theoretic construction gives an orthogonality relation
between the stabilizer on the \(G\)-side and the stabilizer on the unipotent
side.  The pinning supplies the missing normalization: using semisimple
characters and Malle's results on automorphisms of cuspidal characters
\cite{Malle2017}, one attaches to each cuspidal \(\rho\) a canonical class in
the appropriate quotient of \(A\).  Under the orthogonality isomorphism this
class gives a character of the unipotent stabilizer, and hence selects a
specific irreducible character of \(H\) above the orbit \(O\).  This produces
the pinned cuspidal bijection
\[
\cE(G,s)_{\cusp}\xrightarrow{\sim}\Uch(H)_{\cusp}.
\]

The passage from cuspidal characters to all of \(\cE(G,s)\) uses
Harish--Chandra theory.  For a cuspidal pair \((L,\tau)\) with
\(\tau\in\cE(L,s)\), the cuspidal construction gives a unipotent cuspidal
character
\[
u_\tau\in\Uch(C_{\bL^*}(s)^{F^*}).
\]
A key point is that the relative Weyl groups on the two sides agree:
\[
W_\tau \cong W_{u_\tau}.
\]
This follows from the Weyl-equivariance of the pinned cuspidal correspondence.
Howlett--Lehrer theory identifies the constituents of
\(R_{\bL}^{\bG}(\tau)\) with \(\Irr(W_\tau)\).  On the dual centralizer side,
we use a disconnected version of Howlett--Lehrer theory, in stabilizer form,
to identify the constituents of
\(R_{H_L}^{H}(u_\tau)\) with \(\Irr(W_{u_\tau})\).  The isomorphism
\(W_\tau\cong W_{u_\tau}\) then gives the desired bijection of Harish--Chandra
series.  Taking the disjoint union over all cuspidal pairs gives
Theorem~\ref{thm:JD-connected-groups}.

The proof of Theorem~\ref{thm:disc-JD-bijection} follows the same philosophy,
but with an additional Clifford-theoretic layer coming from the disconnected
group \(G\) itself.  In the disconnected part of the paper we impose the
rational pinned-component condition: each rational component has a representative
whose conjugation action preserves the chosen pinning of \(G^\circ\).  Thus the
semidirect-product model determined by the pinning records the actual component
action on characters of the finite identity component, up to inner conjugacy by
\(G^\circ\).  For a cuspidal constituent
\[
\rho^\circ\in\cE(G^\circ,s)_{\cusp},
\]
the connected-group theorem gives a cuspidal unipotent correspondent
\[
u_0\in\Uch(C_{G^{\circ *}}(s)^{F^*})_{\cusp}.
\]
The pinned equivariance of the connected correspondence, applied to these
pinning-preserving rational representatives, implies that the stabilizers of
\(\rho^\circ\) and \(u_0\) in the component group agree.  We do not assert that
the source and target Clifford-label cocycles agree.  Instead, the source Clifford
class \([\alpha_{\rho^\circ}]\) and the corresponding projective label \(E_\rho\)
are retained as part of the enriched unipotent datum.

The full disconnected construction is then obtained by Harish--Chandra
induction.  Disconnected Lusztig series are shown to be unions of
Harish--Chandra series.  For a cuspidal pair \((L,\tau)\), the enriched cuspidal
datum records \(\J_s^{L^\circ}(\tau^\circ)\), the source Clifford class of
\(\tau^\circ\), and the Clifford label of \(\tau\).  The disconnected
Howlett--Lehrer/corner analysis then supplies the relative label of each
constituent of \(R_L^G(\tau)\).  Combining the enriched cuspidal datum with this
relative label gives the full enriched bijection of
Theorem~\ref{thm:disc-JD-bijection}.

We close with a guide to the organization of the paper.  The early sections
recall Deligne--Lusztig theory, Lusztig series, the Digne--Michel
normalization, Lusztig's disconnected-centre construction, and the
Howlett--Lehrer parametrization of Harish--Chandra series.  The technical heart
of the paper is the construction of preferred extensions of cuspidal unipotent
characters and the resulting cuspidal pinned Jordan decomposition.  The
connected-group theorem is then obtained by combining this cuspidal
construction with relative Weyl group comparison and Howlett--Lehrer theory.
The later sections extend the construction to disconnected reductive groups
with abelian component group satisfying the rational pinned-component condition,
in the enriched sense explained above.  Readers mainly interested in the
Hecke-algebra application to \(p\)-adic groups may read
Theorems~\ref{thm:JD-connected-groups} and \ref{thm:disc-JD-bijection} as the
finite-field input and then proceed directly to
\S\ref{sec:padic-depth0-to-unipotent-hecke}.  For the complementary
Langlands-theoretic application of the disconnected theorem to depth-zero
supercuspidal representations, see \cite{Mishra2026PinnedLLCDepthZero}.

\section{Notation and conventions}
\label{sec:notation}

Let \(\fq\) be a finite field with \(q=p^r\) elements, and fix a prime
\(\ell\neq p\).  We fix, once and for all, an isomorphism
\[
\mathbb C \cong \overline{\mathbb Q}_{\ell}.
\]
All characters will be viewed over either field through this fixed
isomorphism.

Unless otherwise stated, \(\bG\) denotes a connected reductive algebraic group
defined over \(\fq\), and
\[
F:\bG\longrightarrow \bG
\]
denotes the Frobenius morphism defining its \(\fq\)-rational structure.  We
write
\[
G=\bG^F.
\]
More generally, if \(\bH\) is an \(F\)-stable algebraic subgroup of \(\bG\), we
write \(H=\bH^F\).  On the dual side, rational points are always taken with
respect to the dual Frobenius \(F^*\).

We write \(Z(\bH)\), \(C_{\bH}(x)\), and \(N_{\bH}(\bK)\) for the center of
\(\bH\), the centralizer of an element \(x\in\bH\), and the normalizer of a
subgroup \(\bK\leq\bH\), respectively.  If \(\bS\) is a torus in \(\bH\), then
\[
W_{\bH}(\bS)\coloneqq N_{\bH}(\bS)/C_{\bH}(\bS)
\]
denotes the corresponding Weyl group.  If \(\bS\) is \(F\)-stable, we shall
also use the finite rational Weyl group
\[
W_H(S)\coloneqq N_H(S)/C_H(S),
\qquad S=\bS^F,
\]
when this distinction is relevant.

For a finite group \(X\), we write \(\Irr(X)\) for the set of irreducible
characters of \(X\).  If \(X\) is the group of rational points of a connected
reductive group, we write \(\Uch(X)\) for its set of unipotent characters.  If
\(i:H\to K\) is a homomorphism of finite groups, and if \(i(H)\) is clear from
context, we shall sometimes write \(K/H\) for \(K/i(H)\).  This notation will
only be used when no ambiguity can result.

We now fix notation for root data.  Let \(\bB_0\) be an \(F\)-stable Borel
subgroup of \(\bG\), and let \(\bT_0\subseteq\bB_0\) be an \(F\)-stable maximal
torus.  We write
\[
X(\bT_0)=\Hom(\bT_0,\mathbb G_m),
\qquad
X^\vee(\bT_0)=\Hom(\mathbb G_m,\bT_0)
\]
for the character and cocharacter groups.  They are equipped with the usual
perfect pairing
\[
\langle\ ,\ \rangle:
X(\bT_0)\times X^\vee(\bT_0)\longrightarrow \mathbb Z,
\]
defined by
\[
\lambda(\mu(a))=a^{\langle \lambda,\mu\rangle}
\qquad
(a\in\mathbb G_m).
\]
Let \(\Phi=\Phi(\bG,\bT_0)\) be the corresponding root system and
\(\Phi^\vee\) the coroot system.  The Borel subgroup \(\bB_0\) determines a
positive system \(\Phi^+\subseteq\Phi\), and we denote the associated set of
simple roots by \(\Delta\).  Thus the triple
\[
(\bG,\bB_0,\bT_0)
\]
determines the based root datum
\[
\bigl(
X(\bT_0),\Phi,\Delta,
X^\vee(\bT_0),\Phi^\vee,\Delta^\vee
\bigr).
\]

Let \((\bG^*,F^*)\) be dual to \((\bG,F)\).  Thus there are \(F\)- and
\(F^*\)-stable pairs \((\bB_0,\bT_0)\) and \((\bB_0^*,\bT_0^*)\), together with
an isomorphism
\[
\delta:X(\bT_0)\xrightarrow{\sim} X^\vee(\bT_0^*)
\]
which identifies the root datum of \(\bG\) with the dual root datum of
\(\bG^*\).  In particular,
\[
\delta(\Phi)=\Phi^{*\vee}.
\]
If \(\alpha\in\Phi\), we denote by \(\alpha^*\in\Phi^*\) the root determined by
\[
\delta(\alpha)=\alpha^{*\vee}.
\]
The isomorphism \(\delta\) is assumed to be compatible with Frobenius in the
sense that
\[
\delta(\lambda\circ F|_{\bT_0})
=
F^*|_{\bT_0^*}\circ \delta(\lambda)
\qquad
(\lambda\in X(\bT_0)).
\]
Equivalently, the induced isomorphism of root data commutes with the Frobenius
actions on the character and cocharacter lattices.

We write
\[
W=W_{\bG}(\bT_0),
\qquad
W^*=W_{\bG^*}(\bT_0^*).
\]
The duality above identifies \(W\) with \(W^*\), by sending the simple
reflection associated with \(\alpha\) to the simple reflection associated with
\(\alpha^*\).  This identification is compatible with the Frobenius actions,
and hence restricts to an isomorphism
\[
W^F\xrightarrow{\sim} (W^*)^{F^*}.
\]

Finally, we fix our convention for pinnings.  A pinning of \(\bG\) will be
written
\[
\cP=(\bG,\bB,\bT,\{x_\alpha\}_{\alpha\in\Delta}),
\]
where \(\bB\) is a Borel subgroup, \(\bT\subseteq\bB\) is a maximal torus, and
\(x_\alpha:\mathbb G_a\to \bX_\alpha\) is a choice of root homomorphism for
each simple root \(\alpha\).  When the pinning is \(F\)-stable, the dual group
\(\bG^*\) is equipped with the corresponding \(F^*\)-stable dual pinning
\[
\cP^*=(\bG^*,\bB^*,\bT^*,\{x_{\alpha^*}\}_{\alpha^*\in\Delta^*}).
\]

If \(s\in T^*=\bT^{*F^*}\) is semisimple, we put
\[
\bH\coloneqq C_{\bG^*}(s),
\qquad
H\coloneqq \bH^{F^*},
\qquad
\bH^\circ\coloneqq C_{\bG^*}(s)^\circ,
\qquad
H^\circ\coloneqq (\bH^\circ)^{F^*}.
\]
The pinning \(\cP^*\) determines a natural pinning of the connected reductive
group \(\bH^\circ\), obtained from the root subgroups of \(\bG^*\) whose roots
vanish on \(s\).

We shall also use the following notation for pinned automorphisms.  Define
\[
\A^*
\coloneqq
\Stab_{\Aut(\bG^*,\cP^*)}(s).
\]
Via duality of pinned root data, \(\A^*\) corresponds to a subgroup
\[
\A\subseteq \Aut(\bG,\cP).
\]
When no confusion is possible, we use the same symbol for an algebraic
automorphism and for the induced automorphism of the corresponding finite group
of rational points.

We shall use the following terminology for algebraic groups.  A connected
semisimple algebraic group \(\mathbf X\) over \(\overline{\mathbb F}_p\) will
be called \emph{quasi-simple} if its absolute root system is irreducible,
equivalently if its adjoint quotient
\[
        \mathbf X_{\mathrm{ad}}=\mathbf X/Z(\mathbf X)
\]
is a simple adjoint algebraic group.  Thus a quasi-simple algebraic group need
not be simply connected or adjoint.  When it is adjoint we shall usually say
\emph{simple adjoint}.

If \(F\) is a Frobenius endomorphism and \(\mathbf X\) is connected
semisimple, we shall say that \(\mathbf X\) is \(F\)-almost simple if \(F\)
acts transitively on the set of absolute quasi-simple factors of \(\mathbf X\).
Equivalently, up to isogeny, \(\mathbf X\) is obtained by restriction of
scalars from a quasi-simple group over a finite extension of the ground field.

These terms refer to algebraic groups.  We shall not use the finite-group
theoretic meanings of ``quasisimple'' and ``almost simple'' unless this is
explicitly stated.  In particular, when \(X=\mathbf X^F\), the phrase
``\(\mathbf X\) is quasi-simple'' is a statement about the algebraic group
\(\mathbf X\), not about the abstract finite group \(X\).

We shall also use the following terminology for the disconnected groups that
arise as centralizers of semisimple elements.

\begin{definition}[Semisimple-centralizer type]
\label{def:semisimple-centralizer-type}
A possibly disconnected reductive algebraic group \(\mathbf H\) is said to be
of \emph{semisimple-centralizer type} if there exist a connected reductive
algebraic group \(\mathbf G_0\) and a semisimple element \(s\in \mathbf G_0\)
such that
\[
        \mathbf H=C_{\mathbf G_0}(s).
\]
If a Frobenius endomorphism \(F\) is present, we shall say that \((\mathbf H,F)\)
is of \emph{\(F\)-stable semisimple-centralizer type} if \(\mathbf H\) is
\(F\)-stable in such a realization.
\end{definition}

\begin{lemma}[Digne--Michel]
\label{lem:DM-semisimple-centralizer-splits}
Let \(\mathbf H=C_{\mathbf G_0}(s)\) be of semisimple-centralizer type, with
\(\mathbf G_0\) connected reductive and \(s\in \mathbf G_0\) semisimple.  Then
\[
1\longrightarrow \mathbf H^\circ
\longrightarrow \mathbf H
\longrightarrow \pi_0(\mathbf H)
\longrightarrow 1
\]
splits.  Equivalently, there is a finite subgroup \(A\subset \mathbf H\),
mapping isomorphically onto \(\pi_0(\mathbf H)\), such that
\[
        \mathbf H=\mathbf H^\circ\rtimes A .
\]
If, moreover, \((\mathbf H,F)\) is of \(F\)-stable semisimple-centralizer type
and \(H=\mathbf H^F\), \(H^\circ=(\mathbf H^\circ)^F\), then the finite exact
sequence
\[
1\longrightarrow H^\circ
\longrightarrow H
\longrightarrow (\pi_0(\mathbf H))^F
\longrightarrow 1
\]
also splits.  Finally, after choosing a maximal torus of \(\mathbf H^\circ\)
and a positive system for its root system, the component group is represented
at the Weyl-group level by the subgroup preserving this positive system.
\end{lemma}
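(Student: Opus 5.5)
The plan is to realize the component group inside the normalizer of a maximal torus and a Borel subgroup of \(\mathbf H^\circ\). Choose a maximal torus \(\mathbf T\) of \(\mathbf G_0\) containing \(s\). Then \(\mathbf T\subseteq\mathbf H^\circ\) is also a maximal torus of \(\mathbf H^\circ\). Fix a positive system \(\Phi_s^+\) of the root system \(\Phi_s=\{\alpha\in\Phi(\mathbf G_0,\mathbf T):\alpha(s)=1\}\) of \(\mathbf H^\circ\), and let \(\mathbf B_s\) be the corresponding Borel subgroup.

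\textbf{Step 1: Frattini decomposition.} Since all pairs \((\mathbf B_s,\mathbf T)\) in \(\mathbf H^\circ\) are \(\mathbf H^\circ\)-conjugate, we get
\[
\mathbf H=\mathbf H^\circ\, N_{\mathbf H}(\mathbf B_s,\mathbf T),
\qquad
N_{\mathbf H^\circ}(\mathbf B_s,\mathbf T)=\mathbf T.
\]
Hence \(\pi_0(\mathbf H)\cong N_{\mathbf H}(\mathbf B_s,\mathbf T)/\mathbf T\). This quotient is identified with the group \(\{w\in W(s):w\Phi_s^+=\Phi_s^+\}\), where \(W(s)=N_{\mathbf H}(\mathbf T)/\mathbf T\subseteq W_{\mathbf G_0}(\mathbf T)\). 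This gives the final assertion, namely the Weyl-group description of the component group. It also gives the standard decomposition \(W(s)=W^\circ(s)\rtimes A_W\).

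\textbf{Step 2: Lift \(A_W\) to a finite subgroup.} We need a finite subgroup of \(N_{\mathbf H}(\mathbf B_s,\mathbf T)\) mapping isomorphically onto \(A_W\). I expect this to be the main obstacle, since extensions of a finite group by a torus need not split. The attempt I would make first uses Tits' lifting of \(W_{\mathbf G_0}(\mathbf T)\) into \(N_{\mathbf G_0}(\mathbf T)\) relative to a pinning of \(\mathbf G_0\). The Tits representatives \(\dot w\) of elements of \(A_W\) commute with \(s\) because \(s\in\mathbf T\) and \(w\in W(s)\). The difficulty is that the Tits section is only a section, not a homomorphism on arbitrary subgroups.

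Fallback: argue cohomologically. The obstruction to splitting \(1\to\mathbf T\to N\to A_W\to1\) lies in \(H^2(A_W,\mathbf T)\). Replace \(\mathbf T\) by the fixed points of the elements of \(\mathbf T\) that normalize the pinning of \(\mathbf H^\circ\). Concretely, \(N_{\mathbf H}(\mathbf B_s,\mathbf T)\) acts on the set of pinnings \((x_\alpha)_{\alpha\in\Delta_s}\), and \(\mathbf T\) acts transitively modulo \(Z(\mathbf H^\circ)\). Therefore the stabilizer \(N_{\mathbf H}(\mathcal P_s)\) of a pinning \(\mathcal P_s\) of \(\mathbf H^\circ\) surjects onto \(A_W\) with kernel \(Z(\mathbf H^\circ)\).

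The remaining task is to split the extension by the diagonalizable group \(Z(\mathbf H^\circ)\). For this I would invoke Digne--Michel's argument. They show the extension is controlled by the finite group of \(s\)-fixed Tits elements. Using that \(A_W\) acts on \(\Delta_s\) by diagram automorphisms, they produce explicit representatives \(\dot w\) that are multiplicative on \(A_W\). This is the result being cited, and I would follow their reduction to \(\mathbf G_0\) simply connected, or pass to a \(z\)-extension if needed.

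\textbf{Step 3: The rational statement.} If \(\mathbf H\) is \(F\)-stable, choose \(\mathbf T\) and \(\mathbf B_s\) to be \(F\)-stable, which is possible by Lang's theorem in \(\mathbf H^\circ\). Take an \(F\)-stable pinning of \(\mathbf H^\circ\). Then \(N_{\mathbf H}(\mathcal P_s)\) is \(F\)-stable. Lang's theorem applied to \(\mathbf H^\circ\) shows \(H\to(\pi_0\mathbf H)^F\) is surjective, and the rational analogue of Step 1 gives
\[
H=H^\circ\cdot N_{\mathbf H}(\mathcal P_s)^F.
\]
A splitting \(A\) can then be chosen \(F\)-stable by choosing it inside the fixed points of \(F\) on the finite group of pinned representatives from Step 2. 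The \(F\)-fixed subgroup \(A^F\) maps isomorphically onto \(\pi_0(\mathbf H)^F\), which splits the finite sequence.
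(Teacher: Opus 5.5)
Your Steps~1 and~2 agree with the paper, whose proof consists only of citations. Your Frattini argument with $N_{\mathbf H}(\mathbf B_s,\mathbf T)$ is a correct direct proof of the Weyl-group description (Digne--Michel's Proposition~1.1). For the algebraic splitting you end up invoking Digne--Michel's Theorem~0.1, just as the paper does. One aside is wrong: a ``reduction to $\mathbf G_0$ simply connected'' cannot be their method, since then $C_{\mathbf G_0}(s)$ is connected and there is nothing to split. The real gap is in Step~3. There you try to derive the rational splitting from the algebraic one, whereas the paper cites a separate result (Digne--Michel's Theorem~0.2). Your derivation has two holes.

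\emph{First hole: the pinning stabilizer need not reach every rational component.} Lang's theorem in $\mathbf H^\circ$ gives $H=H^\circ\cdot N_{\mathbf H}(\mathbf B_s,\mathbf T)^F$, because $F$-stable Borel--torus pairs are $H^\circ$-conjugate. It does not give $H=H^\circ\cdot N_{\mathbf H}(\mathcal P_s)^F$.
\begin{itemize}
\item The $F$-stable pinnings attached to $(\mathbf B_s,\mathbf T)$ form a torsor under $(\mathbf T/Z(\mathbf H^\circ))^F$.
\item The image of $\mathbf T^F$ in that group has index $|H^1(F,Z(\mathbf H^\circ))|$, which is in general $>1$ when $Z(\mathbf H^\circ)$ is disconnected.
\item For example, for $\mathbf H=C_{\mathrm{SO}_5}(s)\cong\mathrm O_4$ and $q$ odd, there are two $H^\circ$-classes of $F$-stable pinnings of $\mathrm{SO}_4$.
\end{itemize}
So surjectivity of $N_{\mathbf H}(\mathcal P_s)^F\to\pi_0(\mathbf H)^F$ amounts to showing that $H$ preserves the $H^\circ$-class of your chosen pinning. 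You do not address this.

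\emph{Second hole, more serious: nothing makes the splitting $A$ from Step~2 $F$-stable.} If $A$ were $F$-stable, then $A^F\cong\pi_0(\mathbf H)^F$ would indeed follow, but $F$-stability is exactly what is missing.
\begin{itemize}
\item The splittings of the $F$-equivariant extension $1\to Z(\mathbf H^\circ)\to N_{\mathbf H}(\mathcal P_s)\to\pi_0(\mathbf H)\to 1$ form a torsor under $Z^1(\pi_0(\mathbf H),Z(\mathbf H^\circ))$.
\item This is a diagonalizable group that is in general disconnected, so Lang's theorem does not produce an $F$-fixed splitting.
\item Concretely, restricting a given splitting to $\pi_0(\mathbf H)^F$ yields elements $n$ with $n^{-1}F(n)\in Z(\mathbf H^\circ)$. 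Correcting them to $F$-fixed elements meets an obstruction in $H^1(F,Z(\mathbf H^\circ))$, and the corrections must also respect the group law.
\end{itemize}
``Choosing $A$ inside the $F$-fixed points of the finite group of pinned representatives'' does not get around this. Step~2 produced no canonical $F$-stable finite group. Also, a subgroup of $H$ can only map to $\pi_0(\mathbf H)^F$, so it is not an algebraic splitting at all. What the lemma actually requires is a splitting of the finite extension
\[
1\to Z(\mathbf H^\circ)^F\to N_{\mathbf H}(\mathcal P_s)^F\to\pi_0(\mathbf H)^F\to 1,
\]
once the surjectivity from the first hole is established. This does not follow formally from the algebraic splitting. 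You should either cite Digne--Michel's Theorem~0.2 for the rational statement, as the paper does, or give an independent argument for this finite splitting.
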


\begin{proof}
The algebraic splitting is \cite[Theorem~0.1]{DigneMichel2026Centralizers}, and
the splitting on rational points is \cite[Theorem~0.2]{DigneMichel2026Centralizers}.
The final assertion is the Weyl-group splitting in
\cite[Proposition~1.1]{DigneMichel2026Centralizers}.
\end{proof}

\begin{definition}[Pinned semidirect component model]
\label{def:pinned-semidirect-component-model}
Let \(\mathbf H\) be a possibly disconnected reductive algebraic group with
Frobenius \(F\), let \(H=\mathbf H^F\), and let \(H^\circ=(\mathbf H^\circ)^F\).
Fix a pinning of \(\mathbf H^\circ\).  If \(J\le H\) is a subgroup containing
\(H^\circ\), we shall say that \(J\) has a \emph{pinned semidirect component
model} if there is a finite subgroup \(A_J\le J\), mapping isomorphically onto
\(J/H^\circ\), such that
\[
        J=H^\circ\rtimes A_J,
\]
and the conjugation action of \(A_J\) on \(\mathbf H^\circ\), after passing to
outer automorphism classes, is represented by the corresponding pinned
automorphisms of \(\mathbf H^\circ\).  Thus the semidirect-product notation is
part of the specified model; it is not asserted for an arbitrary disconnected
reductive group.
\end{definition}

\begin{remark}
\label{rmk:DM-supplies-pinned-component-models}
If \((\mathbf H,F)\) is of \(F\)-stable semisimple-centralizer type, then
Lemma~\ref{lem:DM-semisimple-centralizer-splits} supplies such a semidirect
component model for \(H=\mathbf H^F\).  Intersecting the Digne--Michel
complement with the inverse image of a subgroup of \(H/H^\circ\) supplies the
same kind of model for every intermediate subgroup \(J\) with
\(H^\circ\le J\le H\).  This is the justification for the semidirect-product
notation used for centralizers of semisimple elements below.
\end{remark}

\section{Preliminaries: Deligne--Lusztig Theory and Jordan Decomposition of Characters}
\label{sec:DL-theory-JD}

In this section we recall the parts of Deligne--Lusztig theory and Jordan
decomposition of characters that will be used later.  This summary is adapted
from \cite[\textsection 2.3]{AroteMishra22b}.  For further details we refer to
\cite{DelLus76,Lus84,Lus88,Book:DigneandMichel,Book:GeckandMalle}.

Let \(\bG\) be a connected reductive group over \(\fq\), with Frobenius
morphism \(F:\bG\to \bG\), and put \(G=\bG^F\).  Let
\((\bG^*,F^*)\) be dual to \((\bG,F)\), and put
\(G^*=\bG^{*F^*}\).

\begin{proposition}\label{prop:geometric-conjugacy}
Let \(\bT\) be an \(F\)-stable maximal torus of \(\bG\), and let
\(\bT^*\) be an \(F^*\)-stable maximal torus of \(\bG^*\) dual to \(\bT\).
There is a natural bijection
\[
\left\{
\begin{array}{c}
G\text{-conjugacy classes of pairs }(\bT,\theta),\\
\theta\in\Irr(T),\ T=\bT^F
\end{array}
\right\}
\longleftrightarrow
\left\{
\begin{array}{c}
G^*\text{-conjugacy classes of pairs }(\bT^*,s),\\
s\in T^*,\ T^*=\bT^{*F^*}
\end{array}
\right\}.
\]
\end{proposition}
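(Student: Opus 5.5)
The plan is to follow the standard Deligne--Lusztig argument: reduce both sides to a single torus modulo its rational Weyl group, and then identify characters of \(T\) with points of \(T^*\) using the fixed duality \(\delta\).

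\textbf{Step 1: reduction to a single torus.} \(G\)-classes of \(F\)-stable maximal tori are parametrized by \(F\)-conjugacy classes in \(W\), via \(w\mapsto \bT_w\) obtained by twisting \(\bT_0\) with \(g^{-1}F(g)\in \dot w\bT_0\) (Lang's theorem). Similarly, \(G^*\)-classes of \(F^*\)-stable maximal tori are parametrized by \(F^*\)-classes in \(W^*\). The identification \(W\cong W^*\) induced by \(\delta\) is compatible with Frobenius, up to the usual inversion \(w\mapsto w^{-1}\) built into the duality convention. It therefore matches these two parametrizations, and this matching defines "dual" tori \(\bT_w\leftrightarrow \bT^*_{w}\). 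With it in hand, both sides of the proposition decompose as disjoint unions over the matched classes of tori. For fixed \(w\), the left side becomes \(\Irr(T_w)/W_G(\bT_w)^F\) and the right side becomes \(T^*_w/W_{G^*}(\bT^*_w)^{F^*}\). Here \(W_G(\bT_w)^F\cong C_{W,F}(w)\), twisted appropriately, and likewise on the dual side; the two groups are identified by \(\delta\).

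\textbf{Step 2: identifying characters with points.} Transporting \(\bT_w\) back to \(\bT_0\), the finite torus \(T_w\) becomes \(\bT_0^{wF}\). One then has the canonical description
\[
\Irr(\bT_0^{wF})\cong X(\bT_0)/(wF-1)X(\bT_0).
\]
This uses the fixed isomorphism \(\overline{\mathbb F}_p^\times\hookrightarrow \Qlcl^\times\) (via \(\mathbb C\cong\Qlcl\)), applied through the norm map. On the dual side,
\[
\bT_0^{*\,(wF)^*}\cong X^\vee(\bT_0^*)/((wF)^*-1)X^\vee(\bT_0^*),
\]
again via the exact sequence \(0\to X^\vee\to X^\vee\otimes\mathbb Q\to \bT^*\) combined with the chosen roots of unity. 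The map \(\delta\) intertwines the two actions \(wF-1\) and \((wF)^*-1\) by the compatibility assumed in \S\ref{sec:notation}, and it therefore induces an isomorphism \(\Irr(T_w)\cong T^*_w\).

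\textbf{Step 3: equivariance and independence of choices.} Since \(\delta\) intertwines the \(W\)- and \(W^*\)-actions, the isomorphism of Step 2 is equivariant for the relative Weyl groups, and it passes to the quotients of Step 1. One must also check that the result does not depend on the choices made: the representative \(g\) in Lang's theorem, and the element \(w\) within its \(F\)-class. Changing \(g\) changes both sides by compatible conjugations, which is the standard argument of Deligne--Lusztig (5.2) or Digne--Michel. Taking the union over classes of tori then gives the stated bijection.

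The main obstacle is Step 2. The identifications depend on the fixed root-of-unity choices, and naturality in the torus under rational conjugation has to be verified carefully; everything else is formal bookkeeping. In practice I would cite \cite{DelLus76} or \cite{Book:DigneandMichel} for this step rather than reprove it.
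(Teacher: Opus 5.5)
Your outline is the standard Deligne--Lusztig argument: parametrize tori by $F$-classes in $W$, match the $F$-class of $w$ with the $F^*$-class of $(w^*)^{-1}$, and use the equivariant identifications $\Irr(T^F)\cong X(\bT)/(F-1)X(\bT)$ and $T^{*F^*}\cong X^\vee(\bT^*)/(F^*-1)X^\vee(\bT^*)$; this is exactly what the paper relies on, since it states the proposition without proof as a recollection from \cite{DelLus76} and \cite{Book:DigneandMichel}. The only slip is your parenthetical attributing the root-of-unity data to the fixed isomorphism $\mathbb C\cong\Qlcl$: that isomorphism supplies neither the embedding $\overline{\mathbb F}_p^\times\hookrightarrow\Qlcl^\times$ nor the identification $(\mathbb Q/\mathbb Z)_{p'}\cong\overline{\mathbb F}_p^\times$, which are separate choices on which the ``natural'' bijection depends.
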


For an \(F\)-stable maximal torus \(\bT\) of \(\bG\) and a character
\(\theta\in\Irr(T)\), Deligne and Lusztig associate a virtual character
\(R_{\bT}^{\bG}(\theta)\) of \(G\), constructed from \(\ell\)-adic cohomology
with compact support, where \(\ell\neq p\).  If \((\bT,\theta)\) corresponds
to \((\bT^*,s)\) under Proposition~\ref{prop:geometric-conjugacy}, we also
write
\[
R_{\bT^*}^{\bG}(s) \coloneqq R_{\bT}^{\bG}(\theta).
\]

\begin{remark}
Suppose that \(\bT\) is contained in an \(F\)-stable Borel subgroup
\(\bB\) of \(\bG\).  Then \(R_{\bT}^{\bG}(\theta)\) is an actual character,
rather than merely a virtual character, and it agrees with ordinary
Harish--Chandra induction from \(B=\bB^F\):
\[
R_{\bT}^{\bG}(\theta)=\Ind_{B}^{G}(\widetilde{\theta}),
\]
where \(\widetilde{\theta}\) denotes the inflation of \(\theta\) to \(B\).
\end{remark}

\begin{definition}[Lusztig series]
Let \(s\in G^*\) be semisimple.  The Lusztig series, or rational series,
associated with \(s\) is
\[
\cE(G,s)
\coloneqq
\left\{
\rho\in\Irr(G)
\ \middle|\
\langle R_{\bT^*}^{\bG}(s),\rho\rangle\neq 0
\text{ for some }F^*\text{-stable maximal torus }
\bT^*\subseteq \bG^*
\text{ with }s\in T^*
\right\}.
\]
\end{definition}

\begin{definition}[Unipotent character]
A character \(\rho\in\Irr(G)\) is called \emph{unipotent} if
\[
\langle R_{\bT^*}^{\bG}(1),\rho\rangle\neq 0
\]
for some \(F^*\)-stable maximal torus \(\bT^*\subseteq \bG^*\).  The set of
unipotent characters of \(G\) is denoted by \(\Uch(G)\).  Thus
\[
\Uch(G)=\cE(G,1).
\]
\end{definition}

\begin{theorem}[Partition into Lusztig series]
Every irreducible character \(\rho\in\Irr(G)\) occurs in some
Deligne--Lusztig virtual character.  More precisely, there exist an
\(F\)-stable maximal torus \(\bT\subseteq\bG\) and a character
\(\theta\in\Irr(T)\) such that
\[
\langle R_{\bT}^{\bG}(\theta),\rho\rangle\neq 0.
\]
Moreover,
\[
\Irr(G)=\bigsqcup_{(s)}\cE(G,s),
\]
where \((s)\) runs over the semisimple conjugacy classes of \(G^*\).
\end{theorem}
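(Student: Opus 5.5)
The plan is to prove the two assertions separately: first that every irreducible character occurs in some \(R_{\bT}^{\bG}(\theta)\), then that the series indexed by \(G^*\)-classes of semisimple elements are pairwise disjoint and cover \(\Irr(G)\).

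\textbf{Existence.} I would use the orthogonality formula for Deligne--Lusztig characters to express the regular character as a rational combination of the \(R_{\bT}^{\bG}(\theta)\). Concretely, the identity
\[
\Reg_G=\frac{1}{|W|_{p}\,\text{-normalized}}\ \sum_{(\bT)}\ \frac{\epsilon_{\bG}\epsilon_{\bT}}{|W_G(\bT)|}\,\frac{|G|_{p'}}{|T|}\sum_{\theta\in\Irr(T)} R_{\bT}^{\bG}(\theta)
\]
can be derived from two facts: the character formula for \(R_{\bT}^{\bG}\), and the fact that the average of \(\epsilon_{\bG}\epsilon_{\bT}R_{\bT}^{\bG}(\Reg_T)\) over \(G\)-classes of maximal tori equals \(\Reg_G\) up to the constant \(|G|_p\). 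Taking the inner product with \(\rho\) then gives \(\rho(1)\neq 0\) on the left. Hence some \(\langle R_{\bT}^{\bG}(\theta),\rho\rangle\) is nonzero. This is the standard argument in Deligne--Lusztig (Cor.~7.7) and Digne--Michel, and I would cite the regular-character decomposition there rather than rederive the Green-function identities.

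\textbf{Disjointness.} I would show that if \(\rho\) occurs in both \(R_{\bT}^{\bG}(\theta)\) and \(R_{\bT'}^{\bG}(\theta')\), then \((\bT,\theta)\) and \((\bT',\theta')\) are geometrically conjugate. The route is the Mackey-type scalar product formula. Since \(\langle R_{\bT}^{\bG}(\theta),R_{\bT'}^{\bG}(\theta')\rangle\) can vanish even when a common constituent exists, I would instead use the disjointness theorem of Deligne--Lusztig (Thm.~6.2). That theorem is proved by comparing \(H^*_c\) of the varieties \(X_{\bT}\times X_{\bT'}\) modulo \(G\), and its output is that a common constituent forces geometric conjugacy via the norm maps \(\theta\circ N_{F^n/F}\). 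This is the main obstacle: it is a genuinely geometric cohomological argument, so I would quote it.

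\textbf{Translation to the dual side.} Combining the previous step with Proposition~\ref{prop:geometric-conjugacy}, I would check that geometric conjugacy of \((\bT,\theta)\) corresponds to \(\bG^*\)-conjugacy of the associated \(s\). For rational series one needs the finer statement that the series depend only on the \(G^*\)-class of \(s\) and that distinct \(G^*\)-classes give disjoint series. For this I would follow Digne--Michel's treatment of rational series, using Lusztig's \(\mathcal E(G,s)\) characterization via \(R_{\bT^*}^{\bG}(s)\), the fact that the \(\bG^*\)-class meets \(G^*\) in possibly several rational classes, and the refinement through the \(H^1(F,Z)\)-twisted argument (Lusztig's result that rational series partition \(\Irr(G)\)). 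Finally, existence together with disjointness gives the stated partition.
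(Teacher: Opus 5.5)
Your outline is correct and follows the standard route the paper relies on: the paper states this theorem without proof, recalling it from Deligne--Lusztig, Lusztig and Digne--Michel, i.e.\ existence from expressing the regular character through the $R_{\bT}^{\bG}(\theta)$, disjointness of geometric series from the Deligne--Lusztig disjointness theorem, and Lusztig's refinement to rational series (usually proved via a regular embedding into a group with connected centre, where rational and geometric series coincide, plus Clifford theory with the linear characters of the quotient). The only slip is your displayed identity, whose stray prefactor should be deleted: the correct statement is $\Reg_G=|G|_p^{-1}\sum_{\bT}\epsilon_{\bG}\epsilon_{\bT}R_{\bT}^{\bG}(\Reg_T)$, summed over all $F$-stable maximal tori rather than averaged over classes, and this regroups exactly into your double sum; in any case a nonzero constant would not affect the argument.
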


\begin{definition}[Sign]
Let \(\bG\) be a connected algebraic group defined over \(\fq\), with
Frobenius morphism \(F\).  Its sign is
\[
\epsilon_{\bG}\coloneqq (-1)^{\operatorname{rk}_{\fq}(\bG)},
\]
where \(\operatorname{rk}_{\fq}(\bG)\) denotes the \(F\)-split rank of \(\bG\).
\end{definition}

For connected reductive groups with connected center, Lusztig
\cite{Lus84} introduced the Jordan decomposition of characters.  It gives a
parametrization of the Lusztig series \(\cE(G,s)\) by unipotent characters of
the finite group \(C_{\bG^*}(s)^{F^*}\).  Lusztig's construction does not, by
itself, determine a unique bijection with all the functorial properties needed
below.  Digne and Michel resolved this ambiguity by proving the following
uniqueness theorem.

\begin{theorem}[{\cite[Theorem 7.1]{DM90}}]
\label{thm:JD-connected-center}
There exists a unique collection of bijections
\[
J_s^G:\cE(G,s)\longrightarrow \Uch(H),
\]
where \(\bG\) runs over connected reductive groups with connected center,
\(s\in G^*\) is semisimple, and
\[
\bH\coloneqq C_{\bG^*}(s),
\qquad
H\coloneqq \bH^{F^*},
\]
satisfying the following properties.

\begin{enumerate}
    \item For every \(F^*\)-stable maximal torus \(\bT^*\leq \bH\) and every
    \(\rho\in\cE(G,s)\), one has
    \[
    \langle R_{\bT^*}^{\bG}(s),\rho\rangle
    =
    \epsilon_{\bG}\epsilon_{\bH}
    \langle R_{\bT^*}^{\bH}(1_{T^*}),J_s^G(\rho)\rangle .
    \]

    \item If \(s=1\) and \(\rho\in\Uch(G)\), then:
    \begin{enumerate}
        \item[(a)] the Frobenius eigenvalues \(\omega_{\rho}\) and
        \(\omega_{J_1^G(\rho)}\) are equal;

        \item[(b)] if \(\rho\) lies in the principal series, then
        \(\rho\) and \(J_1^G(\rho)\) correspond to the same character of the
        Iwahori--Hecke algebra.
    \end{enumerate}

    \item If \(z\in Z(G^*)\), then
    \[
    J_{sz}^G(\rho\otimes\widehat z)=J_s^G(\rho)
    \]
    for every \(\rho\in\cE(G,s)\), where \(\widehat z\) is the linear character
    of \(G\) corresponding to \(z\).

    \item Let \(\bL^*\) be an \(F^*\)-stable Levi subgroup of \(\bG^*\) such
    that \(\bH\leq\bL^*\), and let \(\bL\leq\bG\) be the dual Levi subgroup.
    Then the following diagram commutes:
    \[
    \begin{tikzcd}
    \cE(G,s)\arrow{r}{J_s^G}
    & \Uch(H)  \\
    \cE(L,s) \arrow{u}{R_L^G}\arrow{r}{J_s^L}
    & \Uch(H)\arrow{u}{\Id}.
    \end{tikzcd}
    \]

    \item Suppose that \(\bG\) is of type \(\fE_8\), and that \(\bH\) is of
    type \(\fE_7\A_1\), respectively of type \(\fE_6\A_2\).  Let
    \(\bL\leq\bG\) be a Levi subgroup of type \(\fE_7\), respectively
    \(\fE_6\), whose dual \(\bL^*\) is contained in \(\bH\).  Then the
    following diagram commutes:
    \[
    \begin{tikzcd}
    \mathbb{Z}\cE(G,s)\arrow{r}{J_s^G}
    & \mathbb{Z}\Uch(H)  \\
    \mathbb{Z}\cE(L,s)_{c} \arrow{u}{R_{\bL}^{\bG}}\arrow{r}{J_s^L}
    & \mathbb{Z}\Uch(L^*)_{c}\arrow{u}{R_{\bL^*}^{\bH}},
    \end{tikzcd}
    \]
    where the subscript \(c\) denotes the subgroup spanned by the cuspidal
    part of the corresponding Lusztig series.

    \item Let \(\bT_1\leq Z(\bG)\) be an \(F\)-stable central torus, and let
    \[
    \pi_1:\bG\longrightarrow \bG_1\coloneqq \bG/\bT_1
    \]
    be the natural epimorphism.  Let \(s_1\in G_1^*\) and suppose that
    \(s=\pi_1^*(s_1)\).  Put
    \[
    \bH_1\coloneqq C_{\bG_1^*}(s_1).
    \]
    Then the following diagram commutes:
    \[
    \begin{tikzcd}
    \cE(G,s)\arrow{r}{J_s^G}
    & \Uch(H) \arrow{d}{} \\
    \cE(G_1,s_1) \arrow{u}{}
    \arrow{r}{J_{s_1}^{G_1}}
    & \Uch(H_1),
    \end{tikzcd}
    \]
    where the left vertical map is inflation along \(G\to G_1\), and the right
    vertical map is restriction along the natural embedding \(H_1\hookrightarrow H\).

    \item If \(\bG=\prod_i \bG_i\) is a direct product of \(F\)-stable
    subgroups \(\bG_i\), and \(s=\prod_i s_i\), then
    \[
    J_s^G=\prod_i J_{s_i}^{G_i}.
    \]
\end{enumerate}
\end{theorem}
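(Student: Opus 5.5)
This is the Digne--Michel uniqueness theorem, which the paper quotes from \cite[Theorem~7.1]{DM90}. I would follow their strategy, which separates existence from uniqueness. Both rest on the fact that \(\bH=C_{\bG^*}(s)\) is connected when \(Z(\bG)\) is connected.

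\textbf{Existence.} I would start from Lusztig's bijection \cite{Lus84}, which already satisfies (1). The freedom left by (1) is limited. The uniform projection of \(\rho\) determines its Lusztig family, and within a family only certain characters have equal uniform projections. The prototype is a pair of cuspidal characters with different Frobenius eigenvalues, as in the \(\fE_7\) and \(\fE_8\) families.

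Inside each family I would fix the remaining choices as follows. For \(s=1\), conditions (2a) and (2b) pin down the bijection. Frobenius eigenvalues separate the ambiguous unipotent characters, and on the principal series one matches characters of the Iwahori--Hecke algebra.

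For general \(s\), I would reduce to a simpler group. First use (6) and (7) to pass to a product of groups, each with connected centre and simple adjoint quotient. When \(\bH\) lies in a proper Levi \(\bL^*\), I would define \(J_s^G\) through Lusztig induction from \(\bL\), as in (4). Lusztig induction gives a sign-twisted bijection \(\cE(L,s)\to\cE(G,s)\), so this transports the already-fixed choice on \(L\). Property (3) makes everything independent of multiplying \(s\) by a central element.

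This leaves the isolated \(s\) in exceptional groups. There I would check the finitely many ambiguous families case by case, using Lusztig's tables. The two \(\fE_8\) configurations are the ones fixed by (5), via Harish--Chandra induction from the cuspidal characters of the \(\fE_7\) or \(\fE_6\) Levi.

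\textbf{Uniqueness.} Take two systems \(J\) and \(J'\) satisfying (1)--(7). By (1), \(J'^{-1}\circ J\) preserves uniform projections, so it permutes characters only within the non-uniformly-separated families. Properties (6), (7) and (3) reduce the comparison to a simple adjoint quotient. Property (4) settles every non-isolated \(s\), by induction on the semisimple rank. For isolated \(s\) with \(\bH\) of classical type, families are determined by their uniform projections except in controlled cases, which are handled by Harish--Chandra compatibility and (2). The remaining exceptional isolated cases are settled by (2a) when \(s=1\) and by (5) otherwise.

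\textbf{Main obstacle.} I expect the hardest part to be the exhaustive verification in the exceptional isolated cases. One has to show that each non-uniformly-determined ambiguity is separated by (2) or (5). One also has to show that the choices made there are compatible with (4), that is, independent of the Levi \(\bL\) chosen. I would attack this with Lusztig's explicit description of families and of Harish--Chandra series in \(\fE_6\), \(\fE_7\), \(\fE_8\), \(\mathsf F_4\) and \(\mathsf G_2\).
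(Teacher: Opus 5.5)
The paper gives no proof of this theorem; it is quoted from \cite[Theorem~7.1]{DM90}. Your outline does have the overall shape of the Digne--Michel argument: existence from Lusztig's bijection plus normalizations, and uniqueness by reducing via (3), (6), (7) and then (4) to isolated $s$, finished with (1), (2), (5). The uniqueness half, however, has concrete holes.

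For isolated $s$ with $\bH$ of classical type you invoke ``Harish--Chandra compatibility''. That is not among (1)--(7), so it is not available in a uniqueness proof. What you actually need is Lusztig's result that unipotent characters of groups of classical type are determined by their uniform projections. Then (1) alone forces $J_s^G$, and there are no exceptional cases to handle.

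More importantly, your list of ambiguities contains only cuspidal pairs with different Frobenius eigenvalues. It omits the principal-series pairs in the exceptional families: $\rho_{\phi_{512,11}},\rho_{\phi_{512,12}}$ in $\fE_7$, and the two $4096$-pairs in $\fE_8$. Their Iwahori--Hecke characters involve $\sqrt q$. The two members of each pair have equal uniform projections and the same Frobenius eigenvalue. For $s=1$ only (2)(b) separates them, so ``(2a) when $s=1$'' is not enough.

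The same pairs break your claim that (5) settles the $\fE_8$ cases. Suppose $\bH$ has type $\fE_7\A_1$, and let $\chi$ be a unipotent character of the $\A_1$ factor. Then $\rho_{\phi_{512,11}}\boxtimes\chi$ and $\rho_{\phi_{512,12}}\boxtimes\chi$ in $\Uch(H)$ have equal uniform projections. By (1), the two corresponding elements of $\cE(G,s)$ may therefore be interchanged. The other properties do not prevent this:
\begin{itemize}
\item These characters lie in the principal series of $\cE(G,s)$, not in the Harish--Chandra series above the cuspidal part of $\cE(L,s)$. So (5), as formulated here, imposes nothing on them.
\item Property (2) is only for $s=1$.
\item Property (4) does not apply, since $\bH$ lies in no proper Levi subgroup.
\item Properties (3), (6), (7) only relate $\fE_8$ to central twists and products with other factors, so they cannot distinguish the two members of such a pair.
\end{itemize}
Composing $J_s^G$ with this swap is therefore compatible with (1)--(7) as written, and no argument along your lines can close at this step. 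To finish, you need a normalization that reaches these principal-series characters. One option is the Hecke-algebra condition (2)(b) for the principal series of $\cE(G,s)$ with $s\neq 1$; another is (5) imposed on a larger subspace of $\cE(L,s)$. You should check precisely which conditions Digne--Michel impose before claiming uniqueness from the list as stated.
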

Lusztig \cite{Lus88} extended the Jordan decomposition of characters to
connected reductive groups whose center need not be connected.  The construction
uses a regular embedding into a connected reductive group with connected center.
We recall the relevant notation and auxiliary results, referring to
\cite{Lus88} for the proofs.

\begin{remark}
Let
\[
\pi:\bG\longrightarrow \bG_{\ad}
\]
be the adjoint quotient.  The finite group \(G_{\ad}\) acts on \(G\) by
conjugation through representatives in \(\bG_{\ad}\), and hence
\(G_{\ad}/\pi(G)\) acts on \(\Irr(G)\).  This action extends linearly to
virtual characters.  It preserves each Lusztig series \(\cE(G,s)\), since it
preserves the corresponding Deligne--Lusztig virtual characters
\(R_{\bT^*}^{\bG}(s)\).  Thus \(G_{\ad}/\pi(G)\) acts naturally on
\(\cE(G,s)\).
\end{remark}

\begin{definition}[Regular embedding]
Let \(\bG\) be a connected reductive group defined over \(\fq\).  A morphism
\[
i:\bG\longrightarrow \bG'
\]
defined over \(\fq\) is called a \emph{regular embedding} if:
\begin{itemize}
    \item \(\bG'\) is a connected reductive group over \(\fq\) with connected
    center;

    \item \(i\) identifies \(\bG\) with a closed subgroup of \(\bG'\);

    \item \(i(\bG)\) and \(\bG'\) have the same derived subgroup.
\end{itemize}
\end{definition}

Let \(i:\bG\hookrightarrow \bG'\) be a regular embedding.  We write
\(G'=\bG'^{F}\), and identify \(G\) with its image in \(G'\).  By
\cite[Theorem~1.7.12]{Book:GeckandMalle}, the dual morphism is a surjective
homomorphism
\[
i^*:\bG'^*\longrightarrow \bG^*
\]
defined over \(\fq\).  Its kernel
\[
\bK\coloneqq \ker(i^*)
\]
is an \(F^*\)-stable torus contained in \(Z(\bG'^*)\).  Put
\[
K\coloneqq \bK^{F^*}.
\]

There is a canonical isomorphism
\[
K \cong \Hom(G'/G,\Qlcl^\times).
\]
Thus \(K\) acts on \(\Irr(G')\) by tensoring with the corresponding linear
characters of \(G'/G\).  Under this action, if \(k\in K\), then tensoring by
the character corresponding to \(k\) sends
\[
\cE(G',s') \quad\text{onto}\quad \cE(G',ks').
\]

For \(s'\in G'^*=\bG'^{*F^*}\), define
\[
K_{s'}\coloneqq
\{\,k\in K\mid ks' \text{ is }G'^*\text{-conjugate to }s'\,\}.
\]
Equivalently, \(K_{s'}\) is the stabilizer of the Lusztig series
\(\cE(G',s')\) under the above action of \(K\).

\begin{lemma}\label{lem:H-component-group}
Let \(s'\in G'^*\), set
\[
s=i^*(s')\in G^*,
\qquad
\bH=C_{\bG^*}(s),
\qquad
H=\bH^{F^*},
\qquad
H^\circ=(\bH^\circ)^{F^*}.
\]
Then there is a canonical isomorphism
\[
H/H^\circ \cong K_{s'}.
\]
Consequently, \(H/H^\circ\) acts on \(\cE(G',s')\).
\end{lemma}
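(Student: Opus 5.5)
The plan is to construct the isomorphism \(H/H^\circ\cong K_{s'}\) directly from the surjection \(i^*\) with central toral kernel \(\bK\). Write \(\bH'=C_{\bG'^*}(s')\); since \(\bG'\) has connected centre, \(\bH'\) is connected.

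\emph{Step 1: an algebraic identification.} For \(h'\in\bG'^*\) with \(i^*(h')\in\bH\), the element \(h's'h'^{-1}s'^{-1}\) lies in \(\bK\). Set
\[
\bH^\sharp\coloneqq (i^*)^{-1}(\bH)=\{h'\in\bG'^*\mid h's'h'^{-1}\in s'\bK\}.
\]
The map
\[
\phi:\bH^\sharp\to\bK,\qquad h'\mapsto h's'h'^{-1}s'^{-1},
\]
is a homomorphism. Indeed, \(\bK\) is central, so \(h_1h_2 s' h_2^{-1}h_1^{-1}=h_1 (k_2 s') h_1^{-1}=k_2k_1s'\). Its kernel is \(\bH'\), which is connected. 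The image of \(\phi\) is finite, because \(\bH^\sharp/\bH'\) injects into a finite set: the conjugates of \(s'\) within the finitely many elements of \(s'\bK\) sharing the semisimple class of \(s'\), counted via eigenvalue data on a maximal torus. Since \(i^*\) is surjective with connected kernel \(\bK\subseteq\bH'\), we get \(\bH^\sharp/\bK=\bH\). Hence \(\bH^{\sharp\circ}=\bH'\bK=\bH'\) and \(\bH^\circ=i^*(\bH')\). It follows that \(\phi\) induces an injection
\[
\bar\phi:\ \bH/\bH^\circ\cong\bH^\sharp/\bH'\hookrightarrow\bK.
\]

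\emph{Step 2: rational points.} Take \(h\in H\) and lift it to \(h'\in\bH^\sharp\). Because \(\bK\) is connected, Lang's theorem lets us choose \(h'\) to be \(F^*\)-fixed. The surjection \(G'^*\to G^*\) holds exactly because \(\bK\) is connected, and every preimage lies in \(\bH^\sharp\). Then \(\phi(h')\in K\), and \(\phi(h')s'=h's'h'^{-1}\) is \(G'^*\)-conjugate to \(s'\), so \(\phi(h')\in K_{s'}\).

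This is well defined on \(H\). Two rational lifts differ by \(K\), and \(\phi(K)=1\). The kernel is \(i^*(H')\), and \(i^*(H')=H^\circ\): applying Lang's theorem with the connected group \(\bK\) inside \(\bH'\) gives \((i^*(\bH'))^{F^*}=i^*(\bH'^{F^*})\).

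Surjectivity onto \(K_{s'}\) is also direct. If \(ks'=g's'g'^{-1}\) with \(g'\in G'^*\), then \(g'\in(\bH^\sharp)^{F^*}\) and \(\phi(g')=k\), so \(i^*(g')\in H\) maps to \(k\). This gives \(H/H^\circ\cong K_{s'}\). The construction is canonical, depending only on \(i\) and \(s'\), and is independent of lifts.

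\emph{Step 3: the action.} Finally, \(K_{s'}\) acts on \(\cE(G',s')\) by tensoring with the linear character \(\widehat k\) of \(G'/G\). This preserves the series, since \(\cE(G',s')\otimes\widehat k=\cE(G',ks')=\cE(G',s')\). Transporting along the isomorphism gives the action of \(H/H^\circ\).

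The main obstacle is the rational bookkeeping in Step 2. One must verify that \(\ker(H\to K_{s'})\) is exactly \(H^\circ\) and not larger, which amounts to the equality \((\bH^\circ)^{F^*}=i^*(\bH'^{F^*})\). I would handle this by applying Lang's theorem to the connected central torus \(\bK\), noting that \(H^1(F^*,\bK)=1\).
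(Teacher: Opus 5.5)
Your proposal is correct and follows the same route as the paper. The paper uses exactly the map \(x\mapsto s'^{-1}\dot x s'\dot x^{-1}\in\bK\), which coincides with your \(\phi(\dot x)\) because \(\bK\) is central, and simply asserts that it induces an isomorphism \(H/H^\circ\to K_{s'}\). Your write-up supplies the details the paper leaves implicit, and they are sound: connectedness of \(C_{\bG'^*}(s')\) to identify the kernel with \(H^\circ\), Lang's theorem for the torus \(\bK\) to obtain rational lifts and \(i^*(H')=H^\circ\), and the Weyl-group bound giving finiteness of the image.
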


\begin{proof}
For \(x\in H\), choose a lift \(\dot{x}\in \bG'^*\) with
\(i^*(\dot{x})=x\).  Since \(x\) centralizes \(s=i^*(s')\), the element
\[
s'^{-1}\dot{x}s'\dot{x}^{-1}
\]
belongs to \(\bK\).  This construction induces a well-defined homomorphism
\[
H/H^\circ\longrightarrow K,
\]
whose image is precisely \(K_{s'}\).  The resulting map is the required
isomorphism.
\end{proof}

\begin{proposition}[{cf. \cite[Proposition~2.3.15]{Book:GeckandMalle}}]
\label{prop:H-action-on-unipotent-characters}
Let \(i:\bG\hookrightarrow \bG'\) be a regular embedding, and let
\(s'\in G'^*\).  Put
\[
s=i^*(s'),
\qquad
\bH=C_{\bG^*}(s),
\qquad
\bH'=C_{\bG'^*}(s').
\]
Then \(i^*\) induces a surjective morphism
\[
\bH'\twoheadrightarrow \bH^\circ
\]
with kernel \(\bK\).  Moreover, pullback along this morphism gives a canonical
bijection
\[
\Uch(H^\circ)\xrightarrow{\sim}\Uch(H'),
\qquad
\eta\longmapsto \eta\circ i^*|_{H'}.
\]
Via the identification \(H/H^\circ\cong K_{s'}\), this bijection transports the
natural \(H/H^\circ\)-action on \(\Uch(H^\circ)\) to the corresponding action
on \(\Uch(H')\).
\end{proposition}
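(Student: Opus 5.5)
Since \(s'\) is semisimple in \(\bG'^*\), a group with connected centre, we first reduce to the structural statement about centralizers.  As \(\bG'\) has connected centre, \(\bH'=C_{\bG'^*}(s')\) is connected (Steinberg).  The morphism \(i^*:\bG'^*\to\bG^*\) is surjective with central toral kernel \(\bK\).  Our plan is to first show \(i^*(\bH')=\bH^\circ\).

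For \(x'\in\bH'\) we have \(i^*(x')\in C_{\bG^*}(s)=\bH\).  Since \(\bH'\) is connected, the image lies in \(\bH^\circ\).  For the reverse inclusion we compare root data.  Choose a maximal torus \(\bT'^*\ni s'\), so that \(\bT^*=i^*(\bT'^*)\) is a maximal torus containing \(s\).  The map \(i^*\) is a central isogeny-type surjection, so it induces a bijection on roots, and root subgroups map isomorphically onto root subgroups.  Under the canonical identification \(\Phi(\bG'^*,\bT'^*)=\Phi(\bG^*,\bT^*)\) we have \(\alpha(s')=\alpha(i^*s')\), because roots of \(\bG'^*\) factor through \(i^*\): they are trivial on the central \(\bK\).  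Hence the root systems of \(\bH'\) and \(\bH^\circ\) coincide.  Both groups are generated by their maximal torus and root subgroups, so \(i^*(\bH')=\bH^\circ\).  The kernel is \(\bH'\cap\bK=\bK\), since \(\bK\) is central.  All constructions are \(F^*\)-equivariant because \(i\) is defined over \(\fq\).

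Next we pass to rational points and unipotent characters.  Consider \(1\to\bK\to\bH'\to\bH^\circ\to1\).  Since \(\bK\) is a connected torus, Lang's theorem gives surjectivity \(H'\to H^\circ\) with kernel \(K\).  Thus \(H^\circ\cong H'/K\), and inflation \(\eta\mapsto\eta\circ i^*|_{H'}\) is injective on irreducible characters.

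To see that inflation matches unipotent characters, we use that \(F^*\)-stable maximal tori of \(\bH'\) are exactly the preimages of those of \(\bH^\circ\), as \(\bK\) is central.  We then apply the standard compatibility of Deligne--Lusztig induction with quotients by central tori: \(R_{\bT'^*}^{\bH'}(1)=\infl(R_{\bT^*}^{\bH^\circ}(1))\).  This is the same compatibility as Theorem~\ref{thm:JD-connected-center}(6), or Digne--Michel's result that the unipotent variety and the Deligne--Lusztig varieties are insensitive to central tori.  Since unipotent characters are the constituents of the \(R_{\bT}(1)\), and inflation preserves inner products, we obtain the bijection \(\Uch(H^\circ)\cong\Uch(H')\).

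Finally, we check equivariance.  Given \(x\in H\), choose a lift \(\dot x\in\bG'^*\) as in Lemma~\ref{lem:H-component-group}.  Conjugation by \(\dot x\) sends \(s'\) to \(ks'\) with \(k\in K_{s'}\), so it maps \(\bH'\) to \(C_{\bG'^*}(ks')=\bH'\), the latter equality because \(k\) is central.  Since \(i^*(\dot x y\dot x^{-1})=x\,i^*(y)\,x^{-1}\), conjugation by \(\dot x\) on \(\bH'\) lifts conjugation by \(x\) on \(\bH^\circ\).  Pullback therefore intertwines the two actions.  The action on \(\Uch(H')\) is the one by \(\Ad\dot x\); it is independent of the choice of lift up to \(\bK\bH'\), which acts trivially on unipotent characters since inner \(H'\)-conjugation and central elements act trivially.

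The main obstacle we expect is rationality in this last step.  One would like \(\dot x\in G'^*\), but \(\dot x\) is only chosen in \(\bG'^*\).  Since \(F^*(\dot x)\dot x^{-1}\in\bK\), conjugation by \(\dot x\) still commutes with \(F^*\) on \(\bH'\), because \(\bK\) is central.  So it induces an automorphism of \(H'\), which suffices.
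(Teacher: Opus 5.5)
Your proof is correct, and it is essentially the standard argument behind the result the paper simply cites from Geck--Malle. The root-datum comparison, together with connectedness of \(\bH'\), gives \(\bH'\twoheadrightarrow\bH^\circ\) with kernel \(\bK\); Lang's theorem for the connected torus \(\bK\) passes this to rational points; and the compatibility of \(R_{\bT}(1)\) with quotients by central tori is the isotypic-morphism fact already used in Lemma~\ref{lem:unipotent-trivial-central-character}.

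The ``obstacle'' at the end is not really one. Since \(\bK\) is connected, Lang's theorem also makes \(G'^*\to G^*\) surjective, so you can choose the lift \(\dot x\) in \(G'^*\) from the outset. This is also what guarantees that \(k\) lies in \(K_{s'}\), which is defined by rational conjugacy.
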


The compatibility of the connected-center Jordan decomposition with central
twists implies the following equivariance statement.

\begin{lemma}[{\cite[Proposition~8.1]{Lus88}}]
\label{lem:regular-embedding-JD-equivariance}
The Jordan decomposition bijection for \(G'\),
\[
J_{s'}^{G'}:\cE(G',s')\xrightarrow{\sim}\Uch(H'),
\]
is compatible with the \(H/H^\circ\)-action, where \(H/H^\circ\) is identified
with \(K_{s'}\) as in Lemma~\ref{lem:H-component-group}.
\end{lemma}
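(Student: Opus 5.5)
The plan is to derive the statement from two earlier inputs: the compatibility of the Digne--Michel bijection \(J_{s'}^{G'}\) with tensoring by linear characters of \(G'/G\) (Theorem~\ref{thm:JD-connected-center}(3)), and the identification of \(H/H^\circ\) with \(K_{s'}\) from Lemma~\ref{lem:H-component-group} and Proposition~\ref{prop:H-action-on-unipotent-characters}.

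\textbf{Step 1: the action on the \(G'\)-side.} Let \(k\in K_{s'}\) with corresponding linear character \(\widehat k\) of \(G'/G\). Since \(ks'\) is \(G'^*\)-conjugate to \(s'\), we may choose \(g\in G'^*\) with \(ks'={}^g s'\). Tensoring by \(\widehat k\) sends \(\cE(G',s')\) to \(\cE(G',ks')=\cE(G',s')\); this is how \(K_{s'}\) acts on the source.

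\textbf{Step 2: transport through \(J\).} Because \(\bK\le Z(\bG'^*)\), property (3) of Theorem~\ref{thm:JD-connected-center} gives
\[
J_{ks'}^{G'}(\rho\otimes\widehat k)=J_{s'}^{G'}(\rho),
\]
as a unipotent character of \(C_{\bG'^*}(ks')^{F^*}=C_{\bG'^*}(s')^{F^*}=H'\). To compare with \(J_{s'}^{G'}\), we must identify \(J_{ks'}^{G'}\) with \(J_{{}^g s'}^{G'}\). Conjugation by \(g\) induces \(\Ad(g):H'\to C_{\bG'^*}({}^gs')^{F^*}\), and the uniqueness in Theorem~\ref{thm:JD-connected-center} forces
\[
J_{{}^gs'}^{G'}=\Ad(g)_*\circ J_{s'}^{G'}.
\]
Indeed, the transported family still satisfies properties (1)--(7): the Deligne--Lusztig multiplicities are invariant under \(G'^*\)-conjugation of pairs \((\bT^*,s)\). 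Combining the two displays gives
\[
J_{s'}^{G'}(\rho\otimes\widehat k)=\Ad(g)^{-1}_*\,J_{s'}^{G'}(\rho).
\]

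\textbf{Step 3: matching with \(H/H^\circ\).} Take \(x\in H\) with image \(k\) in the sense of Lemma~\ref{lem:H-component-group}, so that \(\dot x\) is a lift of \(x\) satisfying \(s'^{-1}\dot x s'\dot x^{-1}=k\) (up to inversion conventions). Then \(\dot x^{-1}\) plays the role of \(g\), up to an element of \(C_{\bG'^*}(s')\). On \(\Uch(H')\), the resulting automorphism \(\Ad(\dot x)\) is, via the bijection of Proposition~\ref{prop:H-action-on-unipotent-characters}, exactly conjugation by \(x\) on \(\Uch(H^\circ)\). The ambiguity by \(C_{\bG'^*}(s')\) acts trivially on unipotent characters: \(C_{\bG'^*}(s')\) is connected because \(Z(\bG')\) is connected, so its rational points act by inner automorphisms of \(H'\).

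The main obstacle is Step 2, namely the naturality of \(J^{G'}\) under conjugation \(s'\mapsto{}^gs'\) with \(g\) rational but not centralizing. This requires checking carefully that conjugating the whole Digne--Michel family preserves every normalizing property, in particular (4), (5) and (6), which involve Levi subgroups and central quotients that move along with \(g\). Alternatively, one can note that \(\Uch(H')\) is determined by the multiplicities in property (1) together with the Frobenius eigenvalue and Hecke data, all of which are conjugation-invariant. Sign and inverse conventions in Lemma~\ref{lem:H-component-group} affect only whether \(x\) or \(x^{-1}\) appears, and that does not change the equivariance statement.
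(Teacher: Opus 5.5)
Your argument is correct and follows the route the paper indicates. The paper gives no independent proof: it cites Lusztig's Proposition~8.1 after remarking that the statement follows from the central-twist compatibility, property (3) of Theorem~\ref{thm:JD-connected-center}. You supply the two further ingredients this tacitly needs: naturality of \(J^{G'}\) under rational conjugation \(s'\mapsto {}^g s'\), and the identification, via \(i^*\), of \(\Ad(\dot x)\) on \(H'\) with conjugation by \(x\) on \(H^\circ\).

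Be aware that, given (3), naturality for those \(g\) with \({}^g s'=ks'\) is essentially a restatement of the lemma, so Step~2 carries all the content. It is best taken as part of the standard conjugation-invariant formulation of the Digne--Michel decomposition. Transporting along a single \(g\) does not by itself produce a whole family that visibly satisfies (4), (6) and (7). Your alternative appeal to Frobenius eigenvalues and Hecke data does not help either, since those conditions only constrain \(s=1\).

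Finally, with the convention \(s'^{-1}\dot x s'\dot x^{-1}=k\), it is \(\dot x\), not \(\dot x^{-1}\), that conjugates \(s'\) to \(ks'\). This only changes whether \(x\) or \(x^{-1}\) acts, which is harmless because \(H/H^\circ\) is abelian and only orbits and stabilizers are used later.
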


\begin{remark}\label{rmk:regular-embedding-adjoint-action}
Let \(i:\bG\hookrightarrow \bG'\) be a regular embedding.  By
\cite[Remark~1.7.6]{Book:GeckandMalle}, \(i\) induces isomorphisms
\[
\bG_{\ad}\cong \bG'_{\ad},
\qquad
G_{\ad}\cong G'_{\ad}.
\]
Hence the conjugation action of \(G'\) on \(G\) induces a natural surjective
homomorphism
\[
G'/G \twoheadrightarrow G_{\ad}/\pi(G).
\]
In particular, the action of \(G'/G\) on \(\Irr(G)\) factors through the
action of \(G_{\ad}/\pi(G)\).

On the other hand, the canonical isomorphism
\[
K\cong \Hom(G'/G,\Qlcl^\times)
\]
identifies
\[
G'/G \cong \Hom(K,\Qlcl^\times).
\]
Restricting characters from \(K\) to \(K_{s'}\), and using
Lemma~\ref{lem:H-component-group}, gives a natural surjective homomorphism
\[
\alpha:G'/G
\twoheadrightarrow
\Hom(K_{s'},\Qlcl^\times)
\cong
\Hom(H/H^\circ,\Qlcl^\times).
\]
The preceding paragraph shows that \(\alpha\) factors through
\(G_{\ad}/\pi(G)\).
\end{remark}

\begin{theorem}[Multiplicity-freeness {\cite[\S 10]{Lus88}}]
Let \(i:\bG\hookrightarrow \bG'\) be a regular embedding.  Then, for every
\(\rho'\in\Irr(G')\), the restriction \(\rho'|_G\) is multiplicity-free.
\end{theorem}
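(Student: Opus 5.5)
Under a regular embedding \(i:\bG\hookrightarrow\bG'\), restriction from \(G'\) to \(G\) is multiplicity-free. The plan is to reduce this to Clifford theory for the normal subgroup \(G\trianglelefteq G'\) with abelian quotient, and then to use the Jordan-decomposition description of the inertia data.

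Fix \(\rho'\in\Irr(G')\) and an irreducible constituent \(\rho\) of \(\rho'|_G\). By Clifford theory, \(\rho'|_G=e\sum_{g\in G'/I}{}^g\rho\), where \(I=I_{G'}(\rho)\) is the inertia group. Since \(G'=G\cdot Z(\bG')^{F}\) up to a finite abelian quotient, the group \(G'/G\) is abelian. Gallagher's theorem then says the constituents of \(\Ind_G^{G'}\rho\) are parametrised by the extensions of \(\rho\) to \(I\), and \(e^2\) equals the number of characters of \(G'\) above \(\rho\) that are not obtained by linear twists. So it suffices to show that \(\rho\) extends to \(I\). Equivalently, the number of \(\rho''\in\Irr(G')\) lying over \(\rho\), counted up to twisting by \(\Irr(G'/G)\), times the orbit size, must equal \(|G'/G|\) divided by the number of twists fixing \(\rho'\).

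Next, to make this count concrete, I would compare the two stabilizers. Let \(\rho'\in\cE(G',s')\). The twists by \(K\cong\Hom(G'/G,\Qlcl^\times)\) fixing \(\rho'\) form a subgroup of \(K_{s'}\cong H/H^\circ\) (Lemma~\ref{lem:H-component-group}). By Lemma~\ref{lem:regular-embedding-JD-equivariance}, this subgroup equals the stabilizer \(\Stab_{K_{s'}}(J_{s'}^{G'}(\rho'))\) of the corresponding unipotent character of \(H'\), which by Proposition~\ref{prop:H-action-on-unipotent-characters} is the stabilizer in \(H/H^\circ\) of a unipotent character \(u\) of \(H^\circ\). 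Frobenius reciprocity gives
\[
\langle\rho'|_G,\rho'|_G\rangle_G=\langle\Ind_G^{G'}\Res\rho',\rho'\rangle=\#\{\lambda\in\Irr(G'/G):\rho'\otimes\lambda=\rho'\}=|\Stab_{H/H^\circ}(u)|=e^2\,[G':I].
\]
Multiplicity-freeness is therefore the statement \([G':I]=|\Stab_{H/H^\circ}(u)|\), i.e., the \(G'\)-orbit of \(\rho\) is as large as possible.

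The key step, which I expect to be the main obstacle, is to prove that the number of constituents is at least \(|\Stab(u)|\). My approach would be to check this on Deligne--Lusztig combinations. The uniform projection of \(\rho'\) restricts to \(G\) as a sum of \(R_{\bT}^{\bG}(\theta)\), and the constituents of \(\rho'|_G\) are separated by the characters \(R_{\bT^*}^{\bG}(s)\) twisted by characters of \(K_{s'}\). Following Lusztig \cite[\S 10]{Lus88}, this requires the reduction to the case \(\bG\) simple and simply connected modulo centre, together with a case analysis, and in bad cases the explicit restriction of Gelfand--Graev characters, which are multiplicity-free. I would first try the Gelfand--Graev argument. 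For each \(z\in Z(G)\)-class, \(\Gamma_z\) is multiplicity-free, and every character of \(G'\) is a constituent of some (dual) Gelfand--Graev character after Alvis--Curtis duality, at least for regular series. Reducing general \(\rho'\) to this case via Harish--Chandra induction and Howlett--Lehrer would be where the real work, namely Lusztig's case analysis, lies. Realistically I would cite \cite[\S 10]{Lus88} for that step.
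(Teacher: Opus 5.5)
Your computation
$\langle\rho'|_G,\rho'|_G\rangle_G=\#\{\lambda\in\Irr(G'/G):\rho'\lambda=\rho'\}=|\Stab_{H/H^\circ}(u)|=e^2[G':I]$
is correct. So the theorem is equivalent to $e=1$, that is, to $\rho$ extending to $I$, that is, to $[G':I]=|\Stab_{H/H^\circ}(u)|$.

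The Gallagher sentences before it are off. $\Irr(G'\mid\rho)$ is a single orbit under twisting by $\Irr(G'/G)$, of size $|I/G|/e^2$. So ``$e^2$ equals the number of characters above $\rho$ not obtained by linear twists'' is false, and your ``equivalently'' count holds for every value of $e$.

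More importantly, this is a restatement, not a reduction. The equality $[G':I]=|\Stab_{H/H^\circ}(u)|$ is exactly the orthogonality of stabilizers in Remark~\ref{rmk:multi-regular-embedding-action}, equivalently the fibre count of Theorem~\ref{thm:Lusztig-orbit-JD-disconnected-centralizers}(2). Both sit downstream of the multiplicity-freeness you are proving. The only elementary gain is that $e=1$ is automatic in two situations: when $|\Stab_{H/H^\circ}(u)|$ is squarefree (since $e^2$ divides it), and when the stabilizer of $\rho$ in $G'/G\,Z(\bG')^F$ is cyclic. The real content is the non-cyclic case, e.g.\ a Klein four stabilizer in untwisted type $D_{2n}$. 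There $\rho'|_G=2\rho$ is consistent with every identity you wrote. You, like the paper, take this step from \cite[\S10]{Lus88}; the paper gives no argument beyond that citation.

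The tools you sketch for that step would not supply it.

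\emph{Deligne--Lusztig characters.} Put $\bT'=\bT Z(\bG')$, which centralizes $\bT$. Then $G'=G\,T'$, so every $R_{\bT}^{\bG}(\theta)$ is invariant under $G'$-conjugation. Hence all constituents of $\rho'|_G$ have the same multiplicity in every Deligne--Lusztig character; this is also visible in Theorem~\ref{thm:Lusztig-orbit-JD-disconnected-centralizers}(3), whose right side depends only on the fibre. No combination of uniform functions separates or counts them.

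\emph{Gelfand--Graev characters.} These do break the symmetry, and your idea works for regular $\rho'$. The Gelfand--Graev character of $G'$ is $\Ind_G^{G'}\Gamma_z$, so $1=\langle\rho'|_G,\Gamma_z\rangle_G=e\sum_i\langle\rho_i,\Gamma_z\rangle_G$ forces $e=1$. Alvis--Curtis duality then gives the semisimple case. But it is false that every character of $G'$ occurs in a Gelfand--Graev character or its dual. In $\cE(G',s')$ only the Jordan correspondents of the trivial and Steinberg characters of $H'$ do. Harish--Chandra induction does not reduce to these either: cuspidal unipotent characters are already neither regular nor semisimple. Moreover, transferring $e=1$ from $\tau$ to the constituents of $R_{\bL}^{\bG}(\tau)$ would need an equivariant Howlett--Lehrer theory.

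As written, your argument proves the statement only for regular and semisimple characters. Otherwise it rests on the same citation the paper makes.
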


We shall also use the following elementary Clifford-theoretic fact, in the
form used by Lusztig.

\begin{remark}[{\cite[\textsection 9]{Lus88}}]
\label{rmk:multi-regular-embedding-action}
Let \(N\triangleleft M\) be finite groups, and suppose that \(M/N\) is
abelian.  The conjugation action of \(M\) on \(N\) induces an action of
\(M/N\) on \(\Irr(N)\).  The dual group
\[
(M/N)^\vee\coloneqq \Hom(M/N,\Qlcl^\times)
\]
acts on \(\Irr(M)\) by tensoring with linear characters.

Assume that, for every \(\rho'\in\Irr(M)\), the restriction \(\rho'|_N\) is
multiplicity-free.  Then there is a unique bijection between orbit sets
\[
\Irr(N)/(M/N)
\longleftrightarrow
\Irr(M)/(M/N)^\vee
\]
with the following property.  If \(\mathcal O\subseteq\Irr(N)\) corresponds to
\(\mathcal O'\subseteq\Irr(M)\), and if
\(\rho_\circ\in\mathcal O\), \(\rho'_\circ\in\mathcal O'\), then
\[
\rho'_\circ|_N=\sum_{\rho\in\mathcal O}\rho,
\qquad
\Ind_N^M(\rho_\circ)=\sum_{\rho'\in\mathcal O'}\rho'.
\]
Moreover, the stabilizer of \(\rho_\circ\) in \(M/N\) and the stabilizer of
\(\rho'_\circ\) in \((M/N)^\vee\) are orthogonal under the natural pairing
\[
M/N\times (M/N)^\vee\longrightarrow \Qlcl^\times.
\]
\end{remark}

With this notation in place, we can now state Lusztig's disconnected-center
version of Jordan decomposition \cite[Proposition~5.1]{Lus88}.
\begin{theorem}[Jordan decomposition for disconnected centralizers]
\label{thm:Lusztig-orbit-JD-disconnected-centralizers}
Let \(s\in G^*\) be semisimple, and put
\[
\bH\coloneqq C_{\bG^*}(s),
\qquad
H\coloneqq \bH^{F^*},
\qquad
H^\circ\coloneqq (\bH^\circ)^{F^*}.
\]
Fix a regular embedding \(i:\bG\hookrightarrow\bG'\), and choose
\(s'\in G'^*\) such that \(i^*(s')=s\).  Then Lusztig constructs a
surjective map
\[
J_s=J_s^{(G,G')}:
\cE(G,s)\twoheadrightarrow \Uch(H^\circ)/(H/H^\circ)
\]
with the following properties.

\begin{enumerate}
    \item The fibers of \(J_s\) are precisely the
    \(G_{\ad}/\pi(G)\)-orbits on \(\cE(G,s)\).

    \item Let \(\mathcal O\subseteq \Uch(H^\circ)\) be an
    \(H/H^\circ\)-orbit, and let
    \[
    \Gamma=\Stab_{H/H^\circ}(u)
    \]
    be the stabilizer of any element \(u\in\mathcal O\).  Then
    \[
    |J_s^{-1}(\mathcal O)|=|\Gamma|.
    \]

    \item If \(\rho\in \cE(G,s)\) and \(J_s(\rho)=\mathcal O\), then, for
    every \(F^*\)-stable maximal torus \(\bT^*\subseteq\bG^*\) containing
    \(s\), one has
    \[
    \big\langle R_{\bT^*}^{\bG}(s),\rho\big\rangle_G
    =
    \epsilon_{\bG}\epsilon_{\bH^\circ}
    \sum_{u\in\mathcal O}
    \big\langle
    R_{\bT^*}^{\bH^\circ}(1_{T^*}),u
    \big\rangle_{H^\circ}.
    \]
\end{enumerate}
\end{theorem}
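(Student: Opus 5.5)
The plan is to obtain \(J_s\) by transporting the connected-centre bijection of Theorem~\ref{thm:JD-connected-center} from \(G'\) down to \(G\) along the regular embedding, and to control the descent with the Clifford correspondence of Remark~\ref{rmk:multi-regular-embedding-action}.

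\textbf{Step 1: definition of \(J_s\).} Given \(\rho\in\cE(G,s)\), choose an irreducible constituent \(\rho'\) of \(\Ind_G^{G'}\rho\). Since \(R_{\bT}^{\bG'}(\theta')\) restricts to \(R_{\bT}^{\bG}(\theta'|_T)\), we have \(\rho'\in\cE(G',s'')\) for some \(s''\) with \(i^*(s'')=s\). Since \(\ker i^*=\bK\) and \(\bK^{F^*}=K\) (Lang's theorem applied to the torus \(\bK\)), \(s''\) is \(G'^*\)-conjugate to \(ks'\) for some \(k\in K\). Replacing \(\rho'\) by \(\rho'\otimes\widehat{k}^{-1}\), which is still a constituent because \(\widehat{k}\) is trivial on \(G\), we may assume \(\rho'\in\cE(G',s')\). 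Set
\[
J_s(\rho)\coloneqq\text{the }H/H^\circ\text{-orbit of }\eta,\qquad \eta\circ i^*|_{H'}=J^{G'}_{s'}(\rho'),
\]
using Proposition~\ref{prop:H-action-on-unipotent-characters}. The remaining choice is \(\rho'\) up to tensoring by the stabilizer \(K_{s'}\cong H/H^\circ\) (Lemma~\ref{lem:H-component-group}). By Lemma~\ref{lem:regular-embedding-JD-equivariance}, changing \(\rho'\) in this way moves \(\eta\) within its \(H/H^\circ\)-orbit, so \(J_s\) is well defined. Surjectivity follows because every \(\rho'\in\cE(G',s')\) restricts nontrivially to \(G\), and its constituents lie in \(\cE(G,s)\).

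\textbf{Step 2: properties (1) and (2).} By multiplicity-freeness, Remark~\ref{rmk:multi-regular-embedding-action} applies to \(N=G\), \(M=G'\). It gives a bijection between \(G'/G\)-orbits on \(\cE(G,s)\) and \((G'/G)^\vee=K\)-orbits on the \(\rho'\). The latter restricted to \(\cE(G',s')\) are the \(K_{s'}\)-orbits, which by the equivariant bijection \(J^{G'}_{s'}\) match the \(H/H^\circ\)-orbits \(\mathcal O\subseteq\Uch(H^\circ)\). By Remark~\ref{rmk:regular-embedding-adjoint-action}, the \(G'/G\)-orbits are exactly the \(G_{\ad}/\pi(G)\)-orbits, which gives (1). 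For (2), the fibre is the \(G'\)-orbit of \(\rho\), so its size is \([G':G]/|\Stab_{G'/G}(\rho)|\). Orthogonality identifies \(\Stab_{G'/G}(\rho)\) with the annihilator of \(\Stab_K(\rho')\), so the orbit size equals \(|\Stab_K(\rho')|\). Since \(\Stab_K(\rho')\subseteq K_{s'}\), equivariance gives \(\Stab_K(\rho')=\Stab_{K_{s'}}(J^{G'}_{s'}\rho')\cong\Gamma\).

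\textbf{Step 3: the multiplicity formula (3).} Write \(\bT'^*=(i^*)^{-1}(\bT^*)\). Then
\[
\langle R^{\bG}_{\bT^*}(s),\rho\rangle_G=\langle R^{\bG'}_{\bT'^*}(s'),\rho'\rangle_{G'}.
\]
To see this, note that the left side equals \(\langle \Ind_G^{G'}R^{\bG}(s),\rho'\rangle\) computed with any constituent. By multiplicity-freeness and Frobenius reciprocity, \(\Ind_G^{G'}R^{\bG}_{\bT^*}(s)=\sum_{k\in K/K_{s'}\text{-reps}} R^{\bG'}_{\bT'^*}(ks')\), and only the term with \(ks'\) conjugate to \(s'\) pairs nontrivially with \(\rho'\). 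Establishing this restriction and induction identity correctly, in particular that the fibre of \(i^*\) over \(s\) meets each series once per \(K/K_{s'}\)-coset, is the main technical point.

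Next apply Theorem~\ref{thm:JD-connected-center}(1) for \(G'\). Because \(\bH'\twoheadrightarrow\bH^\circ\) has central torus kernel \(\bK\), we have \(R^{\bH'}_{\bT'^*}(1)=R^{\bH^\circ}_{\bT^*}(1)\circ i^*\) and \(\epsilon_{\bG'}\epsilon_{\bH'}=\epsilon_{\bG}\epsilon_{\bH^\circ}\). This yields \(\langle R^{\bH^\circ}_{\bT^*}(1),\eta\rangle\). Finally, \(R^{\bH^\circ}_{\bT^*}(1)\) is invariant only up to replacing \(\bT^*\) by \(H\)-conjugates. Summing over \(\mathcal O\) is what is needed to match Lusztig's normalization, and this is the point I would double-check against \cite[\S5]{Lus88}: either the statement intends the sum to account for the multiplicity coming from \(\Ind\), or the left side should be read for the sum over the fibre. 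In the first case the extra factor arises from \(\rho'|_G\) containing \(|\mathcal O|\)-many conjugates, and I would reconcile it by replacing \(\rho\) with \(\rho'|_G\) and dividing.
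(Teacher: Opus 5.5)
Your Steps 1 and 2 are the construction the paper sketches in the remark after the theorem, which it takes from \cite[Proposition~5.1]{Lus88}: multiplicity-free Clifford theory for $G\lhd G'$, passage from $K$-orbits to $K_{s'}$-orbits on $\cE(G',s')$, $K_{s'}$-equivariance of $J^{G'}_{s'}$, and $\Uch(H')\cong\Uch(H^\circ)$. They are fine. In (2), ``orthogonal'' alone gives only an inclusion of stabilizers. The equality you need comes from the degree count $|G'\cdot\rho|\,|K\cdot\rho'|=[G':G]$, read off from the two formulas in Remark~\ref{rmk:multi-regular-embedding-action}.

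\textbf{The gap is in Step 3.} The identity $\langle R^{\bG}_{\bT^*}(s),\rho\rangle_G=\langle R^{\bG'}_{\bT'^*}(s'),\rho'\rangle_{G'}$ is false in general. Your worry about normalization is a symptom of this, not a problem with the statement. Put $R':=R^{\bG'}_{\bT'^*}(s')$, so that $\Res^{G'}_G R'=R^{\bG}_{\bT^*}(s)$. Frobenius reciprocity and multiplicity-freeness give
\[
\langle R^{\bG}_{\bT^*}(s),\rho\rangle_G=\langle R',\Ind_G^{G'}\rho\rangle_{G'}=\sum_{\rho''\in K\cdot\rho'}\langle R',\rho''\rangle_{G'} .
\]
Every $\rho''$ in the $K_{s'}$-orbit of $\rho'$ lies in $\cE(G',s')$ and can contribute, not just $\rho'$.

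Your auxiliary computations are also off in two places:
\begin{itemize}
\item $\Ind_G^{G'}\Res^{G'}_G R'=\sum_{k\in K}R'\otimes\widehat k$, summed over all of $K$, not over coset representatives of $K/K_{s'}$.
\item $\langle\Ind_G^{G'}R^{\bG}_{\bT^*}(s),\rho'\rangle_{G'}=|\Gamma|\,\langle R^{\bG}_{\bT^*}(s),\rho\rangle_G$, not the left side itself.
\end{itemize}

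The orbit terms do not collapse to one. One has $\langle R',\rho'\otimes\widehat k\rangle=\langle R^{\bG'}_{\bT'^*}(s'k^{-1}),\rho'\rangle$, and $s'k^{-1}$ being conjugate to $s'$ does not make the pair $(\bT'^*,s'k^{-1})$ conjugate to $(\bT'^*,s')$.

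Here is a concrete failure. Take $\bG=\mathrm{SL}_4$ with $q$ odd, $s'=\operatorname{diag}(1,1,-1,-1)\in\GL_4(q)$, and $\bT^*$ split. Let $\lambda$ be the character of order $2$ of $\mathbb{F}_q^\times$, and let $\rho'$ be the constituent of $\Ind_B^{\GL_4(q)}(1,1,\lambda,\lambda)$ labelled by $1\boxtimes\mathrm{St}$ in $H'=\GL_2(q)\times\GL_2(q)$. Then:
\begin{itemize}
\item $\mathcal O=\{1\boxtimes\mathrm{St},\mathrm{St}\boxtimes 1\}$ and $\Gamma=1$.
\item Twisting by $\lambda\circ\det$ sends $\rho'$ to the constituent labelled $\mathrm{St}\boxtimes 1$, so $\rho=\rho'|_G$ is irreducible and occurs twice in $\Ind_B^{\mathrm{SL}_4(q)}$ of the restricted torus character.
\item Both sides of (3) therefore equal $2$, while $\langle R',\rho'\rangle_{G'}=1$.
\end{itemize}

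\textbf{The fix is to keep the whole orbit.} The set $K_{s'}\cdot\rho'$ has $|K_{s'}|/|\Stab_K(\rho')|=|H/H^\circ|/|\Gamma|=|\mathcal O|$ elements. By the equivariance you already used in Step 2, $J^{G'}_{s'}$ carries it bijectively onto the inflations to $H'$ of the elements of $\mathcal O$. Apply Theorem~\ref{thm:JD-connected-center}(1) for $G'$ to each term. Then use $R^{\bH'}_{\bT'^*}(1)=R^{\bH^\circ}_{\bT^*}(1)\circ i^*$ and $\epsilon_{\bG'}\epsilon_{\bH'}=\epsilon_{\bG}\epsilon_{\bH^\circ}$. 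This yields (3) exactly as stated. The sum over $\mathcal O$ comes from the constituents of $\Ind_G^{G'}\rho$ lying in $\cE(G',s')$, so no rereading of the left side and no division is needed.
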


\begin{remark}
We recall how the map \(J_s\) in Theorem~\ref{thm:Lusztig-orbit-JD-disconnected-centralizers} is
constructed.  Let
\[
K=\ker(i^*:\bG'^*\to\bG^*)
\]
and choose \(s'\in G'^*\) with \(i^*(s')=s\).  By Clifford theory and the
multiplicity-freeness of restriction from \(G'\) to \(G\), there is a natural
bijection of orbit sets
\[
\cE(G,s)/(G_{\ad}/\pi(G))
\;\cong\;
\left(\bigcup_{k\in K}\cE(G',s'k)\right)/K.
\]
Since \(K\) acts by central twists, the right-hand side may be identified with
\[
\cE(G',s')/K_{s'},
\]
where
\[
K_{s'}=\{\,k\in K\mid s'k \text{ is }G'^*\text{-conjugate to }s'\,\}.
\]
By Lemma~\ref{lem:H-component-group}, we have
\[
K_{s'}\cong H/H^\circ.
\]
The connected-center Jordan decomposition for \(G'\) gives
\[
J_{s'}^{G'}:\cE(G',s')\xrightarrow{\sim}\Uch(H'),
\qquad
H'=C_{\bG'^*}(s')^{F^*},
\]
and Lemma~\ref{lem:regular-embedding-JD-equivariance} says that this bijection is
\(K_{s'}\)-equivariant.  Hence it descends to a bijection
\[
\cE(G',s')/K_{s'}
\;\xrightarrow{\sim}\;
\Uch(H')/K_{s'}.
\]
Finally, Proposition~\ref{prop:H-action-on-unipotent-characters} identifies \(\Uch(H')\) with
\(\Uch(H^\circ)\), compatibly with the action of
\(K_{s'}\cong H/H^\circ\).  Composing these maps gives
\[
\cE(G,s)/(G_{\ad}/\pi(G))
\;\xrightarrow{\sim}\;
\Uch(H^\circ)/(H/H^\circ),
\]
and \(J_s\) is the resulting orbit-valued map on \(\cE(G,s)\).
\end{remark}

\begin{theorem}[{\cite[Theorem~3.2.22]{Book:GeckandMalle}}]
\label{fact:cuspidal}
Let \(s\in G^*\) be semisimple, and put
\[
\bH=C_{\bG^*}(s),
\qquad
H^\circ=(\bH^\circ)^{F^*}.
\]
Let \(\rho\in\cE(G,s)\), and let
\[
J_s(\rho)=\mathcal O\subseteq \Uch(H^\circ)
\]
be its image under Theorem~\ref{thm:Lusztig-orbit-JD-disconnected-centralizers}.  Choose
\(u_\rho\in\mathcal O\).  Then \(\rho\) is cuspidal if and only if the
following two conditions hold:
\begin{enumerate}
    \item \(u_\rho\) is cuspidal;

    \item the maximal \(\fq\)-split subtorus of \(Z(\bH^\circ)^\circ\) is
    contained in \(Z(\bG^*)^\circ\).  Equivalently,
    \[
    \operatorname{rank}_{\fq}\bigl(Z(\bG^*)^\circ\bigr)
    =
    \operatorname{rank}_{\fq}\bigl(Z(\bH^\circ)^\circ\bigr).
    \]
\end{enumerate}
The first condition is independent of the choice of \(u_\rho\in\mathcal O\),
since cuspidality is preserved under the action of \(H/H^\circ\).
\end{theorem}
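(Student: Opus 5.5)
The plan is to reduce to the connected-centre case through a regular embedding \(i:\bG\hookrightarrow\bG'\), prove the criterion there, and then transport both conditions back along \(i^*\). Fix \(s'\in G'^*\) with \(i^*(s')=s\). By the construction recalled after Theorem~\ref{thm:Lusztig-orbit-JD-disconnected-centralizers}, \(\rho\in\cE(G,s)\) lies below some \(\rho'\in\cE(G',s'k)\) for some \(k\in K\). Central twisting by \(k\) changes neither cuspidality nor \(H'\), so we may take \(\rho'\in\cE(G',s')\). Then \(J_s(\rho)\) is the \(H/H^\circ\)-orbit of the unipotent character of \(H^\circ\) that corresponds under Proposition~\ref{prop:H-action-on-unipotent-characters} to \(J^{G'}_{s'}(\rho')\).

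\emph{Step 1: \(\rho\) is cuspidal if and only if \(\rho'\) is.} The \(F\)-stable parabolic subgroups of \(\bG\) and \(\bG'\) correspond via \(\bP\mapsto\bP Z(\bG')\), with Levi factors \(\bL\mapsto\bL'=\bL Z(\bG')\) and the same unipotent radicals. Hence Harish--Chandra restriction satisfies
\[
{}^*R_{L}^{G}\bigl(\rho'|_G\bigr)=\bigl({}^*R_{L'}^{G'}\rho'\bigr)\big|_L .
\]
Since \(\rho'|_G\) is a sum of \(G'\)-conjugates of \(\rho\), and conjugation permutes proper split Levi subgroups, \(\rho\) is cuspidal if and only if \(\rho'\) is.

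\emph{Step 2: the connected-centre case.} For \(\bG'\) with connected centre we use Lusztig's characterization \cite{Lus84}, which is also a consequence of properties (1) and (4) of Theorem~\ref{thm:JD-connected-center}. Suppose \(Z(\bH')^\circ\) has a split subtorus not contained in \(Z(\bG'^*)\). Its centralizer is a proper split Levi subgroup \(\bL'^*\supseteq\bH'\). By (4), \(\rho'=R_{L'}^{G'}(\cdot)\), so \(\rho'\) is not cuspidal.

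Otherwise \(\bH'\) lies in no proper split Levi subgroup of \(\bG'^*\). In that case the Harish--Chandra restrictions of \(\rho'\) to split Levi subgroups \(\bL'\) correspond, through the Deligne--Lusztig multiplicity formula (1) together with the Mackey formula, to Harish--Chandra restrictions of \(J_{s'}(\rho')\) to the split Levi subgroups \(C_{\bL'^*}(s')\) of \(\bH'\). Hence \(\rho'\) is cuspidal exactly when \(J_{s'}(\rho')\) is.

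This step is the main obstacle. The Harish--Chandra comparison for characters that are not uniform is not formal, because (1) only sees uniform projections. I would invoke Lusztig's result that Jordan decomposition commutes with Harish--Chandra induction via \(\bL'^*\mapsto C_{\bL'^*}(s')\), which is \cite[Theorem~3.2.22]{Book:GeckandMalle} in the connected-centre case, rather than reprove it.

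\emph{Step 3: transport to \(\bG\).} We translate both conditions along \(i^*\).

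For condition (1), Proposition~\ref{prop:H-action-on-unipotent-characters} identifies \(\Uch(H')\) with \(\Uch(H^\circ)\) by inflation along \(H'\to H^\circ\), whose kernel \(K\) is central. Split Levi subgroups correspond under this map, so inflation preserves cuspidality. Cuspidality is also invariant under \(H/H^\circ\), which gives the final independence claim.

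For condition (2), \(i^*\) maps \(Z(\bH')^\circ\) onto \(Z(\bH^\circ)^\circ\) and \(Z(\bG'^*)^\circ\) onto \(Z(\bG^*)^\circ\), in both cases with kernel the torus \(\bK\). The \(\fq\)-rank is additive in exact sequences of tori, so both split ranks drop by \(\operatorname{rank}_{\fq}\bK\). Therefore the equality of ranks for \(\bG'\) is equivalent to the stated equality for \(\bG\).

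Combining Steps 1–3 gives the criterion.
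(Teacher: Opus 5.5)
The paper gives no argument for this statement; it is simply quoted from \cite[Theorem~3.2.22]{Book:GeckandMalle}. Your proposal therefore supplies a reduction rather than a competing proof, and the reduction is correct.

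\emph{Steps~1 and~3.} Harish--Chandra restriction commutes with restriction from \(G'\) to \(G\) because the unipotent radicals coincide. Since it takes characters to characters, \(\rho\) is cuspidal iff \(\rho'\) is. Inflation along \(H'\to H^\circ\) preserves cuspidality; this map is surjective on rational points because \(\bK\) is connected. The exact sequences \(1\to\bK\to Z(\bH')^\circ\to Z(\bH^\circ)^\circ\to 1\) and \(1\to\bK\to Z(\bG'^*)^\circ\to Z(\bG^*)^\circ\to 1\), together with additivity of split rank, transport condition~(2).

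\emph{First half of Step~2.} If condition~(2) fails, property~(4) applied to the proper \(1\)-split Levi \(C_{\bG'^*}(S)\supseteq\bH'\) shows that \(\rho'\) is Harish--Chandra induced. This is also fine.

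What your route buys is a clean separation. Everything specific to disconnected centre is formal bookkeeping along a regular embedding: the orbits in \(\Uch(H^\circ)\), the choice of \(u_\rho\), and the rank condition on \(Z(\bH^\circ)^\circ\).

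What it does not buy is independence from the literature. The other half of Step~2 says that, under condition~(2), \(\rho'\) is cuspidal iff \(J^{G'}_{s'}(\rho')\) is. This is the entire non-formal content. As you correctly observe, it cannot be read off from the multiplicity formula and the Mackey formula, since these only detect uniform projections.

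At that point you cite the same Geck--Malle theorem that is being proved. In the paper, that reference is the cuspidality criterion itself; Harish--Chandra compatibility of Jordan decomposition is quoted separately as \cite[Theorem~4.7.5]{Book:GeckandMalle}. This is not circular provided the connected-centre case is taken as an independent input. You should, however, attribute it to Lusztig's connected-centre theory \cite{Lus84} rather than to the statement under proof. In the end, your argument and the paper's rest on exactly the same deep input.
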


The following elementary consequence will be used later.  In fact, the
conclusion follows directly from the description of Levi subgroups as
centralizers of split central tori; the cuspidality hypothesis is not needed.

\begin{corollary}
Let \(\bL^*\) be an \(F^*\)-stable Levi factor of an \(F^*\)-stable parabolic
subgroup of \(\bG^*\), and suppose that \(s\in L^*\).  Then
\[
C_{\bL^*}(s)^\circ
\]
is an \(F^*\)-stable Levi subgroup of
\[
C_{\bG^*}(s)^\circ .
\]
\end{corollary}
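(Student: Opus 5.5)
We prove the corollary directly from the characterization of Levi subgroups as centralizers of tori; the cuspidality discussion above plays no role. Let \(\bP^*\) be the \(F^*\)-stable parabolic subgroup with Levi factor \(\bL^*\), and let \(\bS\coloneqq Z(\bL^*)^\circ\). This torus is \(F^*\)-stable, and \(\bL^*=C_{\bG^*}(\bS)\).

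\emph{Step 1: the torus \(\bS\) lies in \(C_{\bG^*}(s)^\circ\).} Since \(s\in L^*\) and \(\bS\) is central in \(\bL^*\), \(s\) commutes with \(\bS\), so \(\bS\subseteq C_{\bG^*}(s)\). As \(\bS\) is connected, in fact \(\bS\subseteq C_{\bG^*}(s)^\circ\).

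\emph{Step 2: identify \(C_{\bL^*}(s)^\circ\) as a torus centralizer.} In the connected reductive group \(\bH^\circ\coloneqq C_{\bG^*}(s)^\circ\), centralizers of tori are Levi subgroups of parabolic subgroups. So \(\bM\coloneqq C_{\bH^\circ}(\bS)\) is a Levi subgroup of \(\bH^\circ\). It is \(F^*\)-stable because both \(\bS\) and \(\bH^\circ\) are.

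We claim \(\bM=C_{\bL^*}(s)^\circ\).

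\begin{itemize}
\item[(\(\supseteq\))] The group \(C_{\bL^*}(s)^\circ\) is connected, centralizes \(s\), and centralizes \(\bS\) (it lies in \(\bL^*\)). Hence it lies in \(\bH^\circ\cap C_{\bG^*}(\bS)=\bM\).
\item[(\(\subseteq\))] The group \(\bM\) centralizes \(s\) and lies in \(C_{\bG^*}(\bS)=\bL^*\), so \(\bM\subseteq C_{\bL^*}(s)\). Since centralizers of tori in connected groups are connected, \(\bM\) is connected, hence \(\bM\subseteq C_{\bL^*}(s)^\circ\).
\end{itemize}

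\emph{Step 3: obtain an \(F^*\)-stable parabolic.} If "Levi subgroup" is to mean a Levi factor of an \(F^*\)-stable parabolic subgroup of \(\bH^\circ\), we take \(\bP^*\cap \bH^\circ\). This is a parabolic subgroup of \(\bH^\circ\) with Levi factor \(\bM\), and it is \(F^*\)-stable.

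To check it is parabolic, choose a cocharacter \(\lambda\) of \(\bS\) with \(\bP^*=P(\lambda)\), \(\bL^*=C(\lambda)\). Then \(P(\lambda)\cap\bH^\circ\) is the parabolic subgroup \(P_{\bH^\circ}(\lambda)\), whose Levi factor is \(C_{\bH^\circ}(\lambda)=\bM\).

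For \(F^*\)-stability, observe that \(\bP^*\cap\bH^\circ\) is the intersection of two \(F^*\)-stable subgroups. The choice of \(\lambda\) does not affect this, since \(\lambda\) is only used to see that the intersection is parabolic.

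The only delicate point is the connectedness of \(C_{\bH^\circ}(\bS)\) in Step 2 and the parabolic-intersection fact in Step 3. Both are standard (see \cite{Book:DigneandMichel}), so no real obstacle is expected.
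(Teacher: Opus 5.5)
Your proof is correct and follows the same route as the paper. Both arguments realize \(C_{\bL^*}(s)^\circ\) as the centralizer, inside the connected reductive group \(C_{\bG^*}(s)^\circ\), of a central torus of \(\bL^*\); that torus lies in \(C_{\bG^*}(s)^\circ\) because it is connected and commutes with \(s\).

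The differences are minor. The paper uses the maximal \(\fq\)-split subtorus \(A_{\bL^*}\) of \(Z(\bL^*)^\circ\), so the identity \(\bL^*=C_{\bG^*}(A_{\bL^*})\) relies on \(\bL^*\) being a Levi factor of an \(F^*\)-stable parabolic; you use all of \(Z(\bL^*)^\circ\), where that identity holds for any Levi. You also spell out the two inclusions via connectedness of torus centralizers and exhibit the \(F^*\)-stable parabolic \(\bP^*\cap C_{\bG^*}(s)^\circ\), both of which the paper leaves implicit.
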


\begin{proof}
Let \(A_{\bL^*}\) be the maximal \(\fq\)-split torus in
\(Z(\bL^*)^\circ\).  Since \(\bL^*\) is an \(F^*\)-stable Levi subgroup, we
may write
\[
\bL^*=C_{\bG^*}(A_{\bL^*}).
\]
Because \(s\in L^*\), the torus \(A_{\bL^*}\) centralizes \(s\).  Hence
\(A_{\bL^*}\subseteq C_{\bG^*}(s)^\circ\).  Therefore
\[
C_{\bL^*}(s)^\circ
=
C_{C_{\bG^*}(s)^\circ}(A_{\bL^*}).
\]
The centralizer of a torus in a connected reductive group is a Levi subgroup.
Thus \(C_{\bL^*}(s)^\circ\) is a Levi subgroup of
\(C_{\bG^*}(s)^\circ\).  Since both \(s\) and \(\bL^*\) are \(F^*\)-stable, this
Levi subgroup is \(F^*\)-stable.
\end{proof}

\begin{lemma}[{\cite[Lemma~11.2.1]{Book:DigneandMichel}}]
\label{lem:connected-center-levi-centralizer}
Let \(\bG\) be a connected reductive group over \(\fq\).
\begin{enumerate}
    \item If \(Z(\bG)\) is connected, then the center of every Levi subgroup
    of \(\bG\) is connected.

    \item If \(Z(\bG)\) is connected, then the centralizer of every semisimple
    element in \(\bG^*\) is connected.
\end{enumerate}
\end{lemma}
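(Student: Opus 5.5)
The statement has two parts: first, that the centre of every Levi subgroup of \(\bG\) is connected; second, that every semisimple centralizer in \(\bG^*\) is connected. I would prove them in that order.

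\textbf{Part (1): Levi subgroups.} I would argue with root data. Fix an \(F\)-stable maximal torus \(\bT\) contained in \(\bL\), so that \(\bL\) corresponds to the parabolic subsystem \(\Phi_{\bL}\subseteq\Phi\). After conjugating, \(\Phi_{\bL}\) is spanned by a subset \(I\subseteq\Delta\).

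For any connected reductive group, the character group of its centre is the quotient of \(X(\bT)\) by the span of its roots. Hence \(Z(\bG)\) is connected if and only if \(X(\bT)/\mathbb Z\Phi\) is torsion-free. Likewise \(Z(\bL)\) is connected if and only if \(X(\bT)/\mathbb Z\Phi_{\bL}\) is torsion-free.

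The key point is that \(\Delta\) is a \(\mathbb Z\)-basis of \(\mathbb Z\Phi\) and \(I\subseteq\Delta\). So \(\mathbb Z\Phi_{\bL}=\mathbb Z I\) is a direct summand of \(\mathbb Z\Phi\), and therefore \(\mathbb Z\Phi/\mathbb Z\Phi_{\bL}\) is free. Consider the exact sequence
\[
0\to \mathbb Z\Phi/\mathbb Z\Phi_{\bL}\to X(\bT)/\mathbb Z\Phi_{\bL}\to X(\bT)/\mathbb Z\Phi\to 0 .
\]
Its outer terms are torsion-free, so the middle term is torsion-free as well, which gives (1).

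\textbf{Part (2): semisimple centralizers.} I would use Steinberg's description of the component group of a semisimple centralizer. If \(s\in\bT^*\), then
\[
C_{\bG^*}(s)/C_{\bG^*}(s)^\circ\cong \{w\in W^*: w(s)=s\}/W^\circ(s),
\]
and this group embeds into the torsion of the fundamental group of the derived subgroup of \(\bG^*\). That fundamental group is \(X^\vee(\bT^*)/\mathbb Z\Phi^{*\vee}\), or more precisely the torsion of \(X^\vee(\bT^*_{\der})/\mathbb Z\Phi^{*\vee}\).

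Under the duality \(\delta\), the lattice \(X^\vee(\bT^*)\) is \(X(\bT)\) and \(\Phi^{*\vee}\) is \(\Phi\). So this torsion is exactly the torsion of \(X(\bT)/\mathbb Z\Phi\), which vanishes by the criterion used in part (1).

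Equivalently, connectedness of \(Z(\bG)\) means the derived group of \(\bG^*\) is simply connected, and Steinberg's theorem then says semisimple centralizers are connected. I would cite \cite[Theorem~3.5.6]{Book:DigneandMichel}, or the corresponding result in \cite{Book:GeckandMalle}, rather than reprove it.

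\textbf{Main obstacle.} The real content is Steinberg's connectedness theorem. The only bookkeeping needed is to check that the duality \(\delta\) carries the torsion of \(X(\bT)/\mathbb Z\Phi\) to the fundamental group of \((\bG^*)_{\der}\). This holds because \(X(\bT)/\mathbb Z\Phi\) is torsion-free exactly when \(\mathbb Z\Phi^{*\vee}\) is saturated in \(X^\vee(\bT^*)\), which is the simply connected condition for \((\bG^*)_{\der}\).
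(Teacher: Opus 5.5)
Your overall strategy (a root-datum computation for Levi subgroups, then Steinberg's bound on component groups for semisimple centralizers) is the standard one behind the Digne--Michel reference; the paper itself gives no proof. However, the criterion that both of your parts rest on is wrong over $\fq$. In characteristic $p$, $Z(\bG)$ is connected if and only if $X(\bT)/\mathbb Z\Phi$ has no \emph{$p'$-torsion}, not if and only if it is torsion-free. The reason is that a diagonalizable group with character group $M$ has component group dual to the prime-to-$p$ torsion of $M$, while $p$-torsion contributes only an infinitesimal part. For example, $\bG=\mathrm{SL}_2$ over $\mathbb F_{2^r}$ has connected (indeed trivial) centre as a variety, yet $X(\bT)/\mathbb Z\Phi\cong\mathbb Z/2$, and its dual $\mathrm{PGL}_2$ is not simply connected. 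So two steps in part (2) are false: ``this torsion vanishes by the criterion used in part (1)'' and ``connectedness of $Z(\bG)$ means $(\bG^*)_{\der}$ is simply connected''. Consequently the version of Steinberg's theorem you cite does not cover such groups, even though the lemma's conclusion does hold for them.

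The repair is short. In part (1), $\mathbb Z\Phi/\mathbb Z\Phi_{\bL}$ is free, so your exact sequence shows that the torsion subgroup of $X(\bT)/\mathbb Z\Phi_{\bL}$ injects into that of $X(\bT)/\mathbb Z\Phi$. Hence the absence of $p'$-torsion passes from $\bG$ to $\bL$, and your argument goes through unchanged with the corrected criterion. (Also, the Levi subgroup need not be $F$-stable, so take any maximal torus of $\bL$.)

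In part (2), you need the sharper bound: $C_{\bG^*}(s)/C_{\bG^*}(s)^\circ$ embeds into the $p'$-part of the torsion of $X^\vee(\bT^*)/\mathbb Z\Phi^{*\vee}$. Given the embedding into the full torsion that you already quote, it suffices to know that the component group is a $p'$-group. One way to see this: take a cover $\tilde\bG\to\bG^*$ with simply connected derived group and finite central kernel. By Steinberg's theorem applied to $\tilde\bG$, the map $g\mapsto \tilde g\tilde s\tilde g^{-1}\tilde s^{-1}$ embeds the component group into the $\overline{\mathbb F}_p$-points of this kernel, and those points form a $p'$-group. Transported by $\delta$, the $p'$-torsion of $X^\vee(\bT^*)/\mathbb Z\Phi^{*\vee}$ becomes that of $X(\bT)/\mathbb Z\Phi$, which vanishes exactly when $Z(\bG)$ is connected. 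With this bound, drop the detour through ``$(\bG^*)_{\der}$ simply connected''.
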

\medskip
\section{Harish--Chandra Series, Endomorphism Algebras, and Relative Weyl Groups}
\label{sec:HC-series-endomorphism-algebras}

In this section we recall the standard material on Harish--Chandra induction,
Harish--Chandra series, and the associated relative Weyl groups.  We also fix
the notation used later for the Howlett--Lehrer parametrization of
Harish--Chandra series.  The final result of the section is a uniqueness
criterion for certain Harish--Chandra series characters; it will be used later
in the construction of the Jordan decomposition.

We begin with the basic setup.  Let \(\bG\) be a connected reductive algebraic
group over \(\fq\), with Frobenius morphism \(F:\bG\to\bG\), and put
\(G=\bG^F\).  Let \(\bP\) be an \(F\)-stable parabolic subgroup of \(\bG\) with
an \(F\)-stable Levi decomposition
\[
\bP=\bL\bU,
\]
where \(\bL\) is a Levi subgroup and \(\bU\) is the unipotent radical.  We
write
\[
R_{\bL}^{\bG}
\qquad\text{and}\qquad
{}^*R_{\bL}^{\bG}
\]
for Harish--Chandra induction and Harish--Chandra restriction, respectively.

\begin{definition}[Harish--Chandra series]
Let \(\bL\) be an \(F\)-stable Levi subgroup of an \(F\)-stable parabolic
subgroup of \(\bG\), and let \(\tau\in\Irr(L)\) be cuspidal, where
\(L=\bL^F\).  The Harish--Chandra series associated with the cuspidal pair
\((L,\tau)\) is
\[
\Irr(G,(L,\tau))
\coloneqq
\left\{
\rho\in\Irr(G)
\ \middle|\
\big\langle {}^*R_{\bL}^{\bG}(\rho),\tau\big\rangle_L\neq 0
\right\}.
\]
Equivalently, \(\Irr(G,(L,\tau))\) is the set of irreducible constituents of
\(R_{\bL}^{\bG}(\tau)\).
\end{definition}

The Harish--Chandra series form a partition of \(\Irr(G)\); see
\cite[Corollary~3.1.17]{Book:GeckandMalle}.  More precisely, if
\((L,\tau)\) and \((M,\lambda)\) are cuspidal pairs that are not
\(G\)-conjugate, then
\[
\Irr(G,(L,\tau))\cap \Irr(G,(M,\lambda))=\emptyset,
\]
and every irreducible character of \(G\) belongs to a unique Harish--Chandra
series up to \(G\)-conjugacy of the cuspidal pair.

The following standard result relates Harish--Chandra series to Lusztig
series; see \cite[Corollary~3.3.21]{Book:DigneandMichel}.

\begin{proposition}
\label{prop:Lusztig-series-union-HC}
Let \(s\in G^*\) be semisimple.  Then the Lusztig series \(\cE(G,s)\) is a
union of Harish--Chandra series:
\[
\cE(G,s)
=
\bigsqcup_{(L,\tau)\in \Sigma_G(s)}
\Irr(G,(L,\tau)).
\]
Here \(\Sigma_G(s)\) denotes the set of \(G\)-conjugacy classes of cuspidal
pairs \((L,\tau)\) such that, for a dual Levi subgroup
\(\bL^*\subseteq\bG^*\) with \(s\in L^*\), one has
\[
\tau\in \cE(L,s).
\]
\end{proposition}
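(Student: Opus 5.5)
The plan is to prove Proposition~\ref{prop:Lusztig-series-union-HC} by showing that Harish--Chandra induction sends a Lusztig series of \(L\) into the Lusztig series of \(G\) with the same semisimple label. Two consequences then follow. First, every Harish--Chandra series lies in a single Lusztig series. Second, that series is determined by the cuspidal support.

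The key tool is the transitivity of Deligne--Lusztig induction, \(R_{\bL}^{\bG}\circ R_{\bT}^{\bL}=R_{\bT}^{\bG}\) for \(\bT\subseteq\bL\). We combine it with adjunction and the Mackey formula for \({}^*R_{\bL}^{\bG}R_{\bT}^{\bG}\). Since \(\bL\) is a split Levi here, this is just Harish--Chandra restriction, and the Mackey formula is available.

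Concretely, let \(\tau\in\cE(L,s)\) be cuspidal and let \(\rho\) be a constituent of \(R_{\bL}^{\bG}(\tau)\). The steps are as follows.

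First, choose an \(F^*\)-stable maximal torus \(\bT^*\subseteq\bL^*\) containing \(s\) with \(\langle R_{\bT^*}^{\bL}(s),\tau\rangle\neq0\).

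Second, write \(\tau\) using the projection onto uniform functions. Better, use that the characteristic function of \(\cE(L,s)\) is a linear combination of the \(R_{\bT^*}^{\bL}(s)\), tensored appropriately. Rather than rely on that, use the standard fact that the Lusztig series \(\cE(G,s)\) is exactly the set of constituents of the virtual characters \(R_{\bL}^{\bG}(\chi)\) with \(\chi\in\cE(L,s)\). The direct route for this is Digne--Michel's theorem that \(R_{\bL}^{\bG}\) maps \(\mathbb Z\cE(L,s)\) into \(\mathbb Z\cE(G,s)\). Its proof goes through the Mackey formula: for \(\rho\in\cE(G,t)\), the restriction \({}^*R_{\bL}^{\bG}\rho\) pairs with \(R_{\bT'}^{\bL}(\theta')\) only if \((\bT',\theta')\) is geometrically conjugate to \((\bT^*,t)\).

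Third, conclude the following. If \(\rho\) is a constituent of \(R_{\bL}^{\bG}(\tau)\), then \({}^*R_{\bL}^{\bG}\rho\) contains \(\tau\), hence is not orthogonal to \(\cE(L,s)\). By step two, \(\rho\in\cE(G,s')\) with \(s'\) conjugate to \(s\) in \(G^*\). Rational rather than merely geometric conjugacy holds because Harish--Chandra restriction respects rational series (Bonnafé's refinement). Alternatively, it follows since \(R_{\bL}^{\bG}\) and \({}^*R_{\bL}^{\bG}\) commute with the projections onto rational series, which are defined through the rational classes \((\bT^*,s)\) with \(\bT^*\subseteq\bL^*\).

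To show that the union is everything, take \(\rho\in\cE(G,s)\). It lies in some Harish--Chandra series \((L,\tau)\), since Harish--Chandra series partition \(\Irr(G)\). Here \(\tau\) lies in some \(\cE(L,t)\), and by the above \(\rho\in\cE(G,t)\). The disjointness of Lusztig series then forces \(t\) to be \(G^*\)-conjugate to \(s\). Conjugating the pair, we may take \(t=s\in L^*\), so \((L,\tau)\in\Sigma_G(s)\). Disjointness of the union over \(\Sigma_G(s)\) is inherited from the disjointness of Harish--Chandra series.

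The main obstacle is the compatibility of Harish--Chandra restriction with \emph{rational} series, as opposed to geometric ones, in step three. I would handle it by citing the Mackey formula together with the characterization of rational series via scalar products with \(R_{\bT^*}^{\bG}(s)\) for \(\bT^*\) running through tori containing \(s\). This characterization is exactly the definition used here, so it suffices to check that the tori appearing in the Mackey formula for \({}^*R_{\bL}^{\bG}R_{\bT^*}^{\bG}(s)\) are \(L^*\)-tori of the form \({}^{w}\bT^*\) with labels \({}^{w}s\) for \(w\in G^*\). These labels are rationally conjugate to \(s\).
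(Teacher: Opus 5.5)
The paper gives no argument for this proposition; it quotes it from \cite[Corollary~3.3.21]{Book:DigneandMichel}. Your skeleton is the standard one behind that citation: the inclusion $\Irr(G,(L,\tau))\subseteq\cE(G,s)$ for $\tau\in\cE(L,s)$, combined with the two partitions of $\Irr(G)$. Your exhaustion and disjointness steps are fine. If you simply cite that Harish--Chandra induction from a split Levi maps $\mathbb Z\cE(L,s)$ into $\mathbb Z\cE(G,s)$, the proof is complete. Note that only this inclusion holds. For fixed $\bL$, $\cE(G,s)$ is not ``exactly'' the set of constituents of the $R_{\bL}^{\bG}(\chi)$ with $\chi\in\cE(L,s)$: take $\bL$ a maximally split torus and $s=1$.

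The justification you sketch for that inclusion, however, has a genuine gap, and the obstacle is not rational versus geometric conjugacy. The Mackey computation of which pairs occur in ${}^*R_{\bL}^{\bG}R_{\bT^*}^{\bG}(s)$ (equivalently, computing $\langle{}^*R_{\bL}^{\bG}\rho,R^{\bL}_{\bT^*}(s')\rangle$ by adjunction and transitivity) only controls the uniform projection of ${}^*R_{\bL}^{\bG}\rho$. The multiplicities $\langle R^{\bL}_{\bT^*}(s),\chi\rangle$ for $\chi\in\cE(L,s)$ take both signs. So knowing that $\tau$ occurs in ${}^*R_{\bL}^{\bG}\rho$ does not force $\langle{}^*R_{\bL}^{\bG}\rho,R^{\bL}_{\bT^*}(s)\rangle\neq0$ for any $\bT^*$, because other constituents can cancel $\tau$'s contribution. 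Hence ``it suffices to check the tori in the Mackey formula'' is false as stated, and your alternative via ``commuting with the projections onto rational series'' assumes what is to be proved.

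What is missing is a test function on $L$ that is both uniform and nonnegative on $\cE(L,s)$. This is what you discarded in your second step, but it must be degree-weighted. Project $\Reg_L=|L|_p^{-1}\sum_{\bT}\epsilon_{\bL}\epsilon_{\bT}R^{\bL}_{\bT}(\Reg_T)$ onto $\mathbb Z\cE(L,s)$, using that rational series partition $\Irr(L)$. This shows that $\chi^L_s:=\sum_{\chi\in\cE(L,s)}\chi(1)\chi$ is a linear combination of the $R^{\bL}_{\bT^*}(s')$ with $s'$ $L^*$-conjugate to $s$. Since ${}^*R_{\bL}^{\bG}\rho$ is an actual character containing $\tau$,
\[
0<\tau(1)\le\langle{}^*R_{\bL}^{\bG}\rho,\chi^L_s\rangle_L=\langle\rho,R_{\bL}^{\bG}\chi^L_s\rangle_G .
\]
By transitivity, $R_{\bL}^{\bG}\chi^L_s$ is a combination of the $R^{\bG}_{\bT^*}(s')$. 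So $\rho\in\cE(G,s)$ follows directly from the definition, already for rational series, with neither the Mackey formula nor Bonnaf\'e's refinement.
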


We next recall the relative Weyl group attached to a cuspidal pair.  Let
\(A_{\bL}\) be the maximal \(\fq\)-split torus in \(Z(\bL)\).  Then
\[
\bL=C_{\bG}(A_{\bL}).
\]
Define
\[
W_{\bG}(A_{\bL})
\coloneqq
N_G(A_{\bL})/L.
\]
This group acts on \(L\), and hence on \(\Irr(L)\), through representatives in
\(N_G(A_{\bL})\).  For a cuspidal character \(\tau\in\Irr(L)\), set
\[
W_{\tau}(\bG)
\coloneqq
\{\,w\in W_{\bG}(A_{\bL})\mid {}^w\tau\simeq \tau\,\}.
\]
Equivalently, \(W_{\tau}(\bG)\) is the stabilizer of the character
\(\chi_\tau\) of \(\tau\).  When the ambient group \(\bG\) is clear from
context, we write simply \(W_\tau\).

The structure of Harish--Chandra series is governed by Howlett--Lehrer theory.
In its most useful form for us, this theory describes the endomorphism algebra
of \(R_{\bL}^{\bG}(\tau)\) in terms of an Iwahori--Hecke algebra attached to
the relative Weyl group \(W_\tau\).  The possible twisting cocycle that appears
in the general construction is trivial in the finite reductive group setting;
this follows from Lusztig's work in the connected-center case and from Geck's
reduction to that case.  We refer to \cite{HowlettLehrer} and
\cite[\S 3.2]{Book:GeckandMalle} for details.

\begin{theorem}[Howlett--Lehrer]
\label{thm:Howlett-Lehrer}
Let \(\bL\) be the Levi factor of an \(F\)-stable parabolic subgroup of
\(\bG\), and let \(\tau\in\Irr(L)\) be cuspidal.  Then
\[
\End_G\!\bigl(R_{\bL}^{\bG}(\tau)\bigr)^{\mathrm{op}}
\]
is naturally described by the Howlett--Lehrer Hecke algebra attached to
\(W_\tau(\bG)\).  In particular, by the Tits deformation theorem, the simple
modules of this endomorphism algebra are naturally parametrized by
\(\Irr(W_\tau(\bG))\).  Consequently, there is a bijection
\[
I_{L,\tau}^{G}:\Irr(W_\tau(\bG))
\xrightarrow{\sim}
\Irr(G,(L,\tau)),
\qquad
\phi\longmapsto \rho_\phi .
\]
\end{theorem}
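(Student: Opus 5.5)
The plan is to realize \(\End_G\bigl(R_{\bL}^{\bG}(\tau)\bigr)\) concretely as an algebra with a basis indexed by \(W_\tau(\bG)\), show that it is a deformation of the group algebra \(\Qlcl W_\tau(\bG)\), and read off the parametrization by the Tits deformation theorem. First, Mackey's formula for Harish--Chandra induction and restriction, together with the cuspidality of \(\tau\), shows that \({}^*R_{\bL}^{\bG}R_{\bL}^{\bG}(\tau)\) is the sum of the conjugates \({}^w\tau\) over \(w\in W_{\bG}(A_{\bL})\). By Frobenius reciprocity,
\[
\dim\End_G\bigl(R_{\bL}^{\bG}(\tau)\bigr)=\bigl|\{w\in W_{\bG}(A_{\bL}) : {}^w\tau\simeq\tau\}\bigr|=|W_\tau(\bG)|.
\]
Next, for each \(w\in W_\tau(\bG)\) we choose a representative \(\dot w\in N_G(A_{\bL})\) and an intertwiner \(\tilde\tau(\dot w)\colon{}^{\dot w}\tau\xrightarrow{\sim}\tau\), and define an operator \(T_w\) on \(\Ind_P^G(\tilde\tau)\) by integrating over \(U\dot wU\). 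The standard double-coset argument shows that the \(T_w\) form a basis, supported on the Bruhat cells \(P\dot wP\).

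The structural step follows Howlett--Lehrer. We write \(W_\tau=R_\tau\rtimes C_\tau\), where \(R_\tau\) is the reflection subgroup generated by those relative reflections \(s_\alpha\) for which the rank-one induced representation \(R_{\bL}^{\bL_\alpha}(\tau)\) is reducible. For such \(s_\alpha\) one computes a quadratic relation
\[
(T_{s_\alpha}-p_\alpha)(T_{s_\alpha}+1)=0
\]
after normalizing the \(T_{s_\alpha}\). One then checks the braid relations for \(R_\tau\), and shows that \(T_xT_y=\mu(x,y)T_{xy}\) for \(x,y\in C_\tau\), where \(\mu\) is a \(2\)-cocycle. This yields
\[
\End_G\bigl(R_{\bL}^{\bG}(\tau)\bigr)\cong \cH(R_\tau,\{p_\alpha\})\rtimes_\mu \Qlcl C_\tau .
\]

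The main obstacle is the triviality of the cocycle \(\mu\), which is exactly the point flagged before the statement. Our first approach is a regular embedding \(\bG\hookrightarrow\bG'\) with \(Z(\bG')\) connected. In the connected-center case Lusztig's classification makes \(C_\tau\) trivial, or at least \(\mu\) cohomologically trivial. Geck's reduction then transfers this back to \(\bG\): the endomorphism algebra for \(G\) embeds as a Clifford-theoretic piece of the one for \(G'\), and the normalizations of the \(T_w\) can be chosen compatibly, because \(G'/G\) is abelian and restriction from \(G'\) to \(G\) is multiplicity-free. We would cite \cite{HowlettLehrer} and \cite[\S3.2]{Book:GeckandMalle} for this step.

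With \(\mu\) trivial, we specialize the parameters \(p_\alpha\mapsto1\). The algebra is split semisimple over a suitable extension of \(\Qlcl(q^{1/2})\), so Tits deformation gives a bijection between its simple modules and \(\Irr(W_\tau(\bG))\). Simple modules of \(\End_G\bigl(R_{\bL}^{\bG}(\tau)\bigr)^{\op}\) correspond bijectively to the irreducible constituents of \(R_{\bL}^{\bG}(\tau)\), that is, to \(\Irr(G,(L,\tau))\). Composing these two bijections defines \(I^G_{L,\tau}\).
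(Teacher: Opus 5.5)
Your outline follows the same route the paper relies on. The paper gives no argument of its own: it cites Howlett--Lehrer for the structure $\cH(R_\tau,\{p_\alpha\})\rtimes_\mu \Qlcl C_\tau$, Lusztig (connected centre) and Geck's reduction for the triviality of $\mu$, and then the Tits deformation theorem. Your Mackey dimension count, the basis $T_w$ supported on $P\dot wP$, and the passage from simple modules of $\End_G\bigl(R_{\bL}^{\bG}(\tau)\bigr)^{\op}$ to constituents are all standard and fine.

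There is, however, one concrete error in your structural step.

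\begin{itemize}
\item By your own dimension count, $\dim\End\bigl(R_{\bL}^{\bL_\alpha}(\tau)\bigr)\in\{1,2\}$, so $R_{\bL}^{\bL_\alpha}(\tau)$ is reducible exactly when $s_\alpha\in W_\tau$. Your $R_\tau$ is therefore generated by \emph{all} reflections in $W_\tau$.
\item Howlett--Lehrer instead use only the reflections $s_\alpha\in W_\tau$ with $p_\alpha\neq 1$. Here $p_\alpha$ is the ratio of the degrees of the two constituents of $R_{\bL}^{\bL_\alpha}(\tau)$, and $C_\tau$ is the stabilizer of the positive system of the resulting root system.
\item The distinction is real. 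Take $\mathrm{SL}_2(q)$ with $q$ odd, and $\tau$ the quadratic character of the split torus. Then $R_{\bL}^{\bG}(\tau)$ splits into two characters of degree $(q+1)/2$, so $s_\alpha\in W_\tau$ but $p_\alpha=1$.
\item When $p_\alpha\neq1$, the ratio of the two eigenvalues of an unnormalized $T_{s_\alpha}$ is $-p_\alpha$ or $-p_\alpha^{-1}$. This singles out a unique rescaling satisfying $(T_{s_\alpha}-p_\alpha)(T_{s_\alpha}+1)=0$.
\item When $p_\alpha=1$, the relation becomes $T_{s_\alpha}^2=1$, which fixes $T_{s_\alpha}$ only up to sign. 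So there is no canonical normalization on which to base the verification of braid relations. Whether compatible signs exist is precisely a cocycle question.
\end{itemize}

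Such reflections must be placed in $C_\tau$. There any twisting is recorded by $\mu$ and is disposed of by Geck's theorem. With $R_\tau$ corrected in this way, your argument matches the cited proof.

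Your one-line justification of Geck's reduction (abelian $G'/G$ and multiplicity-free restriction) is a heuristic, not an argument. The paper likewise treats that step purely by citation.
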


We shall use this parametrization together with its compatibility with
Harish--Chandra induction.  The following form is the one needed later; see
\cite[Theorem~3.2.7]{Book:GeckandMalle}.

\begin{theorem}[Howlett--Lehrer comparison theorem]
\label{thm:Howlett-Lehrer-comparison}
Let \((L,\tau)\) be a cuspidal pair of \(G\).  For every \(F\)-stable Levi
subgroup \(\bM\) of an \(F\)-stable parabolic subgroup of \(\bG\), with
\[
\bL\leq \bM\leq \bG,
\]
there is a bijection
\[
I_{L,\tau}^{M}:
\Irr(W_{\tau}(\bM))
\xrightarrow{\sim}
\Irr(M,(L,\tau)),
\qquad
\phi\longmapsto \rho_\phi ,
\]
and these bijections are compatible with induction in the following sense:
\[
\begin{tikzcd}
\Irr(W_{\tau}(\bG))
\arrow[r,"I_{L,\tau}^{G}"]
& \mathbb{Z}\Irr(G,(L,\tau)) \\
\Irr(W_{\tau}(\bM))
\arrow[u,"\Ind_{W_{\tau}(\bM)}^{W_{\tau}(\bG)}"]
\arrow[r,"I_{L,\tau}^{M}"]
& \mathbb{Z}\Irr(M,(L,\tau))
\arrow[u,"R_{\bM}^{\bG}"']
\end{tikzcd}
\]
Here the horizontal maps are extended linearly to Grothendieck groups on the
right-hand side.
\end{theorem}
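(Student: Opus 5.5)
The plan is to realize both sides of the diagram inside the endomorphism algebras and then specialize to the Weyl group. Put
\[
\cH_G=\End_G\bigl(R_{\bL}^{\bG}(\tau)\bigr)^{\op},\qquad
\cH_M=\End_M\bigl(R_{\bL}^{\bM}(\tau)\bigr)^{\op}.
\]
The first step is transitivity of Harish--Chandra induction, \(R_{\bM}^{\bG}\circ R_{\bL}^{\bM}=R_{\bL}^{\bG}\). Through this identity, the functor \(R_{\bM}^{\bG}\) gives an algebra homomorphism \(\cH_M\to\cH_G\).

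The second step is to identify this homomorphism with a parabolic embedding. By Howlett--Lehrer, \(W_\tau(\bG)=R_\tau\rtimes C_\tau\), where \(R_\tau\) is generated by the relevant reflections. The subgroup \(W_\tau(\bM)=W_\tau(\bG)\cap N_M(A_{\bL})/L\) inherits a compatible decomposition \(R_{\tau,M}\rtimes C_{\tau,M}\), with \(R_{\tau,M}\) a standard parabolic subgroup of \(R_\tau\) for suitable choices. The standard basis elements \(T_w\) (\(w\in W_\tau(\bM)\)) of \(\cH_M\) should be sent to the corresponding \(T_w\) of \(\cH_G\). Their quadratic parameters are computed in rank-one Levi subgroups \(\bL\le\bM_\alpha\le\bM\), so they are the same in both algebras. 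The cocycle is trivial, as recalled before Theorem~\ref{thm:Howlett-Lehrer}, so the \(T_w\) may be normalized compatibly.

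The third step translates the induction functor into module theory. Consider the functor \(\rho\mapsto\Hom_G(R_{\bL}^{\bG}(\tau),\rho)\), which gives a bijection between \(\Irr(G,(L,\tau))\) and simple \(\cH_G\)-modules, and its analogue for \(M\). By Frobenius reciprocity together with the identification of the previous step, \(R_{\bM}^{\bG}\) corresponds to
\[
\cH_G\otimes_{\cH_M}(-).
\]
Concretely, for \(\rho'\in\Irr(M,(L,\tau))\) with module \(E'\), the multiplicity of \(\rho\) in \(R_{\bM}^{\bG}(\rho')\) equals the multiplicity of the simple module \(E_\rho\) in \(\cH_G\otimes_{\cH_M}E'\). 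This uses semisimplicity of \(\cH_G\), which holds because it is an endomorphism algebra over a field of characteristic zero.

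The final step passes to the Weyl group. Work with the generic algebras \(\cH_G(\mathbf v)\supseteq\cH_M(\mathbf v)\) over \(\Qlcl[\mathbf v^{\pm1}]\), which are split semisimple over a suitable extension. Tits deformation gives bijections \(\Irr(\cH_G)\leftrightarrow\Irr(W_\tau(\bG))\) by specializing either to \(q\)-parameters or to \(1\). Induction \(\cH_G(\mathbf v)\otimes_{\cH_M(\mathbf v)}-\) is defined integrally, because \(\cH_G(\mathbf v)\) is free over \(\cH_M(\mathbf v)\) on coset representatives. Hence induction commutes with both specializations, and decomposition numbers are preserved. At \(\mathbf v=1\) this induction is \(\Ind_{W_\tau(\bM)}^{W_\tau(\bG)}\), which yields the commutative diagram.

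The main obstacle I expect is the second step: proving that \(R_{\bM}^{\bG}\) induces exactly the parabolic embedding \(T_w\mapsto T_w\), including the normalization of the \(T_w\). That normalization involves choosing extensions of \(\tau\) to \(N\)-stabilizers and fixing signs, and it must be made compatibly for \(\bM\) and \(\bG\). I would handle it by fixing the normalization once on \(W_\tau(\bG)\) and restricting it to \(W_\tau(\bM)\). This is the route taken in \cite[Theorem~3.2.7]{Book:GeckandMalle}, which one may ultimately cite.
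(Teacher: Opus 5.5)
Your outline is correct and is the standard Howlett--Lehrer argument. It has three stages: the algebra embedding $\End_M(R_{\bL}^{\bM}(\tau))^{\op}\hookrightarrow\End_G(R_{\bL}^{\bG}(\tau))^{\op}$ induced by $R_{\bM}^{\bG}$, the translation of $R_{\bM}^{\bG}$ into $\cH_G\otimes_{\cH_M}(-)$ on modules, and Tits deformation of this integrally defined induction. The paper gives no proof of its own and simply quotes \cite[Theorem~3.2.7]{Book:GeckandMalle}, which is exactly the source you fall back on for the one nontrivial step (that $R_{\bM}^{\bG}$ realizes the parabolic embedding $T_w\mapsto T_w$ with compatible normalizations), so your route agrees with the paper's.
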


Equivalently, if \(\phi\in\Irr(W_\tau(\bM))\), then
\[
R_{\bM}^{\bG}(\rho_\phi)
=
\sum_{\psi\in\Irr(W_\tau(\bG))}
\big\langle
\Ind_{W_{\tau}(\bM)}^{W_{\tau}(\bG)}(\phi),\psi
\big\rangle
\rho_\psi .
\]
Thus the decomposition of Harish--Chandra induced characters is controlled by
ordinary character induction for the corresponding relative Weyl groups.

We now record the promised uniqueness criterion.  The following result and its
proof were provided to us by Jay Taylor.

\begin{proposition}[Jay Taylor]\label{prop:unique-HC-char}
Let $(L,\tau)$ be a cuspidal pair in $G$.
Suppose $\phi,\phi' \in \Irr(W_{\tau}(\bG))$ satisfy the following:

\begin{enumerate}[label={\normalfont(\arabic*)}]
	\item ${}^*R_{\bM}^{\bG}(\rho_{\phi}) = {}^*R_{\bM}^{\bG}(\rho_{\phi'})$ for any proper $F$-stable Levi subgroup $\bL \leqslant \bM < \bG$, of an $F$-stable parabolic subgroup of $\bG$,
	\item $\rho_{\phi}(1) = \rho_{\phi'}(1)$.
\end{enumerate}

Then $\rho_{\phi} = \rho_{\phi'}$ except possibly when $W_{\tau}(\bG)$ is irreducible of type $\mathsf{E}_7$ or $\mathsf{E}_8$ and $\{\phi,\phi'\}$ is an exceptional family of characters of dimension $512$ or $4096$ as in \cite[6.3.6]{geck-pfeiffer:2000:characters-of-finite-coxeter-groups}.
\end{proposition}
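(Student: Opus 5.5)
We translate both hypotheses into statements about $W=W_\tau(\bG)$ and then use a known fact about Coxeter groups. The intermediate groups $W_\tau(\bM)$ are parabolic subgroups of $W$; the case $\phi=\phi'$ is trivial, so we may assume $\phi\neq\phi'$.

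\emph{Step 1: restriction.} Apply the adjoint of Theorem~\ref{thm:Howlett-Lehrer-comparison}. Since $R_{\bM}^{\bG}$ and ${}^*R_{\bM}^{\bG}$ are adjoint, and $\Ind$ and $\Res$ are adjoint, the $\rho$-component of ${}^*R_{\bM}^{\bG}(\rho_\phi)$ is
\[
\big\langle \Res^{W_\tau(\bG)}_{W_\tau(\bM)}\phi,\psi\big\rangle \quad\text{at } \rho=\rho_\psi^{M}.
\]
The bijectivity of $I_{L,\tau}^M$ then turns hypothesis (1) into
\[
\Res_{W_\tau(\bM)}\phi=\Res_{W_\tau(\bM)}\phi'
\]
for every proper $\bM\supseteq\bL$. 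We must first check that $W_\tau$ is generated by reflections in the situation at hand. This follows from the triviality of the cocycle quoted before Theorem~\ref{thm:Howlett-Lehrer}, together with Lusztig's result that the relevant $W_\tau$ are Coxeter groups and that the $W_\tau(\bM)$ run over the proper standard parabolic subgroups of $W$. Concretely, the $\bM$ correspond to subsets of the relative simple roots.

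\emph{Step 2: degrees.} Howlett--Lehrer theory with trivial cocycle gives $\rho_\phi(1)=\tau(1)\,[G:P]_{p'}\,/\,P_\phi$, where $P_\phi$ is the generic-degree quotient. Up to a common factor, this is the Schur element of the Hecke algebra $\cH(W_\tau,\{q^{c_s}\})$ at $\phi$. Hypothesis (2) therefore says that $\phi$ and $\phi'$ have the same generic degree $D_\phi(q)$, evaluated at the actual parameters.

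\emph{Step 3: combinatorics.} We must show that two irreducible characters of a finite Coxeter group that agree on all proper parabolic subgroups and have equal Schur-element values coincide, apart from the listed $\mathsf{E}_7$/$\mathsf{E}_8$ exceptions. First reduce to irreducible $W$. If $W=W_1\times W_2$ is nontrivial, then $\phi=\phi_1\boxtimes\phi_2$. Restricting to the proper parabolic $W_1$ (or, if $W_1$ is itself nonparabolic-trivial, to $W_1\times W_2'$ for a maximal parabolic $W_2'$ of $W_2$) recovers $\phi_1$ up to multiples, and similarly for $\phi_2$. This determines $\phi$, except in a few trivial cases with $W_i$ of rank one, where the degree condition settles things.

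For irreducible $W$, compare the class functions $\phi-\phi'$. They vanish on every element lying in a proper parabolic subgroup, so they are supported on cuspidal (elliptic) classes. By Geck--Pfeiffer, characters agreeing on all elements of proper parabolic subgroups are exactly those in a small list. For classical types these are pairs with the same induced/restricted data, which are then separated by fake or generic degrees and $a$-values. For exceptional types, the pairs are listed in \cite[6.3.6]{geck-pfeiffer:2000:characters-of-finite-coxeter-groups}, and only the $512$/$4096$ pairs share generic degree.

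\emph{Main obstacle.} The case-by-case verification in Step 3 is the hard part: showing that, for each irreducible type with its unequal parameters $q^{c_s}$ as arising for $W_\tau$, equality of restrictions plus equality of the specialized Schur element forces $\phi=\phi'$. I would first attempt a uniform argument. For types $\mathsf{A}$ and $\mathsf{B}$, restriction to maximal parabolics already determines $\phi$ by the branching rule. The rest would be a table check: for $\mathsf{D}_n$, the exceptional types, and $I_2(m)$, use the known character tables, with special care for specialization of parameters possibly creating coincidences, which is checked via the degree polynomials in CHEVIE.
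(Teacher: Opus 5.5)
Your Steps 1--3 follow the same route as the paper's proof. Howlett--Lehrer comparison plus Frobenius reciprocity turns (1) into equal restrictions to all proper parabolic subgroups, with $\bM=\bL$ giving $\phi(1)=\phi'(1)$. The degree formula turns (2) into $c_\phi=c_{\phi'}$ at the actual parameters. One then reduces to irreducible $W_\tau(\bG)$ and invokes Geck--Pfeiffer's separation results.

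The gap is that the only cases where hypothesis (2) is actually needed are either misstated or left undone.

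\emph{Rank-one type $\mathsf A$.} Type $\mathsf A$ is not settled by branching in rank one. There $\mathbf{1}$ and $\varepsilon$ agree on the only proper parabolic subgroup (the trivial one), so you must compare $c_{\mathbf{1}}=\Phi_2(q^k)$ with $c_\varepsilon=q^{-k}\Phi_2(q^k)$. These differ exactly because the Hecke parameter is $q^k$ with $k\geq 1$. That fact does not follow from triviality of the Howlett--Lehrer cocycle, as your Step~1 suggests. Take $\bG=\mathrm{SL}_2$, $q$ odd, and $\tau$ the quadratic character of the split torus. Then $W_\tau(\bG)\cong\mathbb{Z}/2$, but its reflection subgroup is trivial and the endomorphism algebra is $\C[\mathbb{Z}/2]$ with parameter $1$. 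The two constituents of $R_{\bT}^{\bG}(\tau)$ both have degree $(q+1)/2$ and satisfy (1) and (2), yet they are distinct. So the Coxeter structure with parameters $q^k$, $k\geq 1$, cannot be derived in general. It must be imported from Lusztig in the connected-centre or unipotent setting. That is where the paper applies the proposition, and the paper's own $\mathsf A_1$ step tacitly uses this input.

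\emph{Type $\mathsf G_2$.} Here $\phi_{2,1}$ and $\phi_{2,2}$ both restrict to $\mathbf{1}+\varepsilon$ on each rank-one parabolic. Everything therefore rests on showing $c_{\phi_{2,1}}\neq c_{\phi_{2,2}}$ at the parameters that actually occur, namely $(q,q^{2k-1})$ with $k\in\{1,2,5\}$ \cite[Table~II]{Lus84}, for every prime power $q$. Deferring this to a CHEVIE check of degree polynomials leaves the content unproved, since distinct polynomials can coincide at a specialization. The paper takes the Schur elements from \cite[Thm.~8.3.4]{geck-pfeiffer:2000:characters-of-finite-coxeter-groups} and reduces to comparing $\Phi_3(q^{k-1})\Phi_6(q^k)$ with $\Phi_3(q^k)\Phi_6(q^{k-1})$. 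It separates these by Zsigmondy primitive prime divisors of $\Phi_6(q)$, $\Phi_{12}(q)$, $\Phi_{30}(q)$, and checks $q=2$ directly.

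\emph{Remaining types.} For the classical types you appeal to fake degrees and $a$-values, but hypotheses (1)--(2) do not provide these invariants. What is needed, and what the paper uses, is that restriction alone already separates characters in types $\mathsf A_n$ ($n\geq 2$), $\mathsf B_n$, $\mathsf C_n$, $\mathsf D_n$ \cite[Cor.~5.4.8, Thm.~6.2.9]{geck-pfeiffer:2000:characters-of-finite-coxeter-groups}. The same holds in $\mathsf F_4,\mathsf E_6,\mathsf E_7,\mathsf E_8$ outside the $512/4096$ pairs \cite[6.3.6]{geck-pfeiffer:2000:characters-of-finite-coxeter-groups}, so no degree comparison is needed in these types.

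Your caveat about rank-one factors in the reducible case is unnecessary. Each factor is a proper parabolic subgroup, and $\Res_{W_1}\phi=\phi_2(1)\phi_1$ already forces $\phi_1=\phi_1'$.
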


\begin{proof}
Put
\[
\cW:=W_{\tau}(\bG).
\]
If $\cW$ is trivial, then there is nothing to prove. We therefore assume that
$\cW\neq 1$.

We first translate the hypothesis on Harish--Chandra restrictions into a
statement about restrictions of characters of $\cW$.  Let $\bM$ be an
$F$-stable Levi subgroup with $\bL\leq \bM\leq \bG$, and put
\[
\cW_{\bM}:=W_{\tau}(\bM).
\]
We extend the Howlett--Lehrer bijection
\[
I_{L,\tau}^{M}:\Irr(\cW_{\bM})\longrightarrow \Irr(M,(L,\tau))
\]
linearly to virtual characters.  By the comparison theorem and Frobenius
reciprocity, the projection of ${}^*R_{\bM}^{\bG}(\rho_{\psi})$ to the
Harish--Chandra series $\Irr(M,(L,\tau))$ is
\begin{equation}\label{eq:HC-restriction-W}
I_{L,\tau}^{M}\!\left(\Res_{\cW_{\bM}}^{\cW}\psi\right),
\qquad \psi\in \Irr(\cW).
\end{equation}
Indeed, for $\eta\in\Irr(\cW_{\bM})$, the comparison theorem gives
\[
R_{\bM}^{\bG}\bigl(I_{L,\tau}^{M}(\eta)\bigr)
=
I_{L,\tau}^{G}\!\left(\Ind_{\cW_{\bM}}^{\cW}\eta\right),
\]
and hence
\[
\left\langle {}^*R_{\bM}^{\bG}(\rho_\psi),
I_{L,\tau}^{M}(\eta)\right\rangle_M
=
\left\langle \psi,\Ind_{\cW_{\bM}}^{\cW}\eta\right\rangle_{\cW}
=
\left\langle \Res_{\cW_{\bM}}^{\cW}\psi,\eta\right\rangle_{\cW_{\bM}}.
\]
Thus hypothesis~\textup{(1)} implies
\begin{equation}\label{eq:proper-parabolic-restrictions}
\Res_{\cW'}^{\cW}\phi
=
\Res_{\cW'}^{\cW}\phi'
\end{equation}
for every proper parabolic subgroup $\cW'<\cW$.  In particular, taking
$\bM=\bL$, so that $\cW_{\bL}=1$, gives
\begin{equation}\label{eq:degree-of-W-chars}
\phi(1)=\phi'(1).
\end{equation}

We shall also use the degree formula for the constituents of
$R_{\bL}^{\bG}(\tau)$.  Let $c_{\psi}$ denote the Schur element attached to
the irreducible character $\psi$ of the Howlett--Lehrer Hecke algebra.  By
Lusztig's degree formula, see \cite[Cor.~8.7]{Lus84} and
\cite[\S8.1.8]{geck-pfeiffer:2000:characters-of-finite-coxeter-groups},
there is a nonzero constant $C_{\bG,\bL,\tau}$, independent of $\psi$, such
that
\begin{equation}\label{eq:degree-schur-element}
\rho_{\psi}(1)
=
C_{\bG,\bL,\tau}\frac{\psi(1)}{c_{\psi}}.
\end{equation}
Combining hypothesis~\textup{(2)} with \eqref{eq:degree-of-W-chars} and
\eqref{eq:degree-schur-element}, we obtain
\begin{equation}\label{eq:equal-schur-elements}
c_{\phi}=c_{\phi'}.
\end{equation}

We now reduce to the case where $\cW$ is irreducible.  Write
\[
\cW=\cW_1\times\cdots\times \cW_r
\]
as a product of its irreducible Coxeter components, and write
\[
\phi=\phi_1\boxtimes\cdots\boxtimes \phi_r,
\qquad
\phi'=\phi'_1\boxtimes\cdots\boxtimes \phi'_r.
\]
If $r>1$, then each factor $\cW_i$ is a proper parabolic subgroup of $\cW$,
arising from an intermediate Levi subgroup in the Howlett--Lehrer
correspondence.  Hence \eqref{eq:proper-parabolic-restrictions} gives
\[
\frac{\phi(1)}{\phi_i(1)}\,\phi_i
=
\Res_{\cW_i}^{\cW}\phi
=
\Res_{\cW_i}^{\cW}\phi'
=
\frac{\phi'(1)}{\phi'_i(1)}\,\phi'_i.
\]
Since $\phi_i$ and $\phi'_i$ are irreducible characters of $\cW_i$, this
implies $\phi_i=\phi'_i$.  This holds for every $i$, and therefore
$\phi=\phi'$.  We may therefore assume from now on that $\cW$ is irreducible.

Suppose first that $\cW$ is of type $\mathsf A_n$, with $n\geq 1$.  If neither
$\phi$ nor $\phi'$ is the trivial character $\mathbf 1_{\cW}$, then
\eqref{eq:proper-parabolic-restrictions} implies $\phi=\phi'$ by
\cite[Cor.~5.4.8]{geck-pfeiffer:2000:characters-of-finite-coxeter-groups}.
We may therefore assume that one of the two characters is $\mathbf 1_{\cW}$.
By \eqref{eq:degree-of-W-chars}, both characters have degree one, so
\[
\{\phi,\phi'\}\subseteq \{\mathbf 1_{\cW},\varepsilon\},
\]
where $\varepsilon$ is the sign character.  If $n\geq 2$, then restriction to
a rank-one parabolic subgroup distinguishes $\mathbf 1_{\cW}$ from
$\varepsilon$, and hence $\phi=\phi'$.  If $n=1$, then the corresponding
Hecke algebra has parameter $q^k$ for some $k\geq 1$, and the two Schur
elements are
\[
c_{\mathbf 1_{\cW}}=\Phi_2(q^k),
\qquad
c_{\varepsilon}=q^{-k}\Phi_2(q^k).
\]
These are distinct.  Therefore \eqref{eq:equal-schur-elements} again forces
$\phi=\phi'$.

If $\cW$ is of type $\mathsf B_n$, $\mathsf C_n$, or $\mathsf D_n$, in the
usual irreducible ranges, then \eqref{eq:proper-parabolic-restrictions}
already implies $\phi=\phi'$ by
\cite[Thm.~6.2.9]{geck-pfeiffer:2000:characters-of-finite-coxeter-groups}.
It remains to consider the exceptional crystallographic types.  Let $\cW$ be
of type $\mathsf G_2$.  By the classification of the possible failures of
separation by proper parabolic restrictions, the only possible distinct pair
satisfying \eqref{eq:proper-parabolic-restrictions} is
\[
\{\phi,\phi'\}=\{\phi_{2,1},\phi_{2,2}\},
\]
where $\phi_{2,1}$ and $\phi_{2,2}$ are the two irreducible characters of
degree $2$, distinguished by their $b$-invariants.  According to
\cite[Table~II]{Lus84} and \cite[Thm.~8.6]{Lus84}, the possible parameters for
the corresponding Hecke algebra are of the form
\[
(q,q^{2k-1}),\qquad k\in\{1,2,5\}.
\]
By \cite[Thm.~8.3.4]{geck-pfeiffer:2000:characters-of-finite-coxeter-groups},
the Schur elements are
\[
c_{\phi_{2,b}}
=
2q^{-2k+1}\,
\Phi_3(q^{k+b-2})\,
\Phi_6(q^{k-b+1}),
\qquad b\in\{1,2\}.
\]
Thus it suffices to compare the two products
\[
\Phi_3(q^{k-1})\Phi_6(q^k)
\quad\text{and}\quad
\Phi_3(q^k)\Phi_6(q^{k-1}).
\]
For $k=1,2,5$ these factor as follows:
\[
\begin{array}{c|c|c}
k &
\Phi_3(q^{k-1})\Phi_6(q^k) &
\Phi_3(q^k)\Phi_6(q^{k-1}) \\
\hline
1 &
3\Phi_6(q) &
\Phi_3(q) \\
2 &
\Phi_3(q)\Phi_{12}(q) &
\Phi_3(q)\Phi_6(q)^2 \\
5 &
\Phi_3(q)\Phi_6(q)^2\Phi_{12}(q)\Phi_{30}(q) &
\Phi_3(q)\Phi_{15}(q)\Phi_{24}(q).
\end{array}
\]
For $q=2$ the two entries in each row are unequal by direct evaluation.  For
$q\neq 2$, Zsigmondy's theorem gives a primitive prime divisor of
$\Phi_6(q)$, $\Phi_{12}(q)$, and $\Phi_{30}(q)$ respectively in the three
rows; such a prime cannot divide any of the cyclotomic factors appearing on
the other side.  Hence
\[
c_{\phi_{2,1}}\neq c_{\phi_{2,2}},
\]
contradicting \eqref{eq:equal-schur-elements}.  Therefore $\phi=\phi'$ in
type $\mathsf G_2$ as well.

Finally suppose that $\cW$ is of type
\[
\mathsf F_4,\quad \mathsf E_6,\quad \mathsf E_7,\quad \text{or}\quad
\mathsf E_8.
\]
By \cite[6.3.6]{geck-pfeiffer:2000:characters-of-finite-coxeter-groups},
proper parabolic restrictions determine irreducible characters in these types
except for the exceptional pairs occurring in types $\mathsf E_7$ and
$\mathsf E_8$.  These exceptional pairs have degrees $512$ and $4096$,
respectively.  Thus, outside precisely the exceptional cases stated in the
proposition, \eqref{eq:proper-parabolic-restrictions} forces
$\phi=\phi'$, and hence $\rho_{\phi}=\rho_{\phi'}$.
\end{proof}

Let $\bT$ be an $F$-stable maximal torus contained in $\bL$, and let $\theta : T \to \overline{\mathbb{Q}}_{\ell}^{\times}$ be a character. Let $\bL^{*}$ be an $F^{*}$-stable Levi subgroup of $\bG^{*}$ corresponding to the Levi subgroup $\bL$ of $\bG$. Note that $\bL^{*}$ is determined up to $G^{*}$-conjugacy.
Let $(\bT^{*}, s)$ be a pair corresponding to $(\bT, \theta)$ as in Proposition~\ref{prop:geometric-conjugacy}, with $\bT^{*} \subset \bL^{*}$.

\begin{lemma}\label{lem:dual-stabilizers-geometric-pairs}
Let $[(\bT, \theta)]$ denote the $L$-conjugacy class of $(\bT, \theta)$, and let $[(\bT^{*}, s)]$ denote the $L^{*}$-conjugacy class of $(\bT^{*}, s)$. Then the isomorphism
\[
\delta : W_{\bG}(\bT_{0}) \longrightarrow W_{\bG^{*}}(\bT_{0}^{*}),
\]
induces an isomorphism
\[
\Stab_{W_{\bG}(A_{\bL})} \left( [(\bT, \theta)] \right) \cong \Stab_{W_{\bG^{*}}(A_{\bL^{*}})} \left( [(\bT^{*}, s)] \right).
\]
\end{lemma}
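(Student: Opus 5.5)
The plan is to reduce everything to the reference torus \(\bT_0\) and transport the duality \(\delta\) down to the relative Weyl groups.

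\emph{Step 1: normalize the Levi pair.} After conjugating by \(G\), and correspondingly by \(G^*\), we may assume that \(\bL\) is a standard Levi subgroup containing \(\bT_0\), attached to an \(F\)-stable subset \(I\subseteq\Delta\), and that \(\bL^*\) is the standard Levi attached to the dual subset \(I^*\). Then
\[
N_{G}(A_{\bL})/L \cong N_{W^F}(W_I)\big/W_I^F ,
\]
or more precisely the group \(N_{W}(W_I)^F\) of elements normalizing \(W_I\) and preserving \(I\), viewed modulo \(W_I\). This is the standard description of \(W_{\bG}(A_{\bL})\) via the complement \(\{w\in W^F : wI=I\}\), and the same description holds on the dual side with \(I^*\). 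Since \(\delta\) identifies \(W\) with \(W^*\), sends simple reflections to simple reflections, and commutes with \(F\), it maps \(W_I\) onto \(W_{I^*}\) and \(\{w\in W^F: wI=I\}\) onto \(\{w\in W^{*F^*}: wI^*=I^*\}\). This gives an isomorphism
\[
W_{\bG}(A_{\bL})\cong W_{\bG^*}(A_{\bL^*})
\]
induced by \(\delta\).

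\emph{Step 2: identify both stabilizers with stabilizers of geometric classes.} The \(L\)-conjugacy classes of pairs \((\bT,\theta)\) with \(\bT\subseteq\bL\) correspond, by Proposition~\ref{prop:geometric-conjugacy} applied to \(\bL\) and its dual \(\bL^*\), to \(L^*\)-classes of pairs \((\bT^*,s)\) with \(\bT^*\subseteq \bL^*\). Write \(\bT={}^{g}\bT_0\) with \(g^{-1}F(g)\) representing \(w\in W_I\). The pair is then encoded by \(w\) together with a character \(\theta\) of \(\bT_0^{wF}\), or equivalently an element of \(X(\bT_0)/(wF-1)X(\bT_0)\). On the dual side, \(\delta\) turns this into an element of \(X^\vee(\bT^*_0)/(wF^*-1)X^\vee(\bT_0^*)\cong \bT_0^{*wF^*}\), which is \(s\). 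In these terms, \(L\)-classes of pairs are classes of such data under \(W_I\)-twisted conjugation. An element \(n\in N_G(A_{\bL})\) acts on these classes through its image in \(\{v\in W^F: vI=I\}\), acting by \(w\mapsto v w F(v)^{-1}\) and \(\lambda\mapsto v\lambda\). The dual element \(\delta(v)\) acts on the dual data by the same formulas transported through \(\delta\), because \(\delta\) is \(W\)-equivariant and \(F\)-compatible. Hence the bijection \([(\bT,\theta)]\leftrightarrow[(\bT^*,s)]\) is equivariant for the isomorphism of Step 1, and equivariant bijections identify stabilizers.

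\emph{Main obstacle.} The delicate point is checking that the \(W_{\bG}(A_{\bL})\)-action on \(L\)-classes is really computed by this formula on the torus data. Representatives in \(N_G(A_{\bL})\) are only defined modulo \(L\), and the pair \((\bT,\theta)\) is moved by a representative that need not normalize \(\bT_0\). I would handle this by choosing representatives \(\dot v\in N_G(\bT_0)\) for \(v\) with \(vI=I\), which is possible since \(v\) is \(F\)-fixed and Lang's theorem applies in \(N_{\bG}(\bT_0)\). Then \({}^{\dot v}(\bT,\theta)\) can be computed explicitly, and it only remains to verify that changing \(g\) within \(\bL\) changes the data by \(W_I\)-twisted conjugation. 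That verification is the standard parametrization of \(L\)-classes of maximal tori by \(F\)-classes in \(W_I\).
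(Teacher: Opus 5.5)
Your proposal is correct and follows the same route as the paper, which likewise identifies $W_{\bG}(A_{\bL})$ with the normalizer quotient $N_{W^F}(W_{\bL}(\bT_0)^F)/W_{\bL}(\bT_0)^F$ via Howlett--Lehrer and transports it by $\delta$. Your Step~2, which checks that the bijection between $L$-classes and $L^*$-classes of pairs is equivariant for this isomorphism, spells out what the paper compresses into ``from which the result follows''. One small point of notation: the twisted Frobenius on $X^\vee(\bT_0^*)$ is the $\delta$-transport of the operator attached to $wF$, which in general is not literally $w^*F^*$ (an inverse and an $F^*$-twist intervene); since you transport all the data through $\delta$, this does not affect the argument.
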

\begin{proof}
By \cite[Proposition~2.1]{HowlettLehrer}, we have
\[
W_{\bG}(A_{\bL}) \cong \frac{N_{W_{\bG}(\bT_{0})^{F}}(W_{\bL}(\bT_{0})^{F})}{W_{\bL}(\bT_{0})^{F}}.
\]
Thus, $W_{\bG}(A_{\bL}) \cong W_{\bG^{*}}(A_{\bL^{*}})$ under the isomorphism $\delta$, from which the result follows.
\end{proof}

\begin{lemma}\label{lem:centralizer-stabilizer-geometric-pair}
Let $s \in T^{*}$ be a semisimple element of $L^{*}$ and let $\tau\in \cE(L,s)$ be a cuspidal representation.
Let $\bH = C_{\bG^{*}}(s)$ be the centraliser of $s$ in $\bG^{*}$, and let $\bH_{\bL} = C_{\bL^{*}}(s)$ be the centraliser of $s$ in $\bL^{*}$.
Then
\[
\Stab_{W_{\bG^{*}}(A_{\bL^{*}})} \left( [(\bT^{*}, s)] \right) \cong \Stab_{W_{\bH}(A_{\bL^{*}})} \left( [(\bT^{*}, 1)] \right),
\]
where
\[
W_{\bH}(A_{\bL^{*}}) = \frac{N_{H}(A_{\bL^{*}})}{H_{\bL}}.
\]
\end{lemma}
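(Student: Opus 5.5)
The plan is to compare the two stabilizers through explicit coset representatives in \(N_{G^*}(A_{\bL^*})\).

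\textbf{Step 1: containment.} Let \(w\in W_{\bG^*}(A_{\bL^*})=N_{G^*}(A_{\bL^*})/L^*\) stabilize the \(L^*\)-class \([(\bT^*,s)]\). Choose a representative \(n\in N_{G^*}(A_{\bL^*})\). Then \({}^n\bT^*={}^l\bT^*\) and \({}^ns={}^ls\) for some \(l\in L^*\). Replacing \(n\) by \(l^{-1}n\), which represents the same \(w\), we may assume that \(n\) normalizes \(\bT^*\) and centralizes \(s\). Thus \(n\in N_H(A_{\bL^*})\). Here \(A_{\bL^*}\subseteq C_{\bG^*}(s)^\circ\), as in the proof of the Corollary above, so \(A_{\bL^*}\) is also the relevant split central torus on the \(\bH\)-side.

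\textbf{Step 2: the map.} Send \(w\) to the image of \(n\) in \(N_H(A_{\bL^*})/H_{\bL}\), where \(H_{\bL}=H\cap L^*\). Since \(n\) normalizes \(\bT^*\), this image stabilizes the \(H_{\bL}\)-class \([(\bT^*,1)]\); this only says that \({}^n\bT^*\) is \(H_{\bL}\)-conjugate to \(\bT^*\), which holds trivially.

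Well-definedness: two normalized choices \(n,n'\) differ by an element of \(L^*\) that centralizes \(s\), that is, by an element of \(H_{\bL}\).

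Inverse map: any \(h\in N_H(A_{\bL^*})\) with \({}^h\bT^*\) \(H_{\bL}\)-conjugate to \(\bT^*\) can be adjusted by \(H_{\bL}\) to normalize \(\bT^*\). It then fixes \(s\), so its image in \(N_{G^*}(A_{\bL^*})/L^*\) stabilizes \([(\bT^*,s)]\).

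Injectivity of the forward map follows from \(H_{\bL}\subseteq L^*\), and the two maps are mutually inverse homomorphisms.

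\textbf{Main obstacle.} The difficulty is the stated target group, which is
\[
N_H(A_{\bL^*})/H_{\bL}
\]
rather than \(N_H(A_{\bL^*})/H_{\bL}^\circ\), and the need to check carefully that \(N_{G^*}(A_{\bL^*})\cap H=N_H(A_{\bL^*})\), with the correct quotient on each side.

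The potential subtlety is that \(H_{\bL}\) may be disconnected. Elements of \(H_{\bL}\setminus H_{\bL}^\circ\) are nevertheless in \(L^*\), so quotienting by \(H_{\bL}\) matches quotienting by \(L^*\) on the \(G^*\)-side exactly. This is the step I would verify most carefully.

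If the intended reading is instead the stabilizer of \(\tau\)-type data, then I would invoke Lemma~\ref{lem:dual-stabilizers-geometric-pairs} to transport the result to the \(G\)-side afterwards. The core argument, however, is the representative-normalization above.
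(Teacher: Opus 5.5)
Your argument is correct and is essentially the paper's proof written out in full. The paper records $C_{\bG^*}(A_{\bL^*})=\bL^*$ and $C_{\bH}(A_{\bL^*})=\bH_{\bL}$ and says the isomorphism follows from the definitions of the stabilisers; that is exactly your normalization of coset representatives by $L^*$, combined with $H_{\bL}=H\cap L^*$.

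The paper additionally invokes cuspidality of $\tau$ (via Theorem~\ref{fact:cuspidal}) to see that $A_{\bL^*}$ is the maximal split central torus of $\bH_{\bL}^\circ$. Your argument shows this is not needed for the isomorphism as stated, since $W_{\bH}(A_{\bL^*})$ is defined directly through $A_{\bL^*}$. Cuspidality only justifies regarding $W_{\bH}(A_{\bL^*})$ as the natural relative Weyl group of $\bH_{\bL}$ in $\bH$.
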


\begin{proof}
Since $A_{\bL^{*}}$ is the maximal $\mathbb{F}_{q}$-split subtorus of $Z(\bL^{*})^{\circ}$, Theorem~\ref{fact:cuspidal} implies that $A_{\bL^{*}}$ is also the maximal $\mathbb{F}_{q}$-split subtorus of $Z((\bH_L)^{\circ})$. Moreover, we have $C_{\bG^{*}}(A_{\bL^{*}}) = \bL^{*}$ and $C_{\bH}(A_{\bL^{*}}) = \bH_{\bL}$. The result follows from these observations and the definition of the relevant stabilisers.
\end{proof}

\section{Jordan decomposition, relative Weyl groups, and endomorphism algebras}

In this section we compare the action of relative Weyl groups on cuspidal
characters with the corresponding action on the Jordan-decomposition side.
We first treat the case where the Levi subgroup has connected centre, and
then reduce the general case to this one by means of a regular embedding.
The final consequence is that the endomorphism algebra of Harish--Chandra
induction depends only on the relevant fibre of the disconnected Jordan
decomposition.

\subsection{The connected-centre case}

Let $\bG$ be a connected reductive group defined over $\fq$, and let
$\bL$ be an $F$--stable Levi factor of an $F$--stable parabolic subgroup of
$\bG$.  Throughout this subsection we assume that $Z(\bL)$ is connected.
Let $A_{\bL}$ denote the maximal $\fq$--split torus in $Z(\bL)$.

Let
\[
\delta:W_{\bG}(A_{\bL})\xrightarrow{\sim}W_{\bG^*}(A_{\bL^*})
\]
be the duality isomorphism.  For $w\in W_{\bG}(A_{\bL})$, write
$w^*=\delta(w)$.  We choose representatives inducing $F$--equivariant
automorphisms
\[
\sigma_w:\bL\longrightarrow\bL,
\qquad
\sigma_{w^*}:\bL^*\longrightarrow\bL^*,
\]
dual to one another.  The induced action on characters of $L$ is given by
\[
\rho\longmapsto \rho\circ\sigma_w^{-1}.
\]

Let $s\in L^*$ be semisimple and put
\[
s^w:=\sigma_{w^*}(s),\qquad
\bH_{L,s}:=C_{\bL^*}(s),\qquad
\bH_{L,s^w}:=C_{\bL^*}(s^w).
\]
Then $\sigma_{w^*}$ identifies $\bH_{L,s}$ with $\bH_{L,s^w}$.  By the
functoriality of Deligne--Lusztig induction under dual automorphisms, the map
\[
f_w^s:\mathcal E(L,s)\longrightarrow \mathcal E(L,s^w),
\qquad
\rho\longmapsto \rho\circ\sigma_w^{-1},
\]
is a bijection.  Similarly, $\sigma_{w^*}$ induces a bijection
\[
f_{w^*}^s:\Uch(H_{L,s})\longrightarrow \Uch(H_{L,s^w}),
\qquad
u\longmapsto u\circ\sigma_{w^*}^{-1}.
\]

Let
\[
J_s^L:\mathcal E(L,s)\longrightarrow \Uch(H_{L,s})
\]
denote the Digne--Michel Jordan decomposition for $\bL$, as normalized in
Theorem~\ref{thm:JD-connected-center}.  Transporting the Jordan
decomposition at $s^w$ back to the Lusztig series of $s$ gives another
bijection
\[
\widetilde J_{s,w}^L
:=
(f_{w^*}^s)^{-1}\circ J_{s^w}^L\circ f_w^s
:
\mathcal E(L,s)\longrightarrow \Uch(H_{L,s}).
\]
The following proposition says that the Digne--Michel normalization is
invariant under this transport.
\begin{proposition}
\label{prop:compatibility-JDs-for-Levi-with-connected-center}
Let $\bG$ be a connected reductive group defined over $\fq$, and let
$\bL$ be an $F$--stable Levi factor of an $F$--stable parabolic subgroup of
$\bG$ such that $Z(\bL)$ is connected.  Let
$w\in W_{\bG}(A_{\bL})$, and let
$w^*\in W_{\bG^*}(A_{\bL^*})$ be the corresponding element under the duality
isomorphism.  Choose representatives inducing $F$--equivariant automorphisms
\[
\sigma_w:\bL\longrightarrow \bL,\qquad
\sigma_{w^*}:\bL^*\longrightarrow \bL^* .
\]
For a semisimple element $s\in L^*$, put
\[
s^w:=\sigma_{w^*}(s),\qquad
\bH_s:=C_{\bL^*}(s),\qquad
\bH_{s^w}:=C_{\bL^*}(s^w).
\]
Then $\sigma_{w^*}$ identifies $\bH_s$ with $\bH_{s^w}$, and for every
$\tau\in \mathcal{E}(L,s)$ one has
\[
J_{s^w}^{L}\bigl(\tau\circ\sigma_w^{-1}\bigr)
=
J_s^L(\tau)\circ\sigma_{w^*}^{-1}.
\]
Equivalently, if
\[
f_w:\mathcal{E}(L,s)\longrightarrow \mathcal{E}(L,s^w),
\qquad
\tau\longmapsto \tau\circ\sigma_w^{-1},
\]
and
\[
f_{w^*}:\Uch(H_s)\longrightarrow \Uch(H_{s^w}),
\qquad
u\longmapsto u\circ\sigma_{w^*}^{-1},
\]
then
\[
f_{w^*}\circ J_s^L
=
J_{s^w}^L\circ f_w,
\]
or, equivalently,
\[
J_s^L
=
f_{w^*}^{-1}\circ J_{s^w}^{L}\circ f_w .
\]
\end{proposition}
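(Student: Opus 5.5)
The plan is to use the uniqueness in Theorem~\ref{thm:JD-connected-center} (Digne--Michel): the transported collection of bijections satisfies all the characterizing properties, hence coincides with the original one. Concretely, fix the $F$-equivariant automorphism $\sigma_w$ of $\bL$ and its dual $\sigma_{w^*}$ of $\bL^*$. For every connected reductive group $\bG_1$ with connected centre, every semisimple $s_1$, and every $F$-equivariant isomorphism $\sigma:\bG_1\to\bG_2$ with dual $\sigma^*$, define
\[
\widetilde J_{s_1}^{G_1}:=(f_{\sigma^*})^{-1}\circ J_{\sigma^*(s_1)}^{G_2}\circ f_\sigma .
\]
Rather than transporting only along $\sigma_w$ on $\bL$, I would transport the whole family: for each connected-centre group, use $J$ transported along an arbitrary isomorphism. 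The first check is that the map is well defined. Since $\sigma_{w^*}(s)=s^w$, $\sigma_{w^*}$ carries $C_{\bL^*}(s)$ onto $C_{\bL^*}(s^w)$, and $f_w$ maps $\cE(L,s)$ to $\cE(L,s^w)$. This follows because $\sigma_w$ maps $R_{\bT}^{\bL}(\theta)$ to $R_{\sigma_w(\bT)}^{\bL}(\theta\circ\sigma_w^{-1})$, and duality matches $(\sigma_w(\bT),\theta\circ\sigma_w^{-1})$ with $(\sigma_{w^*}(\bT^*),s^w)$. Here $\sigma_w,\sigma_{w^*}$ are dual, so they induce dual maps on the character and cocharacter lattices.

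Next I would verify properties (1)--(7) of Theorem~\ref{thm:JD-connected-center} for the transported family. Property (1) follows from transport of structure. We have
\[
\langle R_{\bT^*}^{\bL}(s),\tau\rangle=\langle R_{\sigma_{w^*}\bT^*}^{\bL}(s^w),f_w\tau\rangle,
\]
and similarly on the centralizer side. The signs $\epsilon$ are unchanged, since $\sigma$ is $F$-equivariant and preserves $\fq$-ranks. Property (2) concerns $s=1$. Frobenius eigenvalues are intrinsic and invariant under $F$-equivariant isomorphisms, because they come from the cohomology of Deligne--Lusztig varieties and $\sigma$ transports these varieties. The principal-series Hecke-algebra labels are transported compatibly, since $\sigma$ maps an $F$-stable Borel to an $F$-stable Borel and induces an isomorphism of Iwahori--Hecke algebras preserving standard generators. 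Property (3), central twists, holds because $\sigma^*$ maps $Z(G^*)$ to $Z(G^*)$ compatibly with $\widehat z\mapsto\widehat z\circ\sigma^{-1}$. Property (4), Levi compatibility, holds because $\sigma$ transports a Levi $\bM$ with $\bH\le\bM^*$ to $\sigma(\bM)$ with $C(s^w)\le\sigma^*(\bM^*)$, and Lusztig induction commutes with $\sigma$. Properties (5), (6) and (7), about $\fE_8$, central quotients and products, are similarly preserved by transport.

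By uniqueness, the transported family equals the original one. Specializing to $\bG_1=\bG_2=\bL$ and $\sigma=\sigma_w$ gives the formula.

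The main obstacle is making the uniqueness argument legitimate. Theorem~\ref{thm:JD-connected-center} characterizes a family indexed by all groups simultaneously, so the transported object has to be a family on all connected-centre groups. I would handle this as follows. For a fixed $\sigma$, define $\widetilde J$ to be $J$ itself on every pair other than $(\bL,s)$, and to be the transport on $(\bL,s)$. The compatibility properties then relate $(\bL,s)$ to other groups: Levis, quotients and products. So one must check that transporting all of those along the restrictions of $\sigma$ is consistent.

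The cleaner route is to transport along $\sigma$ simultaneously on all subquotients of $\bL$, namely its Levi subgroups, its central quotients by $F$-stable central tori, and its product factors, while keeping $J$ unchanged on groups unrelated to $\bL$. I would also check that the $\fE_8$ normalization (5) is insensitive to $\sigma$, since any such $\sigma$ on $\fE_8$ is inner up to $F$-stable torus conjugation. If this bookkeeping becomes awkward, the fallback is to invoke the Digne--Michel proof directly. That proof shows $J$ is determined inductively by (1) plus Harish--Chandra data and Frobenius eigenvalues, all of which are transported by $\sigma$.
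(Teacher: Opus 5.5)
Your proposal follows the paper's proof: transport the Digne--Michel bijection along the dual pair $(\sigma_w,\sigma_{w^*})$, check that properties (1)--(7) of Theorem~\ref{thm:JD-connected-center} survive the transport, and conclude by uniqueness. The paper does (1) explicitly via $R_{\sigma_{w^*}(\bT^*)}^{\bL}(s^w)=R_{\bT^*}^{\bL}(s)\circ\sigma_w^{-1}$ and treats the remaining properties as transport of structure. The family-level issue you flag is real, and the paper passes over it; but transporting only on subquotients of $\bL$ does not resolve it, because properties (4), (6) and (7) also tie $\bL$ to groups containing it as a Levi subgroup, to its central extensions, and to products $\bL\times\bK$, on which $\sigma$ need not extend. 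Your fallback, rerunning Digne--Michel's inductive determination of $J_s^L$, is the way to make the argument airtight.
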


\begin{proof}
Changing the chosen representatives of $w$ and $w^*$ changes
$\sigma_w$ and $\sigma_{w^*}$ only by inner automorphisms of $\bL$ and
$\bL^*$, respectively.  Hence the induced maps on irreducible characters and
on unipotent characters are independent of these choices, after the usual
identification of characters under inner conjugacy.

The automorphisms $\sigma_w$ and $\sigma_{w^*}$ are dual to one another.
Therefore Deligne--Lusztig induction is transported by these automorphisms:
for every $F^*$--stable maximal torus $\bT^*\leq \bH_s$,
\[
R_{\sigma_{w^*}(\bT^*)}^{\bL}(s^w)
=
R_{\bT^*}^{\bL}(s)\circ\sigma_w^{-1},
\]
and similarly
\[
R_{\sigma_{w^*}(\bT^*)}^{\bH_{s^w}}(1)
=
R_{\bT^*}^{\bH_s}(1)\circ\sigma_{w^*}^{-1}.
\]
It follows at once that $f_w$ maps $\mathcal{E}(L,s)$ bijectively onto
$\mathcal{E}(L,s^w)$, and that $f_{w^*}$ maps $\Uch(H_s)$ bijectively onto
$\Uch(H_{s^w})$.

Define
\[
\widetilde J_s^L
:=
f_{w^*}^{-1}\circ J_{s^w}^{L}\circ f_w
:
\mathcal{E}(L,s)\longrightarrow \Uch(H_s).
\]
We claim that $\widetilde J_s^L$ satisfies the same Digne--Michel
normalizing conditions as $J_s^L$.  For the first condition, let
$\rho\in\mathcal{E}(L,s)$ and let $\bT^*\leq \bH_s$ be an $F^*$--stable
maximal torus.  Using the preceding functoriality and then the defining
property of $J_{s^w}^L$, we obtain
\begin{align*}
\bigl\langle R_{\bT^*}^{\bL}(s),\rho\bigr\rangle_L
&=
\bigl\langle
R_{\sigma_{w^*}(\bT^*)}^{\bL}(s^w),
\rho\circ\sigma_w^{-1}
\bigr\rangle_L                                      \\
&=
\epsilon_{\bL}\epsilon_{\bH_{s^w}}
\bigl\langle
R_{\sigma_{w^*}(\bT^*)}^{\bH_{s^w}}(1),
J_{s^w}^L(\rho\circ\sigma_w^{-1})
\bigr\rangle_{H_{s^w}}                              \\
&=
\epsilon_{\bL}\epsilon_{\bH_s}
\bigl\langle
R_{\bT^*}^{\bH_s}(1),
\widetilde J_s^L(\rho)
\bigr\rangle_{H_s}.
\end{align*}
Here $\epsilon_{\bH_s}=\epsilon_{\bH_{s^w}}$, since
$\sigma_{w^*}$ identifies $\bH_s$ with $\bH_{s^w}$.

The remaining Digne--Michel conditions are preserved by the same transport
of structure.  The condition for $s=1$ is preserved because the relevant
Deligne--Lusztig varieties, Frobenius actions, and Iwahori--Hecke algebras
are identified by $\sigma_w$ and $\sigma_{w^*}$.  The compatibility with
central characters follows from the fact that, if $z\in Z(L^*)$, then the
linear character $\widehat z\circ\sigma_w^{-1}$ corresponds to
$\widehat{\sigma_{w^*}(z)}$.  Compatibility with Harish--Chandra induction
is transported by the identities
\[
f_w\circ R_{\bM}^{\bL}
=
R_{\sigma_w(\bM)}^{\bL}\circ f_w
\]
and the corresponding identity on the dual side.  The exceptional
$\mathsf E_8$ conditions, the compatibility with central quotients, and the
direct-product condition are likewise invariant under the same
$F$--equivariant isomorphisms.

Thus the family obtained from the Digne--Michel Jordan decomposition by
transport through $(\sigma_w,\sigma_{w^*})$ again satisfies the
Digne--Michel characterizing properties.  By the uniqueness assertion in
Theorem~\ref{thm:JD-connected-center}, it must coincide with the
original family.  Hence
\[
\widetilde J_s^L=J_s^L,
\]
which is precisely
\[
J_s^L
=
f_{w^*}^{-1}\circ J_{s^w}^{L}\circ f_w .
\]
Equivalently,
\[
J_{s^w}^{L}\bigl(\tau\circ\sigma_w^{-1}\bigr)
=
J_s^L(\tau)\circ\sigma_{w^*}^{-1}
\]
for all $\tau\in\mathcal{E}(L,s)$.
\end{proof}

\begin{proposition}\label{prop:weyl-isom-connected-levi-center}
Let $\bG$ be a connected reductive group over $\mathbb F_q$, and let
$\bL$ be an $F$--stable Levi factor of an $F$--stable parabolic subgroup of
$\bG$ such that $Z(\bL)$ is connected.  Let $s\in L^*$ be semisimple, and let
\[
\tau\in \mathcal E(L,s)
\]
be a cuspidal irreducible character of $L$.  Put
\[
\bH:=C_{\bG^*}(s),\qquad
\bH_{\bL}:=C_{\bL^*}(s),
\]
and set
\[
u:=J_s^L(\tau)\in \Uch(H_{\bL}).
\]
Define
\[
W_\tau(\bG)
:=
\left\{
w\in W_{\bG}(A_{\bL})
\;\middle|\;
\tau\circ \sigma_w^{-1}=\tau
\right\},
\]
and
\[
W_u(\bH)
:=
\left\{
w^*\in W_{\bH}(A_{\bL^*})
\;\middle|\;
u\circ \sigma_{w^*}^{-1}=u
\right\},
\]
where
\[
W_{\bH}(A_{\bL^*})
=
N_H(A_{\bL^*})/H_{\bL}.
\]
Then the duality isomorphism
\[
\delta:W_{\bG}(A_{\bL})\xrightarrow{\sim} W_{\bG^*}(A_{\bL^*})
\]
restricts, after the standard identification of the stabilizer of the
$L^*$--class of $s$ with $W_{\bH}(A_{\bL^*})$, to a canonical isomorphism
\[
W_\tau(\bG)
\xrightarrow{\;\sim\;}
W_u(\bH).
\]
Equivalently,
\[
W_\tau(\bG)
\cong
W_{J_s^L(\tau)}(\bH).
\]
\end{proposition}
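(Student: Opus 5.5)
The plan is to prove the statement in two stages. First, the stabilizer of $\tau$ lies inside the stabilizer of the geometric class $[(\bT^*,s)]$. Second, on that stabilizer the equivariance of Proposition~\ref{prop:compatibility-JDs-for-Levi-with-connected-center} converts the condition ``$w$ fixes $\tau$'' into ``$w^*$ fixes $u$''.

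\emph{Step 1: reduce to the stabilizer of the class of $s$.}
Let $w\in W_\tau(\bG)$. Since $f_w$ maps $\cE(L,s)$ onto $\cE(L,s^w)$, and Lusztig series for $L$ are disjoint, $\tau\circ\sigma_w^{-1}=\tau$ forces $\cE(L,s^w)=\cE(L,s)$. Hence $s^w$ is $L^*$-conjugate to $s$, so $w^*=\delta(w)$ stabilizes the $L^*$-class of $s$. By Lemma~\ref{lem:dual-stabilizers-geometric-pairs}, $\delta$ identifies the stabilizer of $[(\bT,\theta)]$ with the stabilizer of $[(\bT^*,s)]$. By Lemma~\ref{lem:centralizer-stabilizer-geometric-pair}, the latter is identified with a subgroup of $W_{\bH}(A_{\bL^*})=N_H(A_{\bL^*})/H_{\bL}$.

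Concretely, after composing $\sigma_{w^*}$ with an inner automorphism $\Ad(l)$, $l\in L^*$, we may choose the representative $\dot w^*\in N_{G^*}(A_{\bL^*})$ so that it fixes $s$. It then lies in $N_H(A_{\bL^*})$. Conversely, every element of $N_H(A_{\bL^*})$ arises in this way. This identification is the ``standard identification'' in the statement, and it makes sense of $\delta$ as a map from $W_\tau(\bG)$ into $W_{\bH}(A_{\bL^*})$.

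\emph{Step 2: transfer the stabilizer condition.}
Take $w$ in the stabilizer of the class of $s$, normalized as above so that $s^w=s$ and $\sigma_{w^*}$ preserves $\bH_{\bL}$. Proposition~\ref{prop:compatibility-JDs-for-Levi-with-connected-center}, with $s^w=s$, gives
\[
J_s^L(\tau\circ\sigma_w^{-1})=J_s^L(\tau)\circ\sigma_{w^*}^{-1}.
\]
Since $J_s^L$ is a bijection (Theorem~\ref{thm:JD-connected-center}, which applies because $Z(\bL)$ is connected), we get
\[
\tau\circ\sigma_w^{-1}=\tau \iff u\circ\sigma_{w^*}^{-1}=u.
\]
This shows that $\delta$ maps $W_\tau(\bG)$ into $W_u(\bH)$ injectively.

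For surjectivity, let $w^*\in W_u(\bH)$. It fixes $s$ and normalizes $A_{\bL^*}$, so $w=\delta^{-1}(w^*)$ lies in the stabilizer of the class of $s$, and the same equivalence gives $w\in W_\tau(\bG)$. The resulting isomorphism is canonical because every step above is independent of the chosen representatives, by the first paragraph of the proof of Proposition~\ref{prop:compatibility-JDs-for-Levi-with-connected-center}.

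\emph{Main obstacle.}
The delicate point is the normalization in Step 1. Replacing $\sigma_{w^*}$ by $\Ad(l)\circ\sigma_{w^*}$ with $l\in L^*$, so that $s^w=s$ exactly, must change the induced map on $\Uch(H_{\bL})$ only by an element of $H_{\bL}$; and the resulting element of $N_H(A_{\bL^*})/H_{\bL}$ must be well defined.

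The first point is a concern because $l$ is determined only modulo $C_{L^*}(s)=H_{\bL}$. Here $H_{\bL}$ may be disconnected even when $Z(\bL)$ is connected; for instance, $Z(\bG)$ may be disconnected. However, Lemma~\ref{lem:connected-center-levi-centralizer} shows that $\bH_{\bL}=C_{\bL^*}(s)$ is connected, because $Z(\bL)$ is connected. So conjugation by elements of $H_{\bL}$ acts trivially on $\Uch(H_{\bL})$, and the ambiguity disappears.

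I would verify this point first, and then check that the class of $\dot w^*$ in $N_H(A_{\bL^*})/H_{\bL}$ is independent of the choice of $l$. Given that, the remaining steps are formal.
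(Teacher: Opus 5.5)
Your proposal is correct and follows the paper's proof essentially step for step. Disjointness of Lusztig series places $\delta(W_\tau(\bG))$ in the stabilizer of the $L^*$-class of $s$, which is identified with $W_{\bH}(A_{\bL^*})$ by adjusting the representative by an element of $L^*$; Proposition~\ref{prop:compatibility-JDs-for-Levi-with-connected-center} together with bijectivity of $J_s^L$ then gives both inclusions.

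Two small corrections. First, Lemmas~\ref{lem:dual-stabilizers-geometric-pairs} and \ref{lem:centralizer-stabilizer-geometric-pair} concern stabilizers of classes of pairs $[(\bT^*,s)]$, not of the $L^*$-class of $s$, so they are not the right citation; your ``concretely'' paragraph is what actually does the work. Second, your ``main obstacle'' is not a real obstacle: conjugation by any element of $H_{\bL}$ fixes every class function on $H_{\bL}$, whether or not $\bH_{\bL}$ is connected.
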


\begin{proof}
We first recall the identification of the target relative Weyl group.  The
inclusion
\[
N_H(A_{\bL^*})\subset N_{G^*}(A_{\bL^*})
\]
induces an injective homomorphism
\[
W_{\bH}(A_{\bL^*})
=
N_H(A_{\bL^*})/H_{\bL}
\longrightarrow
W_{\bG^*}(A_{\bL^*})
=
N_{G^*}(A_{\bL^*})/L^*.
\]
Its image is precisely the stabilizer of the $L^*$--conjugacy class of $s$.
Indeed, if $n^*\in N_{G^*}(A_{\bL^*})$ represents an element stabilizing the
$L^*$--class of $s$, then there is an element $\ell^*\in L^*$ such that
\[
\ell^* n^* s(n^*)^{-1}(\ell^*)^{-1}=s.
\]
Thus $\ell^* n^*\in N_H(A_{\bL^*})$, and its class modulo $H_{\bL}$ depends
only on the class of $n^*$ modulo $L^*$.

Let $w\in W_\tau(\bG)$.  Then
\[
\tau\circ\sigma_w^{-1}=\tau.
\]
If $w^*=\delta(w)$, then the usual compatibility of Lusztig series with dual
automorphisms gives
\[
\tau\circ\sigma_w^{-1}\in
\mathcal E\bigl(L,\sigma_{w^*}(s)\bigr).
\]
Since the Lusztig series of $L$ are disjoint and
$\tau\in\mathcal E(L,s)$, the elements $s$ and $\sigma_{w^*}(s)$ are
$L^*$--conjugate.  Hence $w^*$ lies in the stabilizer of the $L^*$--class of
$s$, and therefore determines an element of
$W_{\bH}(A_{\bL^*})$.  We denote this element by $\delta_s(w)$.

Choose a representative of $\delta_s(w)$ in $N_H(A_{\bL^*})$.  Then it
centralizes $s$, and hence its action preserves $\bH_{\bL}=C_{\bL^*}(s)$.
By Proposition~\ref{prop:compatibility-JDs-for-Levi-with-connected-center},
applied in the form
\[
J_s^L(\rho\circ\sigma_w^{-1})
=
J_s^L(\rho)\circ\sigma_{\delta_s(w)}^{-1},
\qquad
\rho\in\mathcal E(L,s),
\]
we obtain
\[
u\circ\sigma_{\delta_s(w)}^{-1}
=
J_s^L(\tau)\circ\sigma_{\delta_s(w)}^{-1}
=
J_s^L(\tau\circ\sigma_w^{-1})
=
J_s^L(\tau)
=
u.
\]
Thus $\delta_s(w)\in W_u(\bH)$.

Conversely, let $w^*\in W_u(\bH)$, and let
\[
w:=\delta^{-1}(w^*)
\]
under the inclusion
\[
W_{\bH}(A_{\bL^*})\hookrightarrow W_{\bG^*}(A_{\bL^*}).
\]
Since $w^*$ is represented by an element of $N_H(A_{\bL^*})$, it centralizes
$s$.  Hence $\sigma_{w^*}(s)=s$, and therefore
$\tau\circ\sigma_w^{-1}$ again lies in $\mathcal E(L,s)$.  Using the same
equivariance of the Jordan decomposition, we get
\[
J_s^L(\tau\circ\sigma_w^{-1})
=
J_s^L(\tau)\circ\sigma_{w^*}^{-1}
=
u\circ\sigma_{w^*}^{-1}
=
u
=
J_s^L(\tau).
\]
Since $J_s^L:\mathcal E(L,s)\to \Uch(H_{\bL})$ is a bijection, it follows that
\[
\tau\circ\sigma_w^{-1}=\tau.
\]
Thus $w\in W_\tau(\bG)$.

The two constructions are inverse to each other, and hence $\delta$ induces
the desired canonical isomorphism
\[
W_\tau(\bG)\cong W_u(\bH)
=
W_{J_s^L(\tau)}(\bH).
\]
\end{proof}

\begin{theorem}\label{thm:HC-endo-match-connected-centre}
Let $\bG$ be a connected reductive group over $\fq$ with a connected center.
Let $\bL$ be an $F$-stable Levi factor of an $F$-stable parabolic subgroup of $G$ and
let $\tau\in \cE(L,s)$ be a cuspidal irreducible representation of $L^F$.
Then there is a canonical isomorphism, $$\End_{G}(R_{\bL}^{\bG}(\tau))\cong\End_{H}(R_{\bH_L}^{\bH}(J_s^L(\tau))),$$
where $J_s^L$ is the unique Jordan decomposition as in Theorem \ref{thm:JD-connected-center}.
\end{theorem}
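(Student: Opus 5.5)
The plan is to reduce both sides to Howlett--Lehrer Hecke algebras and show that the underlying groups, decompositions and parameters agree.

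\textbf{Connectedness reductions.} Since \(Z(\bG)\) is connected, Lemma~\ref{lem:connected-center-levi-centralizer} gives two facts. First, \(Z(\bL)\) is connected. Second, \(\bH=C_{\bG^*}(s)\) and \(\bH_L=C_{\bL^*}(s)\) are connected. The second fact is obtained by applying the lemma to \(\bG\), and to \(\bL\), whose centre is also connected. Consequently \(R_{\bH_L}^{\bH}\) is ordinary Harish--Chandra induction in a connected reductive group. By the corollary following Theorem~\ref{fact:cuspidal}, \(\bH_L\) is an \(F^*\)-stable Levi subgroup of \(\bH\). Finally, \(u=J_s^L(\tau)\) is cuspidal by Theorem~\ref{fact:cuspidal}.

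Theorem~\ref{thm:Howlett-Lehrer} therefore applies to both pairs:
\begin{itemize}
\item \(\End_G(R_{\bL}^{\bG}(\tau))\) is the Howlett--Lehrer algebra \(\cH(W_\tau(\bG),\{p_r\})\);
\item \(\End_H(R_{\bH_L}^{\bH}(u))\) is the Howlett--Lehrer algebra \(\cH(W_u(\bH),\{p'_r\})\).
\end{itemize}
In both cases the cocycle is trivial. Proposition~\ref{prop:weyl-isom-connected-levi-center} supplies the canonical isomorphism \(\delta_s:W_\tau(\bG)\xrightarrow{\sim}W_u(\bH)\). It remains to show that \(\delta_s\) matches the Howlett--Lehrer structures.

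\textbf{Matching the Coxeter part.} Write \(W_\tau(\bG)=W(\Phi_\tau)\rtimes\Omega_\tau\), and similarly \(W_u(\bH)=W(\Phi_u)\rtimes\Omega_u\). The reflection subgroup \(W(\Phi_\tau)\) is generated by the reflections \(r\) attached to the Levi subgroups \(\bM\supseteq\bL\) of semisimple rank one relative to \(\bL\) for which \(R_{\bL}^{\bM}(\tau)\) is reducible.

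To transfer this to the dual side, I will do the following for each such \(\bM\). Apply the Digne--Michel bijection \(J_s^M\) and property~(1) of Theorem~\ref{thm:JD-connected-center}. Expand \(R_{\bL}^{\bM}(\tau)\) through its uniform projection, using transitivity of Deligne--Lusztig induction on both sides. Together these show that \(J_s^M\) sends the constituents of \(R_{\bL}^{\bM}(\tau)\) to those of \(R_{\bH_L}^{\bH_M}(u)\), where \(\bH_M=C_{\bM^*}(s)\).

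The rank-one situation leaves little room. There are at most two constituents, and the relevant groups \(W_\tau(\bM)\) and \(W_u(\bH_M)\) have order at most \(2\) and are identified by \(\delta_s\). Reducibility is then equivalent on the two sides. Hence \(\delta_s\) carries \(\Phi_\tau\) onto \(\Phi_u\), carries simple reflections to simple reflections, and carries \(\Omega_\tau\) onto \(\Omega_u\), since these are the length-zero stabilisers of the positive systems.

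\textbf{Matching the parameters.} This is the main obstacle. For a simple reflection \(r\), the parameter \(p_r\) is the ratio of the degrees of the two constituents of \(R_{\bL}^{\bM}(\tau)\). Equivalently, by Lusztig's degree formula \eqref{eq:degree-schur-element}, it is the ratio of the Schur elements \(\Phi_2(p_r)\) and \(p_r^{-1}\Phi_2(p_r)\).

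By property~(1) of Theorem~\ref{thm:JD-connected-center}, together with the degree formula for Jordan decomposition,
\[
\rho(1)=|M^*:H_M|_{p'}\,J_s^M(\rho)(1).
\]
So the ratio of degrees is preserved by \(J_s^M\). It follows that \(p_r=p'_{\delta_s(r)}\), provided the larger constituent corresponds to the larger one. That compatibility holds because \(J_s^M\) preserves the multiplicity of the trivial-torus term, which identifies the constituent matching the sign character.

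If this bookkeeping proves delicate, I would instead invoke Lusztig's explicit parameter tables \cite[Thm.~8.6, Table~II]{Lus84}. Those tables show that the parameters of \((L,\tau)\) coincide with those of the unipotent cuspidal pair \((H_L,J_s^L(\tau))\).

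\textbf{Assembling the isomorphism.} With the groups, the Coxeter data and the parameters matched, \(\delta_s\) induces an isomorphism \(T_w\mapsto T_{\delta_s(w)}\) of Howlett--Lehrer algebras. This isomorphism is canonical, since \(\delta_s\) is canonical and the normalisation of the basis \(T_w\) is fixed by the positivity convention on both sides. Composing with the identifications from Theorem~\ref{thm:Howlett-Lehrer} yields the asserted canonical isomorphism of endomorphism algebras.
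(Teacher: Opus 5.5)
Your route is genuinely more detailed than the paper's. The paper's proof is a single line. Proposition~\ref{prop:weyl-isom-connected-levi-center} gives $W_\tau(\bG)\cong W_u(\bH)$, and Theorem~\ref{thm:Howlett-Lehrer} describes each endomorphism algebra through its relative Weyl group. The paper never checks that reflection subgroups, simple reflections and parameters correspond. On its own, that argument gives only an abstract isomorphism of split semisimple algebras, $\End_G(R_{\bL}^{\bG}(\tau))\cong\C[W_\tau(\bG)]\cong\C[W_u(\bH)]\cong\End_H(R_{\bH_L}^{\bH}(u))$, via Tits deformation. That isomorphism is neither canonical nor sensitive to parameters, yet the paper later uses the theorem as a parameter-preserving identification (Lemma~\ref{lem:HC-series-map-is-JD} and \S\ref{sec:finite-rank-one-parameter-comparison}). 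You match $\Phi_\tau$ with $\Phi_u$, simple reflections with simple reflections, and $p_r$ with $p'_{\delta_s(r)}$, and then send $T_w\mapsto T_{\delta_s(w)}$. That proves the stronger statement. Your canonicity claim is also justified: for $p_r>1$, the normalised $T_r$ is the unique element of the line spanned by the Howlett--Lehrer basis element $B_r$ that satisfies the quadratic relation.

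One step, however, does not work as written. Property~(1) of Theorem~\ref{thm:JD-connected-center} controls only uniform projections, and uniform projections do not detect Harish--Chandra series, because inside a family they satisfy linear relations. For example, in $\mathrm{Sp}_4(q)$ the cuspidal unipotent character of degree $\tfrac12q(q-1)^2$ plus the one of degree $\tfrac12q(q+1)^2$ has the same uniform projection as the sum of the two of degree $\tfrac12q(q^2+1)$. So knowing that $J_s^M(\pi_1)+J_s^M(\pi_2)$ and $u_1+u_2$ have equal uniform projections does not give $J_s^M(\{\pi_1,\pi_2\})=\{u_1,u_2\}$.

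This correspondence has to be imported from the literature. One option is the Harish--Chandra compatibility of the Digne--Michel bijection \cite[Theorem~4.7.5]{Book:GeckandMalle}, which the paper itself invokes in Lemma~\ref{lem:HC-series-map-is-JD}. The other is your fallback, Lusztig's parameter results \cite[Thm.~8.6]{Lus84}. Granted that, the parameter step is fine. Your caveat about ``the larger constituent'' is then unnecessary, since $J_s^M$ rescales every degree in $\cE(M,s)$ by the same factor $|M^*:H_M|_{p'}$.

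The Coxeter-part matching needs no constituent correspondence at all. By Howlett--Lehrer, $\dim\End_M(R_{\bL}^{\bM}(\tau))=|W_\tau(\bM)|$, so reducibility means exactly $|W_\tau(\bM)|=2$. Proposition~\ref{prop:weyl-isom-connected-levi-center}, applied inside $\bM$, gives $W_\tau(\bM)\cong W_u(\bH_M)$. What that step does need, and you should state it, is that the maximal split torus of $Z(\bH_L)^\circ$ equals $A_{\bL^*}$; this is condition~(2) of Theorem~\ref{fact:cuspidal}. Only then is every $F^*$-stable Levi subgroup of $\bH$ (of an $F^*$-stable parabolic) containing $\bH_L$ of the form $C_{\bM^*}(s)$, so that every simple reflection of $W_u(\bH)$ comes from some $\bM$.
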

\begin{proof}
The Theorem follows from Proposition \ref{prop:weyl-isom-connected-levi-center} and Theorem \ref{thm:Howlett-Lehrer}.
\end{proof}
\subsection{General case}

We now remove the hypothesis that the centre of the Levi subgroup is connected.
Let $\bL$ be an $F$--stable Levi subgroup of $\bG$.  Choose an $F$--stable
torus $\bS$ containing $Z(\bL)$, and form
\[
\bG'=(\bG\times \bS)/\{(z,z^{-1})\mid z\in Z(\bG)\}.
\]
Let $i:\bG\hookrightarrow \bG'$ be the regular embedding induced by
$g\mapsto (g,1)$, and let $\bS'$ be the image of $\{1\}\times \bS$ in
$\bG'$.  Then $Z(\bG')=\bS'$ is connected.  Put
\[
\bL'=\bL\cdot \bS'.
\]
Then $\bL'$ is an $F$--stable Levi subgroup of $\bG'$, and the restriction
\[
i_L:\bL\hookrightarrow \bL'
\]
is a regular embedding.  We write, as usual,
\[
L=\bL^F,\qquad L'=\bL'^F.
\]

Let
\[
i_L^*:\bL'^*\twoheadrightarrow \bL^*
\]
be the dual epimorphism, and let
\[
\bK=\ker(i_L^*).
\]
For $s\in L^*$ choose $s'\in L'^*$ such that $i_L^*(s')=s$.  Put
\[
\bH_L=C_{\bL^*}(s),\qquad
\bH_{L'}=C_{\bL'^*}(s').
\]
By Lusztig's construction for a regular embedding, we have a surjective map
\[
J_s^L:\mathcal E(L,s)\longrightarrow
\Uch(H_L^\circ)/(H_L/H_L^\circ).
\]
Its fibres are the diagonal-automorphism orbits on $\mathcal E(L,s)$; with
the above regular embedding, these may be described as the $L'/L$--orbits.
For $\tau\in\mathcal E(L,s)$, we denote its $L'/L$--orbit by
\[
[\tau]_{L'}.
\]

We shall use the following equivariance property of the disconnected Jordan
map.

\begin{lemma}\label{lem:disconnected-JD-Weyl-equivariance}
Let $w\in W_{\bG}(A_{\bL})$, and let
\[
w^*=\delta(w)\in W_{\bG^*}(A_{\bL^*})
\]
be its dual element.  Suppose that $w^*$ stabilizes the $L^*$--conjugacy
class of $s$.  Under the standard identification of this stabilizer with
$W_{\bH}(A_{\bL^*})$, where $\bH=C_{\bG^*}(s)$, let the corresponding element
be denoted by $w_s^*$.  Then, for every $\rho\in\mathcal E(L,s)$,
\[
J_s^L(\rho\circ\sigma_w^{-1})
=
w_s^*\cdot J_s^L(\rho)
\]
as elements of
\[
\Uch(H_L^\circ)/(H_L/H_L^\circ).
\]
\end{lemma}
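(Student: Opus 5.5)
The plan is to reduce to the connected-centre statement, Proposition~\ref{prop:compatibility-JDs-for-Levi-with-connected-center}, applied to \(\bL'\), using the explicit construction of Lusztig's orbit-valued map recalled after Theorem~\ref{thm:Lusztig-orbit-JD-disconnected-centralizers}. The first step is to lift \(w\) to the regular embedding. Since \(\bS'=Z(\bG')\) is central, \(A_{\bL'}\) is generated by \(A_{\bL}\) and the split part of \(\bS'\). Moreover \(N_{G'}(A_{\bL'})=N_G(A_{\bL})\cdot S'\). Hence \(W_{\bG}(A_{\bL})\cong W_{\bG'}(A_{\bL'})\). A representative \(\dot w\in N_G(A_{\bL})\) therefore gives an \(F\)-equivariant automorphism \(\sigma'_w\) of \(\bL'\) extending \(\sigma_w\) and acting trivially on \(\bS'\).

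Dually, \(\sigma'_{w^*}\) is an automorphism of \(\bL'^*\) commuting with \(i_L^*\) and acting trivially on \(\bK\subseteq Z(\bG'^*)\). Consequently \(s'^w:=\sigma'_{w^*}(s')\) is a lift of \(s^w\).

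The second step is to track \(\rho\) through Lusztig's construction. Choose \(\rho'\in\cE(L',s'k)\) with \(\rho\subseteq\rho'|_L\); after a central twist by \(K\) we may take \(k=1\). Then \(\rho'\circ\sigma_w'^{-1}\) lies above \(\rho\circ\sigma_w^{-1}\) and belongs to \(\cE(L',s'^w)\). Proposition~\ref{prop:compatibility-JDs-for-Levi-with-connected-center}, applied to \(\bL'\) (which has connected centre), gives
\[
J^{L'}_{s'^w}\bigl(\rho'\circ\sigma_w'^{-1}\bigr)=J^{L'}_{s'}(\rho')\circ\sigma_{w^*}'^{-1}.
\]
The identification of Proposition~\ref{prop:H-action-on-unipotent-characters}, \(\Uch(H_{L'})\cong\Uch(H_L^\circ)\) by pullback along \(i_L^*\), intertwines \(\sigma'_{w^*}\) with \(\sigma_{w^*}\), because the two automorphisms commute with \(i_L^*\). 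Thus the orbit attached to \(\rho\circ\sigma_w^{-1}\) is the \(\sigma_{w^*}\)-transport of the orbit attached to \(\rho\), viewed in \(\Uch(C_{\bL^*}(s^w)^\circ)\).

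The third step returns from \(s^w\) to \(s\). By hypothesis there is \(\ell^*\in L^*\) with \(\ell^*\sigma_{w^*}(s)\ell^{*-1}=s\), and \(\Ad(\ell^*)\circ\sigma_{w^*}\) is a representative of \(w_s^*\) normalizing \(\bH_L\). Conjugation by \(\ell^*\) is inner in \(L^*\). Its effect on Lusztig's construction therefore amounts to replacing \(s'^w\) by an \(L'^*\)-conjugate times an element of \(K\). Such a replacement changes the identification only through the \(K_{s'}\cong H_L/H_L^\circ\)-action, which Lemma~\ref{lem:regular-embedding-JD-equivariance} shows is harmless after passing to orbits. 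This yields \(J_s^L(\rho\circ\sigma_w^{-1})=w_s^*\cdot J_s^L(\rho)\) in \(\Uch(H_L^\circ)/(H_L/H_L^\circ)\).

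The main obstacle is making this last step rigorous. One must check that the choice of lift \(\ell'^*\in L'^*\) of \(\ell^*\), and of the element of \(K\) relating \(\Ad(\ell'^*)(s'^w)\) to \(s'\), alters the result only by the component-group action. One must also check that Lusztig's map is independent of the lift \(s'\), up to exactly that action. I would first try to verify this directly from the proof of Lemma~\ref{lem:H-component-group}, where the cocycle \(s'^{-1}\dot x s'\dot x^{-1}\in\bK\) appears, combined with the central-twist compatibility (3) in Theorem~\ref{thm:JD-connected-center}.
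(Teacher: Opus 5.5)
Your proposal is correct and follows the paper's route. You lift \(w\) to the connected-centre Levi \(\bL'\), apply Proposition~\ref{prop:compatibility-JDs-for-Levi-with-connected-center} there, and push the result through Lusztig's construction. The discrepancy \(s'\mapsto s'k\) with \(k\in K\) is then absorbed by the central-twist property~(3) of Theorem~\ref{thm:JD-connected-center} and by passing to \(H_L/H_L^\circ\)-orbits. The paper instead lifts a representative of \(w_s^*\) directly, so that \(\widetilde w^*(s')=s'k_w\), whereas you factor it as \(\Ad(\ell^*)\circ\sigma_{w^*}\); your explicit appeal to property~(3) is exactly what fills in the paper's ``each step is natural''.

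One small correction: the equality \(N_{G'}(A_{\bL'})=N_G(A_{\bL})\cdot S'\) fails on rational points in general. For example, \(G'\neq G\,S'\) already for \(\mathrm{SL}_2\subset\GL_2\) with \(q\) odd. The isomorphism \(W_{\bG}(A_{\bL})\cong W_{\bG'}(A_{\bL'})\) still holds, because Lang's theorem gives \(G'=G\,T'\) for any \(F\)-stable maximal torus \(\bT'\subseteq\bL'\).
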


\begin{proof}
The assertion is a formal consequence of Lusztig's construction of
$J_s^L$ by means of the regular embedding $i_L:\bL\hookrightarrow \bL'$,
together with the equivariance of the connected-centre Jordan decomposition.

Indeed, the element $w$ also acts on $\bL'$, since $\bL'=\bL\cdot \bS'$ and
$\bS'$ is central in $\bG'$.  Let $\widetilde w^*$ be the corresponding
dual element acting on $\bL'^*$.  Since $i_L^*(s')=s$ and $w_s^*$ centralizes
$s$, the element $\widetilde w^*$ need not fix $s'$ itself; rather,
\[
\widetilde w^*(s')=s'k_w
\]
for a uniquely determined element $k_w\in K$.  Thus $\widetilde w^*$ permutes
the union
\[
\bigcup_{k\in K}\mathcal E(L',s'k),
\]
and this is precisely the ambiguity accounted for in Lusztig's construction
of the disconnected Jordan map.

Each step in the construction of $J_s^L$ is natural with respect to the
automorphisms induced by $w$ and $\widetilde w^*$: the passage to
$L'/L$--orbits, the correspondence between $L'/L$--orbits in
$\mathcal E(L,s)$ and $K$--orbits in the union
$\bigcup_{k\in K}\mathcal E(L',s'k)$, the passage from $K$ to $K_{s'}$, and
the identification
\[
K_{s'}\cong H_L/H_L^\circ.
\]
The only non-formal part is the connected-centre Jordan decomposition
\[
J_{s'}^{L'}:\mathcal E(L',s')\longrightarrow \Uch(H_{L'}),
\]
and its equivariance is exactly Proposition
\ref{prop:compatibility-JDs-for-Levi-with-connected-center}, applied to the
Levi subgroup $\bL'$ of $\bG'$.

Therefore the disconnected map $J_s^L$ is equivariant for the induced Weyl
actions, giving
\[
J_s^L(\rho\circ\sigma_w^{-1})
=
w_s^*\cdot J_s^L(\rho).
\]
\end{proof}

\begin{theorem}\label{thm:weyl-isom-general-levi}
Let $\bG$ be a connected reductive group over $\mathbb F_q$, and let
$\bL$ be an $F$--stable Levi factor of an $F$--stable parabolic subgroup of
$\bG$.  Let $s\in L^*$ be semisimple, and let
\[
\tau\in\mathcal E(L,s)
\]
be cuspidal.  Put
\[
\bH=C_{\bG^*}(s),\qquad
\bH_L=C_{\bL^*}(s),
\]
and let
\[
\bar u:=J_s^L(\tau)\in \Uch(H_L^\circ)/(H_L/H_L^\circ).
\]
Define
\[
\Stab_{W_{\bG}(A_{\bL})}([\tau]_{L'})
=
\left\{
w\in W_{\bG}(A_{\bL})
\;\middle|\;
[\tau\circ\sigma_w^{-1}]_{L'}=[\tau]_{L'}
\right\},
\]
and
\[
W_{\bar u}(\bH)
=
\left\{
w^*\in W_{\bH}(A_{\bL^*})
\;\middle|\;
w^*\cdot\bar u=\bar u
\right\}.
\]
Then the duality isomorphism
\[
\delta:W_{\bG}(A_{\bL})\xrightarrow{\sim}W_{\bG^*}(A_{\bL^*})
\]
restricts to a canonical isomorphism
\[
\Stab_{W_{\bG}(A_{\bL})}([\tau]_{L'})
\;\xrightarrow{\;\sim\;}\;
W_{\bar u}(\bH).
\]
Equivalently,
\[
\Stab_{W_{\bG}(A_{\bL})}([\tau]_{L'})
\cong
W_{J_s^L(\tau)}(\bH),
\]
where the right-hand side is the stabilizer of the
$H_L/H_L^\circ$--orbit \(J_s^L(\tau)\).
\end{theorem}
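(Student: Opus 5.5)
The plan follows the proof of Proposition~\ref{prop:weyl-isom-connected-levi-center}. The only changes are that Lemma~\ref{lem:disconnected-JD-Weyl-equivariance} replaces the connected-centre equivariance, and that injectivity of $J_s^L$ is replaced by the fibre description in Theorem~\ref{thm:Lusztig-orbit-JD-disconnected-centralizers}(1).

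\textbf{Step 1: the target group.} Exactly as in the proof of Proposition~\ref{prop:weyl-isom-connected-levi-center}, the inclusion $N_H(A_{\bL^*})\subset N_{G^*}(A_{\bL^*})$ identifies $W_{\bH}(A_{\bL^*})$ with the stabilizer in $W_{\bG^*}(A_{\bL^*})$ of the $L^*$-class of $s$. Here $A_{\bL^*}$ is central in $\bH_L^\circ$, and $C_{\bH}(A_{\bL^*})=\bH_L$. That argument did not use connectedness of centres, so it carries over unchanged. For an element $w^*$ in this stabilizer, write $w^*_s$ for its image.

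\textbf{Step 2: $\delta$ maps $\Stab_{W_{\bG}(A_{\bL})}([\tau]_{L'})$ into $W_{\bar u}(\bH)$.} Let $w$ fix $[\tau]_{L'}$. Then $\tau\circ\sigma_w^{-1}={}^{x}\tau$ for some $x\in L'$. Diagonal automorphisms preserve Lusztig series, so $\tau\circ\sigma_w^{-1}\in\cE(L,s)$. On the other hand, $\tau\circ\sigma_w^{-1}$ lies in $\cE(L,\sigma_{w^*}(s))$, by the functoriality used in the proof of Proposition~\ref{prop:compatibility-JDs-for-Levi-with-connected-center}. Disjointness of Lusztig series then forces $\sigma_{w^*}(s)$ to be $L^*$-conjugate to $s$. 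Hence $w^*_s\in W_{\bH}(A_{\bL^*})$ is defined. Lemma~\ref{lem:disconnected-JD-Weyl-equivariance} gives
\[
w^*_s\cdot\bar u=J_s^L(\tau\circ\sigma_w^{-1})=J_s^L({}^x\tau)=\bar u,
\]
where the last equality holds because the fibres of $J_s^L$ are unions of $L'/L$-orbits.

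\textbf{Step 3: the converse inclusion.} Let $w^*\in W_{\bar u}(\bH)$ and put $w=\delta^{-1}(w^*)$. A representative of $w^*$ in $N_H(A_{\bL^*})$ centralizes $s$, so $\tau\circ\sigma_w^{-1}\in\cE(L,s)$. Lemma~\ref{lem:disconnected-JD-Weyl-equivariance} gives $J_s^L(\tau\circ\sigma_w^{-1})=w^*\cdot\bar u=\bar u=J_s^L(\tau)$. Theorem~\ref{thm:Lusztig-orbit-JD-disconnected-centralizers}(1) says the fibres of $J_s^L$ are exactly the $G_{\ad}/\pi(G)$-orbits. With the chosen regular embedding these are the $L'/L$-orbits: by Remark~\ref{rmk:regular-embedding-adjoint-action}, applied to $L$, the map $L'/L\to L_{\ad}/\pi(L)$ is surjective. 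Therefore $\tau\circ\sigma_w^{-1}\in[\tau]_{L'}$, and so $w$ fixes $[\tau]_{L'}$.

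\textbf{Step 4: conclusion.} The two maps are restrictions of the group isomorphism $\delta$, combined with the identification of Step~1, so they are mutually inverse. Hence they give the required canonical isomorphism.

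\textbf{Main obstacle.} The delicate point is the bookkeeping of where the orbits live. Lemma~\ref{lem:disconnected-JD-Weyl-equivariance} is stated for $J_s^L$ with $\bL$ the Levi subgroup and fixed $s$, but $\sigma_w$ need not fix $s$, only its $L^*$-class. One must therefore check that the $L^*$-conjugation needed to return from $\sigma_{w^*}(s)$ to $s$ acts trivially on the set of $H_L/H_L^\circ$-orbits. It does so because conjugation by $H_L$ acts on $\Uch(H_L^\circ)$ through the component group. I would verify this by first modifying the representative of $w^*$ by an element of $L^*$, so that it actually centralizes $s$, before applying the lemma.
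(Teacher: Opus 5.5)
Your proposal is correct and follows the paper's proof essentially step for step. It uses the same identification of $W_{\bH}(A_{\bL^*})$ with the stabilizer of the $L^*$-class of $s$, applies Lemma~\ref{lem:disconnected-JD-Weyl-equivariance} in both directions in the same way, and makes the same appeal to the fibres of $J_s^L$ being the $L'/L$-orbits. Your justification of that last point, that $L'/L\to L_{\ad}/\pi(L)$ is surjective by Remark~\ref{rmk:regular-embedding-adjoint-action}, only makes explicit what the paper asserts without comment.
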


\begin{proof}
We first recall the relevant identification on the dual side.  The natural
map
\[
W_{\bH}(A_{\bL^*})
=
N_H(A_{\bL^*})/H_L
\longrightarrow
W_{\bG^*}(A_{\bL^*})
\]
is injective, and its image is the stabilizer of the $L^*$--conjugacy class
of $s$.  Indeed, if $n^*\in N_{G^*}(A_{\bL^*})$ represents an element
stabilizing the $L^*$--class of $s$, then for some $\ell^*\in L^*$ one has
\[
\ell^* n^* s(n^*)^{-1}(\ell^*)^{-1}=s.
\]
Thus $\ell^*n^*\in N_H(A_{\bL^*})$, and its class modulo $H_L$ depends only
on the class of $n^*$ modulo $L^*$.

Let
\[
w\in \Stab_{W_{\bG}(A_{\bL})}([\tau]_{L'}).
\]
Then
\[
[\tau\circ\sigma_w^{-1}]_{L'}=[\tau]_{L'}.
\]
In particular, $\tau\circ\sigma_w^{-1}$ belongs to the same disconnected
Lusztig series $\mathcal E(L,s)$.  On the other hand, the usual compatibility
of Lusztig series with dual automorphisms gives
\[
\tau\circ\sigma_w^{-1}
\in
\mathcal E(L,\delta(w)(s)).
\]
Since Lusztig series are disjoint, $\delta(w)(s)$ is $L^*$--conjugate to
$s$.  Hence $\delta(w)$ lies in the stabilizer of the $L^*$--class of $s$,
and therefore determines an element
\[
\delta_s(w)\in W_{\bH}(A_{\bL^*}).
\]

By Lemma~\ref{lem:disconnected-JD-Weyl-equivariance},
\[
J_s^L(\tau\circ\sigma_w^{-1})
=
\delta_s(w)\cdot J_s^L(\tau).
\]
But $J_s^L$ is constant on $L'/L$--orbits, and
\[
[\tau\circ\sigma_w^{-1}]_{L'}=[\tau]_{L'}.
\]
Therefore
\[
\delta_s(w)\cdot J_s^L(\tau)=J_s^L(\tau).
\]
Thus
\[
\delta_s(w)\in W_{\bar u}(\bH).
\]

Conversely, let
\[
w^*\in W_{\bar u}(\bH),
\]
and let $w\in W_{\bG}(A_{\bL})$ be the inverse image of $w^*$ under the
duality isomorphism, using the inclusion
\[
W_{\bH}(A_{\bL^*})
\hookrightarrow
W_{\bG^*}(A_{\bL^*}).
\]
Since $w^*$ is represented by an element of $N_H(A_{\bL^*})$, it stabilizes
$s$.  Hence
\[
\tau\circ\sigma_w^{-1}\in\mathcal E(L,s).
\]
Again by Lemma~\ref{lem:disconnected-JD-Weyl-equivariance},
\[
J_s^L(\tau\circ\sigma_w^{-1})
=
w^*\cdot J_s^L(\tau).
\]
Since $w^*$ fixes $\bar u=J_s^L(\tau)$, this gives
\[
J_s^L(\tau\circ\sigma_w^{-1})=J_s^L(\tau).
\]
The fibres of $J_s^L$ are precisely the diagonal-automorphism orbits, which
in the present regular embedding are the $L'/L$--orbits.  Therefore
\[
[\tau\circ\sigma_w^{-1}]_{L'}=[\tau]_{L'}.
\]
Thus
\[
w\in \Stab_{W_{\bG}(A_{\bL})}([\tau]_{L'}).
\]

The two constructions are inverse to one another.  Hence duality induces the
desired canonical isomorphism
\[
\Stab_{W_{\bG}(A_{\bL})}([\tau]_{L'})
\cong
W_{J_s^L(\tau)}(\bH).
\]
\end{proof}
The preceding theorem also shows that the endomorphism algebra of the
Harish--Chandra induction depends only on the fibre of the disconnected Jordan
decomposition, up to non-canonical algebra isomorphism.

\begin{corollary}\label{cor:end-algebras-same-JD-fiber}
Let $\bG$ be a connected reductive group over $\mathbb F_q$, and let
$\bL$ be an $F$--stable Levi factor of an $F$--stable parabolic subgroup of
$\bG$.  Let $s\in L^*$ be semisimple, and let
\[
\tau,\tau'\in \mathcal E(L,s)
\]
be irreducible cuspidal characters.  Suppose that
\[
J_s^L(\tau)=J_s^L(\tau')
\]
in
\[
\Uch(H_L^\circ)/(H_L/H_L^\circ),
\qquad
H_L=C_{\bL^*}(s).
\]
Then
\[
\End_G\bigl(R_{\bL}^{\bG}(\tau)\bigr)
\cong
\End_G\bigl(R_{\bL}^{\bG}(\tau')\bigr).
\]
\end{corollary}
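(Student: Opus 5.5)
Since \(J_s^L(\tau)=J_s^L(\tau')\), the fibre description in Theorem~\ref{thm:Lusztig-orbit-JD-disconnected-centralizers}(1) shows that \(\tau\) and \(\tau'\) lie in the same diagonal-automorphism orbit. With the regular embedding \(i_L:\bL\hookrightarrow\bL'\) fixed above, this means \([\tau]_{L'}=[\tau']_{L'}\). Concretely, there is \(x\in L'\) with \(\tau'={}^x\tau\). The argument then transports \(R_{\bL}^{\bG}(\tau)\) to \(R_{\bL}^{\bG}(\tau')\) by conjugation by \(x\), and proves that conjugation induces an algebra isomorphism of endomorphism algebras.

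\emph{Step 1: lift \(x\) to an automorphism of \(G\).} Recall \(\bL'=\bL\cdot\bS'\) with \(\bS'=Z(\bG')\) central in \(\bG'\). So \(x\in L'\subseteq G'\), and conjugation by \(x\) is an \(F\)-equivariant automorphism \(c_x\) of \(\bG\), since \(\bG\) is normal in \(\bG'\) (it contains the derived subgroup). This automorphism has the following properties.
\begin{itemize}
\item It preserves \(\bL\), because \(x\in\bL'\) normalizes \(\bL=\bL'\cap\bG\).
\item It preserves any \(F\)-stable parabolic \(\bP=\bL\bU\) with Levi factor \(\bL\), since \(\bP'=\bP\bS'\) is a parabolic of \(\bG'\) containing \(\bL'\).
\end{itemize}

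\emph{Step 2: transport Harish--Chandra induction.} Because \(c_x\) preserves \(\bP\) and \(\bU\), transport of structure gives
\[
{}^x R_{\bL\subseteq\bP}^{\bG}(\tau)\cong R_{\bL\subseteq\bP}^{\bG}({}^x\tau)=R_{\bL\subseteq\bP}^{\bG}(\tau'),
\]
as \(G\)-modules, where \({}^x(-)\) means precomposition with \(c_x^{-1}\). More concretely, the \(G\)-module \(\Ind_P^G(\tilde\tau)\) twisted by \(c_x\) is isomorphic to \(\Ind_P^G(\widetilde{{}^x\tau})\) via \(f\mapsto f\circ c_x^{-1}\). Twisting a module by an automorphism does not change its endomorphism algebra. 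We therefore obtain
\[
\End_G\bigl(R_{\bL}^{\bG}(\tau)\bigr)=\End_G\bigl({}^xR_{\bL}^{\bG}(\tau)\bigr)\cong\End_G\bigl(R_{\bL}^{\bG}(\tau')\bigr).
\]
This is the required isomorphism. It is non-canonical, since it depends on the choice of \(x\) modulo \(L\cdot\) stabilizer.

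\emph{Step 3: cross-check with Howlett--Lehrer data.} Theorem~\ref{thm:weyl-isom-general-levi} gives the compatibility with the Howlett--Lehrer description. The orbit stabilizers \(\Stab_{W_{\bG}(A_{\bL})}([\tau]_{L'})\) coincide for \(\tau\) and \(\tau'\), as both equal \(W_{\bar u}(\bH)\). The actual relative Weyl groups \(W_\tau\) and \(W_{\tau'}\) are conjugate under the action of \(c_x\) on \(W_{\bG}(A_{\bL})\). That action is trivial, because \(L'=L\cdot S'\cdot(\text{torus part})\) acts on \(N_G(A_{\bL})/L\) through inner elements modulo \(L\) and the central \(S'\). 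Hence \(W_\tau=W_{\tau'}\), and \(c_x\) matches the Hecke parameters.

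\emph{Main obstacle.} The main point needing care is Step 1: checking that \(c_x\) genuinely normalizes \(\bP\), so that no change of parabolic is required. If it did not, I would fall back on the independence of Harish--Chandra induction from the parabolic, which in the finite case is \cite[Theorem~3.1.?]{Book:GeckandMalle}-type Mackey/independence.
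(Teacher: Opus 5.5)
Your proposal is correct and follows essentially the same route as the paper. Equality of the orbit-valued images gives $x\in L'$ with $\tau'\simeq\tau\circ\sigma_x^{-1}$. Since $\sigma_x$ preserves $\bG$, $\bL$ and the parabolic $\bP=\bP'\cap\bG$, it carries $R_{\bL}^{\bG}(\tau)$ to $R_{\bL}^{\bG}(\tau')$ up to twisting by an automorphism, and such a twist does not change the endomorphism algebra.

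Your Step~3 and the fallback in your last paragraph are not needed. The claim $W_\tau=W_{\tau'}$ in Step~3 is nonetheless correct: write $x=\ell z$ with $\ell\in\bL$ and $z\in Z(\bG')$; then, since every $n\in N_G(A_{\bL})$ normalizes $\bL$, $x$ acts trivially on $N_G(A_{\bL})/L$.
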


\begin{proof}
Choose a regular embedding
\[
i:\bG\hookrightarrow \bG'
\]
with connected centre, and put $\bL'=\bL Z(\bG')$.  By Lusztig's construction of
the disconnected Jordan decomposition, the fibres of
\[
J_s^L:\mathcal E(L,s)\longrightarrow
\Uch(H_L^\circ)/(H_L/H_L^\circ)
\]
are precisely the diagonal-automorphism orbits, equivalently the $L'/L$--orbits
on $\mathcal E(L,s)$.  Hence the equality
\[
J_s^L(\tau)=J_s^L(\tau')
\]
implies that there exists $x\in L'$ such that
\[
\tau'\simeq \tau\circ \sigma_x^{-1},
\]
where $\sigma_x$ denotes the automorphism of $L$ induced by conjugation by $x$.

Let $\bP$ be an $F$--stable parabolic subgroup of $\bG$ with Levi factor $\bL$.
Since $\bL'$ is a Levi factor of the corresponding parabolic subgroup of $\bG'$
and $\bG$ is normalized by $\bG'$, the automorphism $\sigma_x$ preserves the pair
$(\bG,\bL)$ and transports Harish--Chandra induction.  Thus
\[
R_{\bL}^{\bG}(\tau')
\simeq
R_{\bL}^{\bG}(\tau\circ\sigma_x^{-1})
\simeq
R_{\bL}^{\bG}(\tau)\circ\sigma_x^{-1}.
\]
Twisting by the automorphism $\sigma_x$ is an equivalence of the category of
$G$--modules, and it preserves endomorphism algebras.  Therefore
\[
\End_G\bigl(R_{\bL}^{\bG}(\tau')\bigr)
\cong
\End_G\bigl(R_{\bL}^{\bG}(\tau)\circ\sigma_x^{-1}\bigr)
\cong
\End_G\bigl(R_{\bL}^{\bG}(\tau)\bigr).
\]
This proves the claim.
\end{proof}

\section{Harish--Chandra compatibility in the connected-centre case}
\label{sec:HC-compatible-JD-connected-centre}

In this section we record the Harish--Chandra compatibility of the
Digne--Michel Jordan decomposition in the connected-centre case.

Let $\bG$ be a connected reductive group over $\fq$ with connected centre, and
let $s\in G^*$ be semisimple.  We fix, once and for all, the unique
Jordan decomposition of Digne--Michel,
\[
J_s^G:\cE(G,s)\longrightarrow \Uch(H),
\qquad
H=C_{\bG^*}(s)^{F^*},
\]
as in Theorem~\ref{thm:JD-connected-center}.  If $\bL$ is an
$F$-stable Levi factor of an $F$-stable parabolic subgroup of $\bG$, then
$Z(\bL)$ is again connected.  Hence the same theorem applies to $\bL$ and gives
\[
J_s^L:\cE(L,s)\longrightarrow \Uch(H_L),
\qquad
H_L=C_{\bL^*}(s)^{F^*}.
\]

Let $\tau\in \cE(L,s)$ be cuspidal and put
\[
u_\tau:=J_s^L(\tau).
\]
By Theorem~\ref{thm:HC-endo-match-connected-centre}, there is a canonical
isomorphism of relative endomorphism algebras
\[
\End_G\!\bigl(R_{\bL}^{\bG}(\tau)\bigr)
\cong
\End_H\!\bigl(R_{\bH_L}^{\bH}(u_\tau)\bigr).
\]
Consequently, this isomorphism induces a canonical bijection between the
corresponding Harish--Chandra series,
\[
\J_{L,\tau}^{G}:
\Irr(G,(L,\tau))
\xrightarrow{\sim}
\Irr(H,(H_L,u_\tau)).
\]
Equivalently,
\[
\J_{L,\tau}^{G}
\]
is the bijection obtained by transporting the simple modules of the relative
endomorphism algebra on the $G$-side to those of the corresponding relative
endomorphism algebra on the centralizer side.
Using Proposition~\ref{prop:Lusztig-series-union-HC}, the Lusztig series
$\cE(G,s)$ is the disjoint union of the Harish--Chandra series
$\Irr(G,(L,\tau))$ with $\tau\in \cE(L,s)$ cuspidal, taken up to
$G$-conjugacy of cuspidal pairs.  We therefore define
\[
\J_{G,s}:\cE(G,s)\longrightarrow \Uch(H)
\]
by requiring
\[
\J_{G,s}|_{\Irr(G,(L,\tau))}=\J_{L,\tau}^{G}.
\]
From now on, $u_{\tau}=J_s^L(\tau)$ where $J_s^L$ is the unique Jordan decomposition as in Theorem \ref{thm:JD-connected-center} and $\J_{G,s}$ is constructed as above using this collection of bijections at the cupidal levels.
\smallskip
\begin{lemma}[The Harish--Chandra series map is a Jordan decomposition]
\label{lem:HC-series-map-is-JD}
The map
\[
\J_{G,s}:\cE(G,s)\longrightarrow \Uch(H)
\]
constructed above is a Jordan decomposition of $\cE(G,s)$.

More precisely, for every $\rho\in\cE(G,s)$ one has
\[
\J_{G,s}(\rho)=J_s^G(\rho)
\]
unless the two characters lie in one of the exceptional pairs occurring in
Proposition~\ref{prop:unique-HC-char}.  In those exceptional cases the two
possible characters have the same degree and the same multiplicities in the
relevant Deligne--Lusztig virtual characters.  Hence the possible interchange
inside such a pair does not affect the defining character-theoretic properties
of Jordan decomposition.
\end{lemma}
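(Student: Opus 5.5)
The plan is to compare \(\J_{G,s}\) with the Digne--Michel bijection \(J_s^G\) one Harish--Chandra series at a time, and then invoke Proposition~\ref{prop:unique-HC-char}.

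\textbf{Step 1: \(J_s^G\) preserves Harish--Chandra series.} Fix a cuspidal pair \((L,\tau)\) with \(\tau\in\cE(L,s)\), and put \(u_\tau=J_s^L(\tau)\). By Digne--Michel property~(4), together with the standard fact that Lusztig's Jordan decomposition intertwines \(R_{\bL}^{\bG}\) and \(R_{\bH_L}^{\bH}\) up to the sign \(\epsilon_{\bG}\epsilon_{\bH}\) (this follows from~(1) and transitivity of Deligne--Lusztig induction), \(J_s^G\) sends the constituents of \(R_{\bL}^{\bG}(\tau)\) bijectively onto the constituents of \(R_{\bH_L}^{\bH}(u_\tau)\). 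Hence \(J_s^G\) restricts to a bijection
\[
\Irr(G,(L,\tau))\xrightarrow{\sim}\Irr(H,(H_L,u_\tau)).
\]
The same holds for every intermediate Levi subgroup \(\bM\) with \(\bL\le\bM<\bG\).

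\textbf{Step 2: set up the comparison.} Let \(\phi\in\Irr(W_\tau(\bG))\) and \(\rho=\rho_\phi\). By construction, \(\J_{G,s}(\rho)\) is the character of \(H\) labelled by \(\delta(\phi)\) under the Howlett--Lehrer bijection for \((H_L,u_\tau)\), where \(\delta\) is the isomorphism of Proposition~\ref{prop:weyl-isom-connected-levi-center}. Write
\[
J_s^G(\rho)=\rho^H_{\delta(\phi')}
\]
for the \(\phi'\) determined by Step~1. The goal is to show \(\phi=\phi'\), and for this we verify the two hypotheses of Proposition~\ref{prop:unique-HC-char}. Rather than apply that proposition on \(G\), we apply it on \(H\) (whose Howlett--Lehrer theory is the disconnected/stabilizer form), or equivalently transport it back to \(G\) through \(J_s^G\). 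Proceed by induction on the semisimple rank of \(\bG\), so that \(\J_{M,s}=J_s^M\) holds for every proper Levi subgroup \(\bM\) containing \(\bL\).

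\textbf{Step 3: verify the hypotheses.} For hypothesis~(1), combine the comparison theorem (Theorem~\ref{thm:Howlett-Lehrer-comparison}) on both sides with the compatibility of \(\delta\) with the relative Weyl groups of intermediate Levis (that is, \(\delta\) restricts to \(W_\tau(\bM)\cong W_{u_\tau}(\bH_M)\)), the induction hypothesis, and Digne--Michel~(4) for \(\bM\). This gives
\[
{}^*R^{\bH}_{\bH_M}\bigl(\rho^H_{\delta(\phi)}\bigr)={}^*R^{\bH}_{\bH_M}\bigl(\rho^H_{\delta(\phi')}\bigr).
\]
For hypothesis~(2), use Lusztig's degree formula: \(J_s^G\) satisfies \(\rho(1)=|G:H|_{p'}\,J_s^G(\rho)(1)\) by~(1). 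For the \(\J\) side, compare Schur elements. The Hecke parameters on the two sides coincide by Lusztig's computation of the parameters for \(\End_G(R_{\bL}^{\bG}(\tau))\) in terms of \(u_\tau\) (Lus84, Thm.~8.6), so the degree formula~\eqref{eq:degree-schur-element} gives equal degrees.

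\textbf{Step 4: conclude.} Proposition~\ref{prop:unique-HC-char} then yields \(\phi=\phi'\), except in the \(\mathsf E_7/\mathsf E_8\) exceptional pairs. In those pairs, \(\phi\) and \(\phi'\) have equal restrictions to all proper parabolics and equal Schur elements, so the two characters have equal degrees. They also have equal multiplicities in every \(R_{\bT^*}^{\bH}(1)\), since these multiplicities are determined by the Harish--Chandra restriction data together with the Weyl-group character values that are uniform on the family. Therefore \(\J_{G,s}\) still satisfies property~(1) of Theorem~\ref{thm:JD-connected-center}.

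\textbf{Main obstacle.} I expect the hardest step to be verifying that \(\delta\) transports the Hecke algebra parameters, that is, that the isomorphism of Theorem~\ref{thm:HC-endo-match-connected-centre} is an isomorphism of Hecke algebras with the same parameters and not merely of group algebras. My first attempt would be to reduce, via the comparison theorem, to rank-one Levis \(\bM\), where each parameter is read off from the degrees of the two constituents of \(R_{\bL}^{\bM}(\tau)\). Equality of these degrees on both sides follows from Digne--Michel~(4) for \(\bM\).
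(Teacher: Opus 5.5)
Your overall route is the paper's: induction on the semisimple rank, equality of proper Harish--Chandra restrictions and of degrees, and then Proposition~\ref{prop:unique-HC-char} applied to the cuspidal pair $(H_L,u_\tau)$ of $\bH$, with only the $\mathsf E_7/\mathsf E_8$ pairs left over. Here $\bH$ is connected because $Z(\bG)$ is connected, so no disconnected Howlett--Lehrer theory is involved. Your worry about Hecke parameters is well founded. The paper takes parameter preservation from Theorem~\ref{thm:HC-endo-match-connected-centre}, whose proof only matches relative Weyl groups. Your rank-one degree-ratio argument, or Lusztig's parameter computation, is a reasonable way to supply it.

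\textbf{The gap} is in the input driving Steps~1 and~3. You need that $J_s^G$ maps $\Irr(G,(L,\tau))$ onto $\Irr(H,(H_L,u_\tau))$, and more precisely that ${}^*R^{\bH}_{\bH_M}\circ J_s^G=J_s^M\circ{}^*R^{\bG}_{\bM}$ for $\bL\le\bM\le\bG$. Neither of your justifications gives this.
\begin{itemize}
\item Property~(4) of Theorem~\ref{thm:JD-connected-center} only concerns Levi subgroups $\bL^*$ that contain the whole centralizer $\bH$. Here $\bH_L=C_{\bL^*}(s)$ is a proper Levi subgroup of $\bH$.
\item Property~(1) together with transitivity only controls the uniform projections of $J_s^G(\rho)$ and of its restrictions, and uniform projections do not detect Harish--Chandra series. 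For example, the cuspidal unipotent characters $\mathsf G_2[\theta]$ and $\mathsf G_2[\theta^2]$ of $\mathsf G_2(q)$ have the same uniform projection; this is precisely why Digne--Michel need conditions beyond~(1).
\end{itemize}
What you need is Lusztig's theorem that the Jordan decomposition is compatible with Harish--Chandra induction and restriction. The paper quotes it as Geck--Malle, Theorem~4.7.5. With it, Step~3 goes through. So does your rank-one parameter argument, provided you use the degree formula $\rho(1)=|M:H_M|_{p'}\,J_s^M(\rho)(1)$ rather than property~(4); that formula follows from~(1) because the regular character is uniform.

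Two smaller points.
\begin{itemize}
\item The equality of multiplicities in Step~4 does not follow from Harish--Chandra restriction data. Cuspidal characters, for instance, have trivial proper restrictions but nonzero multiplicities in Deligne--Lusztig characters. It comes instead from Lusztig's Fourier matrices for the exceptional families, whose two principal-series members have the same uniform projection.
\item Your induction hypothesis can only be the lemma itself, i.e.\ equality up to exceptional pairs. So you must also explain why such a swap inside a proper Levi $\bM$ does not destroy the exact equality of restrictions required in hypothesis~(1) of Proposition~\ref{prop:unique-HC-char}.
\end{itemize}
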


\begin{proof}
We argue by induction on the semisimple rank of $\bG$.  The assertion is clear
for tori.  By the usual reduction to simple adjoint factors, it is enough to
consider the case where $\bG/Z(\bG)$ is simple.

Let
\[
\rho\in\Irr(G,(L,\tau))\subseteq\cE(G,s),
\]
and put
\[
u_\tau=J_s^L(\tau),\qquad
\eta=J_s^G(\rho),\qquad
\eta'=\J_{G,s}(\rho).
\]
By the Harish--Chandra compatibility of the Digne--Michel Jordan decomposition
\cite[Theorem~4.7.5]{Book:GeckandMalle}, if
\[
\rho\in \Irr(G,(L,\tau))\subseteq \cE(G,s),
\]
then
\[
J_s^G(\rho)\in \Irr(H,(H_L,J_s^L(\tau))).
\]
Moreover, for every $F$-stable Levi subgroup $\bM$ with
$\bL\leq \bM\leq \bG$, one has
\[
{}^*R_{\bH_M}^{\bH}\bigl(J_s^G(\rho)\bigr)
=
J_s^M\!\left({}^*R_{\bM}^{\bG}(\rho)\right).
\]
By the induction hypothesis applied to $\bM$, the right-hand side agrees with
\[
\J_{M,s}\!\left({}^*R_{\bM}^{\bG}(\rho)\right),
\]
up to the exceptional ambiguity described in
Proposition~\ref{prop:unique-HC-char}.  On the other hand, the construction of
$\J_{G,s}$ and the comparison theorem give
\[
\J_{M,s}\!\left({}^*R_{\bM}^{\bG}(\rho)\right)
=
{}^*R_{\bH_M}^{\bH}\bigl(\J_{G,s}(\rho)\bigr).
\]
By the induction hypothesis applied to $\bM$, the maps $J_s^M$ and $\J_{M,s}$
agree on the constituents of ${}^*R_{\bM}^{\bG}(\rho)$, except possibly for the
exceptional pairs of Proposition~\ref{prop:unique-HC-char}.  In the exceptional
cases, the two possible constituents have the same proper Harish--Chandra
restrictions and the same Deligne--Lusztig multiplicities, so the ambiguity is
invisible to the character-theoretic tests used below.  Thus $J_s^G(\rho)$ and
$\J_{G,s}(\rho)$ have the same Harish--Chandra restrictions to all proper
Levi subgroups of $\bH$, up to precisely the exceptional ambiguity already
listed in Proposition~\ref{prop:unique-HC-char}.

It remains to compare degrees.  The degree formula for Jordan decomposition \cite[Theorem 3.2.18]{Book:GeckandMalle} and \cite[Corollary 2.6.6]{Book:GeckandMalle},
together with the degree formula for the Howlett--Lehrer parametrization and
the parameter-preserving endomorphism algebra comparison in
Theorem~\ref{thm:HC-endo-match-connected-centre}, gives
\[
J_s^G(\rho)(1)=\J_{G,s}(\rho)(1).
\]
Applying Proposition~\ref{prop:unique-HC-char} to the connected reductive
group $\bH$ and the cuspidal pair $(H_L,u_\tau)$, we obtain
\[
J_s^G(\rho)=\J_{G,s}(\rho)
\]
unless the two characters lie in one of the exceptional pairs described in
that proposition.

In the remaining cases, the two possible characters have the same degree
\(512\) in type $\mathsf E_7$ and \(4096\) in type $\mathsf E_8$, and the two
members of each pair have the same multiplicities in the relevant
Deligne--Lusztig virtual characters.  Therefore replacing one by the other
does not change the defining scalar-product identities for Jordan
decomposition.  Hence $\J_{G,s}$ is a Jordan decomposition.
\end{proof}

By Lemma~\ref{lem:HC-series-map-is-JD}, the map
\[
\J_{G,s}:\cE(G,s)\longrightarrow \Uch(H)
\]
constructed above from the Harish--Chandra series bijections is a Jordan
decomposition.  We shall use this Jordan decomposition in the following
compatibility statement.
\begin{proposition}\label{prop:HC-induction-connected-centre}
Let $\bG$ be a connected reductive group over $\fq$ with a connected centre.
Let $s\in G^{*}$ be any semisimple element and $\bL^*\leq \bG^*$  be an  $F^*$-stable  Levi factor of an $F^*$-stable parabolic subgroup of $\bG^*$ containing $s$ with dual $\bL$.
Let $\J_{G,s}:\cE(G,s)\rightarrow\Uch(H)$ be the Jordan decomposition as above. Then the following diagram commutes:
\[
\begin{tikzcd}
\mathbb{Z}\cE(G,s)\arrow[r,"\J_{G,s}"]
&\mathbb{Z}\Uch(H)\\
\mathbb{Z}\cE(M,s)\arrow[r,"\J_{M,s}"]\arrow[u,"R_{\bM}^{\bG}"]
& \mathbb{Z}\Uch(H_M)\arrow[u,"R_{\bH_M}^{\bH}"],
\end{tikzcd}
\]
where $\bL^*\leq \bM^*\leq \bG^*$ are $F^*$-stable Levi subgroups of $F^*$-stable parabolic subgroups of $\bG^*$.
\end{proposition}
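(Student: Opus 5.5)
Both legs of the diagram are additive, so it suffices to check commutativity on a single irreducible \(\rho\in\cE(M,s)\). By Proposition~\ref{prop:Lusztig-series-union-HC}, applied to \(\bM\), such a \(\rho\) lies in a Harish--Chandra series \(\Irr(M,(L_1,\tau))\) for some cuspidal pair with \(\tau\in\cE(L_1,s)\) and \(\bL_1\le\bM\). Since \(Z(\bM)\) is connected, the construction preceding Lemma~\ref{lem:HC-series-map-is-JD} applies to \(\bM\) with the same cuspidal datum \(u_\tau=J_s^{L_1}(\tau)\). It also applies to \(\bG\), using the identical cuspidal label. The plan is to reduce both sides to induction of characters of relative Weyl groups and then compare via the isomorphism \(W_\tau\cong W_{u_\tau}\).

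\textbf{Step 1 (the \(G\)-side).} Write \(\rho=I^M_{L_1,\tau}(\phi)\) with \(\phi\in\Irr(W_\tau(\bM))\). Theorem~\ref{thm:Howlett-Lehrer-comparison} gives
\[
R_{\bM}^{\bG}(\rho)=\sum_{\psi}\langle\Ind_{W_\tau(\bM)}^{W_\tau(\bG)}\phi,\psi\rangle\, I^G_{L_1,\tau}(\psi).
\]
By definition \(\J_{G,s}\) restricted to \(\Irr(G,(L_1,\tau))\) is \(\J^G_{L_1,\tau}\), which is transport of simple modules through the endomorphism-algebra isomorphism of Theorem~\ref{thm:HC-endo-match-connected-centre}. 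I would make explicit that this isomorphism is induced from \(\delta\colon W_\tau(\bG)\xrightarrow{\sim}W_{u_\tau}(\bH)\) of Proposition~\ref{prop:weyl-isom-connected-levi-center}, with matching Hecke parameters. In that case \(\J^G_{L_1,\tau}\circ I^G_{L_1,\tau}=I^H_{H_{L_1},u_\tau}\circ\delta^*\), where \(\delta^*\) transports characters along \(\delta\). The same identity holds for \(\bM\).

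\textbf{Step 2 (the \(H\)-side).} On \(\bH\), which is connected because \(Z(\bG)\) is connected (Lemma~\ref{lem:connected-center-levi-centralizer}), I would apply Theorem~\ref{thm:Howlett-Lehrer-comparison} to the cuspidal pair \((H_{L_1},u_\tau)\) and the Levi subgroup \(\bH_M\). This gives
\[
R^{\bH}_{\bH_M}\bigl(I^{H_M}(\delta^*\phi)\bigr)=\sum_{\psi'}\langle\Ind_{W_{u_\tau}(\bH_M)}^{W_{u_\tau}(\bH)}\delta^*\phi,\psi'\rangle\, I^H(\psi').
\]
It then remains to check that \(\delta\) carries the subgroup \(W_\tau(\bM)\) onto \(W_{u_\tau}(\bH_M)\) compatibly with the embeddings into \(W_\tau(\bG)\) and \(W_{u_\tau}(\bH)\). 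This follows because the isomorphism in Proposition~\ref{prop:weyl-isom-connected-levi-center} is the restriction of the single duality map \(W_{\bG}(A_{\bL_1})\to W_{\bG^*}(A_{\bL_1^*})\), and \(W_{\bM}(A_{\bL_1})\) corresponds to \(W_{\bM^*}(A_{\bL_1^*})\) under it. Induction then commutes with \(\delta^*\), and the two expansions agree term by term.

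\textbf{Main obstacle.} The delicate point is Step 1: the \(G\)-side and \(H\)-side Howlett--Lehrer parametrizations must be normalized compatibly under \(\delta\). That is, the endomorphism-algebra isomorphism must send standard basis elements \(T_w\) to \(T_{\delta(w)}\) with equal parameters, so that the Tits-deformation bijections correspond, and these normalizations must also be compatible between \(\bM\) and \(\bG\). I would first try to deduce this by building the isomorphism of Theorem~\ref{thm:HC-endo-match-connected-centre} directly from \(\delta\), together with the parameter equality implicit in the degree comparison of Lemma~\ref{lem:HC-series-map-is-JD}. The fallback is Geck's normalization of the Howlett--Lehrer basis, which makes \(\Ind\)-compatibility (Theorem~\ref{thm:Howlett-Lehrer-comparison}) hold simultaneously for all intermediate Levi subgroups on both sides.
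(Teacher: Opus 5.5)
Your proposal is correct and follows essentially the paper's own argument. The paper proves the proposition in one line, by invoking Theorem~\ref{thm:Howlett-Lehrer-comparison} together with the definition of $\J_{G,s}$ and $\J_{M,s}$ as transport along $W_\tau\cong W_{u_\tau}$; your Steps~1--2 only make explicit the checks this leaves implicit, namely that $\delta$ restricts to $W_\tau(\bM)\cong W_{u_\tau}(\bH_M)$ compatibly with the inclusions and that the comparison-normalized Howlett--Lehrer parametrizations are used on both sides. (Commutativity needs no equality of Hecke parameters once $\J$ is defined as this $\delta$-transport, so do not try to extract parameter equality from Lemma~\ref{lem:HC-series-map-is-JD}: its proof already presupposes it, which would make that step circular.)
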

\begin{proof}
The proposition follows from Theorem \ref{thm:Howlett-Lehrer-comparison} and the definition of bijections $\J_{G,s}$ and $\J_{M,s}$.
\end{proof}

\section{Preferred extensions of cuspidal unipotent characters}
\label{sec:preferred-extensions-cuspidal-unipotents}

\subsection{Preferred extensions in the quasisimple case}
Let $\mathbf H^\circ$ be a connected reductive group over $\overline{\mathbb F}_p$
equipped with a Frobenius endomorphism $F$, and put $H^\circ:=\mathbf H^{\circ F}$.
Let $\mathbf H$ be an $F$--stable (possibly disconnected) algebraic group with identity component
$\mathbf H^\circ$ such that $\pi_0(\mathbf H):=\mathbf H/\mathbf H^\circ$ is abelian, and set
$H:=\mathbf H^F$.
Let $u^\circ\in\Irr(H^\circ)$ be a cuspidal unipotent character, and write
\[
I \ :=\ I_H(u^\circ)\ =\ \{\,h\in H:\ {}^hu^\circ\simeq u^\circ\,\}.
\]

We shall use the word \emph{canonical} in this section in the following
pinned-normalized sense.  For a fixed pinning, the component action on the based
root datum is fixed.  If an invariant cuspidal unipotent character admits
several extensions to its inertia group, those extensions differ by the usual
characters of the component quotient.  Lusztig's preferred-extension
normalization, recalled below, selects one of them without any further auxiliary
choice.  Thus the extensions constructed here are canonical relative to the
fixed pinning and to Lusztig's preferred-extension convention; this is the sense
in which they are used later in the Jordan decomposition.

The semidirect-product notation in this section is used only when a pinned
semidirect component model, in the sense of
Definition~\ref{def:pinned-semidirect-component-model}, has been specified.  In
the centralizer situations needed later, this model is supplied by
Lemma~\ref{lem:DM-semisimple-centralizer-splits}; no assertion is being made
that an arbitrary disconnected reductive group splits over its identity
component.

\begin{lemma}[Existence of preferred extensions on cyclic subgroups]
\label{lem:cyclic-preferred-extension}
Let $\mathbf H^\circ$ be a quasisimple adjoint algebraic group with Frobenius $F$ and put
$H^\circ=\mathbf H^{\circ F}$. Let $\mathbf H$ be an $F$--stable (possibly disconnected) algebraic
group with identity component $\mathbf H^\circ$ and set $H=\mathbf H^F$.
Let $u^\circ\in\Irr(H^\circ)$ be a cuspidal unipotent character, and let
\[
I \ :=\ I_H(u^\circ)=\{\,h\in H:\ {}^h u^\circ\simeq u^\circ\,\}.
\]
Fix a cyclic subgroup $C\le I/H^\circ$, and let $H_C\le H$ be its inverse image.
Assume that $H_C$ has a pinned semidirect component model
\[
        H_C=H^\circ\rtimes A_C,
        \qquad A_C\cong C,
\]
in the sense of Definition~\ref{def:pinned-semidirect-component-model}.

Then there exists an extension $u_C\in\Irr(H_C)$ of $u^\circ$ to $H_C$ which is
canonical once one fixes Lusztig's preferred extension convention for Weyl-group representations.
\end{lemma}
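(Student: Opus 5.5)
Existence comes first. Since $C$ is cyclic and $u^\circ$ is $H_C$-invariant, $u^\circ$ extends to $H_C=H^\circ\rtimes A_C$. For a cyclic quotient, the obstruction class in $H^2(C,\Qlcl^\times)$ vanishes because $H^2$ of a cyclic group with coefficients in a divisible group is trivial. Concretely, let $c$ generate $A_C$ with $|A_C|=m$. An intertwiner $T:u^\circ\to{}^{c}u^\circ$ is unique up to a scalar by Schur's lemma, so $T^m$ is a scalar, and $T$ can be rescaled so that $T^m=\id$. The set of extensions is then a torsor under $\Hom(C,\Qlcl^\times)$, and the $m$ possible normalizations of $T$ differ by $m$-th roots of unity. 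So the whole content of the lemma is to single out one element of this torsor.

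To fix the normalization, I would use the pinned model. Since the conjugation action of $c$ on $\mathbf H^\circ$ is, modulo inner automorphisms, a pinned automorphism $\sigma$ of $(\mathbf H^\circ,\cP)$, after adjusting by $H^\circ$ I may assume $c$ acts exactly as $\sigma$. This adjustment is harmless, since it changes the intertwiner only by the action of an element of $H^\circ$. The extension $u_C$ is then determined by the value of its character on the coset $H^\circ c$. I would pin this value down via Lusztig's theory of unipotent characters on disconnected groups $H^\circ\rtimes\langle\sigma\rangle$ (character sheaves on the coset $\mathbf H^\circ\sigma$). By the classification of cuspidal unipotent characters, an invariant cuspidal $u^\circ$ is attached to a cuspidal character sheaf, equivalently a cuspidal pair for generalized Springer theory, which is $\sigma$-stable. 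Lusztig fixes the $\sigma$-equivariant structure on the underlying local system by requiring it to act compatibly with the pinning. This is the preferred extension convention: on the Weyl-group side it selects the extension of a $\sigma$-stable representation to $W\rtimes\langle\sigma\rangle$ defined over $\mathbb Q$ and having a specified sign, or more generally the extension in Lusztig's preferred normalization. The extension $u_C$ is defined as the unique extension whose characteristic function on $H^\circ\sigma$ is the one attached to this normalized $\sigma$-equivariant character sheaf.

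Independence of choices must then be checked, in two steps. First, replacing $c$ by another pinning-preserving representative changes it by an element of $Z(\mathbf H^\circ)^F$, which is trivial in the adjoint quasisimple case, or by $H^\circ$-conjugation; neither alters the character. Second, choosing another generator $c^k$ of $C$ gives the same $u_C$, because the normalization of $\sigma$ is compatible with powers. I would verify this by checking that the preferred normalization for $\sigma^k$ is the $k$-th power of that for $\sigma$, which follows from the fact that Lusztig's convention is defined through the Frobenius/$\sigma$ action on the $\mathbb Q$-form.

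I expect the main obstacle to be justifying that Lusztig's preferred normalization genuinely selects a single extension for every cyclic $C$ and every cuspidal unipotent $u^\circ$. This is especially delicate for the triality automorphism of order $3$ in $D_4$ and for non-split forms, where the rationality argument alone fails. The first thing I would try is a case-by-case appeal to the classification: in the adjoint quasisimple case, nontrivial $C$ arises only for types $A$, $D$, $E_6$ with graph automorphisms. Invariant cuspidal unipotents occur only in a small list (in type $A$ only for $\GL_1$-like situations, and otherwise in types $D$ and $E_6$). For each entry on this list I would cite Lusztig's explicit normalization of the extension, for instance via the eigenvalue of Frobenius or the sign convention for ${}^2E_6$ and $D_4$.
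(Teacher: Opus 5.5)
Your existence argument and your reduction to a representative of a generator of $C$ that acts exactly by the pinned automorphism $\sigma$ are correct and agree with the paper. The gap is in the step that fixes the remaining $|C|$-th root of unity.

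You attach $u^\circ$ to a $\sigma$-stable \emph{cuspidal} character sheaf, equivalently a cuspidal generalized Springer pair, and normalize its $\sigma$-equivariant structure ``compatibly with the pinning''. Such an object need not exist. Take $\mathbf H^\circ=\mathrm{PGL}_{n+1}$ with the unitary Frobenius and $n+1$ triangular, for example $H^\circ=\mathrm{PGU}_3(q)$, whose unipotent character of degree $q(q-1)$ is cuspidal. There the pinned graph automorphism commutes with $F$, acts on $H^\circ$ as the field automorphism, and fixes $u^\circ$, so the lemma has real content. But every unipotent character sheaf of type $A$ lies in the principal series, and $u^\circ=\pm R_\chi$ for a $\sigma$-stable $\chi\in\Irr(\mathfrak S_{n+1})$.

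Your fallback case list excludes this situation only through the incorrect claim that type $A$ has cuspidal unipotent characters just in degenerate cases; the unitary groups ${}^2A_n$ are missed. Moreover, even when a cuspidal pair exists, its relative Weyl group is trivial. So Lusztig's preferred-extension convention for Weyl-group representations, which the statement says the choice depends on, carries no information there. Normalizing a cuspidal sheaf ``by the pinning'' is a further convention that you have not specified.

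The missing idea is the one the paper uses. Work on the Weyl-group side of the generalized Springer block attached to $u^\circ$, with a $\sigma$-stable character $\varphi$ of its relative Weyl group $W$; in the unitary example, $\varphi=\chi$ and $W=\mathfrak S_{n+1}$. Take Lusztig's preferred extension $\tilde\varphi$ of $\varphi$ to $W\rtimes\langle\sigma\rangle$, and let it determine the $\sigma$-equivariant structure on the corresponding local system. That fixes the scalar of the intertwiner $T_\sigma$ with $T_\sigma^{|C|}=1$.

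Your defining condition is also not well posed even where a cuspidal sheaf exists. You require the restriction of $u_C$ to $H^\circ\sigma$ to \emph{be} the characteristic function of one normalized sheaf. That restriction is in general a combination of several twisted almost characters, so one can only prescribe a nonzero coefficient in it, not an equality of functions.

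Your checks of independence of the pinned representative and of the chosen generator of $C$ go beyond what the paper records. The generator check only becomes meaningful once the normalization itself is correctly defined.
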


\begin{proof}
Choose an element $\sigma\in A_C$ whose image generates $C$.  By the pinned
semidirect component model,
\[
        H_C=H^\circ\rtimes\langle\sigma\rangle .
\]
This is the only point in the proof where a semidirect-product decomposition is
used; for semisimple centralizers it is supplied by
Lemma~\ref{lem:DM-semisimple-centralizer-splits}.  Since $C\le I/H^\circ$, the
character $u^\circ$ is $H_C$--stable, hence extends to $H_C$ (cyclic-quotient
extension criterion, e.g.\ \cite[Cor.\ 11.22]{Isaacs}); different extensions correspond to multiplying the
intertwiner $T_\sigma$ by a $|C|$-th root of unity.

\textbf{Lusztig's preferred extension on the Weyl-group side.}
Let $W$ be the Weyl group of $\mathbf H^\circ$.
The automorphism $\sigma$ induces an automorphism (still denoted) $\sigma$ of the based root datum,
hence of $W$, and we form the semidirect product $C_cW:=W\rtimes\langle\sigma\rangle$ where
$c=|\sigma|$.
Lusztig shows that if an irreducible $W$--module $E$ is extendable to $C_cW$, then it has $c$
extensions, and he \emph{singles out one} such extension and calls it the \emph{preferred extension},
defined case-by-case and functorially with respect to products.  \cite[\S17.2]{Lusztig1986character}
More generally, for an abelian group acting on $W$, Lusztig constructs canonical intertwiners
$\sigma:E\to E$ that realize the preferred extensions and build an extension of $E$ to the relevant
semidirect product. \cite[\S17.3]{Lusztig1986character}

\smallskip\noindent
\emph{Transfer from the Weyl-group side.}
Let $c=|\sigma|$.
Fix an irreducible representation $(\rho,V)$ of $H^\circ$ affording $u^\circ$.
Since $u^\circ$ is $H_C$--stable, there exists an intertwiner
$T_\sigma:V\to V$ with
\[
T_\sigma\,\rho(h)\,T_\sigma^{-1} \;=\; \rho(\sigma(h))
\qquad(h\in H^\circ),
\]
unique up to scalar by Schur's lemma. Choosing $T_\sigma$ with $T_\sigma^c=1$
is equivalent to choosing an extension of $u^\circ$ to $H_C$, and different
extensions correspond to multiplying $T_\sigma$ by a $c$-th root of unity.

On the geometric side, $u^\circ$ lies in a (generalized) Springer block $I$
and corresponds to a $\sigma$--stable Weyl-group character $\varphi$
(of a suitable relative Weyl group $W$, in particular $\varphi$ is extendable
to $W\rtimes\langle\sigma\rangle$).  By \cite[\S2]{DLM03} (following Lusztig's
construction of characteristic functions from $F$--stable pairs, cf.
\cite[24.1--24.2]{Lus84}), an extension $\tilde\varphi$ of $\varphi$ to
$W\rtimes\langle\sigma\rangle$ canonically determines the corresponding
$\sigma$--equivariant structure on the local system in the block, hence fixes
the scalar by which $\sigma$ acts on the resulting function/representation.
Equivalently, it fixes the scalar ambiguity in the intertwiner $T_\sigma$
above, and therefore picks out a unique extension of $u^\circ$ to $H_C$.

We now take $\tilde\varphi$ to be Lusztig's preferred extension
\cite[\S17.2--\S17.3]{Lusztig1986character}.  The resulting intertwiner $T_\sigma$ (hence the
corresponding extension) is then canonical once the preferred-extension
convention is fixed; denote this extension by $u_C\in\Irr(H_C)$.

\end{proof}
\begin{lemma}[Gluing preferred extensions in the non-cyclic abelian case]
\label{lem:gluing-preferred-extension-non-cyclic}
Let \(u^\circ\in \Irr(H^\circ)\) be \(I\)-invariant, and put
\[
\Omega \ :=\ I/H^\circ .
\]
Assume that \(\Omega\) is non-cyclic abelian and that a decomposition
\[
\Omega \ \cong\ \Gamma \times \Phi
\]
has been fixed with \(\Gamma\) and \(\Phi\) cyclic. Let
\[
I_\Gamma,\ I_\Phi \ \leq\ I
\]
be the inverse images of \(\Gamma\) and \(\Phi\) under the quotient map
\(I\twoheadrightarrow \Omega\).

Assume that:

\begin{enumerate}
\item there is an extension \(u_\Gamma\in \Irr(I_\Gamma)\) of \(u^\circ\) which is
\(\Phi\)-invariant (equivalently, \(I\)-invariant, since \(I/I_\Gamma\simeq \Phi\));

\item there is an extension \(u_\Phi\in \Irr(I_\Phi)\) of \(u^\circ\).
\end{enumerate}

Then there exists a unique extension \(\widetilde u\in \Irr(I)\) of \(u^\circ\) such that
\[
\widetilde u|_{I_\Gamma}=u_\Gamma,
\qquad
\widetilde u|_{I_\Phi}=u_\Phi.
\]
In particular, if \(u_\Gamma\) and \(u_\Phi\) are canonically determined, then so is
\(\widetilde u\).
\end{lemma}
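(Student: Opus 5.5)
Since \(\Omega=\Gamma\times\Phi\), we have \(I_\Gamma\cap I_\Phi=H^\circ\), \(I=I_\Gamma I_\Phi\), and both \(I_\Gamma\) and \(I_\Phi\) are normal in \(I\) because \(\Omega\) is abelian. The plan is to build \(\widetilde u\) by Clifford theory over \(I_\Gamma\), using hypothesis (1), and then adjust it by a linear character of \(I/I_\Gamma\cong\Phi\) so that it restricts to \(u_\Phi\) on \(I_\Phi\). Uniqueness will come from Gallagher's theorem together with the fact that restriction from \(\Omega\) to the factors is injective on linear characters.

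\emph{Existence.} By (1), \(u_\Gamma\) is \(I\)-invariant, and \(I/I_\Gamma\cong\Phi\) is cyclic. The cyclic-quotient extension criterion (\cite[Cor.~11.22]{Isaacs}, as in Lemma~\ref{lem:cyclic-preferred-extension}) therefore gives an extension \(v\in\Irr(I)\) of \(u_\Gamma\).

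Its restriction \(v|_{I_\Phi}\) extends \(u^\circ\), as does \(u_\Phi\). By Gallagher's theorem, the extensions of \(u^\circ\) to \(I_\Phi\) are exactly the twists of one of them by \(\Irr(I_\Phi/H^\circ)=\Phi^\vee\). Hence \(u_\Phi=v|_{I_\Phi}\cdot\lambda\) for a unique \(\lambda\in\Phi^\vee\).

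Inflate \(\lambda\) along \(I\to\Omega\to\Phi\), where the last map is projection onto the second factor of the fixed decomposition \(\Omega\cong\Gamma\times\Phi\). Call the resulting linear character \(\widetilde\lambda\) of \(I\). It is trivial on \(I_\Gamma\), and its restriction to \(I_\Phi\) is \(\lambda\) inflated. Set \(\widetilde u:=v\widetilde\lambda\). Then \(\widetilde u|_{I_\Gamma}=u_\Gamma\) and \(\widetilde u|_{I_\Phi}=u_\Phi\).

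\emph{Uniqueness.} Suppose \(\widetilde u'\) is another such extension. Gallagher's theorem gives \(\widetilde u'=\widetilde u\mu\) for a unique \(\mu\in\Omega^\vee\), the uniqueness coming from \(u^\circ\) being irreducible and invariant. Restricting to \(I_\Gamma\) and \(I_\Phi\), and again using uniqueness of the twisting character, we get \(\mu|_\Gamma=1\) and \(\mu|_\Phi=1\). Since \(\Omega^\vee\cong\Gamma^\vee\times\Phi^\vee\) via restriction, \(\mu=1\).

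The "canonical" clause is then immediate: the existence step involved a choice of \(v\), but the result \(\widetilde u\) is independent of it by uniqueness.

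The main point needing care is that uniqueness of the twisting character \(\mu\) (equivalently \(\lambda\)) in Gallagher's theorem requires the restricted character \(u^\circ\) to be irreducible. This holds here, so no other obstacle is expected.
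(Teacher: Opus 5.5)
Your proposal is correct and follows essentially the same route as the paper. You extend \(u_\Gamma\) across the cyclic quotient \(I/I_\Gamma\cong\Phi\) by \cite[Cor.~11.22]{Isaacs}, then correct the restriction to \(I_\Phi\) by a linear character of \(\Phi\) inflated along the projection \(\Omega\to\Phi\), which is trivial on \(I_\Gamma\). Uniqueness then comes from Clifford/Gallagher twisting by \(\Omega^\vee\cong\Gamma^\vee\times\Phi^\vee\). The only cosmetic difference is that the paper checks irreducibility of the restricted extension on \(I_\Phi\) by an explicit submodule argument, which you get implicitly from the irreducibility of \(u^\circ\).
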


\begin{proof}
Since \(I_\Gamma\triangleleft I\) and \(I/I_\Gamma\simeq \Phi\) is cyclic, the
\(\Phi\)-invariance of \(u_\Gamma\) implies, by the cyclic-quotient extension criterion
(e.g.\ \cite[Cor.~11.22]{Isaacs}), that \(u_\Gamma\) extends to some
\(\widetilde u_0\in \Irr(I)\).

We claim that \(\widetilde u_0|_{I_\Phi}\) is irreducible. Indeed, let
\(\rho:I\to \GL(V)\) afford \(\widetilde u_0\). Since \(\widetilde u_0\) extends
\(u^\circ\), the restriction \(\rho|_{H^\circ}\) is irreducible. Any \(I_\Phi\)-stable
subspace of \(V\) is in particular \(H^\circ\)-stable, hence must be \(0\) or \(V\).
Thus \(\rho|_{I_\Phi}\) is irreducible, so \(\widetilde u_0|_{I_\Phi}\in \Irr(I_\Phi)\),
and it extends \(u^\circ\).

Now \(I_\Phi/H^\circ\simeq \Phi\) is abelian, and both
\(\widetilde u_0|_{I_\Phi}\) and \(u_\Phi\) lie above \(u^\circ\). By Clifford theory,
there is therefore a unique linear character \(\lambda_\Phi\in \Irr(\Phi)\) such that
\[
\widetilde u_0|_{I_\Phi}
\;=\;
u_\Phi \otimes \widehat{\lambda_\Phi},
\]
where \(\widehat{\lambda_\Phi}\) denotes the inflation of \(\lambda_\Phi\) to \(I_\Phi\).

Let \(\lambda\in \Irr(\Omega)\) be the inflation of \(\lambda_\Phi\) along the projection
\(\Omega=\Gamma\times\Phi\twoheadrightarrow \Phi\), and let \(\widehat\lambda\) be its
inflation to \(I\). Set
\[
\widetilde u \ :=\ \widetilde u_0\otimes \widehat\lambda^{-1}.
\]
Because \(\widehat\lambda\) is trivial on \(I_\Gamma\), we have
\[
\widetilde u|_{I_\Gamma}
=
\widetilde u_0|_{I_\Gamma}
=
u_\Gamma.
\]
Because \(\widehat\lambda|_{I_\Phi}=\widehat{\lambda_\Phi}\), we obtain
\[
\widetilde u|_{I_\Phi}
=
\bigl(\widetilde u_0|_{I_\Phi}\bigr)\otimes \widehat{\lambda_\Phi}^{-1}
=
u_\Phi.
\]
Thus \(\widetilde u\) is the required extension of \(u^\circ\).

For uniqueness, let \(\widetilde u_1,\widetilde u_2\in \Irr(I)\) be two extensions of
\(u^\circ\) with the same restrictions to \(I_\Gamma\) and \(I_\Phi\). Since
\(\Omega=I/H^\circ\) is abelian and both characters lie above \(u^\circ\), Clifford theory
gives
\[
\widetilde u_2 \ =\ \widetilde u_1\otimes \widehat\mu
\]
for some \(\mu\in \Irr(\Omega)\). Restricting to \(I_\Gamma\) shows that
\(\mu|_\Gamma=1\), and restricting to \(I_\Phi\) shows that \(\mu|_\Phi=1\).
Since \(\Omega=\Gamma\times \Phi\), it follows that \(\mu=1\), hence
\(\widetilde u_1=\widetilde u_2\).

Therefore the extension \(\widetilde u\) is unique. The final assertion on canonicity is
immediate.
\end{proof}

\begin{lemma}[\(\Phi\)-stable preferred extension along the graph factor]
\label{lem:preferred-extension-cuspidal-abelian}
Assume that \(\bH^\circ\) is simple adjoint. Assume that
\(\Omega:= H/H^\circ\) is non-cyclic abelian with decomposition
\[
\Omega \ \cong\ \Gamma \times \Phi
\]
where \(\Gamma\) and \(\Phi\) are cyclic.  Assume moreover that this decomposition is realized by a
pinned semidirect component model
\[
        H=H^\circ\rtimes (\Gamma\times\Phi).
\]
Set $K:=H^\circ\rtimes\Gamma\lhd H$.  Then there exists a canonically preferred extension $\tilde u_\Gamma\in\Irr(K)$ of $u^\circ$ which is $\Phi$--invariant.
\end{lemma}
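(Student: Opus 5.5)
We take \(\tilde u_\Gamma\) to be the extension \(u_\Gamma\) produced by Lemma~\ref{lem:cyclic-preferred-extension} for the cyclic subgroup \(C=\Gamma\), with \(H_C=K=H^\circ\rtimes\Gamma\) carrying the pinned model inherited from \(H=H^\circ\rtimes(\Gamma\times\Phi)\). It is canonical relative to Lusztig's preferred-extension convention. The content of the lemma is then that this particular extension is \(\Phi\)-invariant. We implicitly assume, as in the surrounding setup, that \(u^\circ\) is \(H\)-invariant, so that \(I=H\).

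\textbf{Step 1: \(\Phi\) permutes the extensions.} Take \(\varphi\in\Phi\). It normalizes \(K\), and since \(\Omega=\Gamma\times\Phi\) is abelian and the model is semidirect, it commutes with \(\Gamma\) in \(A=\Gamma\times\Phi\). Conjugation by \(\varphi\) fixes \(u^\circ\), so it sends \(u_\Gamma\) to another extension of \(u^\circ\) to \(K\). By Gallagher/Clifford theory this extension equals \(u_\Gamma\otimes\widehat\lambda_\varphi\) for a unique \(\lambda_\varphi\in\Irr(\Gamma)\). In terms of intertwiners, with \(\sigma\) a generator of \(\Gamma\): if \(T_\sigma\) realises \(u_\Gamma\), then \({}^\varphi u_\Gamma\) is realised by \(T_\varphi T_\sigma T_\varphi^{-1}\) for an intertwiner \(T_\varphi\) of \(u^\circ\) with \(\varphi\). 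Here we use that \(\varphi\sigma\varphi^{-1}=\sigma\) in \(A\). The task is therefore to show that \(T_\varphi\) commutes with \(T_\sigma\), i.e.\ that \(\lambda_\varphi(\sigma)=1\).

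\textbf{Step 2: transport to the Weyl-group side.} As in the proof of Lemma~\ref{lem:cyclic-preferred-extension}, the normalization of \(T_\sigma\) comes from the preferred extension \(\tilde\varphi\) of the relevant \(\sigma\)-stable Weyl-group (generalized Springer) character to \(W\rtimes\langle\sigma\rangle\), via the \(\sigma\)-equivariant structure on the local system \cite{DLM03}. The pinned model makes \(\Gamma\times\Phi\) act on the based root datum through pinned automorphisms. Lusztig's construction \cite[\S17.3]{Lusztig1986character} handles an \emph{abelian} group acting on \(W\): it supplies commuting canonical intertwiners for all elements of \(\Gamma\times\Phi\) simultaneously, giving an extension to \(W\rtimes(\Gamma\times\Phi)\). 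In particular, the preferred \(\sigma\)-intertwiner on \(E\) commutes with the \(\varphi\)-action. Since the dictionary of \cite{DLM03}, \cite[24.1--24.2]{Lus84} between equivariant structures and intertwiners is natural for automorphisms of the pinned datum, \(\varphi\) transports the preferred \(\sigma\)-structure to the preferred \((\varphi\sigma\varphi^{-1})\)-structure, that is, to itself. Hence \(\lambda_\varphi=1\).

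\textbf{Main obstacle.} The hard part is the naturality claim in Step 2. We must verify that Lusztig's case-by-case preferred extension is invariant under pinned automorphisms commuting with \(\sigma\), and that the geometric transfer respects this. Because \(\bH^\circ\) is simple adjoint and \(\Omega\) is non-cyclic, the only possibility is type \(\mathsf D_4\), where the pinned outer automorphisms form \(S_3\) and the abelian subgroups are cyclic. Type \(\mathsf D_{2n}\) with diagonal-type components is the other candidate; there \(\Gamma\) or \(\Phi\) acts trivially on the Dynkin diagram, so it acts by inner-type automorphisms on \(W\). Our first attempt would be this reduction. If one factor acts trivially on the based root datum, then its action on \(W\) is trivial, its intertwiner is a scalar, and commutation is automatic. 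In the remaining graph case we would check directly from Lusztig's definition \cite[\S17.2]{Lusztig1986character} that the preferred extension is unchanged by the commuting automorphism.
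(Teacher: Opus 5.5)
\textbf{There is a genuine gap, and it is the step you yourself flag as the main obstacle.}

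Step~1 is fine. Step~2, however, asserts rather than proves that the normalization of Lemma~\ref{lem:cyclic-preferred-extension} is equivariant under a pinned automorphism commuting with $\sigma$. That lemma only says that fixing Lusztig's convention singles out \emph{some} extension to $H^\circ\rtimes\langle\sigma\rangle$. It makes no claim about how the recipe (choice of $\sigma$-equivariant structure, transfer via \cite{DLM03}) behaves under further automorphisms of $(H^\circ,\sigma,F)$. Likewise, \cite[\S17.3]{Lusztig1986character} is a statement about $W$-modules, not about extensions of $u^\circ$. You then defer the decisive case (``we would check directly from Lusztig's definition''). So the $\Phi$-invariance, which is the whole content of the lemma, is not established.

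The supporting case analysis is also muddled. You say $\mathsf D_4$ is the only possibility while noting that abelian subgroups of $S_3$ are cyclic. Types $\mathsf E_6$ and general $\mathsf D_n$ are never addressed. And ``diagonal-type components'' cannot occur here: an $F$-stable inner automorphism of the adjoint group $\bH^\circ$ is conjugation by an element of $H^\circ$.

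\textbf{How the paper proceeds.} The paper never needs naturality of a particular normalization. Following \cite[Prop.~2.3]{Malle08}, it takes a $\Phi$-stable virtual character $R$ of $K$ in which the extensions of $u^\circ$ to $K$ occur with pairwise distinct multiplicities. Concretely, $R$ is $R^{G\sigma}_{T\sigma}(1)$ in type $\mathsf D_4$, an analogous generalized Deligne--Lusztig character in type $\mathsf E_6$, and $R^{\widetilde G}_{\widetilde L}(\widetilde\rho_1)$ via \cite[Cor.~2.4]{DigneMichel94} in type $\mathsf D_n$. Since $\Phi$ permutes these extensions and preserves multiplicities in $R$, each extension is $\Phi$-fixed, and the one of maximal multiplicity is declared canonical. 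That is the missing idea: an invariant that separates the extensions. Appended to your Step~1, it shows your $u_\Gamma$ is $\Phi$-invariant too, although it is a priori a different canonical choice from the paper's.

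\textbf{An elementary alternative.} Inside the pinned algebraic model of the statement, your own reduction can be finished directly, and the case you defer is actually the easy one. Write $c_x$ for conjugation by $x$. The image of $\Gamma\times\Phi$ in $\Out(\bH^\circ)$ is abelian, hence cyclic of order $1$, $2$ or $3$.

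If $\Gamma$ has nontrivial image, that image is the whole image, so for $\varphi\in\Phi$ one has $c_\varphi=\operatorname{Int}(g)\circ c_\gamma$ on $\bH^\circ$ for some $\gamma\in\Gamma$ and $g\in\bH^\circ$. $F$-stability and $Z(\bH^\circ)=1$ force $g\in H^\circ$, and $\varphi\gamma'=\gamma'\varphi$ forces $c_{\gamma'}(g)=g$ for all $\gamma'\in\Gamma$. Hence $\varphi$ acts on $K$ exactly as conjugation by $g\gamma\in K$.

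If $\Gamma$ has trivial image, then $K=H^\circ\times C_K(\bH^\circ)$. $\Phi$ centralizes the second factor, since it normalizes it, acts trivially on $K/H^\circ$, and $C_K(\bH^\circ)\cap H^\circ=1$. The extensions $u^\circ\boxtimes\lambda$ are then $\Phi$-fixed by Lemma~\ref{lem:cuspidal-unipotent-invariant}.

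In both cases every extension of $u^\circ$ to $K$, in particular the preferred one, is $\Phi$-invariant.
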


\begin{proof}
For a quasisimple adjoint $\mathbf H^\circ$, the only Lie types in which the relevant
outer action can contribute a non-cyclic abelian inertia quotient are the three families
appearing in Malle's reduction in \cite[Prop.~2.2]{Malle08}, namely:
\[
A_{n-1}\ (n\ge3),\qquad D_n\ (n\ge4),\qquad E_6.
\]
In type $A_{n-1}$ (equivalently $L_n(q)$ with $n\ge3$) there are \emph{no} cuspidal unipotent
characters, so there is nothing to prove.

Thus we may assume $\mathbf H^\circ$ is of type $D_n$ or $E_6$.
By the pinned semidirect component model in the statement, the decomposition
\(\Omega\cong \Gamma\times\Phi\) is represented inside $H$ and we may write
\[
H \ =\ H^\circ \rtimes (\Gamma\times\Phi),
\]
where $\Gamma$ is generated by a (quasi-central) graph automorphism of order $r\in\{2,3\}$,
and $\Phi$ is cyclic and commutes with $\Gamma$ (``field part'' in the sense of Malle).
Set $K:=H^\circ\rtimes\Gamma\lhd H$.
\smallskip\noindent
The argument is identical to the one used by Malle in the proof of \cite[Prop.~2.3]{Malle08}
in the present cuspidal-unipotent situation: in each of the cases
$D_4$ (i.e.\ $O_8^+(q)$), $E_6(q)$, and $D_n(q)$ with $n=s^2$ ($s$ even),
he produces a $\Phi$--stable virtual character $R$ of $K$ (a suitable generalized
Deligne--Lusztig character for the disconnected group)
whose decomposition into irreducibles separates the different extensions of $u^\circ$
to $K$ by \emph{pairwise distinct multiplicities}. More precisely:
\begin{itemize}
\item In type $D_4$, one takes $R=R^{G\sigma}_{T\sigma}(1)$ in the disconnected group
$G\rtimes\langle\sigma\rangle$; its decomposition contains the $r$ extensions of the cuspidal
unipotent character with \emph{different multiplicities}, and $T^F$ is stabilized by field
automorphisms, hence $R$ is $\Phi$--stable \cite[Prop.~2.3]{Malle08}.
\item In type $E_6$, the proof of \cite[Prop.~2.3]{Malle08} similarly uses an explicit
decomposition of a suitable generalized Deligne--Lusztig character in $G\rtimes\langle\gamma\rangle$
showing that the extensions occur with different multiplicities.
\item In type $D_n$ ($n=s^2$ with $s$ even), Malle takes a $\gamma$-- and $F$--stable Levi $L$,
extends a suitable unipotent character $\rho_1$ to $\widetilde L=L\rtimes\langle\gamma\rangle$,
and applies \cite[Cor.~2.4]{DigneMichel94} to obtain a $\Phi$--stable virtual character
$R=R^{\widetilde G}_{\widetilde L}(\widetilde\rho_1)$ in which the two extensions of $u^\circ$
to $K$ occur with distinct multiplicities \cite[Prop.~2.3]{Malle08}.
\end{itemize}

Now write the resulting $\Phi$--stable virtual character as
\[
R \;=\; \sum_{\eta\in\Irr(K)} m_\eta\,\eta.
\]
Since $R$ is $\Phi$--stable, the multiplicity $m_\eta$ is constant on $\Phi$--orbits.
Since the irreducible constituents $\eta$ lying above $u^\circ$ occur with \emph{pairwise distinct}
multiplicities (as stated in \cite[Prop.~2.3]{Malle08}), there is a unique such constituent with
maximal $m_\eta$; denote it by $\tilde u_\Gamma$.  Its $\Phi$--orbit must be trivial (otherwise an
orbit would contain two constituents with the same multiplicity), hence $\tilde u_\Gamma$ is
$\Phi$--invariant.  This is the canonical choice used below.

\end{proof}

\begin{lemma}[Cuspidal unipotent invariance on quasi-simple factors]\label{lem:cuspidal-unipotent-invariant}
Let \(\mathbf X\) be a connected quasi-simple algebraic group over
\(\overline{\mathbb F}_p\), equipped with a Frobenius endomorphism \(F\), and
put \(X=\mathbf X^F\). If $\chi\in\Uch(X)$ is cuspidal unipotent, then $\chi$ is invariant under all automorphisms of $X$.
In particular, for any subgroup $A\le \Aut(X)$ one has $I_A(\chi)=A$.
\end{lemma}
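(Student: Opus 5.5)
The plan is to reduce to the standard generators of \(\Aut(X)\) and check invariance separately for each type of generator. By Steinberg's description, every automorphism of \(X=\mathbf X^F\) is, up to the small exceptions with non-standard automorphisms (Suzuki and Ree groups and \(\mathsf B_2,\mathsf G_2,\mathsf F_4\) in characteristic \(2,3,2\)), a product of an inner automorphism, a diagonal automorphism, a field automorphism and a graph automorphism. It therefore suffices to show that a cuspidal unipotent \(\chi\) is fixed by each of these kinds separately. Inner automorphisms act trivially on characters, so there is nothing to check for them.

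\textbf{Diagonal automorphisms.} I would use a regular embedding \(\mathbf X\hookrightarrow \mathbf X'\). The restriction of unipotent characters gives a bijection \(\Uch(X')\to\Uch(X)\), since \(\cE(X',1)\) consists of characters whose restrictions remain irreducible. This is the case \(s=1\) of Theorem~\ref{thm:Lusztig-orbit-JD-disconnected-centralizers}, where \(H/H^\circ=1\), so every fibre is a singleton. Hence \(\chi\) extends to \(X'\), and is therefore invariant under \(X'\). By Remark~\ref{rmk:regular-embedding-adjoint-action}, \(X'\) induces all diagonal automorphisms, so \(\chi\) is fixed by them.

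\textbf{Field and graph automorphisms.} Here I would invoke Malle's theorem on the action of automorphisms on unipotent characters \cite{Malle08} (Theorem~2.5 there). It states that every unipotent character is \(\Aut(X)\)-invariant except in the following cases:
\begin{enumerate}
\item type \(\mathsf D_n\), where the graph automorphism swaps the two characters attached to a degenerate symbol;
\item type \(\mathsf D_4\), for the triality automorphism;
\item the exceptional graph automorphisms of \(\mathsf B_2(2^{f})\), \(\mathsf G_2(3^f)\) and \(\mathsf F_4(2^f)\).
\end{enumerate}
It remains to show that no exceptional character is cuspidal, which I would do case by case.
\begin{itemize}
\item In type \(\mathsf D_n\), cuspidal unipotents exist only for \(n=s^2\) with \(s\) even. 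They correspond to symbols of defect \(4s>0\), which are non-degenerate, so they are fixed.
\item For \(\mathsf D_4\) and triality, the unique cuspidal unipotent \(\mathsf D_4[1]\) is alone in its Harish--Chandra series. Any automorphism permutes cuspidal unipotents while preserving degree and Frobenius eigenvalue, so it is fixed.
\item For the exceptional graph automorphisms, I would use the same invariants: degree and Frobenius eigenvalue are preserved, up to Galois twists compatible with rationality. Checking the tables, these invariants separate the cuspidal unipotents of \(\mathsf B_2\), \(\mathsf G_2\) and \(\mathsf F_4\). The swapped characters in Malle's list lie in non-cuspidal (principal or \(\mathsf B_2\)-) series.
\end{itemize}
For Suzuki and Ree groups, the cuspidal unipotents are likewise distinguished by degree and eigenvalue, as read from Lusztig's tables \cite{Lus84}.

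\textbf{Final step and main obstacle.} The last assertion is immediate: \(A\le\Aut(X)\) fixes \(\chi\), so its inertia group is \(A\). I expect the main obstacle to be the case check in the field/graph step. The delicate point is \(\mathsf G_2(3^f)\), which has several cuspidal unipotents sharing a degree and distinguished only by their Frobenius eigenvalues \(\pm1,\theta,\theta^2\). There a field automorphism could a priori permute \(\theta\) and \(\theta^2\). I would first try to rule this out using Lusztig's result that unipotent characters are rational up to the eigenvalue field, together with Malle's explicit statement that field automorphisms fix all unipotent characters. Failing that, I would use Geck's characterization of unipotent characters by their character fields.
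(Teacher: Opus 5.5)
Your argument is correct in substance, but it is organized differently from the paper's.

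The paper splits by Lie type. For classical groups it uses that Lusztig's symbol combinatorics allows at most one cuspidal unipotent character, so that character is fixed by every automorphism, since automorphisms permute the cuspidal unipotent characters. For exceptional groups it simply cites Malle (\cite{Malle2017}, \cite{Malle08}).

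You instead split by Steinberg's generators of \(\Aut(X)\):
\begin{itemize}
\item diagonal automorphisms are handled intrinsically by the case \(s=1\) of Lusztig's orbit-valued Jordan decomposition, whose fibres are singletons because \(C_{\bG^*}(1)=\bG^*\) is connected;
\item field and graph automorphisms are handled by Malle's list of exceptions, checking that none of the characters moved there is cuspidal.
\end{itemize}
Your version makes explicit where cuspidality is used. The paper's uniqueness argument, which you already invoke for \(\mathsf D_4\), handles all classical types at once and makes your \(\mathsf D_n\) and \(\mathsf B_2\) checks unnecessary. For the exceptional types both arguments rest on the same theorem of Malle.

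A few details need fixing.
\begin{itemize}
\item The cuspidal symbol of \(\mathsf D_{s^2}\) has defect \(2s\), not \(4s\). It is still non-degenerate, so your conclusion stands.
\item In \(\mathsf G_2\), only \(\mathsf G_2[\theta]\) and \(\mathsf G_2[\theta^2]\) share a degree; \(\mathsf G_2[1]\) and \(\mathsf G_2[-1]\) have degrees different from each other and from these two.
\item The ``main obstacle'' you flag is not really one, since the theorem you cite already covers it. Intrinsically, field and graph automorphisms are induced by isogenies commuting with \(F\). These carry the Deligne--Lusztig variety of \(w\) to that of its image and commute with \(F\) on cohomology, so they preserve Lusztig's Frobenius eigenvalues. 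This is exactly what separates \(\mathsf G_2[\theta]\) from \(\mathsf G_2[\theta^2]\).
\item Your fallback through character fields would fail. The two characters \(\mathsf G_2[\theta]\) and \(\mathsf G_2[\theta^2]\) are complex conjugate, and both have character field \(\mathbb Q(\sqrt{-3})\).
\end{itemize}
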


\begin{proof}
As recalled by Malle, in the classical types Lusztig’s parametrisation of unipotent characters by
symbols implies that a classical group has \emph{at most one} cuspidal unipotent character (see
Table~1 in \cite[\S2.1]{Malle2017}); consequently this cuspidal unipotent character is fixed
by \emph{all} automorphisms of $G$.
For groups of exceptional type, see the proof in \cite[Thm.~ 3.2]{Malle2017})  and \cite[Thm.~2.5]{Malle08}.
Together these references yield that any cuspidal unipotent character of $X=\mathbf X^F$ is
$\Aut(X)$--invariant.

\end{proof}

We recall a standard fact from the literature in the form of a Lemma.
\begin{lemma}\label{lem:unipotent-trivial-central-character}
Let $\mathbf G$ be a connected reductive group defined over $\mathbb F_q$ with Frobenius $F$.
If $\chi\in\cE(\mathbf G^F,(1))$ is a unipotent character (in particular, a cuspidal unipotent
character), then $Z(\mathbf G)^F\subseteq\ker(\chi)$. Equivalently, the central character of $\chi$
is trivial.
\end{lemma}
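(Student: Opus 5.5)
The plan is to show that $Z(\mathbf G)^F$ acts by the trivial scalar in every unipotent character. The argument goes through the Deligne--Lusztig characters $R_{\mathbf T}^{\mathbf G}(1)$ and their character formula.

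\textbf{Step 1: reduction to a Deligne--Lusztig character.} Let $\chi\in\Uch(G)$, where $G=\mathbf G^F$. By definition there is an $F$-stable maximal torus $\mathbf T$ with $\langle R_{\mathbf T}^{\mathbf G}(1),\chi\rangle\neq 0$. Since $\chi$ is irreducible, Schur's lemma gives a linear character $\omega_\chi$ of $Z:=Z(\mathbf G)^F$ with $\chi(zg)=\omega_\chi(z)\chi(g)$. It therefore suffices to show that $R_{\mathbf T}^{\mathbf G}(1)(zg)=R_{\mathbf T}^{\mathbf G}(1)(g)$ for all $z\in Z$ and $g\in G$. Indeed, if this holds, then every irreducible constituent of $R_{\mathbf T}^{\mathbf G}(1)$ has trivial central character. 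To see this, decompose $R_{\mathbf T}^{\mathbf G}(1)$ according to central characters; the components are linearly independent class functions. Translation by $z$ multiplies the $\omega$-component by $\omega(z)$, so invariance under every $z$ forces all components with $\omega\neq 1$ to vanish.

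\textbf{Step 2: central translation invariance.} There are two routes, and I would use the first.

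\emph{First route (character formula).} Use \cite[Prop.~2.2.18]{Book:GeckandMalle}/Deligne--Lusztig, which writes $R_{\mathbf T}^{\mathbf G}(\theta)(g)$ in terms of the Jordan decomposition $g=su$ as a sum over $x\in G$ with $x^{-1}sx\in T$. The summand is $\theta(x^{-1}sx)$ times a Green function $Q^{C(s)^\circ}_{x\mathbf T x^{-1}}(u)$. Replacing $g$ by $zg$ replaces $s$ by $zs$ and leaves $u$ unchanged. Moreover $C_{\mathbf G}(zs)^\circ=C_{\mathbf G}(s)^\circ$, and $x^{-1}zsx=z\,x^{-1}sx\in T$ exactly when $x^{-1}sx\in T$, because $Z\subseteq T$. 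The only change in each summand is the factor $\theta(z)$, which equals $1$ when $\theta=1$.

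\emph{Second route (geometric).} This is the standard statement $R_{\mathbf T}^{\mathbf G}(\theta)(zg)=\theta(z)R_{\mathbf T}^{\mathbf G}(\theta)(g)$ for $z\in Z(\mathbf G)^F$. It can be seen directly from the Deligne--Lusztig variety: left multiplication by $z$ on $\mathbf G/\mathbf U$ agrees with right multiplication by $z\in\mathbf T^F$. Hence $z$ acts on $H^*_c(\tilde X)_\theta$ by $\theta(z)$.

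\textbf{Step 3: conclusion.} With $\theta=1$, Steps 1 and 2 give $\omega_\chi=1$, which means $Z(\mathbf G)^F\subseteq\ker\chi$. The cuspidal case is a special case, so nothing further is needed for it.

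The only genuinely non-formal point is the central-twist identity in Step 2. I expect this to be the main place to be careful, namely in checking that $Z\subseteq T$ for every $F$-stable maximal torus, which holds because the centre lies in every maximal torus. Otherwise the argument is formal.
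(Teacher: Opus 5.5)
Your argument is correct, but it takes a different route from the paper's.

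The paper never uses the character formula. It applies \cite[Prop.~13.20]{Book:DigneandMichel} to the adjoint quotient $\pi:\mathbf G\to\mathbf G_{\ad}$, whose image contains the derived group of $\mathbf G_{\ad}$. That result says every unipotent character of $\mathbf G^F$ has the form $\bar\chi\circ\pi^F$ with $\bar\chi\in\Uch(\mathbf G_{\ad}^F)$, and the lemma then follows because $\ker\pi^F=Z(\mathbf G)^F$.

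You instead prove directly that $R_{\mathbf T}^{\mathbf G}(\theta)(zg)=\theta(z)R_{\mathbf T}^{\mathbf G}(\theta)(g)$ for $z\in Z(\mathbf G)^F$. This uses $Z(\mathbf G)\subseteq\mathbf T$, the equality $C_{\mathbf G}(zs)=C_{\mathbf G}(s)$, and the fact that $(zs)u$ is again a Jordan decomposition. You then separate the central-character components of $R_{\mathbf T}^{\mathbf G}(1)$.

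What your route buys: it is elementary and self-contained. It also shows uniformly that every constituent of $R_{\mathbf T}^{\mathbf G}(\theta)$, and hence every member of a Lusztig series, has central character $\theta|_{Z(\mathbf G)^F}$.

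What the paper's route buys: it is a one-line citation with a genuinely stronger conclusion. It shows not only that $\chi$ factors through $\mathbf G^F/Z(\mathbf G)^F\cong\pi(\mathbf G^F)$, but that $\chi$ is the restriction of a unipotent character of the full group $\mathbf G_{\ad}^F$. That group in general strictly contains $\pi(\mathbf G^F)$.

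The stronger form is what the paper actually relies on later. In the proof of Proposition~\ref{prop:preferred-extension-abelian-component}, $u^\circ$ is replaced by a cuspidal unipotent character of $\overline{\mathbf H}^{\,\circ F}$. For that use, your argument would have to be supplemented by the Digne--Michel result anyway.
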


\begin{proof}
Let $\pi:\mathbf G\to \mathbf G_{\ad}:=\mathbf G/Z(\mathbf G)$ be the adjoint quotient.  Then $\pi$
is defined over $\mathbb F_q$, has central kernel $Z(\mathbf G)$, and is surjective, hence
$\pi(\mathbf G)$ contains the derived group $(\mathbf G_{\ad})'$.
Therefore \cite[Prop.~13.20]{Book:DigneandMichel} applies to $\pi$ and shows that every unipotent character of
$\mathbf G^F$ is of the form $\bar\chi\circ \pi^F$ with $\bar\chi$ a unipotent character of
$\mathbf G_{\ad}^F$.
In particular, $\chi$ is inflated from $\mathbf G_{\ad}^F$, so $\ker(\pi^F)=Z(\mathbf G)^F$
acts trivially; hence $Z(\mathbf G)^F\subseteq\ker(\chi)$, i.e.\ the central character is trivial.
\end{proof}
We now combine the preceding lemmas in the form needed later. For the construction of the
canonical Jordan decomposition, it is enough to know that a cuspidal unipotent character admits a
canonically determined extension across an abelian component group.

\begin{proposition}[Canonical extension in the abelian component case]
\label{prop:preferred-extension-abelian-component}
Let $\mathbf H^\circ$ be a quasisimple algebraic group over $\overline{\mathbb F}_p$
equipped with a Frobenius endomorphism $F$, and set $H^\circ:=\mathbf H^{\circ F}$.
Let $\mathbf H$ be an $F$--stable (possibly disconnected) algebraic group with identity component
$\mathbf H^\circ$, and put $H:=\mathbf H^F$. Assume that the component group
\[
\pi_0(\mathbf H)=\mathbf H/\mathbf H^\circ
\]
is abelian, so that $H/H^\circ$ is abelian as well.  Assume moreover that $H$ has a pinned semidirect
component model in the sense of Definition~\ref{def:pinned-semidirect-component-model}.  By
Remark~\ref{rmk:DM-supplies-pinned-component-models}, this extra hypothesis is automatic for the
semisimple centralizers to which the proposition is applied below. Let
\[
u^\circ\in\Irr(H^\circ)
\]
be a cuspidal unipotent character. Then there exists a canonically determined extension
\[
\widetilde u\in\Irr(H)
\]
of $u^\circ$ to $H$.

We shall refer to this canonically determined extension as the \emph{preferred extension} of
$u^\circ$ to $H$.
\end{proposition}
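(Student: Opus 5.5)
First I will check that the inertia group is all of \(H\). By Lemma~\ref{lem:cuspidal-unipotent-invariant}, \(u^\circ\) is invariant under every automorphism of \(H^\circ\), so \(I=I_H(u^\circ)=H\) and \(\Omega:=H/H^\circ\) is abelian. The plan is then to reduce to the adjoint case and afterwards split according to the structure of \(\Omega\). For the reduction, I will use Lemma~\ref{lem:unipotent-trivial-central-character}: \(u^\circ\) is inflated from the adjoint quotient \(\mathbf H^\circ_{\ad}{}^F\). The pinned semidirect component model \(H=H^\circ\rtimes A\) induces a compatible pinned action of \(A\) on \(\mathbf H^\circ_{\ad}\), since pinned automorphisms descend to the adjoint quotient. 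Extensions of \(u^\circ\) to \(H\) that are trivial on \(Z(\mathbf H^\circ)^F\) are the same as extensions to the corresponding adjoint model. Because \(Z(\mathbf H^\circ)^F\) lies in the kernel of every extension above \(u^\circ\), it should suffice to treat \(\mathbf H^\circ\) simple adjoint, and to define \(\widetilde u\) by inflation.

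Next I split into cases. If \(\Omega\) is trivial, I take \(\widetilde u=u^\circ\). If \(\Omega\) is cyclic, Lemma~\ref{lem:cyclic-preferred-extension} applied with \(C=\Omega\) gives the canonical extension \(u_C\), and I set \(\widetilde u=u_C\). If \(\Omega\) is non-cyclic, I first want a canonical decomposition \(\Omega\cong\Gamma\times\Phi\) into cyclic factors. For \(\mathbf H^\circ\) simple adjoint, the image of \(\Omega\) in \(\Out(H^\circ)\) is a subgroup of diagonal \(\times\) graph \(\times\) field automorphisms. Following Malle's reduction \cite[Prop.~2.2]{Malle08}, only types \(A_{n-1}\), \(D_n\), and \(E_6\) give non-cyclic abelian \(\Omega\). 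Type \(A\) has no cuspidal unipotent characters, so only \(D_n\) and \(E_6\) remain, where \(\Gamma\) is the graph part and \(\Phi\) is the field part. Once this decomposition is fixed by the pinning, Lemma~\ref{lem:preferred-extension-cuspidal-abelian} gives a \(\Phi\)-invariant extension \(\tilde u_\Gamma\) to \(K=H^\circ\rtimes\Gamma\). Lemma~\ref{lem:cyclic-preferred-extension} applied to \(C=\Phi\) gives \(u_\Phi\) on \(H^\circ\rtimes\Phi\). Lemma~\ref{lem:gluing-preferred-extension-non-cyclic} then produces a unique \(\widetilde u\in\Irr(H)\) restricting to both, and I set this as the preferred extension.

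The main obstacle is canonicity in the non-cyclic case. The splitting \(\Omega\cong\Gamma\times\Phi\) is not unique abstractly, so I have to argue it is determined by the pinning. Concretely, \(\Gamma\) should be the subgroup acting through graph automorphisms of the based root datum. \(\Phi\) should be the subgroup whose representatives in the pinned model act through the Frobenius/field part, which commutes with the pinning. If that identification is ambiguous, for example when a diagonal part is present in \(\Omega\), my first attempt is to note that diagonal automorphisms act trivially on unipotent characters. The diagonal contribution should therefore be absorbable, either into the choice of representatives or into the cyclic factor, without changing the glued extension. I would also check that a different choice of \(\Phi\)-complement changes \(u_\Phi\) only through the pinned representative. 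Since Lemma~\ref{lem:cyclic-preferred-extension} is functorial in the pinned automorphism, the glued \(\widetilde u\) would then be unchanged.
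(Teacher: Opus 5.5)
Your proposal follows the paper's proof essentially step for step. It reduces to the simple adjoint case via Lemma~\ref{lem:unipotent-trivial-central-character}, gets $I_H(u^\circ)=H$ from Lemma~\ref{lem:cuspidal-unipotent-invariant}, and handles the cyclic case by Lemma~\ref{lem:cyclic-preferred-extension}. In the non-cyclic case it uses Malle's reduction to types $D_n$ and $E_6$, then Lemmas~\ref{lem:preferred-extension-cuspidal-abelian}, \ref{lem:cyclic-preferred-extension} and \ref{lem:gluing-preferred-extension-non-cyclic} on a graph/field splitting $\Omega\cong\Gamma\times\Phi$.

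The paper simply fixes that splitting "after fixing commuting representatives" and gives no independence-of-complement argument, so your last paragraph goes beyond the paper. As you yourself hedge, it is only heuristic: you have not shown that replacing $\Phi$ by another cyclic complement leaves the glued extension unchanged.
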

\begin{proof}
Let
\[
\pi:\mathbf H \longrightarrow \overline{\mathbf H}:=\mathbf H/Z(\mathbf H^\circ)
\]
be the quotient by the center of \(\mathbf H^\circ\). Then
\[
\overline{\mathbf H}^{\,\circ}=\mathbf H^\circ/Z(\mathbf H^\circ)
\]
is adjoint, and
\[
\overline{\mathbf H}/\overline{\mathbf H}^{\,\circ}\;\cong\;\mathbf H/\mathbf H^\circ,
\]
so the component group is unchanged.  The chosen pinned semidirect component model for $H$
maps to a pinned semidirect component model for $\overline H$ with the same component quotient.
By Lemma~\ref{lem:unipotent-trivial-central-character},
the cuspidal unipotent character \(u^\circ\) is trivial on \(Z(\mathbf H^\circ)^F\), hence is the
inflation of a cuspidal unipotent character \(\overline u^\circ\in\Irr(\overline H^\circ)\), where
\(\overline H^\circ=\overline{\mathbf H}^{\,\circ F}\). Any extension of \(\overline u^\circ\) to
\(\overline H:=\overline{\mathbf H}^{\,F}\) inflates to an extension of \(u^\circ\) to \(H\).
Therefore, for the existence and canonicity statement, we may replace \(\mathbf H\) by
\(\overline{\mathbf H}\) and hence assume from the outset that \(\mathbf H^\circ\) is simple adjoint.

Set
\[
\Omega \ :=\ H/H^\circ .
\]
By Lemma~\ref{lem:cuspidal-unipotent-invariant}, the character \(u^\circ\) is fixed by all
automorphisms of \(H^\circ\), hence
\[
I_H(u^\circ)=H.
\]

If \(\Omega\) is cyclic, then applying
Lemma~\ref{lem:cyclic-preferred-extension} with \(C=\Omega\) yields a canonical
extension
\[
\widetilde u\in\Irr(H)
\]
of \(u^\circ\). This settles the cyclic case.

Assume now that \(\Omega\) is non-cyclic abelian. By Malle's reduction
(\cite[Prop.~2.2]{Malle08}), the only simple adjoint types for which such a non-cyclic abelian
component quotient can occur in the present setting are
\[
A_{n-1}\ (n\ge 3),\qquad D_n\ (n\ge 4),\qquad E_6.
\]
In type \(A_{n-1}\) there are no cuspidal unipotent characters, so this case does not arise.

Thus \(\mathbf H^\circ\) is of type \(D_n\) or \(E_6\). Again by the same reduction, after fixing
commuting representatives of the component action, we may choose a decomposition
\[
\Omega \ \cong\ \Gamma\times\Phi
\]
with \(\Gamma\) and \(\Phi\) cyclic, where \(\Gamma\) is the graph part and \(\Phi\) the cyclic
field part. Let
\[
I_\Gamma,\ I_\Phi \le H
\]
denote the inverse images of \(\Gamma\) and \(\Phi\) under the quotient map \(H\twoheadrightarrow\Omega\).

By Lemma~\ref{lem:preferred-extension-cuspidal-abelian}, there exists a canonically preferred
extension
\[
u_\Gamma\in\Irr(I_\Gamma)
\]
of \(u^\circ\) which is \(\Phi\)-invariant. By
Lemma~\ref{lem:cyclic-preferred-extension}, applied to the cyclic subgroup
\(\Phi\le \Omega\), there exists a canonical extension
\[
u_\Phi\in\Irr(I_\Phi)
\]
of \(u^\circ\).

Now Lemma~\ref{lem:gluing-preferred-extension-non-cyclic} applies and yields a unique extension
\[
\widetilde u\in\Irr(H)
\]
of \(u^\circ\) such that
\[
\widetilde u|_{I_\Gamma}=u_\Gamma,
\qquad
\widetilde u|_{I_\Phi}=u_\Phi.
\]
Since \(u_\Gamma\) and \(u_\Phi\) are canonical, the character \(\widetilde u\) is canonical as
well.

Hence in all cases \(u^\circ\) admits a preferred extension to \(H\).
\end{proof}

\subsection{Pinned representatives for component actions}
Fix a pinning $\cP$ of $G$ and hence a pinning of $G^*$ and of $H^\circ=C_{G^*}(s)^\circ$.
Write $\cP_{H^\circ}$ for this pinning.  Let $\Aut(H^\circ,\cP_{H^\circ})$ denote automorphisms of
the algebraic group $H^\circ$ preserving the pinning; it is canonically identified with the
automorphism group of the based root datum of $H^\circ$.

\begin{lemma}[Pinned representatives of the component action]\label{lem:pinned-reps-Omega}
Let $H=C_{G^*}(s)$ and $\Omega:=H/H^\circ$.  Conjugation by $H$ induces a homomorphism
$\Omega\to\Out(H^\circ)$.  The pinning $\cP_{H^\circ}$ provides a canonical section
$\Out(H^\circ)\to \Aut(H^\circ,\cP_{H^\circ})$.  Composing, we obtain a canonical homomorphism
\[
\Omega\ \longrightarrow\ \Aut(H^\circ,\cP_{H^\circ}),\qquad \omega\ \longmapsto\ \sigma_\omega,
\]
whose image consists of pinned automorphisms of $H^\circ$ representing the outer action class of $\omega$.
\end{lemma}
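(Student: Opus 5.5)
The plan has three steps: build the homomorphism \(\Omega\to\Out(H^\circ)\), build the pinned section \(\Out(H^\circ)\to\Aut(H^\circ,\cP_{H^\circ})\), and compose them. The only real work is checking well-definedness and the homomorphism property of each step.

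\textbf{Step 1: the map \(\Omega\to\Out(H^\circ)\).} Since \(H^\circ\) is normal in \(H\), conjugation gives a homomorphism \(H\to\Aut(H^\circ)\). Composing with the projection to \(\Out(H^\circ)=\Aut(H^\circ)/\operatorname{Inn}(H^\circ)\), the subgroup \(H^\circ\) acts by inner automorphisms and so maps to the identity. The composite therefore factors through \(\Omega=H/H^\circ\).

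\textbf{Step 2: the pinned section.} I would use the standard split exact sequence
\[
1\to\operatorname{Inn}(H^\circ)\to\Aut(H^\circ)\to\Out(H^\circ)\to 1 .
\]
Here \(\Out(H^\circ)\) is identified with the automorphism group of the based root datum of \(H^\circ\). The isomorphism theorem for reductive groups says that every automorphism of the based root datum lifts to a unique automorphism preserving the pinning \(\cP_{H^\circ}\). In particular, \(\Aut(H^\circ,\cP_{H^\circ})\) meets \(\operatorname{Inn}(H^\circ)\) trivially: an inner automorphism preserving \((\bB,\bT)\) is conjugation by an element of \(\bT\), and fixing every \(x_\alpha\) forces it to be trivial modulo \(Z(H^\circ)\). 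Hence \(\Aut(H^\circ,\cP_{H^\circ})\) maps isomorphically onto \(\Out(H^\circ)\). The inverse of this isomorphism is the canonical section, and it is a homomorphism because it is the inverse of a group isomorphism. It depends only on \(\cP_{H^\circ}\), which in turn was obtained from \(\cP\) through \(\cP^*\), as recorded in \S\ref{sec:notation}.

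\textbf{Step 3: composition.} Define \(\omega\mapsto\sigma_\omega\) as the composite of Steps 1 and 2. Since \(\sigma_\omega\) is a lift of the outer class of \(\omega\), conjugation by any representative \(h\in H\) of \(\omega\) differs from \(\sigma_\omega\) by an inner automorphism of \(H^\circ\). This is the asserted representation property.

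The main obstacle is Step 2: the trivial-intersection and surjectivity claims, together with the identification of \(\Out(H^\circ)\) with based root datum automorphisms. I would cite the standard reference (e.g.\ Springer's isomorphism theorem, or \cite{Book:DigneandMichel}) rather than reprove them. A further point is that \(H^\circ\) need not be semisimple. The argument is unaffected, since the pinned lift is still unique given the full based root datum, including the lattice.

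Finally, \(F^*\)-compatibility is not part of the statement. If it is wanted, it follows because \(\cP_{H^\circ}\) is \(F^*\)-stable, so \(F^*\) normalizes \(\Aut(H^\circ,\cP_{H^\circ})\), and the section intertwines the \(F^*\)-actions.
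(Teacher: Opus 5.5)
Your proposal is correct and is the standard argument the paper relies on when it calls this lemma ``self evident'': you factor the conjugation action through $\Omega$, then compose with the inverse of the isomorphism $\Aut(H^\circ,\cP_{H^\circ})\xrightarrow{\sim}\Out(H^\circ)$ given by the isomorphism theorem for pinned reductive groups. Your check that pinned automorphisms meet $\operatorname{Inn}(H^\circ)$ trivially, and your remark that the non-semisimple case causes no difficulty, supply details the paper leaves implicit.
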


\begin{proof}
Self evident.
\end{proof}

\subsection{Permutation factors and wreath-product extensions}

\begin{lemma}[Canonical wreath-product extension]\label{lem:wreath-extension}
Let $K$ be a finite group and $\theta\in\Irr(K)$.  For $m\ge 1$, put $K^m:=K\times\cdots\times K$ and
$\theta^{\boxtimes m}:=\theta\boxtimes\cdots\boxtimes\theta\in\Irr(K^m)$.  Let $\mathfrak S_m$ act on $K^m$
by permuting factors and form the semidirect product $K\wr \mathfrak S_m:=K^m\rtimes \mathfrak S_m$.

Then $\theta^{\boxtimes m}$ has a \emph{canonical} extension $\widetilde\theta^{(m)}\in\Irr(K\wr\mathfrak S_m)$
characterized as follows: if $(\rho,V)$ affords $\theta$, then $K^m$ acts on $V^{\otimes m}$ by the tensor
product representation and $\mathfrak S_m$ acts by permuting tensor factors:
\[
\sigma\cdot(v_1\otimes\cdots\otimes v_m):=v_{\sigma^{-1}(1)}\otimes\cdots\otimes v_{\sigma^{-1}(m)}.
\]
This yields a representation of $K\wr\mathfrak S_m$ on $V^{\otimes m}$ whose character is $\widetilde\theta^{(m)}$.
Moreover,
\[
\Res^{K\wr\mathfrak S_m}_{K^m}\widetilde\theta^{(m)}=\theta^{\boxtimes m}
\qquad\text{and hence}\qquad
\bigl\langle \Res^{K\wr\mathfrak S_m}_{K^m}\widetilde\theta^{(m)},\ \theta^{\boxtimes m}\bigr\rangle_{K^m}=1.
\]
\end{lemma}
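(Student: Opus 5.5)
We prove the lemma by writing down the tensor-induced representation explicitly and checking its properties directly. Let \((\rho,V)\) afford \(\theta\). On \(V^{\otimes m}\) define, for \(k=(k_1,\dots,k_m)\in K^m\) and \(\sigma\in\mathfrak S_m\),
\[
\Pi(k)(v_1\otimes\cdots\otimes v_m)=\rho(k_1)v_1\otimes\cdots\otimes\rho(k_m)v_m,
\qquad
\Pi(\sigma)=P_\sigma,
\]
where \(P_\sigma\) permutes tensor factors as in the statement. The argument then has four steps.

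\begin{enumerate}
\item \emph{\(\Pi\) is a representation of the semidirect product.} We take \(\mathfrak S_m\) to act on \(K^m\) by \((\sigma k)_i=k_{\sigma^{-1}(i)}\), so that \(\sigma k\sigma^{-1}={}^{\sigma}k\). It suffices to check the relation \(P_\sigma\Pi(k)P_\sigma^{-1}=\Pi({}^{\sigma}k)\), which follows by evaluating both sides on pure tensors. We also need \(\sigma\mapsto P_\sigma\) to be a homomorphism, which is the standard fact that the place-permutation action is a left action with the \(\sigma^{-1}\) convention. These facts, together with the fact that \(\Pi|_{K^m}\) is a homomorphism, give a homomorphism \(K^m\rtimes\mathfrak S_m\to\GL(V^{\otimes m})\) via \(\Pi(k\sigma)=\Pi(k)P_\sigma\).

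\item \emph{Restriction and irreducibility.} The restriction of \(\Pi\) to \(K^m\) is the outer tensor product \(\rho^{\boxtimes m}\), whose character is \(\theta^{\boxtimes m}\). This character is irreducible, being an outer tensor product of irreducibles (\(\langle\theta^{\boxtimes m},\theta^{\boxtimes m}\rangle=\prod\langle\theta,\theta\rangle=1\)). Since the restriction to a subgroup is irreducible, \(\Pi\) is itself irreducible. This gives the displayed restriction identity and the scalar product equal to \(1\).

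\item \emph{Independence of choices.} If \(\rho'=A\rho A^{-1}\) is an equivalent representation, then \(A^{\otimes m}\) commutes with every \(P_\sigma\) and intertwines the two tensor representations. Hence the character \(\widetilde\theta^{(m)}\) depends only on \(\theta\), and in this sense the extension is canonical.

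\item \emph{Characterization among extensions.} All extensions of \(\theta^{\boxtimes m}\) differ by linear characters of \(\mathfrak S_m\), namely the trivial character and the sign. We single out the tensor one by its values on \(\mathfrak S_m\): for \(\sigma\) with cycle type \((c_1,\dots,c_r)\) one has \(\operatorname{tr}(P_\sigma)=(\dim V)^{r}\). This equals \(\theta(1)^{\#\text{cycles}}\) and is positive, whereas the sign-twisted extension gives \(-\theta(1)^{r}\) on odd permutations. Thus the permutation-action construction distinguishes \(\widetilde\theta^{(m)}\) from its sign twist whenever \(m\ge2\).
\end{enumerate}

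The main obstacle is bookkeeping rather than substance. We expect the only delicate point to be the convention check in step 1, namely matching the \(\sigma^{-1}\) in the definition of \(P_\sigma\) with the chosen action of \(\mathfrak S_m\) on \(K^m\). Once that is fixed consistently, everything else is formal.
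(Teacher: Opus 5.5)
Your proof is correct and takes essentially the same route as the paper: you realize the extension on $V^{\otimes m}$, check the semidirect-product relation $\sigma k\sigma^{-1}={}^{\sigma}k$, deduce irreducibility from the irreducibility of the restriction $\theta^{\boxtimes m}$, and obtain canonicity from independence of the chosen model. Your explicit intertwiner $A^{\otimes m}$ makes the paper's ``basis-free and functorial'' remark precise, and your step~4 is a correct addition the paper does not contain: via Gallagher's theorem the only other extension is the sign twist, and the positive values $\theta(1)^{\#\mathrm{cycles}(\sigma)}$ on $\mathfrak S_m$ single out $\widetilde\theta^{(m)}$ intrinsically.
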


\begin{proof}
Let $(\rho,V)$ afford $\theta$.  With the actions described in the statement,
$V^{\otimes m}$ becomes a representation of $K\wr \mathfrak S_m$, because the
permutation action of $\mathfrak S_m$ on tensor factors satisfies the semidirect
product relation
\[
\sigma (k_1,\dots,k_m)\sigma^{-1}=(k_{\sigma^{-1}(1)},\dots,k_{\sigma^{-1}(m)}).
\]
Its restriction to $K^m$ is exactly the external tensor product
$\theta^{\boxtimes m}$.

Since $\theta\in\Irr(K)$, the character $\theta^{\boxtimes m}$ is irreducible on
$K^m$.  As $K^m\triangleleft K\wr \mathfrak S_m$, any nonzero
$(K\wr \mathfrak S_m)$-submodule of $V^{\otimes m}$ is in particular a nonzero
$K^m$-submodule, hence must be all of $V^{\otimes m}$.  Therefore the above
representation is irreducible, and its character
$\widetilde\theta^{(m)}\in\Irr(K\wr \mathfrak S_m)$ extends
$\theta^{\boxtimes m}$.

The equality
\[
\Res^{K\wr \mathfrak S_m}_{K^m}\widetilde\theta^{(m)}=\theta^{\boxtimes m}
\]
is built into the construction, so the inner product statement follows
immediately.  Finally, the construction is basis-free and functorial in $(\rho,V)$,
so the resulting character depends only on $\theta$; hence the extension is canonical.
\end{proof}

\subsection{The general reductive case}
\begin{proposition}[Preferred characters arising out of a cuspidal unipotent of the identity component]
\label{prop:preferred-extension-abelian-component-general}
Let $\mathbf H^\circ$ be a connected reductive  group over $\overline{\F}_p$ equipped with a
Frobenius endomorphism $F$, and put $H^\circ:=\mathbf H^{\circ F}$.
Let $\mathbf H$ be an $F$--stable (possibly disconnected) algebraic group with identity component
$\mathbf H^\circ$ and set $H:=\mathbf H^F$.
Assume that $\pi_0(\mathbf H):=\mathbf H/\mathbf H^\circ$ is abelian (hence $H/H^\circ$ is abelian),
and assume that $H$ has a pinned semidirect component model in the sense of
Definition~\ref{def:pinned-semidirect-component-model}.  This holds for $F$-stable
semisimple centralizers by Remark~\ref{rmk:DM-supplies-pinned-component-models}.
Let $u^\circ\in\Irr(H^\circ)$ be a cuspidal unipotent character and let
\[
I \ :=\ I_H(u^\circ)\ =\ \{\,h\in H:\ {}^h u^\circ\simeq u^\circ\,\}.
\]

Then $u^\circ$ admits a preferred extension to $I$.
In particular, if $u^\circ$ is $H$--invariant (equivalently $I=H$), then $u^\circ$ admits a preferred
extension to $H$.
\end{proposition}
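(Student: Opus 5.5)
The plan is to reduce to the quasi-simple case treated in Proposition~\ref{prop:preferred-extension-abelian-component}, using central quotients, products, and wreath products.

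First we pass to the adjoint quotient. By Lemma~\ref{lem:unipotent-trivial-central-character}, \(u^\circ\) is trivial on \(Z(\bH^\circ)^F\). As in the proof of Proposition~\ref{prop:preferred-extension-abelian-component}, we replace \(\bH\) by \(\bH/Z(\bH^\circ)\). The component group and the pinned semidirect component model are unchanged, and extensions inflate back to \(H\). So we may assume \(\bH^\circ\) is semisimple adjoint. Then \(\bH^\circ=\prod_j \mathbf Y_j\) is a product of its \(F\)-almost simple factors, and each \(\mathbf Y_j\) is a product \(\mathbf X_j^{m_j}\) of quasi-simple factors permuted transitively by \(F\). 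Accordingly \(H^\circ=\prod_j \mathbf X_j^{F^{m_j}}\), and \(u^\circ=\boxtimes_j u_j\) with \(u_j\) cuspidal unipotent on \(X_j:=\mathbf X_j^{F^{m_j}}\).

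Next we sort out the component action. We use the pinned representatives of Lemma~\ref{lem:pinned-reps-Omega}, so that each \(\omega\in I/H^\circ\) acts through \(\sigma_\omega\in\Aut(H^\circ,\cP_{H^\circ})\). Such a pinned automorphism permutes the factors \(Y_j\). It maps \(Y_j\) to \(Y_{j'}\) only if \(u_j\) and \(u_{j'}\) correspond under the induced pinned isomorphism, since \(\omega\) stabilizes \(u^\circ\). We group isomorphic factors carrying identical cuspidal labels into blocks \(Y^{k}\), where \(Y=\mathbf X^{F^m}\) and \(\theta=u_Y\). On each block the image of \(I/H^\circ\) lies in \(\Aut(Y,\cP)\wr\mathfrak S_k\), with \(\Aut(Y,\cP)\) abelian (graph times field parts).

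For a single factor, Lemma~\ref{lem:cuspidal-unipotent-invariant} shows that \(\theta\) is invariant under all of \(\Aut(Y,\cP)\). Let \(B_Y\) be the image of the stabilizer-of-factor part of the component action in \(\Aut(Y,\cP)\). This group is abelian and is realized by pinned representatives. Proposition~\ref{prop:preferred-extension-abelian-component} then gives a preferred extension \(\widetilde\theta\) of \(\theta\) to \(Y\rtimes B_Y\). By Lemma~\ref{lem:wreath-extension} applied with \(K=Y\rtimes B_Y\), the character \(\widetilde\theta^{\boxtimes k}\) has the canonical tensor-permutation extension to \((Y\rtimes B_Y)\wr\mathfrak S_k\). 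We restrict this extension to the inverse image of the image of \(I/H^\circ\) in this wreath product; restriction stays irreducible because it remains irreducible on \(Y^k\). Finally we take the external tensor product over blocks and restrict along the embedding \(I=H^\circ\rtimes A_I\hookrightarrow\prod_{\text{blocks}}(Y\rtimes B_Y)\wr\mathfrak S_k\) given by the pinned model. This yields an extension of \(u^\circ\) to \(I\), and we call it the preferred extension.

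\textbf{Main obstacle.} The hard step is checking that this extension is well defined, i.e.\ independent of the choices made. Two things need verification. First, within a block, choosing different pinned isomorphisms \(Y_j\cong Y\) must change the construction only by conjugation inside \(\Aut(Y,\cP)^k\), and the preferred extension \(\widetilde\theta\) must be invariant under that conjugation. This holds because \(\Aut(Y,\cP)\) is abelian and Proposition~\ref{prop:preferred-extension-abelian-component} is functorial for pinned isomorphisms. Second, we need the embedding of \(A_I\) into the wreath product to be a homomorphism. This uses that the pinned model makes \(A_I\) act by genuine pinned automorphisms rather than merely outer classes, so no cocycle appears. If \(I/H^\circ\) fails to factor through \(B_Y\wr\mathfrak S_k\) because of non-split interaction between the graph and permutation parts, I would instead apply Lemma~\ref{lem:gluing-preferred-extension-non-cyclic} to glue the two restrictions.
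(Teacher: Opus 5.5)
Your construction follows the paper's proof step for step: adjoint reduction, blocks of isomorphic factors with the same cuspidal label, the preferred extension on one factor from Proposition~\ref{prop:preferred-extension-abelian-component}, the tensor-permutation extension of Lemma~\ref{lem:wreath-extension}, restriction, and a tensor product over blocks. Your handling of \(F\)-almost simple factors is more careful than the paper's.

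One step fails as written. \(\Aut(Y,\cP)\) is \emph{not} abelian in general: for untwisted \(Y=D_4(q)\) its graph part is \(\mathfrak S_3\), and \(D_4(q)\) does have a cuspidal unipotent character. You form blocks by isomorphism type and label rather than by orbits, so a block can contain several \(I/H^\circ\)-orbits of factors whose stabilizers act by non-commuting pinned automorphisms. Concretely, let \(\bH=(\mathbf X\times\mathbf X)\rtimes(\mathbb Z/2\times\mathbb Z/3)\) with \(\mathbf X\) adjoint of type \(D_4\) and split Frobenius. Let the involution act on the first factor by a graph involution \(\gamma\) and trivially on the second, and let the element of order \(3\) act on the second factor by triality \(\tau\) and trivially on the first. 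All hypotheses hold and \(u^\circ=\theta\boxtimes\theta\). Whatever pinned identifications \(Y_1\cong Y\cong Y_2\) you choose, any single \(B_Y\) containing all coordinates of the action contains a copy of \(\mathfrak S_3\). So Proposition~\ref{prop:preferred-extension-abelian-component} does not apply to \(Y\rtimes B_Y\), and the wreath product \((Y\rtimes B_Y)\wr\mathfrak S_k\) is not available. The paper's \(A_{X,\chi}\), ``the (abelian) projection of \(\Omega_{X,\chi}\) to \(\Aut(X)\)'', hides the same problem.

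The repair is to run the wreath construction orbit by orbit.
\begin{itemize}
\item Since \(I/H^\circ\) is abelian, all factors in one orbit \(\mathcal O\) have the same stabilizer \(\Omega_{\mathcal O}\). Fix \(j_0\in\mathcal O\), choose \(\omega_j\) with \(\omega_j(j_0)=j\), and identify \(Y_j\) with \(Y_{j_0}\) via \(\sigma_{\omega_j}\).
\item The coordinate of \(\omega\) at \(j\) is then the restriction of \(\sigma_{\omega_{\omega j}^{-1}\omega\,\omega_j}\) to \(Y_{j_0}\). This lies in the abelian image \(B_{\mathcal O}\) of \(\Omega_{\mathcal O}\).
\item Hence \(I\) maps into \((Y\rtimes B_{\mathcal O})\wr\mathfrak S_{|\mathcal O|}\), and your restriction and tensor-product steps go through, with the product taken over orbits.
\end{itemize}
Independence of choices then needs no abelianness of \(\Aut(Y,\cP)\). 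Changing \(\omega_j\) changes the identification by an element of \(B_{\mathcal O}\). That element acts by an inner automorphism of \(Y\rtimes B_{\mathcal O}\), so it fixes \(\widetilde\theta\). Changing \(j_0\) is absorbed by functoriality of the preferred extension under pinned isomorphisms.

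Finally, Definition~\ref{def:pinned-semidirect-component-model} pins only outer classes, so the homomorphism into the wreath product is not automatic as you claim. The adjoint reduction supplies it. There \(\Ad(a)=\operatorname{Int}(h_a)\circ\sigma_a\) with \(h_a\) unique, hence \(F\)-fixed. The map \(a\mapsto h_a^{-1}a\) is then a complement acting by genuine pinned automorphisms.
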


\begin{proof}
The pinned semidirect component model for $H$ restricts to $I$, by intersecting the chosen
complement with the inverse image of $I/H^\circ$.  Hence, replacing $H$ by $I$ and using this
restricted model, we may and do assume that $u^\circ$ is $H$--invariant, so that
$\Omega:=H/H^\circ$ is abelian and $u^\circ$ is $\Omega$--stable.

\medskip
Since $u^\circ$ is unipotent cuspidal, it factors through the adjoint quotient, so we may assume
$\mathbf H^\circ$ is semisimple of adjoint type.  Decompose
\[
H^\circ \ \cong\ \prod_{a\in\mathcal A} X_a
\]
as a direct product of finite groups of Lie type $X_a=\mathbf X_a^F$ coming from connected almost
simple adjoint algebraic groups $\mathbf X_a$ , possibly with repetitions.
Then every unipotent character of $H^\circ$ is an external tensor product, and cuspidality forces
each factor to be cuspidal; thus we may write
\[
u^\circ \ =\ \boxtimes_{a\in\mathcal A}\chi_a,
\qquad \chi_a\in\Uch(X_a)\ \text{cuspidal unipotent.}
\]
For each isomorphism type $X$ occurring among the $X_a$, and each cuspidal unipotent character
$\chi\in\Uch(X)$, let $m_{X,\chi}$ be the multiplicity of the pair $(X,\chi)$ among the factors
$\{(X_a,\chi_a)\}_{a\in\mathcal A}$.  Reordering factors gives a canonical refinement
\begin{equation}\label{eq:block-decomp}
H^\circ \ \cong\ \prod_{(X,\chi)} X^{m_{X,\chi}},
\qquad
u^\circ \ =\ \boxtimes_{(X,\chi)} \chi^{\boxtimes m_{X,\chi}},
\end{equation}
where $(X,\chi)$ runs over the finite set of pairs consisting of an absolute quasi-simple factor $X$ and a
cuspidal unipotent character $\chi$ of $X$ which appears in $u^\circ$.

\medskip

Conjugation by $H$ induces an action of $\Omega$ on the set of almost simple direct factors of
$H^\circ$.  Since $\Omega$ stabilizes $u^\circ$, it cannot permute a factor carrying $(X,\chi)$ into
a factor carrying $(X,\chi')$ with $\chi'\neq\chi$, because that would send $u^\circ$ to a distinct
tensor product character.  Hence the action of $\Omega$ preserves each block $X^{m_{X,\chi}}$ in
\eqref{eq:block-decomp}, and even preserves the partition of the $m_{X,\chi}$ copies by the label
$\chi$.  Consequently, for each pair $(X,\chi)$ the action of $\Omega$ on the block
$X^{m_{X,\chi}}$ factors through an abelian subgroup
\[
\Omega_{X,\chi}\ \le\ \Aut(X)\wr \mathfrak S_{m_{X,\chi}},
\]
whose permutation image lies in the Young subgroup permuting only the $m_{X,\chi}$ identical
$\chi$--factors.

Moreover, by Lemma~\ref{lem:cuspidal-unipotent-invariant}, $\chi$ is invariant under \emph{all}
automorphisms of $X$.  In particular, the character $\chi^{\boxtimes m_{X,\chi}}$ is stabilized by
$\Omega_{X,\chi}$.

\medskip
Fix a pair $(X,\chi)$ and put $m:=m_{X,\chi}$.
Let $A_{X,\chi}$ be the (abelian) projection of $\Omega_{X,\chi}$ to $\Aut(X)$ (i.e.\ the ``outer''
part acting on a single factor), and let $P_{X,\chi}$ be the (abelian) permutation image in
$\mathfrak S_m$.
Then the extension \(\widetilde{\chi}\) of \(\chi\) across the automorphism part $A_{X,\chi}$ follows from Proposition \ref{prop:preferred-extension-abelian-component} , i.e.,  the disconnected group $\mathbf X\rtimes A_{X,\chi}$ (whose component group is
abelian) yields a preferred extension
\[
\widetilde\chi\ \in\ \Irr(X\rtimes A_{X,\chi})
\qquad\text{with}\qquad
\Res^{X\rtimes A_{X,\chi}}_{X}\widetilde\chi=\chi.
\]

\smallskip

Consider the wreath product
\[
(X\rtimes A_{X,\chi})\wr \mathfrak S_m \ :=\ (X\rtimes A_{X,\chi})^m \rtimes \mathfrak S_m.
\]
By Lemma~\ref{lem:wreath-extension} (applied with $K=X\rtimes A_{X,\chi}$ and $\theta=\widetilde\chi$),
the character $\widetilde\chi^{\boxtimes m}\in\Irr\bigl((X\rtimes A_{X,\chi})^m\bigr)$ has a
canonical extension to $(X\rtimes A_{X,\chi})\wr\mathfrak S_m$.
Restrict this canonical extension to the subgroup
\[
X^m \rtimes \Omega_{X,\chi}\ \le\ (X\rtimes A_{X,\chi})\wr\mathfrak S_m
\]
(realizing $\Omega_{X,\chi}$ via its given action on the $m$ copies).  Denote the resulting
character by $\widetilde u_{X,\chi}\in\Irr(X^m\rtimes \Omega_{X,\chi})$.
\medskip
\noindent\textit{Irreducibility after restriction.}
Note that $X^m\triangleleft X^m\rtimes\Omega_{X,\chi}$ and $\Res^{\,X^m\rtimes\Omega_{X,\chi}}_{X^m}\tilde u_{X,\chi}=\chi^{\boxtimes m}\in\Irr(X^m)$. Thus any nonzero $(X^m\rtimes\Omega_{X,\chi})$--submodule is a nonzero $X^m$--submodule, so $\tilde u_{X,\chi}$ is irreducible.
By construction,
\[
\Res^{X^m\rtimes \Omega_{X,\chi}}_{X^m}\widetilde u_{X,\chi}=\chi^{\boxtimes m},
\]
and $\widetilde u_{X,\chi}$ is canonical once the preferred-extension and wreath-extension
conventions are fixed.

\medskip

Now, form the external tensor product over all pairs $(X,\chi)$ appearing in \eqref{eq:block-decomp}:
\[
\widetilde u \ :=\ \boxtimes_{(X,\chi)} \widetilde u_{X,\chi}.
\]
Then $\widetilde u$ is the desired canonical extension of $u^\circ$ to $H$.

\end{proof}
\begin{theorem}[Pinned canonical extension of cuspidal unipotents]
\label{thm:pinned-canonical-extension}
Let \(G\) be a connected reductive group over \(\F_q\), let \(s\in G^*\) be semisimple, and put \(H:=C_{G^*}(s)\) with identity component \(H^\circ\). Fix a pinning \(\cP\) of \(G\), and let \(\cP_H\) be the induced pinning of \(H^\circ\). Let \(u_\rho^\circ\in \Uch(H^\circ)\) be a cuspidal unipotent character, and set
\[
I:=\Stab_H(u_\rho^\circ).
\]
Then the pinning \(\cP\), together with Lusztig's preferred-extension
normalization fixed above, determines a distinguished extension
\[
        \dot u_\rho\in \Irr(I_H(u^\circ))
\]
of \(u^\circ\).  We will call this extension the \(\cP\)-preferred extension.
Consequently,
\[
u_\rho:=\Ind_I^H(\dot u_\rho)
\]
is a canonically determined irreducible character of \(H\) lying above the \(H\)-orbit of
\(u_\rho^\circ\).
\end{theorem}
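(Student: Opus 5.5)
The plan is to assemble the theorem from the results of this section in three steps: produce a pinned semidirect component model for the inertia group $I$, invoke Proposition~\ref{prop:preferred-extension-abelian-component-general} on it, and finish with Clifford theory.

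\textbf{Step 1: the component model for $I$.} Put $\bH=C_{\bG^*}(s)$, which is $F^*$-stable. By Lemma~\ref{lem:DM-semisimple-centralizer-splits}, the rational sequence $1\to H^\circ\to H\to(\pi_0(\bH))^{F^*}\to1$ splits. The complement can be chosen so that its Weyl-group image preserves the positive system determined by the pinning $\cP_H$ induced from $\cP^*$. Then each complement element acts, modulo inner automorphisms of $H^\circ$, by the pinned automorphism $\sigma_\omega$ of Lemma~\ref{lem:pinned-reps-Omega}. Intersecting this complement with the preimage of $I/H^\circ$ gives a pinned semidirect component model for $I$, as in Remark~\ref{rmk:DM-supplies-pinned-component-models}. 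The component group $\pi_0(\bH)$ embeds in the abelian group $K_{s'}$ attached to a regular embedding, by Lemma~\ref{lem:H-component-group}. Hence $I/H^\circ$ is abelian.

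\textbf{Step 2: constructing $\dot u_\rho$.} By definition of $I$, the character $u^\circ$ is $I$-invariant. Proposition~\ref{prop:preferred-extension-abelian-component-general}, applied to $I$ with the model from Step 1, produces the extension $\dot u_\rho\in\Irr(I)$. The construction passes through the adjoint quotient, the product and wreath decomposition, the cyclic preferred extensions, and the gluing lemma. Each input is fixed by $\cP$ together with Lusztig's convention, so $\dot u_\rho$ depends only on these data.

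\textbf{Step 3: irreducibility of $u_\rho$.} The character $\dot u_\rho$ restricts irreducibly to $H^\circ$ and $I$ is the full inertia group of $u^\circ$. The Clifford correspondence (Isaacs, Thm.~6.11) therefore makes $\Ind_I^H(\dot u_\rho)$ irreducible. Its restriction to $H^\circ$ is the sum of the $H$-conjugates of $u^\circ$, so $u_\rho$ lies above the $H$-orbit of $u^\circ_\rho$.

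\textbf{Main obstacle: independence of auxiliary choices.} The Digne--Michel complement is not unique, and it can be changed by elements of $H^\circ$. I would first show that two pinned complements differ by $H^\circ$-conjugation composed with twists by elements of $Z(\bH^\circ)$ or of the pinned torus. Transporting the preferred extension along such a conjugation gives a $H^\circ$-conjugate character of $I$. This is the same character, because $u^\circ$ is invariant under $I$ and conjugation by $H^\circ$ is inner on $I$.

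Torus twists act trivially because $u^\circ$ factors through the adjoint quotient (Lemma~\ref{lem:unipotent-trivial-central-character}). They also preserve the pinned outer action, so Lusztig's intertwiners $T_\sigma$ are unchanged.

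A second point needs care: the non-cyclic case of Proposition~\ref{prop:preferred-extension-abelian-component} uses a decomposition $\Gamma\times\Phi$. I would argue that it is canonical, with $\Gamma$ the graph part fixed by the pinning, but this is the step I expect to need the most justification. If canonicity fails there, I would restrict "distinguished" to mean relative to Malle's graph-field decomposition, which is the convention used in that proposition.
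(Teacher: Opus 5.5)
Your Steps 1--3 are exactly the paper's proof. The paper also takes pinned semidirect models for $H$ and $I$ from Lemma~\ref{lem:DM-semisimple-centralizer-splits} and Remark~\ref{rmk:DM-supplies-pinned-component-models}, and gets $I/H^\circ$ abelian from Lemma~\ref{lem:H-component-group}. It then applies Proposition~\ref{prop:preferred-extension-abelian-component-general}, nominally to $H$, though its proof passes to $I$ at once as you do, and finishes with the Clifford correspondence. Canonicity of $\dot u_\rho$ is simply inherited from that proposition. The paper never discusses dependence on the complement or on the $\Gamma\times\Phi$ splitting, so your last paragraph goes beyond it. Your fallback for the splitting matches what Proposition~\ref{prop:preferred-extension-abelian-component} actually establishes, since its non-cyclic case fixes the splitting through a choice of commuting representatives.

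One step of that extra paragraph is wrong. Lemma~\ref{lem:unipotent-trivial-central-character} only makes $u^\circ$ trivial on $Z(\bH^\circ)^{F^*}$. If $t$ is a non-central torus element, then $\Ad(\sigma t)\neq\Ad(\sigma)$ on $\bH^\circ$, so the intertwiner for $\sigma t$ cannot be the old $T_\sigma$. It must be a scalar multiple of $T_\sigma\rho(t)$, where $\rho$ affords $u^\circ$, and showing that the normalization produces exactly this multiple is what has to be proved.

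Your description of how complements can differ is also unjustified. Under Definition~\ref{def:pinned-semidirect-component-model}, which pins the action only up to outer classes, two admissible complements can differ by an arbitrary $H^\circ$-valued $1$-cocycle, not just by conjugation and central or torus twists.

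A cleaner route is to impose the normalization only at elements of $I$ whose conjugation preserves $\cP_H$ exactly. Two such elements in the same $H^\circ$-coset differ by an element of $H^\circ$ whose conjugation fixes $\cP_H$, hence by an element of $Z(\bH^\circ)^{F^*}$. There your central-character argument does work. Any extension $\widetilde\rho$ of $\rho$ to $I$ satisfies $\widetilde\rho(\sigma h)=\widetilde\rho(\sigma)\rho(h)$ for $h\in H^\circ$. So these values determine the extension on all of $I$ without reference to a complement.

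This route needs every coset of $H^\circ$ in $I$ to contain an $F^*$-fixed element preserving $\cP_H$ on the nose. That is a rational condition on the centralizer side, analogous to Hypothesis~\ref{hyp:rational-pinned-component-condition}. The Weyl-group-level statement in Lemma~\ref{lem:DM-semisimple-centralizer-splits} does not supply it by itself.
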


\begin{proof}
The algebraic group \(\bH=C_{\bG^*}(s)\) is an \(F^*\)-stable semisimple centralizer.
Hence Lemma~\ref{lem:DM-semisimple-centralizer-splits} and
Remark~\ref{rmk:DM-supplies-pinned-component-models} give pinned semidirect component models for
\(H\) and for the inertia subgroup \(I\).  By Lemma~\ref{lem:H-component-group}, the quotient
\(H/H^\circ\) is abelian. Hence
\[
I/H^\circ \le H/H^\circ
\]
is abelian as well. Applying Proposition~\ref{prop:preferred-extension-abelian-component-general}
to \(H\) and the character \(u_\rho^\circ\), we obtain a canonical preferred
extension
\[
\dot u_\rho\in \Irr(I)
\]
of \(u_\rho^\circ\).

Now set
\[
u_\rho:=\Ind_I^H(\dot u_\rho).
\]
By Clifford theory, \(u_\rho\) is irreducible, and its restriction to \(H^\circ\) is the sum of the
\(H\)-conjugates of \(u_\rho^\circ\). Thus \(u_\rho\) lies above the \(H\)-orbit of \(u_\rho^\circ\).
Since \(\dot u_\rho\) is canonical, so is \(u_\rho\).
\end{proof}
\medskip
\section{The cuspidal part of the pinned Jordan decomposition}
\label{sec:cuspidal-pinned-JD}
The preceding section supplied the extension-theoretic input needed on the
unipotent side: after fixing the pinning, cuspidal unipotent characters admit
canonical preferred extensions to the relevant stabilisers.  We now use this
input to isolate the cuspidal part of the pinned Jordan decomposition.

\subsection{Canonical Jordan decomposition of cuspidal characters}
\begin{theorem}[Pinned Malle matching for simple groups]
\label{thm:pinned-Malle-matching-simple}
Let \(\mathbf G\) be simple and simply connected, equipped with a Frobenius
endomorphism \(F\), and put \(G=\mathbf G^F\).  Fix an \(F\)-stable pinning
\(\cP\) of \(\mathbf G\), and let \(\psi_{\cP}\) be the corresponding
non-degenerate character of the unipotent radical of the fixed Borel subgroup.
Let
\[
        A:=G_{\ad}/G
\]
denote the group of diagonal automorphisms of \(G\).  Let
\(s\in G^{*F}\), and let
\[
        \mathcal S_s\subset \cE(G,s)
\]
be the \(A\)-orbit of semisimple characters in \(\cE(G,s)\).  Denote by
\[
        \rho_{s,\psi_{\cP}}\in \mathcal S_s
\]
the Whittaker-normalized semisimple character determined by \(\psi_{\cP}\).

Let
\[
        F_O\subset \cE(G,s)_{\cusp}
\]
be an \(A\)-orbit of cuspidal characters.  Then the pinning \(\cP\) determines
a distinguished element
\[
        \rho_{O,\cP}\in F_O
\]
such that
\[
        \Aut(G)_{\rho_{O,\cP}}
        =
        \Aut(G)_{\rho_{s,\psi_{\cP}}}.
\]
Consequently there is a unique \(A\)-equivariant bijection
\[
        m_{\cP,O}:F_O\xrightarrow{\sim}\mathcal S_s
\]
satisfying
\[
        m_{\cP,O}(\rho_{O,\cP})=\rho_{s,\psi_{\cP}}.
\]
Moreover, for every \(\rho\in F_O\), one has
\[
        \Aut(G)_\rho
        =
        \Aut(G)_{m_{\cP,O}(\rho)}.
\]
We call \(m_{\cP,O}\) the \(\cP\)-preferred Malle matching on the orbit \(F_O\).
\end{theorem}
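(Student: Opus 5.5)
The plan is to reduce everything to Malle's results on the action of automorphisms on cuspidal characters of simple groups \cite{Malle2017}, and then use the pinning only to break the residual $A$-symmetry. There are four steps.

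\emph{Step 1: the semisimple side.} For $\bG$ simple and simply connected, the semisimple characters in $\cE(G,s)$ form one $A$-orbit $\mathcal S_s$. It is the set of constituents of $\pm\sum_{\bT^*\ni s}$-type averages, equivalently the constituents of $\Res^{G'}_G$ of the semisimple character of a regular embedding $G'$. By Lusztig's multiplicity-freeness and Remark~\ref{rmk:multi-regular-embedding-action}, $A$ acts transitively on $\mathcal S_s$ and $|\mathcal S_s|=|\Gamma|$ with $\Gamma=\Stab_{H/H^\circ}(1)=H/H^\circ$. The pinning gives the non-degenerate character $\psi_{\cP}$, and exactly one element of $\mathcal S_s$ is $\psi_{\cP}$-generic. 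This is the uniqueness of Whittaker models, i.e.\ multiplicity one of $\Ind_U^G\psi_{\cP}$, together with the fact that the Gelfand--Graev character meets each Lusztig series in exactly the semisimple characters, with $A$ permuting the $\psi$'s simply transitively on the relevant part. This defines $\rho_{s,\psi_{\cP}}$. Since $\Aut(G,\cP)$ fixes $\psi_{\cP}$ up to $U$-conjugacy and maps $\cE(G,s)$ to $\cE(G,\sigma^*(s))$, we get $\Aut(G)_{\rho_{s,\psi_{\cP}}}=\Stab_{A\rtimes\Aut(G,\cP)}(\text{series})\cap(\text{stabilizer of }\psi_{\cP}\text{-genericity})$. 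This is computed explicitly as (inner)$\cdot\Stab_{\Aut(G,\cP)}(s)\cdot(\ker$ of the $A$-action on $\mathcal S_s)$.

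\emph{Step 2: the cuspidal side, Malle's theorem.} For a cuspidal $A$-orbit $F_O$, Lusztig's orbit-valued Jordan decomposition (Theorem~\ref{thm:Lusztig-orbit-JD-disconnected-centralizers}) gives $|F_O|=|\Stab_{H/H^\circ}(u)|$. By Lemma~\ref{lem:cuspidal-unipotent-invariant} and Theorem~\ref{fact:cuspidal} this equals $|H/H^\circ|=|\mathcal S_s|$, because cuspidal unipotents of $H^\circ$ are invariant. Hence the stabilizers in $A$ of elements of $F_O$ and of $\mathcal S_s$ coincide, as both equal the annihilator in $A$ of the same subgroup of $(H/H^\circ)^\vee$ under the orthogonality of Remark~\ref{rmk:multi-regular-embedding-action}. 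Malle \cite[Thm.~3.1--3.2]{Malle2017} shows more: there is an $\Aut(G)$-equivariant bijection between $F_O$ and $\mathcal S_s$ (his ``$A(\infty)$'' condition). In particular, some $\rho\in F_O$ has $\Aut(G)_\rho=\Aut(G)_{\rho_{s,\psi_{\cP}}}$. I would invoke this existence statement directly.

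\emph{Step 3: distinguishing $\rho_{O,\cP}$.} Among the elements of $F_O$ with this property, I need uniqueness. Since $A$ is abelian and acts transitively on $F_O$ with stabilizer $A_0$, the set of such $\rho$ is a torsor under $N_A(\ldots)/A_0$ intersected with those $a$ with ${}^a\Aut(G)_\rho=\Aut(G)_\rho$. Conjugation by $a\in A$ changes the field/graph part of $\Aut(G)_\rho$ by commutators $[a,\sigma]\in A$, so the condition cuts down to the $a$ with $[a,\Aut(G,\cP)_s]\subseteq A_0$. This set may be larger than $A_0$; this is the main obstacle. To handle it I would follow Malle's case analysis (types $A$, ${}^2A$, $D_n$, $E_6$, $E_7$, where $A\neq1$ and cuspidals exist). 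There I would check that the additional ambiguity is resolved by the requirement that the matching be $\Aut(G)$-equivariant on \emph{all} of $F_O$, not just at one point. Concretely, I define $m_{\cP,O}$ as the unique $A$-equivariant map sending some $\rho$ to $\rho_{s,\psi_{\cP}}$ that is also $\Aut(G,\cP)_s$-equivariant, and take $\rho_{O,\cP}$ to be its preimage. If equivariance still leaves a choice, I would fall back on transport through the regular embedding $G'$: $F_O$ corresponds to a single $G'$-character $\rho'$, and the $\psi_{\cP}$-generic constituent of the matched $G'$-semisimple character fixes the normalization.

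\emph{Step 4: conclusion.} Once $\rho_{O,\cP}$ is fixed, $m_{\cP,O}(a\cdot\rho_{O,\cP}):=a\cdot\rho_{s,\psi_{\cP}}$ is well defined, since the $A$-stabilizers agree by Step 2. It is bijective, since both sides are $A$-torsors over the same stabilizer, and it is unique. The stabilizer identity for general $\rho=a\cdot\rho_{O,\cP}$ follows by conjugating $\Aut(G)_{\rho_{O,\cP}}=\Aut(G)_{\rho_{s,\psi_{\cP}}}$ by $a$.
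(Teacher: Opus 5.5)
Steps 1, 2 and 4 follow the same frame as the paper: existence comes from Malle, and the bijection is $a\rho_{O,\cP}\mapsto a\rho_{s,\psi_{\cP}}$. But Step 3, which you rightly identify as the crux, does not work.

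Put $\chi:=\rho_{s,\psi_{\cP}}$ and suppose $\Aut(G)_\rho=\Aut(G)_\chi$ for some $\rho\in F_O$. If $g\in\Aut(G)$ preserves $F_O$, write $g\rho=b\rho$ with $b\in A$. Then $b^{-1}g\in\Aut(G)_\rho=\Aut(G)_\chi$, so $g\chi=b\chi$. Since $A$ is normal, $g(a\rho)=(gag^{-1})b\rho$ and $g(a\chi)=(gag^{-1})b\chi$. Hence the map $a\rho\mapsto a\chi$ is automatically equivariant for the whole setwise stabilizer of $F_O$ in $\Aut(G)$. The same computation applies verbatim to every other admissible basepoint $a_0\rho$ with $\Aut(G)_{a_0\rho}=\Aut(G)_\chi$.

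So requiring $\Aut(G)$- or $\Aut(G,\cP)_s$-equivariance on all of $F_O$ is satisfied by every candidate and selects nothing. Your ``unique $A$-equivariant and $\Aut(G,\cP)_s$-equivariant map'' fails to be unique precisely in the residual cases, i.e.\ when $(A/A_\rho)^{B_\rho}\neq 1$ in the paper's notation.

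The fallback through the regular embedding fails for a related reason. $\psi_{\cP}$ marks a constituent only of the restriction of the semisimple (or regular) character $\chi'$ of $G'$. The constituents of $\Res^{G'}_{G}\rho'$ for cuspidal $\rho'$ are in general not generic; cuspidal unipotent characters of groups of positive semisimple rank are never generic. There is also no canonical bijection between the constituents of $\Res^{G'}_{G}\rho'$ and those of $\Res^{G'}_{G}\chi'$; producing one is exactly what the theorem asserts. Nothing in your argument lets the pinning reach the cuspidal side.

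The paper supplies that missing input case by case:
\begin{itemize}
\item In type $A$ it selects $\rho_{O,\cP}$ directly as the unique $\psi_{\cP}$-generic member of the cuspidal diagonal orbit. It uses the Cabanes--Sp\"ath equivariant correspondence to get the stabilizer equality.
\item Outside type $A$ it uses Malle's analysis to reduce to two situations. If $(A/A_\rho)^{B_\rho}$ is trivial, the stabilizer condition alone determines $\rho_{O,\cP}$.
\item If $(A/A_\rho)^{B_\rho}$ has order two, the pinning is fed into the specific devices of Malle's proof. These are Brauer-tree chains rooted at $\rho_{s,\psi_{\cP}}$; Deligne--Lusztig induction from the standard Levi with Whittaker-normalized input and multiplicity-one constituents; and the preferred terminal vertex of the $D_n$ diagram for the half-spin ambiguity.
\end{itemize}

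Two smaller points:
\begin{itemize}
\item Semisimple characters are not $\psi$-generic in general; for example $\mathcal S_1=\{1_G\}$. So $\rho_{s,\psi_{\cP}}$ should be defined as the Alvis--Curtis dual of the unique $\psi_{\cP}$-generic character in $\cE(G,s)$, as the paper does in the $z$-extension lemma.
\item Lemma~\ref{lem:cuspidal-unipotent-invariant} concerns quasi-simple groups, not $H^\circ$. You do not need it, since Malle's stabilizer equality already gives equality of the $A$-stabilizers.
\end{itemize}
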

\begin{proof}
Write the action of \(a\in A\) on characters as
\[
        \eta^a:=\eta\circ \ad(a).
\]
Malle's theorem gives, for each cuspidal character
\(\rho\in \cE(G,s)\), a semisimple character
\(\chi\in \cE(G,s)\) satisfying
\[
        \Aut(G)_\chi=\Aut(G)_\rho.
\]
In particular, the \(A\)-orbit \(F_O\) is isomorphic, as an \(A\)-set, to the
\(A\)-orbit \(\mathcal S_s\) of semisimple characters.  The point is to choose
the isomorphism in a way normalized by the pinning.

We first treat type \(A\).  In this case the diagonal automorphism group may
have order larger than two, so the stabilizer condition alone does not choose
a distinguished point.  Instead one uses the Whittaker normalization.  By the
type \(A\) equivariant character correspondence of Cabanes--Späth, as used in
Malle's proof, each diagonal orbit of cuspidal characters in \(\cE(G,s)\)
contains a unique \(\psi_{\cP}\)-generic constituent.  We define
\[
        \rho_{O,\cP}
\]
to be this constituent.  The semisimple orbit \(\mathcal S_s\) similarly
contains the unique \(\psi_{\cP}\)-generic semisimple character
\(\rho_{s,\psi_{\cP}}\).  The type \(A\) equivariance theorem gives
\[
        \Aut(G)_{\rho_{O,\cP}}
        =
        \Aut(G)_{\rho_{s,\psi_{\cP}}}.
\]

We now assume that \(\mathbf G\) is not of type \(A\).  Let
\[
        S_\rho:=\Aut(G)_\rho,\qquad A_\rho:=S_\rho\cap A,
\]
and let \(B_\rho\) be the image of \(S_\rho\) in the graph-field part of
\(\Out(G)\).  If \(\chi\in\mathcal S_s\) is one semisimple character satisfying
\[
        \Aut(G)_\chi=S_\rho,
\]
then the other diagonal translates of \(\chi\) with the same stabilizer
\(S_\rho\) are parametrized by
\[
        (A/A_\rho)^{B_\rho}.
\]
For the non-type \(A\) groups occurring in Malle's reduction, this group is
either trivial or of order two.

If
\[
        (A/A_\rho)^{B_\rho}=1,
\]
then there is no residual choice: the semisimple character in
\(\mathcal S_s\) whose full automorphism stabilizer equals \(S_\rho\) is unique.
Equivalently, there is a unique element of \(F_O\) whose stabilizer equals
\(\Aut(G)_{\rho_{s,\psi_{\cP}}}\).  We define this element to be
\(\rho_{O,\cP}\).

It remains to discuss the residual twofold cases.  These are precisely the
cases in Malle's proof where the stabilizer condition leaves two diagonal
translates undistinguished.  Malle's proof treats them by explicit Brauer-tree
chains, by Deligne--Lusztig induction chains with multiplicity-one
constituents, and by the dihedral actions occurring in type \(D\).  The pinning
removes the remaining sign ambiguity as follows.

In the Brauer-tree cases, Malle connects the cuspidal character to the
semisimple character by a labelled chain of non-exceptional vertices on
Brauer trees.  We root this chain at the pinned semisimple endpoint
\(\rho_{s,\psi_{\cP}}\).  Since the labels of hooks, cohooks, and the relevant
simple factors are fixed by the pinned root datum, this rooted chain has a
unique cuspidal endpoint in \(F_O\).  We define this endpoint to be
\(\rho_{O,\cP}\).  Malle's Brauer-tree argument then gives equality of the
full automorphism stabilizers.

In the Deligne--Lusztig induction cases, choose the Levi subgroup \(L\) to be
the standard Levi determined by the pinned root datum.  The character on \(L\)
appearing in Malle's induction chain is chosen with the corresponding pinned
Whittaker normalization.  The relevant constituents of
\[
        R_L^G(\psi)
\]
occur with multiplicity one.  Hence, among the two possible diagonal translates
in the residual orbit, the pinned induction datum selects a unique constituent.
This constituent is defined to be \(\rho_{O,\cP}\), and Malle's
Deligne--Lusztig argument gives
\[
        \Aut(G)_{\rho_{O,\cP}}
        =
        \Aut(G)_{\rho_{s,\psi_{\cP}}}.
\]

Finally, in the type \(D\) cases where the residual ambiguity is spinorial, the
non-trivial ambiguity is the interchange of the two half-spin labels, or its
graph-field analogue.  The pinning labels the two terminal vertices of the
\(D_n\)-diagram.  We choose the constituent corresponding to the preferred
terminal vertex determined by \(\cP\).  Malle's case analysis shows that the
cuspidal and semisimple diagonal orbits carry the same dihedral action of the
relevant graph-field automorphisms.  Therefore this pinned choice again has
the same full automorphism stabilizer as the pinned semisimple character.

Thus in every case we have constructed a distinguished element
\[
        \rho_{O,\cP}\in F_O
\]
with
\[
        \Aut(G)_{\rho_{O,\cP}}
        =
        \Aut(G)_{\rho_{s,\psi_{\cP}}}.
\]
In particular,
\[
        \Stab_A(\rho_{O,\cP})
        =
        \Stab_A(\rho_{s,\psi_{\cP}}).
\]
There is therefore a unique \(A\)-equivariant bijection
\[
        m_{\cP,O}:F_O\longrightarrow \mathcal S_s
\]
sending \(\rho_{O,\cP}\) to \(\rho_{s,\psi_{\cP}}\), namely
\[
        m_{\cP,O}\bigl(\rho_{O,\cP}^a\bigr)
        :=
        \rho_{s,\psi_{\cP}}^a
        \qquad(a\in A).
\]
This formula is well-defined because the two basepoints have the same
\(A\)-stabilizer.  It is visibly \(A\)-equivariant and is the only
\(A\)-equivariant map with the prescribed value at \(\rho_{O,\cP}\).

For any \(a\in A\), conjugating the equality
\[
        \Aut(G)_{\rho_{O,\cP}}
        =
        \Aut(G)_{\rho_{s,\psi_{\cP}}}
\]
by \(a\) gives
\[
        \Aut(G)_{\rho_{O,\cP}^a}
        =
        \Aut(G)_{\rho_{s,\psi_{\cP}}^a}.
\]
Hence
\[
        \Aut(G)_\rho
        =
        \Aut(G)_{m_{\cP,O}(\rho)}
\]
for all \(\rho\in F_O\), as claimed.
\end{proof}
\begin{lemma}\label{lemma:bijection-cuspidal}
Fix a pinning \(\cP\) of \((\bG,F)\). Then there exists a canonically defined bijection
\[
\J_{s,\cP}:\cE(G,s)_{\cusp}\xrightarrow{\ \sim\ }\Uch(H)_{\cusp}
\]
satisfying
\[
\langle R_{\bT^{*}}^{\bG}(s), \rho \rangle_G = \epsilon_{\bG} \epsilon_{\bH} \langle R_{\bT^{*}}^{\bH}(1), \J_s(\rho) \rangle_H.
\]
We call \(\J_{s,\cP}\) the preferred bijection attached to \(\cP\).
\end{lemma}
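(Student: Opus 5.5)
We build \(\J_{s,\cP}\) one fibre of Lusztig's orbit-valued map at a time. By Theorem~\ref{thm:Lusztig-orbit-JD-disconnected-centralizers} and Theorem~\ref{fact:cuspidal}, the map \(J_s\) restricts to a surjection from \(\cE(G,s)_{\cusp}\) onto the \(H/H^\circ\)-orbits \(O\subseteq\Uch(H^\circ)_{\cusp}\). Here we assume that the split-rank condition in Theorem~\ref{fact:cuspidal}(2) holds; otherwise both sides are empty, since cuspidality of a character of \(H\) is detected on \(H^\circ\). The fibre over \(O\) is a single \(A=G_{\ad}/\pi(G)\)-orbit \(F_O\) with \(|F_O|=|\Gamma|\), where \(\Gamma=\Stab_{H/H^\circ}(u^\circ)\) for \(u^\circ\in O\). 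On the target side, Clifford theory for the abelian quotient \(H/H^\circ\), together with Theorem~\ref{thm:pinned-canonical-extension}, shows the following. The irreducible characters of \(H\) above \(O\) are exactly \(\Ind_I^H(\dot u\otimes\widehat\lambda)\), where \(I=I_H(u^\circ)\), \(\dot u\) is the \(\cP\)-preferred extension, and \(\lambda\in\Irr(\Gamma)\). They are all cuspidal, and there are \(|\Gamma|\) of them. So it suffices to give, for each \(O\), a canonical bijection \(F_O\to\Irr(\Gamma)\).

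\textbf{Constructing \(F_O\to\Irr(\Gamma)\).} By Remark~\ref{rmk:multi-regular-embedding-action}, applied to \(G\triangleleft G'\) with multiplicity-freeness, the stabilizer \(A_\rho\) of \(\rho\in F_O\) in \(G'/G\) is orthogonal to \(\Stab_{K_{s'}}\) of the \(G'\)-character above it. By Lemma~\ref{lem:regular-embedding-JD-equivariance} and Proposition~\ref{prop:H-action-on-unipotent-characters}, that \(K_{s'}\)-stabilizer is \(\Gamma\). The surjection \(\alpha\) of Remark~\ref{rmk:regular-embedding-adjoint-action} then induces an isomorphism \(A/A_\rho\cong\Hom(\Gamma,\Qlcl^\times)=\Irr(\Gamma)\), using a counting argument with \(|F_O|=|\Gamma|\). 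Hence \(F_O\) is an \(A/A_\rho\)-torsor, and so is \(\Irr(\Gamma)\) via \(\alpha\). To get a bijection we need a base point, and we take the \(\cP\)-distinguished element \(\rho_{O,\cP}\) of Theorem~\ref{thm:pinned-Malle-matching-simple}. We then define
\[
\J_{s,\cP}(\rho_{O,\cP}^{a})=\Ind_I^H\bigl(\dot u\otimes\widehat{\alpha(a)|_\Gamma}\bigr),\qquad a\in A .
\]
This is well defined because \(\alpha(a)\) is trivial on \(\Gamma\) exactly when \(a\in A_\rho\). It is a bijection by the orbit and stabilizer count above. Finally, it is independent of \(u^\circ\in O\) and of the choice of regular embedding, because the preferred extensions are \(H\)-conjugation compatible.

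\textbf{Reducing Theorem~\ref{thm:pinned-Malle-matching-simple} to \(\bG\).} That theorem is stated for simple simply connected groups, so we first reduce the general \(\bG\) to this case. We pass to the simply connected cover of the derived group, which preserves cuspidal characters of \(G\) up to restriction. We then decompose into \(F\)-almost simple factors and use restriction of scalars. On each factor the Malle base point is taken, and on a product the base point is the tensor product of the factor base points. This reduction step is the main obstacle. One has to check that the diagonal group \(A\) of \(\bG\) maps compatibly onto the product of the factor diagonal groups, and that the base point obtained this way is unchanged by central twists. I would attempt it through the description \(A\cong Z(\bG)/Z^\circ\)-coinvariants and the product compatibility of Whittaker normalization, since \(\psi_{\cP}\) restricts factorwise.

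\textbf{The character formula.} For \(\rho\in F_O\), Theorem~\ref{thm:Lusztig-orbit-JD-disconnected-centralizers}(3) gives \(\langle R^{\bG}_{\bT^*}(s),\rho\rangle=\epsilon_{\bG}\epsilon_{\bH^\circ}\sum_{u\in O}\langle R^{\bH^\circ}_{\bT^*}(1),u\rangle\). On the \(H\)-side, \(R^{\bH}_{\bT^*}(1)\) is induced from \(H^\circ\) as \(\Ind_{H^\circ}^{H}R^{\bH^\circ}_{\bT^*}(1)\) (with \(\epsilon_{\bH}=\epsilon_{\bH^\circ}\)). By Frobenius reciprocity, its pairing with any \(\Ind_I^H(\dot u\otimes\widehat\lambda)\) equals \(\sum_{u\in O}\langle R^{\bH^\circ}_{\bT^*}(1),u\rangle\), and this is independent of \(\lambda\). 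So the identity holds for every element of the fibre, and in particular for the chosen bijection.
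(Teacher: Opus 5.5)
Your architecture is the same as the paper's, and those parts are fine:
\begin{itemize}
\item fibrewise torsors $F_O$ and $\Irr(\Gamma)$ built from the $\cP$-preferred extension;
\item the orthogonality isomorphism $A/A_\rho\cong\Irr(\Gamma)$ induced by $\alpha$;
\item a pinned base point taken from Theorem~\ref{thm:pinned-Malle-matching-simple};
\item the character formula, obtained from Theorem~\ref{thm:Lusztig-orbit-JD-disconnected-centralizers}(3) via $R^{\bH}_{\bT^*}(1)=\Ind_{H^\circ}^{H}R^{\bH^\circ}_{\bT^*}(1)$ and Frobenius reciprocity.
\end{itemize}
One convention differs. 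You send $\rho_{O,\cP}\circ\ad(a)$ to the label $\alpha(a)|_\Gamma$, whereas the paper's rule $c_{\rho\circ\ad(a)}=a^{-1}c_\rho$ sends it to $\alpha(a^{-1})|_\Gamma$. Both are canonical, but they give different bijections.

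\textbf{The gap: the reduction to simple simply connected groups.} This is the step you yourself call the main obstacle. Theorem~\ref{thm:pinned-Malle-matching-simple} only provides $\rho_{O,\cP}$ when $\bG$ is simple and simply connected, so until the reduction is done your map is undefined for general $\bG$. The route you sketch does not work as stated. Let $\widetilde{\bG}_{\der}$ be the simply connected cover of $\bG_{\der}$, and let $G_1$ be the image of $\widetilde{\bG}_{\der}^F$ in $G$.
\begin{itemize}
\item The direction is reversed. The product of the factor diagonal groups is $G_{\ad}/\pi(G_1)$, and it surjects onto $A=G_{\ad}/\pi(G)$ with kernel $\pi(G)/\pi(G_1)$. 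So $A$ does not map onto it.
\item The isogeny is neither injective nor surjective on rational points. Hence a tensor product of factor base points only fixes a character $\tilde\rho_0$ of $G_1$. If $\rho\in F_O$ lies above $\tilde\rho_0$, then so does $\rho\circ\ad(a)$ for every $a$ in the image in $A$ of the stabilizer of $\tilde\rho_0$ in the larger group. Nothing in your sketch forces these characters to coincide, so no element of $F_O$ is singled out without an extra uniqueness statement.
\end{itemize}

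\textbf{How the paper closes it.} It inflates along a $z$-extension $G'\to G$. This map is surjective on rational points, satisfies $G'_{\ad}/G'\cong G_{\ad}/G$, and has $G'_{\der}$ simply connected, hence a product of $F$-almost simple simply connected factors. It does not choose a base point in $F_O$. Instead it attaches to each $\rho$ the class $c_\rho=g_\rho A_\rho$, where $\chi_\rho\circ\ad(g_\rho)$ equals the pinned Whittaker semisimple character. Here $\chi_\rho$ is the \emph{unique} semisimple character of the series lying above the factorwise-matched semisimple character of $G'_{\der}$. That uniqueness is \cite[Lem.~4.1(b)]{Malle2017}, and it is exactly the ingredient your sketch lacks. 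Independence of the $z$-extension is then proved by the fibre-product argument of Lemma~\ref{lem:zext-independence}.

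\textbf{A secondary error.} Your claim that ``both sides are empty'' when the split-rank condition of Theorem~\ref{fact:cuspidal}(2) fails is false. Cuspidality on $H$ is detected on $H^\circ$, but $H^\circ$ may still have cuspidal unipotent characters. For example, take $s$ regular with $C_{\bG^*}(s)^\circ$ a maximally split torus of a non-toral $\bG$. Then $\cE(G,s)$ is a principal series, so $\cE(G,s)_{\cusp}=\emptyset$, while every unipotent character of $H$ is cuspidal. The fibre count $|F_O|=|\Gamma|$, which both you and the paper use for cuspidal fibres, holds only under that condition. So the condition must be imposed, i.e.\ the target read as the characters of $H$ above orbits in the image of cuspidal characters; it cannot be argued away.
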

\begin{proof}
Let
\[
\bar J_s:\cE(G,s)_{\cusp}\longrightarrow
\Uch(H^\circ)_{\cusp}/\!\sim(H/H^\circ)
\]
be Lusztig's disconnected Jordan decomposition restricted to the cuspidal part. This is independent of the choice of $J_s$
For an \(H/H^\circ\)-orbit
\[
O\subset \Uch(H^\circ)_{\cusp},
\]
put
\[
F_O:=\bar J_s^{-1}(O)\subset \cE(G,s)_{\cusp}.
\]

Choose \(u_O^\circ\in O\), and set
\[
I_O:=\Stab_H(u_O^\circ),\qquad \Omega_O:=I_O/H^\circ .
\]
By Theorem~\ref{thm:pinned-canonical-extension}, \(u_O^\circ\) admits a canonical preferred
extension
\[
\dot u_O\in \Irr(I_O).
\]
For \(\xi\in \Omega_O^\vee=\Irr(\Omega_O)\), define
\[
u_{O,\xi}:=\Ind_{I_O}^H(\dot u_O\otimes \xi)\in \Irr(H).
\]
By Clifford theory, each \(u_{O,\xi}\) is irreducible, and the map
\[
\Omega_O^\vee\longrightarrow \Uch(H)_O,\qquad
\xi\longmapsto u_{O,\xi},
\]
is a bijection onto
\[
\Uch(H)_O:=
\left\{
u\in \Uch(H)_{\cusp}:\Res^H_{H^\circ}u=\sum_{\lambda\in O}\lambda
\right\}.
\]

This parametrization is independent of the chosen representative \(u_O^\circ\in O\).
Indeed, if \(u_1^\circ={}^h u_O^\circ\) for some \(h\in H\), then
\[
I_1=hI_Oh^{-1}.
\]
Since \(H/H^\circ\) is abelian, conjugation by \(h\) induces the identity on
\[
I_O/H^\circ=\Omega_O.
\]
Moreover, by canonicity of the preferred extension, the preferred extension of \(u_1^\circ\) is
\[
{}^h\!\dot u_O.
\]
Hence for every \(\xi\in \Omega_O^\vee\),
\[
\Ind_{I_1}^H\bigl({}^h\!\dot u_O\otimes \xi\bigr)
=
\Ind_{I_O}^H(\dot u_O\otimes \xi),
\]
so the characters \(u_{O,\xi}\) depend only on the orbit \(O\). In particular,
\begin{equation}\label{eq:HO-torsor-rewrite}
|\Uch(H)_O|=|\Omega_O|.
\end{equation}

Now put
\[
A:=G_{\ad}/G.
\]
If \(\rho,\rho'\in F_O\), then \(\rho'=\rho\circ \ad(a)\) for some \(a\in A\), and since \(A\) is
abelian we have
\[
\Stab_A(\rho')=\Stab_A(\rho).
\]
Thus the stabilizer \(A_\rho\) depends only on the orbit \(O\); we denote it by
\[
A_O:=A_\rho\qquad (\rho\in F_O).
\]

By the construction of the disconnected Jordan decomposition together with the orthogonality
statement in Remark~\ref{rmk:multi-regular-embedding-action}, the subgroup \(A_O\) is the orthogonal complement of
\(\Omega_O\) under the pairing induced by
\[
\alpha:A\longrightarrow (H/H^\circ)^\vee.
\]
Hence \(\alpha\) induces an isomorphism
\begin{equation}\label{eq:alpha-fiber-iso-rewrite}
\bar\alpha_O:\ A/A_O \xrightarrow{\ \sim\ } \Omega_O^\vee.
\end{equation}
Since \(F_O\) is a single \(A\)-orbit and \(|F_O|=|\Omega_O|\) by
Theorem~\ref{thm:Lusztig-orbit-JD-disconnected-centralizers}, we obtain
\begin{equation}\label{eq:FO-size-rewrite}
|F_O|=|A/A_O|=|\Omega_O|.
\end{equation}

For every \(\rho\in \cE(G,s)_{\cusp}\), with \(O=\bar J_s(\rho)\), we now attach a class
\[
c_\rho\in A/A_O
\]
canonically determined by \(\rho\) and the chosen pinning \(\cP\).

Case (1): If \(\bG\) is simple and simply connected, let \(c_\rho\) be the class \(g_\rho A_\rho\) obtained
as follows: let
\[
        \chi_\rho:=m_{\cP,O}(\rho)\in \mathcal S_s
\]
be the \(\cP\)-preferred semisimple character attached to \(\rho\) by
Theorem~\ref{thm:pinned-Malle-matching-simple}. Let \(\rho_{s,\psi}\) be the semisimple character attached to the pinned Whittaker datum, and choose
\(g_\rho\in G_{\ad}\) such that
\[
\chi_\rho\circ \ad(g_\rho)=\rho_{s,\psi}.
\]
Then \(c_\rho:=g_\rho A_\rho\).

Case (2): If \(\bG\) is semisimple and simply connected, write
\[
\bG=\prod_i \bG_i,\qquad \rho=\boxtimes_i \rho_i,
\]
and define \(c_\rho\) as the product of the classes attached to the factors by the preceding case.

Case~(3): If \(\bG_{\der}\) is simply connected, choose a semisimple character
\[
\chi_0\in \cE(G_0,s_0)
\]
corresponding to \(\rho_0\). Let
\[
\chi_\rho\in \cE(G,s)
\]
be the unique semisimple character lying above \(\chi_0\), whose existence and
uniqueness are given by \cite[Lem.~4.1(b)]{Malle2017}.  The reduction argument in
\cite[proof of Thm.~1, immediately after Lem.~4.1]{Malle2017} shows that the
stabilizer-matching statement for \(G_0=[G,G]\) passes to \(G\): indeed,
\(G_0\) is characteristic in \(G\), and both \(\rho\) and \(\chi_\rho\) are uniquely
determined, inside the fixed Lusztig series \(\cE(G,s)\), by their constituents
\(\rho_0\) and \(\chi_0\) on \(G_0\).  Since \(\rho_0\) and \(\chi_0\) have the same
stabilizer under diagonal automorphisms by the simply connected derived case, it
follows that
\[
\Stab_A(\chi_\rho)=\Stab_A(\rho)=A_\rho .
\]
Hence
\[
\Stab_A(\chi_\rho)=\Stab_A(\rho)=A_\rho.
\]
Choose \(g_\rho\in G_{\ad}\) such that
\[
\chi_\rho\circ \ad(g_\rho)=\rho_{s,\psi},
\]
and set
\[
c_\rho:=g_\rho A_\rho.
\]
Case (4): Finally, for general \(G\), choose any \(z\)-extension
\[
1\longrightarrow Z\longrightarrow G'\xrightarrow{\,f\,} G\longrightarrow 1
\]
defined over \(\F_q\), with \(Z\) a central torus and \(G'_{\der}\) simply connected. Choose a lift
\(s'\in G'^*\) of \(s\), and let \(\rho'\) be the inflation of \(\rho\) to \(G'^F\). Applying the
preceding case to \((G',\rho')\), we obtain a class
\[
c_{\rho'}\in G'_{\ad}/G'.
\]
Via the canonical identification
\[
G'_{\ad}/G'\xrightarrow{\sim} G_{\ad}/G=A
\]
of Remark~\ref{rmk:regular-embedding-adjoint-action}, we transport \(c_{\rho'}\) to a class \(c_\rho\in A/A_O\).
By Lemma~\ref{lem:zext-independence} below, this class is independent of the chosen \(z\)-extension.
Thus \(c_\rho\) is canonically determined in all cases.

The construction is \(A\)-equivariant: for \(a\in A\),
\begin{equation}\label{eq:crho-equiv}
c_{\rho\circ \ad(a)}=a^{-1}c_\rho.
\end{equation}
In Cases~(1)--(3) this is immediate from the definitions, and in Case~(4) it
follows after inflation to a \(z\)-extension and descent via Lemma~\ref{lem:zext-independence}.

We may therefore define
\[
\J_{s,\cP}(\rho):=u_{O,\bar\alpha_O(c_\rho)},
\qquad O=\bar J_s(\rho).
\]
Everything entering this definition is canonical once \(\cP\) is fixed, so
\(\J_{s,\cP}\) is canonically defined.

It remains to prove that \(\J_{s,\cP}\) is a bijection. Fix an orbit \(O\), and choose
\(\rho_O\in F_O\). Every \(\rho\in F_O\) may be written uniquely in the form
\[
\rho=\rho_O\circ \ad(a)
\]
with \(aA_O\in A/A_O\). By \eqref{eq:crho-equiv},
\[
c_\rho=a^{-1}c_{\rho_O}.
\]
Hence
\[
\J_{s,\cP}(\rho)=u_{O,\bar\alpha_O(a^{-1}c_{\rho_O})}.
\]
Translation by the fixed class \(c_{\rho_O}\) is a bijection of \(A/A_O\), and
\eqref{eq:alpha-fiber-iso-rewrite} is an isomorphism. Therefore the map
\[
F_O\longrightarrow \Uch(H)_O,\qquad
\rho\longmapsto \J_{s,\cP}(\rho),
\]
is a bijection. Taking the disjoint union over all \(H/H^\circ\)-orbits \(O\) yields a bijection
\[
\J_{s,\cP}:\cE(G,s)_{\cusp}\xrightarrow{\ \sim\ }\Uch(H)_{\cusp}.
\]

Finally, let \(\rho\in F_O\). By construction,
\[
\Res^H_{H^\circ}\J_{s,\cP}(\rho)=\sum_{\lambda\in O}\lambda.
\]
Therefore Theorem~\ref{thm:Lusztig-orbit-JD-disconnected-centralizers}(3) gives
\[
\langle R_{\bT^*}^{\bG}(s),\rho\rangle_G
=
\epsilon_{\bG}\epsilon_{\bH^\circ}
\sum_{\lambda\in O}
\langle R_{\bT^*}^{\bH^\circ}(1),\lambda\rangle_{H^\circ}
=
\epsilon_{\bG}\epsilon_{\bH^\circ}
\Bigl\langle
R_{\bT^*}^{\bH^\circ}(1),\Res^H_{H^\circ}\J_{s,\cP}(\rho)
\Bigr\rangle_{H^\circ}.
\]
By the definition of \(R_{\bT^*}^{\bH}(1)\) for the disconnected group \(H\), the last expression is
\[
\epsilon_{\bG}\epsilon_{\bH}\,
\langle R_{\bT^*}^{\bH}(1),\J_{s,\cP}(\rho)\rangle_H.
\]
This is the required formula.
\end{proof}
\begin{lemma}[Independence of the \(z\)-extension]
\label{lem:zext-independence}
Let
\[
1\longrightarrow Z_i \longrightarrow G_i' \xrightarrow{\,f_i\,} G \longrightarrow 1
\qquad (i=1,2)
\]
be two \(F\)-stable \(z\)-extensions of \(G\), with \(Z_i\) a connected central torus and
\((G_i')_{\der}\) simply connected. Fix a pinning \(\cP\) of \((\bG,F)\), and equip each
\(G_i'\) with the induced pinning. Let \(s_i'\in (G_i'^*)^F\) be the image of \(s\) under the
dual regular embedding \(G^* \hookrightarrow G_i'^*\), and let
\[
\rho_i' := \rho\circ f_i^F \in \cE(G_i',s_i')_{\cusp}
\]
be the inflation of \(\rho\in \cE(G,s)_{\cusp}\).

Apply Case~\textup{(3)} to \((G_i',\rho_i')\), and let
\[
c_i(\rho)\in G_{\ad}/G
\]
be the resulting class, via the natural identification of the corresponding diagonal-automorphism
quotients with \(G_{\ad}/G\). Then
\[
c_1(\rho)=c_2(\rho).
\]
Consequently the character
\[
\J_s(\rho)=u_\rho\otimes \alpha(c_i(\rho))
\]
is independent of the chosen \(z\)-extension.
\end{lemma}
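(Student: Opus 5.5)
The plan is to compare the two $z$-extensions through a common third one dominating both, namely the fibre product
\[
G_3' := G_1'\times_G G_2',
\]
which is again an $F$-stable central extension of $G$ by the torus $Z_1\times Z_2$. Its derived group is the fibre product of $(G_1')_{\der}$ and $(G_2')_{\der}$ over $G_{\der}$. Since both are simply connected covers of the same semisimple group, this fibre product contains a copy of the simply connected cover $(G_1')_{\der}\cong (G_2')_{\der}\cong G_{\mathrm{sc}}$ as its derived group. Hence $G_3'$ is again a $z$-extension, and it comes with projections $p_i:G_3'\to G_i'$. Each $p_i$ is itself a $z$-extension, with central torus kernel $Z_{3-i}$. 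It therefore suffices to prove the following claim: if $p:\widetilde G\to G'$ is a surjection with central torus kernel between groups with simply connected derived subgroup, then Case~(3) applied to $\widetilde G$ and to $G'$ gives the same class in $G_{\ad}/G$. Applying the claim to $p_1$ and $p_2$ then yields $c_1(\rho)=c_3(\rho)=c_2(\rho)$.

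To prove the claim, first note that $p$ restricts to an isomorphism $\widetilde G_{\der}\xrightarrow{\sim}G'_{\der}$, because both groups are simply connected with the same root system. Moreover, $p$ induces the identification $\widetilde G_{\ad}=G'_{\ad}$ compatibly with the identifications with $G_{\ad}/G$ from Remark~\ref{rmk:regular-embedding-adjoint-action}. Now trace Case~(3) on both sides.

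\begin{itemize}
\item \textbf{The derived constituent.} The constituent $\widetilde\rho_0$ of the inflation $\widetilde\rho=\rho'\circ p$ on $\widetilde G_{\der}^F$ is carried by $p$ to the constituent $\rho'_0$. Likewise the Malle matching semisimple character $\chi_0$ on $G_{\der}^F$ is the same on both sides, since it is produced by Case~(2) from the pinning alone.
\item \textbf{The semisimple character above $\chi_0$.} The unique semisimple character $\widetilde\chi$ of $\widetilde G$ in $\cE(\widetilde G,\widetilde s)$ lying above $\chi_0$ is the inflation $\chi'\circ p$. This holds because inflation preserves semisimplicity (regularity of $\widetilde s$-components and the Deligne--Lusztig multiplicities are unchanged under central torus quotients). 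The uniqueness statement of \cite[Lem.~4.1(b)]{Malle2017} then forces the equality.
\item \textbf{The pinned Whittaker character.} The character $\rho_{s,\psi}$ is also compatible with inflation, since the pinned Whittaker datum of $\widetilde G$ is pulled back from that of $G'$: the unipotent radicals are identified by $p$.
\end{itemize}

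Consequently, an element $g\in G'_{\ad}=\widetilde G_{\ad}$ with $\chi'\circ\ad(g)=\rho_{s,\psi}$ also satisfies $\widetilde\chi\circ\ad(g)=\widetilde\rho_{s,\psi}$. The stabilizers $A_\rho$ agree as well, so the classes $c$ coincide.

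The main obstacle is justifying that the fibre product is a $z$-extension, together with the bookkeeping of the identifications $G_i'{}_{\ad}/G_i'\cong G_{\ad}/G$. One must check that these identifications commute with the maps induced by $p_i$, which reduces to the fact that all of them are induced by the common adjoint quotient $\bG_{\ad}$. If the fibre-product step causes trouble (for example, disconnectedness of the kernel), I would instead replace $G_3'$ by the identity component of the preimage. Alternatively, I would argue directly that $c_\rho$ depends only on data on $G_{\der}$ and $G_{\ad}$, since $\chi_\rho$ and $\rho_{s,\psi}$ are both determined by their derived restrictions inside a fixed Lusztig series.

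The final assertion then follows immediately. $\J_s(\rho)$ is defined as $u_{O,\bar\alpha_O(c_\rho)}$, so it depends on the $z$-extension only through $c_\rho$.
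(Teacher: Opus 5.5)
Your proposal follows the paper's proof essentially step for step: form the fibre product $G_1'\times_G G_2'$, reduce to a single surjection $f:\widetilde G\to G'$ with central torus kernel inducing an isomorphism on simply connected derived groups, then trace Case~(3) using compatible constituents, the uniqueness in \cite[Lem.~4.1(b)]{Malle2017} to get $\widetilde\rho_s=\infl_f(\rho_s')$, and inflation-compatibility of the pinned Whittaker semisimple characters, so that the same $g\in G'_{\ad}=\widetilde G_{\ad}$ is admissible on both sides. One small slip: the derived group of the fibre product is the identity component of $(G_1')_{\der}\times_{G_{\der}}(G_2')_{\der}$, not the whole fibre product, which can be disconnected. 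Your following sentence already reaches the correct conclusion that this identity component is a copy of $G_{\mathrm{sc}}$, so the argument is unaffected.
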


\begin{proof}
Consider the fibre product
\[
\widetilde G:= G_1' \times_G G_2'
=
\{(g_1,g_2)\in G_1'\times G_2' \mid f_1(g_1)=f_2(g_2)\}.
\]
Then \(\widetilde G\) is a connected reductive \(F\)-stable group, and the projections
\[
p_i:\widetilde G \twoheadrightarrow G_i'
\]
have kernels
\[
\ker(p_1)=\{(1,z_2):z_2\in Z_2\}\cong Z_2,
\qquad
\ker(p_2)=\{(z_1,1):z_1\in Z_1\}\cong Z_1,
\]
which are connected central tori. Thus \(\widetilde G\) is again a \(z\)-extension of each
\(G_i'\), and also of \(G\).

Moreover, each \(p_i\) induces an isomorphism on derived groups:
\[
p_i:\widetilde G_{\der}\xrightarrow{\sim} (G_i')_{\der}.
\]
Indeed, surjectivity follows from the surjectivity of \(p_i\), while
\(\ker(p_i)\cap \widetilde G_{\der}\) is a connected subgroup of the torus \(\ker(p_i)\) contained
in the semisimple group \(\widetilde G_{\der}\), hence is trivial. In particular,
\(\widetilde G_{\der}\) is simply connected.

It is therefore enough to prove the following functoriality statement: if
\[
f:\widetilde G \twoheadrightarrow G'
\]
is a surjective \(F\)-morphism of connected reductive groups whose kernel is a connected central
torus, and if both \(\widetilde G_{\der}\) and \(G'_{\der}\) are simply connected, then the class
constructed in Case~\textup{(3)} for the inflated character on \(\widetilde G\) is the same as the
class constructed for \(G'\).

So let \(f:\widetilde G\twoheadrightarrow G'\) be such a morphism, let \(s'\in (G'^*)^F\) be the
image of \(s\), let \(\widetilde s\in (\widetilde G^*)^F\) be the image of \(s'\) under the dual
regular embedding \(G'^*\hookrightarrow \widetilde G^*\), and let
\[
\rho'\in \cE(G',s')_{\cusp},
\qquad
\widetilde\rho:=\rho'\circ f^F\in \cE(\widetilde G,\widetilde s)_{\cusp}.
\]
Write
\[
G_0':=G'_{\der},\qquad \widetilde G_0:=\widetilde G_{\der}.
\]
As observed above, the induced morphism
\[
f_0:\widetilde G_0\xrightarrow{\sim} G_0'
\]
is an isomorphism.

Since Case~\textup{(3)} is already canonical once the pinning is fixed, we may compute both
classes using compatible auxiliary choices. Choose an irreducible constituent
\[
\rho_0\in \Irr(G_0'^F\mid \rho')
\]
and transport it via \(f_0\) to
\[
\widetilde\rho_0:=\rho_0\circ f_0 \in \Irr(\widetilde G_0^F\mid \widetilde\rho).
\]
Applying Case~\textup{(2)} to \(G_0'\) and \(\widetilde G_0\), we obtain corresponding semisimple
characters
\[
\chi_0\in \cE(G_0',s_0') ,\qquad
\widetilde\chi_0:=\chi_0\circ f_0 \in \cE(\widetilde G_0,\widetilde s_0),
\]
with the same stabilizers as \(\rho_0\) and \(\widetilde\rho_0\), respectively.

Now apply \cite[Lem.~4.1(b)]{Malle2017}. There is a unique semisimple character
\[
\rho_s' \in \cE(G',s')
\]
lying above \(\chi_0\), and a unique semisimple character
\[
\widetilde\rho_s \in \cE(\widetilde G,\widetilde s)
\]
lying above \(\widetilde\chi_0\). Since \(f^F\) is surjective and \(\widetilde\chi_0\) is a constituent
of the restriction of \(\infl_f(\rho_s')\) to \(\widetilde G_0^F\), the uniqueness in
\cite[Lem.~4.1(b)]{Malle2017} forces
\[
\widetilde\rho_s=\infl_f(\rho_s').
\]

Let \(\rho'_{s,\psi}\) and \(\widetilde\rho_{s,\psi}\) denote the semisimple characters attached to the
pinned Whittaker data on \(G'\) and \(\widetilde G\), respectively. Because the pinning on
\(\widetilde G\) is the pullback of that on \(G'\), the corresponding maximal unipotent subgroups
and nondegenerate characters are compatible under \(f\). Hence the Gelfand--Graev characters are
related by inflation, and therefore so are their Alvis--Curtis duals:
\[
\widetilde\rho_{s,\psi}=\infl_f(\rho'_{s,\psi}).
\]

Let \(g\in G'_{\ad}\). Via the canonical identification \(\widetilde G_{\ad}\cong G'_{\ad}\), we may
also regard \(g\) as an element of \(\widetilde G_{\ad}\). Then
\[
\rho_s'\circ \ad(g)=\rho'_{s,\psi}
\]
if and only if
\[
\widetilde\rho_s\circ \ad(g)=\widetilde\rho_{s,\psi},
\]
because inflation commutes with conjugation and \(f^F:\widetilde G^F\twoheadrightarrow G'^F\) is
surjective. Thus the same element \(g\) is admissible in the Case~\textup{(3)} construction for
both \(G'\) and \(\widetilde G\), and so the resulting classes coincide.

Applying this functoriality to the two projections
\[
p_i:\widetilde G\to G_i'
\qquad (i=1,2)
\]
gives
\[
c_1(\rho)=c_{\widetilde G}(\widetilde\rho)=c_2(\rho).
\]
This proves the lemma. The final assertion follows immediately, since \(u_\rho\) depends only on the
\(H/H^\circ\)-orbit attached to \(\rho\).
\end{proof}

\section{Preliminaries about disconnected groups}
This section recalls some basic facts about disconnected reductive groups over \(\mathbb{F}_q\). The contents are taken from \cite{Bonnafe1999Produit}.
\medskip
\subsection{Disconnected reductive groups over $\mathbb{F}_q$}
\label{subsec:disc-red}

\paragraph{General set-up.}
Let $\mathbf G$ be a (not necessarily connected) reductive algebraic group defined over $\F_q$
with Frobenius endomorphism $F:\mathbf G\to \mathbf G$.
Write $\mathbf G^\circ$ for the identity component and set
\[
G:=\mathbf G^F,\qquad G^\circ := (\mathbf G^\circ)^F,\qquad
\Omega := G/G^\circ \cong (\mathbf G/\mathbf G^\circ)^F .
\]

\subsubsection{Parabolics, Levis, quasi-Borels, quasi-tori, Weyl groups}

\begin{definition}[Parabolics and Levis in $\mathbf G$ {\cite[Def.~6.1.1--6.1.2]{Bonnafe1999Produit}}]\label{def:disconn-parabolic}
A \emph{parabolic subgroup} of $\mathbf G$ is a closed subgroup $\mathbf P\le \mathbf G$
containing a Borel subgroup of $\mathbf G^\circ$.
Let $\mathbf U:=R_u(\mathbf P^\circ)$ be the unipotent radical of $\mathbf P^\circ$.
Choose a Levi subgroup $\mathbf L_0\le \mathbf P^\circ$ and put
\[
\mathbf L:=N_{\mathbf P}(\mathbf L_0).
\]
Then $\mathbf L^\circ=\mathbf L_0$ and $\mathbf P=\mathbf L\ltimes \mathbf U$.
We call $\mathbf L$ a \emph{Levi subgroup} of $\mathbf P$.
A subgroup $\mathbf L\le \mathbf G$ is called \emph{regular} if it is a Levi of some parabolic of $\mathbf G$.
\end{definition}

\begin{definition}[Quasi-Borels and quasi-maximal tori {\cite[Def.~6.1.3]{Bonnafe1999Produit}}]
A \emph{quasi-Borel} of $\mathbf G$ is the normalizer $N_{\mathbf G}(\mathbf B^\circ)$
of a Borel subgroup $\mathbf B^\circ\le \mathbf G^\circ$.
A \emph{quasi-maximal torus} of $\mathbf G$ is a Levi subgroup of a quasi-Borel.
\end{definition}

\begin{definition}[Weyl group of $\mathbf G$ relative to a torus {\cite[Def.~6.1.4]{Bonnafe1999Produit}}]
If $\mathbf T^\circ$ is a maximal torus of $\mathbf G^\circ$, set
\[
W_{\mathbf G}(\mathbf T^\circ):=N_{\mathbf G}(\mathbf T^\circ)/\mathbf T^\circ .
\]
If $\mathbf B^\circ\le \mathbf G^\circ$ is a Borel containing $\mathbf T^\circ$ and we put
$\mathbf T:=N_{\mathbf G}(\mathbf T^\circ,\mathbf B^\circ)$,
then $\mathbf T$ is a quasi-maximal torus of $\mathbf G$ with $(\mathbf T)^\circ=\mathbf T^\circ$ and
\[
W_{\mathbf G^\circ}(\mathbf T^\circ)
=
N_{\mathbf G^\circ}(\mathbf T^\circ)/\mathbf T^\circ
\trianglelefteq
W_{\mathbf G}(\mathbf T^\circ)
=
N_{\mathbf G}(\mathbf T^\circ)/\mathbf T^\circ .
\]
Moreover,  multiplication induces an isomorphism
\[
W_{\mathbf G^\circ}(\mathbf T^\circ)\rtimes
(\mathbf T/\mathbf T^\circ)
\xrightarrow{\;\sim\;}
W_{\mathbf G}(\mathbf T^\circ),
\]
where \(\mathbf T/\mathbf T^\circ\) acts on
\(W_{\mathbf G^\circ}(\mathbf T^\circ)\) by conjugation.
\end{definition}

\subsubsection{Pinning and pinned representatives for the component action}

\paragraph{Pinning of $\mathbf G^\circ$.}
Fix an $F$-stable Borel $\mathbf B^\circ\le \mathbf G^\circ$ and an $F$-stable maximal torus
$\mathbf T^\circ\le \mathbf B^\circ$.
Let $\Phi$ be the root system of $\mathbf G^\circ$ relative to $\mathbf T^\circ$ and
$\Delta\subset\Phi$ the simple roots determined by $\mathbf B^\circ$.
A (standard) \emph{pinning} of $\mathbf G^\circ$ is the data
\[
\mathcal P=(\mathbf G^\circ,\mathbf B^\circ,\mathbf T^\circ,(x_\alpha)_{\alpha\in\Delta}),
\]
where $x_\alpha:\mathbb G_a\to \mathbf U_\alpha$ are root group isomorphisms for the simple roots.
We say $\mathcal P$ is \emph{$F$-stable} if $F$ permutes $\Delta$ and
$F(x_\alpha(t))=x_{F(\alpha)}(t^q)$ (after identifying $F$ on $\mathbb G_a$ with $t\mapsto t^q$).

\begin{lemma}[Bonnafé {\cite[Lem.~6.2.1]{Bonnafe1999Produit}}]
Let $\mathbf T:=N_{\mathbf G}(\mathbf T^\circ,\mathbf B^\circ)$.
There exists a closed $F$-stable subgroup $\mathbf A\le \mathbf T$ such that
\begin{enumerate}[label=\textup{(\alph*)}]
\item $\mathbf G=\mathbf G^\circ\cdot \mathbf A$;
\item $\mathbf A\cap \mathbf G^\circ\subset Z(\mathbf G^\circ)$;
\item every $a\in \mathbf A$ induces a \emph{quasi-central} automorphism of $\mathbf G^\circ$
(in the sense of Digne--Michel).
\end{enumerate}
Moreover one may take
\[
\mathbf A=\bigl\{a\in \mathbf T \ \big|\
a\,x_\alpha(t)\,a^{-1}=x_{a(\alpha)}(t)\ \text{for all }\alpha\in\Delta,\ t\in\mathbb G_a\bigr\},
\]
so that conjugation by $\mathbf A$ preserves the chosen pinning $\mathcal P$.
\end{lemma}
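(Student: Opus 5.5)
We construct \(\mathbf A\) directly as the pinning-stabilizer in \(\mathbf T=N_{\mathbf G}(\mathbf T^\circ,\mathbf B^\circ)\), as in the last displayed formula, and verify (a)--(c) together with \(F\)-stability.

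\textbf{Step 1: the pinning stabilizer is a closed subgroup.} Each \(a\in\mathbf T\) normalizes \(\mathbf T^\circ\) and \(\mathbf B^\circ\), so it permutes \(\Delta\). It therefore satisfies \(a\,x_\alpha(t)\,a^{-1}=x_{a(\alpha)}(c_\alpha(a)t)\) for scalars \(c_\alpha(a)\in\mathbb G_m\), since each \(\mathbf U_\alpha\) has automorphism group \(\mathbb G_m\) acting on \(\mathbb G_a\). The conditions \(c_\alpha(a)=1\) are closed, and \(c\) behaves like a cocycle under composition. Hence \(\mathbf A\) is a closed subgroup. Because the pinning is \(F\)-stable and \(F(x_\alpha(t))=x_{F\alpha}(t^q)\), applying \(F\) to the defining relation gives \(F(\mathbf A)\subseteq\mathbf A\). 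So \(\mathbf A\) is \(F\)-stable.

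\textbf{Step 2: (a) via the Frattini argument.} Any \(g\in\mathbf G\) conjugates \((\mathbf T^\circ,\mathbf B^\circ)\) to another Borel pair of \(\mathbf G^\circ\). All such pairs are \(\mathbf G^\circ\)-conjugate, so \(\mathbf G=\mathbf G^\circ\,\mathbf T\). Now let \(a\in\mathbf T\). The automorphism it induces preserves \(\mathbf T^\circ\), \(\mathbf B^\circ\) and \(\Delta\). Choose \(t\in\mathbf T^\circ_{\ad}\) with \(\alpha(t)=c_\alpha(a)^{-1}\), which is possible because the simple roots form a basis of the character lattice of the adjoint torus. Lift \(t\) to \(\mathbf T^\circ\); the lift can be taken in \(\mathbf T^\circ\) because \(\mathbf T^\circ\to\mathbf T^\circ_{\ad}\) is surjective on \(\overline{\mathbb F}_p\)-points. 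The product \(ta\) then fixes every \(x_\alpha\), so \(ta\in\mathbf A\). Thus \(\mathbf T=\mathbf T^\circ\mathbf A\) and \(\mathbf G=\mathbf G^\circ\mathbf A\).

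\textbf{Step 3: (b) and (c).} Suppose \(a\in\mathbf A\cap\mathbf G^\circ\). Then \(a\) normalizes \(\mathbf B^\circ\), so \(a\in\mathbf B^\circ\), and it normalizes \(\mathbf T^\circ\), so \(a\in N_{\mathbf B^\circ}(\mathbf T^\circ)=\mathbf T^\circ\). It fixes all simple root groups pointwise, so \(\alpha(a)=1\) for every \(\alpha\in\Delta\), hence for all roots. Therefore \(a\) centralizes \(\mathbf T^\circ\) and every root subgroup, so \(a\in Z(\mathbf G^\circ)\). For (c), an automorphism preserving a pinning stabilizes the \(F\)-stable pair \((\mathbf T^\circ,\mathbf B^\circ)\). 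Digne--Michel's criterion says such an automorphism is quasi-central, because it fixes a pinning and hence acts on \(\mathbf T^\circ\) through a diagram automorphism with maximal fixed subgroup. I would cite their characterization directly.

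\textbf{Main obstacle.} I expect Step 2's torus adjustment to be the main difficulty, together with keeping \(F\)-stability of \(\mathbf A\) itself. The latter is automatic here because \(\mathbf A\) is defined intrinsically from an \(F\)-stable pinning, so no rational choice of \(t\) is needed.
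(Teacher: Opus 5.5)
Your construction is the standard one, and it is exactly what the ``Moreover'' clause records. The paper itself gives no argument beyond the citation to Bonnaf\'e. Step~1, Step~2 and your proof of (b) are correct:
\begin{itemize}
\item the Frattini argument gives $\mathbf G=\mathbf G^\circ\mathbf T$;
\item the torus adjustment gives $\mathbf T=\mathbf T^\circ\mathbf A$;
\item an element of $\mathbf A\cap\mathbf G^\circ$ lies in $N_{\mathbf B^\circ}(\mathbf T^\circ)=\mathbf T^\circ$ and is killed by every simple root, hence is central.
\end{itemize}
There is a small indexing slip in Step~2. Since $(ta)\,x_\alpha(u)\,(ta)^{-1}=x_{a(\alpha)}\bigl((a\alpha)(t)\,c_\alpha(a)\,u\bigr)$, the condition to impose is $(a\alpha)(t)=c_\alpha(a)^{-1}$, not $\alpha(t)=c_\alpha(a)^{-1}$. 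Because $a$ permutes $\Delta$ and the values of $t$ on $\Delta$ can be prescribed freely, this is only a reindexing.

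\textbf{The justification you sketch for (c) does not work, although the conclusion is true.} All elements of the coset $\mathbf T^\circ a$ induce the same automorphism of $\mathbf T^\circ$. So no property of the action on $\mathbf T^\circ$ can single out the quasi-central elements. Taken literally, your reason would make every element of $\mathbf T$ quasi-central, which already fails for connected $\mathbf G$. There $\mathbf T=\mathbf T^\circ$, and a regular element of $\mathbf T^\circ$ acts trivially on $\mathbf T^\circ$; yet the quasi-central elements of $\mathbf G^\circ$ are only the central ones.

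What the pinning controls is the action on root subgroups. Write $\mathbf U=R_u(\mathbf B^\circ)$. For $\sigma$ normalizing $(\mathbf T^\circ,\mathbf B^\circ)$ one has $\dim C_{\mathbf G^\circ}(\sigma)=\dim\mathbf T^{\circ\sigma}+2\dim\mathbf U^{\sigma}$. For a pinning-preserving $\sigma$, each $\sigma$-orbit $\mathcal O$ of simple roots yields a one-parameter subgroup of $C_{\mathbf G^\circ}(\sigma)$:
\begin{itemize}
\item when the roots of $\mathcal O$ are pairwise orthogonal, it is $u\mapsto\prod_{\beta\in\mathcal O}x_\beta(u)$;
\item in the $A_2$ case, it is a suitable subgroup of $\mathbf U_\alpha\mathbf U_{\sigma\alpha}\mathbf U_{\alpha+\sigma\alpha}$, whose form depends on whether $p=2$.
\end{itemize}
Consequently the Weyl group of $C_{\mathbf G^\circ}(\sigma)^\circ$ relative to $(\mathbf T^{\circ\sigma})^\circ$ is all of $W^\sigma$. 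That is the Digne--Michel characterization of quasi-central elements, backed by Steinberg's description of centralizers of pinned automorphisms. This, not the action on the torus, is what you should cite or verify for (c).
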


\begin{definition}[Pinned representatives of the algebraic component action]\label{def:pinned-Omega-action}
Conjugation by $\mathbf G$ induces a homomorphism
\[
\Omega=\mathbf G/\mathbf G^\circ \longrightarrow \Out(\mathbf G^\circ).
\]
The pinning $\mathcal P$ identifies $\Aut(\mathbf G^\circ,\mathcal P)$ with the automorphism group of the
\emph{based} root datum of $(\mathbf G^\circ,\mathbf T^\circ,\Delta)$ and yields the usual
\emph{pinned lift} section $\Out(\mathbf G^\circ)\to \Aut(\mathbf G^\circ,\mathcal P)$.
Composing, we obtain a canonical homomorphism
\[
\Omega \longrightarrow \Aut(\mathbf G^\circ,\mathcal P),\qquad \omega\longmapsto \sigma_\omega,
\]
whose image consists of pinning-preserving automorphisms representing the algebraic outer action class of $\omega$.
This definition records the algebraic outer action only.
\end{definition}

\subsubsection{Lusztig induction/restriction for disconnected groups}

\paragraph{Deligne--Lusztig varieties and Lusztig functors.}
Let $\mathbf P$ be a parabolic subgroup of $\mathbf G$ with unipotent radical $\mathbf U:=R_u(\mathbf P^\circ)$,
and let $\mathbf L$ be an $F$-stable Levi subgroup of $\mathbf P$.
Set
\[
Y_{\mathbf U}:=\{g\in \mathbf G \mid g^{-1}F(g)\in \mathbf U\}.
\]
Then $G$ acts on $Y_{\mathbf U}$ by left translation and $L:=\mathbf L^F$ acts by right translation.
The virtual $(G,L)$--bimodule $H_c^\ast(Y_{\mathbf U})$ yields Lusztig induction and restriction functors
\[
R_{\mathbf L\subset \mathbf P}^{\mathbf G} \quad\text{and}\quad {}^\ast R_{\mathbf L\subset \mathbf P}^{\mathbf G}
\]
between Grothendieck groups (or between spaces of central class functions), as in
{\cite[\S6.3]{Bonnafe1999Produit}} (cf.\ {\cite[Def.~2.2]{DigneMichel94}}.
If $\mathbf L\supset \mathbf G^\circ$ then $\mathbf U=1$ and
\[
R_{\mathbf L\subset \mathbf L}^{\mathbf G}=\Ind_L^G,\qquad
{}^\ast R_{\mathbf L\subset \mathbf L}^{\mathbf G}=\Res_L^G
\]
{\cite[Prop.~6.3.2]{Bonnafe1999Produit}}.
These functors satisfy the usual transitivity for chains of parabolics
{\cite[Prop.~6.3.3]{Bonnafe1999Produit}}.

\medskip
We now state a Lemma of Digne–Michel \cite[Cor. 2.4]{DigneMichel94} to be used in \S \ref{sec:disc-full-JD}.
\begin{lemma}[Restriction to the identity component for Lusztig functors]\label{lem:DM94-restriction-to-identity-component}
Let $G$ be a (possibly disconnected) reductive $\F_q$--group, $P$ an $F$--stable ``parabolic'' of $G$
(in the sense of \cite[Def.\ 1.4]{DigneMichel94}) with Levi $L$ and unipotent radical $U=R_u(P^\circ)\subset G^\circ$.
Let $P^\circ=P\cap G^\circ$ and $L^\circ=L\cap G^\circ$.
Then for any class function (or virtual character) $\varphi$ of $L^F$ one has, as class functions on $G^{\circ F}$,
\[
\Res^{G^F}_{G^{\circ F}}\bigl(R^G_{L\subset P}(\varphi)\bigr)
\;=\;
\sum_{a\in G^F/(L^F\cdot G^{\circ F})} {}^{a}\!\Bigl(R^{G^\circ}_{L^\circ\subset P^\circ}\bigl(\Res^{L^F}_{L^{\circ F}}\varphi\bigr)\Bigr),
\]
and for any class function $\psi$ of $G^F$,
\[
\Res^{L^F}_{L^{\circ F}}\bigl({}^*R^G_{L\subset P}(\psi)\bigr)
\;=\;
{}^*R^{G^\circ}_{L^\circ\subset P^\circ}\bigl(\Res^{G^F}_{G^{\circ F}}\psi\bigr).
\]
\end{lemma}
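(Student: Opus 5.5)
We prove the first identity, then deduce the second from it by adjunction; both come from the geometric model of the Lusztig functors on the variety \(Y_{\mathbf U}=\{g\in\mathbf G\mid g^{-1}F(g)\in\mathbf U\}\).

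\textbf{Decomposing \(Y_{\mathbf U}\) into components.} The key observation is that \(\mathbf U=R_u(\mathbf P^\circ)\subset\mathbf G^\circ\). Hence for \(g\in Y_{\mathbf U}\) the element \(g^{-1}F(g)\) lies in \(\mathbf G^\circ\), so the component \(g\mathbf G^\circ\) is \(F\)-stable. By Lang's theorem applied to \(\mathbf G^\circ\), every \(F\)-stable component contains a rational point. Therefore
\[
Y_{\mathbf U}=\bigsqcup_{x\in G^F/G^{\circ F}} xY^\circ_{\mathbf U},
\qquad
Y^\circ_{\mathbf U}:=Y_{\mathbf U}\cap\mathbf G^\circ .
\]
Here \(Y^\circ_{\mathbf U}\) is exactly the variety defining \(R^{G^\circ}_{L^\circ\subset P^\circ}\).

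\textbf{Grouping the pieces and the first identity.} The group \(G^{\circ F}\) acts on the left of each piece \(xY^\circ_{\mathbf U}\). The right action of \(l\in L^F\) sends \(xY^\circ_{\mathbf U}\) to \(xlY^\circ_{\mathbf U}\), since \(\mathbf G^\circ\) is normal. So the pieces are permuted by \(L^F\), and we group them into orbits under \(G^{\circ F}\times L^F\). These orbits are indexed by \(a\in G^F/(L^F G^{\circ F})\). Within the orbit of \(a\), the union of the pieces \(alY^\circ_{\mathbf U}\), as a \((G^{\circ F},L^F)\)-variety, is the variety induced from the \((G^{\circ F},L^{\circ F})\)-variety \(aY^\circ_{\mathbf U}\) along \(L^{\circ F}\le L^F\). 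This uses \(L^F\cap G^{\circ F}=L^{\circ F}\).

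Passing to cohomology and tensoring over \(L^F\) then gives the following:
\[
H^*_c\bigl(\textstyle\bigsqcup_l alY^\circ_{\mathbf U}\bigr)\otimes_{L^F}\varphi
\cong
H^*_c(aY^\circ_{\mathbf U})\otimes_{L^{\circ F}}\Res\varphi .
\]
Left translation by \(a\) identifies \(H^*_c(aY^\circ_{\mathbf U})\) with \(H^*_c(Y^\circ_{\mathbf U})\), with the \(G^{\circ F}\)-action twisted by \(a\). Its \(L^{\circ F}\)-action is also twisted by conjugation by \(a\); this is harmless after summing, or one may choose \(a\in N_{G^F}(\mathbf L,\mathbf P)\) when possible. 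Summing over \(a\) yields the displayed sum of conjugates \({}^a R^{G^\circ}_{L^\circ\subset P^\circ}(\Res\varphi)\).

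\textbf{Main obstacle: the twist by \(a\).} This is the step I expect to be hardest. A coset representative \(a\) need not normalize \(\mathbf L^\circ\) or \(\mathbf P^\circ\), so "\({}^a\)" has to be read through the transported pair \(({}^a\mathbf L^\circ,{}^a\mathbf P^\circ)\). Conjugating back to \((\mathbf L^\circ,\mathbf P^\circ)\) is then the step that needs care. I would first try to avoid this by choosing the complement \(\mathbf A\) from Bonnafé's lemma inside \(N_{\mathbf G}(\mathbf L^\circ,\mathbf P^\circ)\), so that the representatives can be taken in \(N_{G^F}(\mathbf L,\mathbf P)\). If that fails, I would fall back on the Digne--Michel definition \cite[Def.~2.2]{DigneMichel94} and verify the identity directly with the character formula.

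\textbf{The restriction identity.} For \({}^*R\), the same decomposition is applied with the roles of the two sides swapped. Alternatively, use adjunction: pair both sides with \(\Res\)-test functions and \(\Ind\)-test functions. Concretely, we compute
\[
\langle\Res^{L^F}_{L^{\circ F}}{}^*R(\psi),\eta\rangle
=\langle\psi,R(\Ind_{L^{\circ F}}^{L^F}\eta)\rangle .
\]
Transitivity of the Lusztig functors and \(R^{\mathbf L}_{\mathbf L^\circ}=\Ind\) rewrite \(R\circ\Ind_{L^{\circ F}}^{L^F}\) as \(\Ind_{G^{\circ F}}^{G^F}\circ R^{G^\circ}_{L^\circ\subset P^\circ}\); this uses \(\mathbf P^\circ\mathbf G^\circ\)-transitivity in \(\mathbf G\). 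Frobenius reciprocity then gives the second formula, since \({}^*R^{G^\circ}_{L^\circ\subset P^\circ}\) is adjoint to \(R^{G^\circ}_{L^\circ\subset P^\circ}\).
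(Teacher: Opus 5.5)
Your argument is correct, and it takes a different route from the paper. The paper gives no proof of this lemma; it imports the statement from \cite[Cor.~2.4]{DigneMichel94}.

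You derive both identities from one geometric fact. Since $\mathbf U\subset\mathbf G^\circ$, Lang's theorem for $\mathbf G^\circ$ gives
$Y_{\mathbf U}=\bigsqcup_{x\in G^F/G^{\circ F}}xY^\circ_{\mathbf U}$.
In other words, $Y_{\mathbf U}$ is induced from the connected variety $Y^\circ_{\mathbf U}$.
\begin{itemize}
\item Grouping the pieces into $G^{\circ F}\times L^F$-orbits, using $L^F\cap G^{\circ F}=L^{\circ F}$, gives the first formula.
\item Viewing $Y_{\mathbf U}$ as a $(G^F,L^{\circ F})$-variety induced from $Y^\circ_{\mathbf U}$ gives directly
$R^G_{L\subset P}\circ\Ind_{L^{\circ F}}^{L^F}=\Ind_{G^{\circ F}}^{G^F}\circ R^{G^\circ}_{L^\circ\subset P^\circ}$.
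Its adjoint is the second formula, so you do not need to quote transitivity.
\end{itemize}
Your adjunction step really uses this commutation identity, not the first displayed formula. The adjoint of the first formula concerns ${}^*R^G_{L\subset P}\circ\Ind_{G^{\circ F}}^{G^F}$, so your opening phrase ``deduce the second from it'' is slightly off.

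The citation buys brevity and Digne--Michel's general framework. Your route makes the lemma self-contained. It also shows that the lemma uses only $\mathbf U\subset\mathbf G^\circ$, the fact that $\mathbf L$ normalizes $\mathbf U$, and $\mathbf L\cap\mathbf G^\circ=\mathbf L^\circ$.

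One correction: the step you flag as the main obstacle is not an obstacle, and your description of it is inaccurate.
\begin{itemize}
\item Left translation $ay\mapsto y$ from $aY^\circ_{\mathbf U}$ to $Y^\circ_{\mathbf U}$ commutes with right multiplication by $L^{\circ F}$. So the $L^{\circ F}$-action is not twisted at all.
\item Only the $G^{\circ F}$-action is twisted, by $h\mapsto a^{-1}ha$. Hence
$H^*_c(aY^\circ_{\mathbf U})\otimes_{L^{\circ F}}\Res\varphi={}^a R^{G^\circ}_{L^\circ\subset P^\circ}(\Res\varphi)$.
Here ${}^a$ is just conjugation of class functions on the normal subgroup $G^{\circ F}$, and $a$ need not normalize $(\mathbf L^\circ,\mathbf P^\circ)$.
\item The summand is independent of the representative chosen in $aL^FG^{\circ F}$. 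This holds because $L^F$ normalizes $(\mathbf L^\circ,\mathbf P^\circ)$ and $\Res\varphi$ is $L^F$-invariant; it also follows from your orbit grouping.
\end{itemize}
Your fallback of taking $a\in N_{G^F}(\mathbf L,\mathbf P)$ would fail in general, because ${}^a\mathbf P^\circ$ need not be $G^{\circ F}$-conjugate to $\mathbf P^\circ$. The transported pair $({}^a\mathbf L^\circ,{}^a\mathbf P^\circ)$ appears only if you identify $aY^\circ_{\mathbf U}$ with $Y^\circ_{{}^a\mathbf U}$ by right translation by $a^{-1}$. In that picture transport of structure gives the same answer.
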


\begin{corollary}\label{cor:HC-induction-restricts-contains-connected}
With the notation of Lemma~\ref{lem:DM94-restriction-to-identity-component},
$\Res^{G^F}_{G^{\circ F}} R^G_{L\subset P}(\tau)$ contains $R^{G^\circ}_{L^\circ\subset P^\circ}(\tau^\circ)$
as a direct summand for every $\tau^\circ\prec\Res^{L^F}_{L^{\circ F}}\tau$.
\end{corollary}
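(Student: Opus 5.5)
The plan is to deduce the corollary from the first identity of Lemma~\ref{lem:DM94-restriction-to-identity-component} applied with \(\varphi=\tau\). We also need that each term on the right-hand side is a genuine character, not merely a virtual one. Since \(\tau\) is a character of \(L^F\), we write \(\Res^{L^F}_{L^{\circ F}}\tau=\sum_{j}\tau_j^\circ\) as a sum of irreducible characters of \(L^{\circ F}\), with multiplicities. The given \(\tau^\circ\prec\Res^{L^F}_{L^{\circ F}}\tau\) is one of the \(\tau_j^\circ\).

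First I would use additivity of Lusztig induction to rewrite the term indexed by the trivial coset \(a=1\) as
\[
R^{G^\circ}_{L^\circ\subset P^\circ}\bigl(\Res^{L^F}_{L^{\circ F}}\tau\bigr)
=R^{G^\circ}_{L^\circ\subset P^\circ}(\tau^\circ)+\sum_{j\neq j_0}R^{G^\circ}_{L^\circ\subset P^\circ}(\tau_j^\circ),
\]
where \(j_0\) is an index with \(\tau_{j_0}^\circ=\tau^\circ\). In the Harish--Chandra situation, which is the one where the corollary will be used, \(\mathbf P\) is \(F\)-stable and \(\mathbf L\) is an \(F\)-stable Levi of \(\mathbf P\). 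Then \(P^\circ=P\cap G^\circ\) is an \(F\)-stable parabolic of \(G^\circ\), and \(R^{G^\circ}_{L^\circ\subset P^\circ}\) is ordinary Harish--Chandra induction \(\Ind_{P^{\circ F}}^{G^{\circ F}}\circ\infl\). In particular it sends characters to characters, so every summand above is an actual character. The same holds for each \(G^F\)-conjugate \({}^a(\cdots)\), because conjugation by \(a\in G^F\) preserves \(G^{\circ F}\) and sends characters to characters.

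Combining these, \(\Res^{G^F}_{G^{\circ F}}R^G_{L\subset P}(\tau)\) equals \(R^{G^\circ}_{L^\circ\subset P^\circ}(\tau^\circ)\) plus a nonnegative integral combination of irreducible characters of \(G^{\circ F}\). That is exactly the assertion that it contains \(R^{G^\circ}_{L^\circ\subset P^\circ}(\tau^\circ)\) as a direct summand. The modules are semisimple since we work in characteristic zero over \(\Qlcl\).

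The one point needing care is positivity, and I expect it to be the main obstacle. If \(P\) is not \(F\)-stable, Lusztig induction is only virtual, and "direct summand" must then be read in the Grothendieck group as a difference of terms. I would therefore state explicitly that we are in the \(F\)-stable (Harish--Chandra) case, as in all later applications. In that case the positivity argument above closes the proof without further input.
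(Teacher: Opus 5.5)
Your argument is correct. It is exactly the deduction the paper intends: the paper states the corollary without proof, as an immediate consequence of the first identity in Lemma~\ref{lem:DM94-restriction-to-identity-component}, namely isolate the $a=1$ term, split $\tau^\circ$ off $\Res^{L^F}_{L^{\circ F}}\tau$ by additivity, and use that Harish--Chandra induction and $G^F$-conjugation send characters to characters. Your caveat about non-$F$-stable $P$ is unnecessary, since $P$ and its Levi $L$ are already $F$-stable in the lemma's notation.
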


\subsubsection{Unipotent characters for disconnected groups}\label{subsubsec:unipotent-characters-for-disconnected}

\paragraph{Unipotent characters.}
Following {\cite[\S6.4]{Bonnafe1999Produit}}, an irreducible character $\gamma\in \Irr(G)$ is called
\emph{unipotent} if it occurs in some $R_{\mathbf T^\circ\subset \mathbf B^\circ}^{\mathbf G}(1)$,
where $\mathbf T^\circ\le \mathbf G^\circ$ is an $F$-stable maximal torus and $\mathbf B^\circ\le \mathbf G^\circ$
a Borel containing it.
Equivalently, $\gamma$ is unipotent iff it occurs in $\Ind_{G^\circ}^G(\gamma^\circ)$ for some unipotent
$\gamma^\circ\in \Irr(G^\circ)$ {\cite[Lem.~6.4.2]{Bonnafe1999Produit}}.

\begin{definition}[A semidirect-product model for $G^*$]\label{def:dual-semidirect-pinning}
Let $\bG$ be a (possibly disconnected) reductive algebraic group over
$\overline{\F}_p$ equipped with a Frobenius endomorphism $F:\bG\to\bG$.
Write $\bG^\circ$ for the identity component and
\[
\Omega \ :=\ \pi_0(\bG)\ \cong\ \bG/\bG^\circ .
\]
Fix an $F$-stable pinning
\[
\mathcal{P}=(\bB,\bT,\{x_\alpha\}_{\alpha\in\Delta})
\qquad\text{of }\bG^\circ.
\]
Let $\bG^{\circ*}$ be the Langlands dual complex reductive group of
$\bG^\circ$, equipped with the \emph{dual pinning}
$\mathcal{P}^*=(\bB^*,\bT^*,\{x_{\alpha^*}\}_{\alpha^*\in\Delta^*})$
and the dual Frobenius automorphism $F^*$.

For each $\omega\in\Omega$, let
\[
        \sigma_\omega\in\Aut(\bG^\circ,\mathcal P)
\]
be the pinned automorphism representing the algebraic outer action of $\omega$,
as in Definition~\ref{def:pinned-Omega-action}.  This semidirect-product
model records the pinned algebraic component action on the dual side.

By duality of pinned automorphisms, $\sigma_\omega$ corresponds uniquely to a
pinned automorphism
\[
\sigma_\omega^* \ \in\ \Aut(\bG^{\circ*},\mathcal{P}^*),
\]
and $\omega\mapsto \sigma_\omega^*$ defines a group homomorphism
\[
\iota:\Omega\longrightarrow \Aut(\bG^{\circ*}),\qquad
\omega\longmapsto \sigma_\omega^*.
\]

\smallskip
\noindent
\textbf{Definition of $\,\bG^*$:}
Define the \emph{(possibly disconnected) dual group} of $\bG$ to be the
semidirect product
\[
\bG^* \ :=\ \bG^{\circ*}\rtimes_{\iota}\Omega,
\]
i.e. as a variety $\bG^*=\bG^{\circ*}\times\Omega$ with multiplication
\[
(g_1,\omega_1)\cdot(g_2,\omega_2)
\ :=\
\bigl(g_1\,\iota(\omega_1)(g_2),\ \omega_1\omega_2\bigr),
\qquad
g_i\in\bG^{\circ*},\ \omega_i\in\Omega .
\]
Then $(\bG^*)^\circ=\bG^{\circ*}$ and $\pi_0(\bG^*)\cong\Omega$.

\smallskip
\noindent
\textbf{Frobenius on $\,\bG^*$:}
Let $F_\Omega:\Omega\to\Omega$ be the action induced by $F$ on components.
Assuming the pinning $\mathcal{P}$ is $F$-stable, define
\[
F^*(g,\omega)\ :=\ \bigl(F^*(g),\,F_\Omega(\omega)\bigr),
\qquad (g,\omega)\in \bG^{\circ*}\rtimes \Omega,
\]
so that $F^*$ is an algebraic automorphism of $\bG^*$ and restricts to the
usual dual Frobenius on $(\bG^*)^\circ=\bG^{\circ*}$.
\end{definition}
\medskip
\section{Disconnected Howlett--Lehrer theory}\label{sec:disconnected-Howlett-Lehrer-theory}
  \subsection{Clifford labels for component extensions}\label{subsec:clifford-labels}

Let $N\lhd M$ be finite with $M/N$ abelian and $\theta\in \Irr(N)$. Put
\[
I_M(\theta):=\{m\in M\mid {}^m\theta\simeq \theta\},\qquad \Omega_\theta:=I_M(\theta)/N.
\]

Clifford theory attaches to $\theta$ a canonical cohomology class
$[\alpha_\theta]\in H^2(\Omega_\theta,\C^\times)$.  We use the following
convention throughout the paper.  Choose a projective extension $\tilde\theta$
of $\theta$ to $I_M(\theta)$, and let
$c_\theta\in Z^2(\Omega_\theta,\C^\times)$ be its multiplier:
\[
\tilde\theta(i_1)\tilde\theta(i_2)
=
c_\theta(\bar i_1,\bar i_2)\tilde\theta(i_1i_2),
\qquad i_1,i_2\in I_M(\theta).
\]
The Clifford-label cocycle is defined to be
\[
\alpha_\theta:=c_\theta^{-1}.
\]
Thus $\alpha_\theta$ is the cocycle whose projective representations cancel the
multiplier of $\tilde\theta$.  Changing $\tilde\theta$ changes $c_\theta$, and
hence $\alpha_\theta$, by a coboundary; the cohomology class
$[\alpha_\theta]$ is canonical.  With this convention Clifford theory gives a
bijection
\[
\Irr(\C_{\alpha_\theta}[\Omega_\theta]) \xrightarrow{\ \sim\ } \Irr(M\mid \theta),
\]
depending on the chosen representative but only through the canonical class
$[\alpha_\theta]$.  Concretely,
\[
E\in \Irr(\C_{\alpha_\theta}[\Omega_\theta])
\quad\longmapsto\quad
\Ind_{I_M(\theta)}^{M}
\bigl(\tilde\theta\otimes \infl_{\Omega_\theta}^{I_M(\theta)}E\bigr)
\ \in \Irr(M\mid \theta).
\]
Here the tensor product is an honest representation of $I_M(\theta)$ because
$\tilde\theta$ has multiplier $c_\theta$ while $E$ has multiplier
$c_\theta^{-1}$.
\medskip
\subsection{A semisimple crossed-product description}
\label{subsec:semisimple-crossed-product}

The next lemma gives the corresponding semisimple crossed-product description at the
level of endomorphism algebras.

Let $\bX$ be a possibly disconnected reductive group over $\F_q$ with Frobenius
endomorphism $F$, and let $\bP$ be an $F$-stable parabolic subgroup of $\bX$ with
$F$-stable Levi factor $\bM$. Write
\[
X:=\bX^F,\qquad X^\circ:=(\bX^\circ)^F,\qquad
M:=\bM^F,\qquad M^\circ:=(\bM^\circ)^F,\qquad P^\circ:=(\bP^\circ)^F.
\]
Let $\theta^\circ\in \Irr(M^\circ)$ be cuspidal, and put
\[
\Omega_{\theta^\circ}:=I_M(\theta^\circ)/M^\circ.
\]
Choose a normalized $2$--cocycle
\[
c=c_{\theta^\circ}\in Z^2(\Omega_{\theta^\circ},\C^\times),
\]
a section
\[
\Omega_{\theta^\circ}\longrightarrow I_M(\theta^\circ),\qquad
\omega\longmapsto \dot\omega,\qquad \dot 1=1,
\]
and intertwiners
\[
\phi_\omega:\theta^\circ\xrightarrow{\ \sim\ }{}^{\dot\omega}\theta^\circ,
\qquad \phi_1=\id,
\]
realizing the factor set $c$.  In accordance with
\S\ref{subsec:clifford-labels}, the corresponding Clifford-label cocycle is
\[
\alpha=\alpha_{\theta^\circ}:=c^{-1}.
\]

Set
\[
M_{X^\circ}:=R^{X^\circ}_{M^\circ\subset P^\circ}(\theta^\circ),
\qquad
N_X:=R^X_{M\subset P}\!\bigl(\Ind_{M^\circ}^{M}\theta^\circ\bigr).
\]
Assume, as in the later applications, that the chosen parabolics are compatible so that
transitivity of Lusztig induction yields
\[
N_X
\;\cong\;
\Ind_{X^\circ}^{X}(M_{X^\circ}),
\]
and that the chosen $\Omega_{\theta^\circ}$--equivariant structure on $\theta^\circ$
induces an action of $\Omega_{\theta^\circ}$ on $M_{X^\circ}$ and identifies
$\Omega_{\theta^\circ}$ with the stabilizer of $M_{X^\circ}$ in $X/X^\circ$.

For each $\omega\in \Omega_{\theta^\circ}$, Harish--Chandra functoriality gives an
isomorphism
\[
\Phi_\omega:
M_{X^\circ}
=
R^{X^\circ}_{M^\circ\subset P^\circ}(\theta^\circ)
\xrightarrow{\ \sim\ }
R^{X^\circ}_{M^\circ\subset P^\circ}({}^{\dot\omega}\theta^\circ)
\cong
{}^{\dot\omega}M_{X^\circ}.
\]
These isomorphisms satisfy the same factor-set relation as the $\phi_\omega$.
They therefore define an action of $\Omega_{\theta^\circ}$ on
\[
E_{X^\circ}:=\End_{X^\circ}(M_{X^\circ})
\]
by
\[
\omega\cdot f
:=
\Phi_\omega^{-1}\circ {}^{\dot\omega}\!f\circ \Phi_\omega,
\qquad f\in E_{X^\circ}.
\]

\begin{lemma}\label{lem:semisimple-crossed-product}
With the above notation, there is a canonical algebra isomorphism
\[
\End_X(N_X)\xrightarrow{\sim} E_{X^\circ}\rtimes_c \Omega_{\theta^\circ},
\]
where the crossed product is formed with respect to the above action of
$\Omega_{\theta^\circ}$ on $E_{X^\circ}$ and the multiplier $c$ of the chosen
projective equivariant structure.
\end{lemma}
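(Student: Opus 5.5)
The approach is to identify $\End_X(N_X)$ with a block of the endomorphism algebra of an induced module from $X^\circ$, and then apply the standard Mackey/crossed-product description. By the assumed transitivity, $N_X\cong \Ind_{X^\circ}^X(M_{X^\circ})$. Frobenius reciprocity and Mackey's formula then give a vector-space decomposition
\[
\End_X\bigl(\Ind_{X^\circ}^X M_{X^\circ}\bigr)\cong \Hom_{X^\circ}\bigl(M_{X^\circ},\Res^X_{X^\circ}\Ind_{X^\circ}^X M_{X^\circ}\bigr)\cong \bigoplus_{x\in X/X^\circ}\Hom_{X^\circ}\bigl(M_{X^\circ},{}^{x}M_{X^\circ}\bigr).
\]
The first step is to show that only the summands indexed by $x\in\Omega_{\theta^\circ}$ are nonzero. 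The constituents of $M_{X^\circ}$ lie in the Harish--Chandra series of $(M^\circ,\theta^\circ)$. Those of ${}^xM_{X^\circ}$ lie in the series of $(M^\circ,{}^{x}\theta^\circ)$, after moving the conjugated Levi back to $M^\circ$ by an element of $X^\circ$, which is allowed since we work modulo $X^\circ$. By disjointness of Harish--Chandra series, the summand vanishes unless ${}^x\theta^\circ$ is $N_{X^\circ}(M^\circ)$-conjugate to $\theta^\circ$. The hypothesis that $\Omega_{\theta^\circ}$ is exactly the stabilizer of $M_{X^\circ}$ in $X/X^\circ$ handles this cleanly: for $x$ outside it, $\Hom_{X^\circ}(M_{X^\circ},{}^xM_{X^\circ})=0$.

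For $\omega\in\Omega_{\theta^\circ}$, composition with $\Phi_\omega$ identifies $\Hom_{X^\circ}(M_{X^\circ},{}^{\dot\omega}M_{X^\circ})$ with $E_{X^\circ}\cdot\Phi_\omega$, a free rank-one $E_{X^\circ}$-module. This gives a linear isomorphism
\[
\End_X(N_X)\cong\bigoplus_{\omega\in\Omega_{\theta^\circ}}E_{X^\circ}\,T_\omega,
\]
where $T_\omega$ is the endomorphism of $\Ind_{X^\circ}^XM_{X^\circ}=\bigoplus_{y\in X/X^\circ}y\otimes M_{X^\circ}$ sending $y\otimes m$ to $y\dot\omega^{-1}\otimes\Phi_\omega(m)$, or its analogue with the inverse convention. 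The next step is to compute products. We have $T_\omega f T_\omega^{-1}=\omega\cdot f$ by the definition of the action. We also have $T_{\omega_1}T_{\omega_2}=c(\omega_1,\omega_2)T_{\omega_1\omega_2}$, because the $\Phi_\omega$ satisfy the same factor-set relation as the $\phi_\omega$, and $\dot\omega_1\dot\omega_2\dot{(\omega_1\omega_2)}^{-1}\in M^\circ\subset X^\circ$ acts compatibly. Sending $fT_\omega\mapsto f\otimes\omega$ is therefore an algebra isomorphism onto $E_{X^\circ}\rtimes_c\Omega_{\theta^\circ}$. For canonicity, changing the section or the intertwiners by scalars rescales the $T_\omega$ and changes $c$ by the corresponding coboundary, so the crossed product is transported compatibly.

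I expect the main obstacle to be the justification that the $\Phi_\omega$ inherit exactly the factor set $c$, including the correction coming from $\dot\omega_1\dot\omega_2\ne\dot{(\omega_1\omega_2)}$. I would handle this by making Harish--Chandra induction functorial for isomorphisms of $M^\circ$-modules and for conjugation by elements of $I_M(\theta^\circ)$ normalizing $(P^\circ,M^\circ)$. This relies on the quasi-central pinned representatives of Bonnafé's lemma, which normalize $\bB^\circ$, so that ${}^{\dot\omega}P^\circ=P^\circ$ for standard parabolics. The remaining checks are sign and convention consistency, and I would verify them directly on the explicit induced model.
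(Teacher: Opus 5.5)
Your proposal is correct and follows essentially the same route as the paper. You write $N_X\cong\Ind_{X^\circ}^X M_{X^\circ}$, take the Mackey decomposition over $X/X^\circ$, kill the summands outside $\Omega_{\theta^\circ}$ via the stabilizer hypothesis, identify each surviving summand with $E_{X^\circ}$ through $\Phi_\omega$, and compute products with the factor set $c$; you make the Harish--Chandra disjointness and the factor-set bookkeeping more explicit than the paper does.

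One small correction: Bonnaf\'e's quasi-central representatives are not needed, and they normalize $\bB^\circ$ but need not normalize a given standard parabolic. The section $\omega\mapsto\dot\omega$ already lands in $I_M(\theta^\circ)\subseteq M=N_{P}(M^\circ)$, so each $\dot\omega$ automatically normalizes both $P^\circ$ and $M^\circ$.
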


\begin{proof}
By the transitivity assumption,
\[
N_X\cong \Ind_{X^\circ}^{X}(M_{X^\circ}).
\]
Hence Frobenius reciprocity and Mackey theory for the normal subgroup
$X^\circ\lhd X$ give
\[
\End_X(N_X)
\cong
\bigoplus_{x\in X/X^\circ}
\Hom_{X^\circ}\!\bigl(M_{X^\circ},{}^xM_{X^\circ}\bigr).
\]
By the hypothesis that $\Omega_{\theta^\circ}$ identifies with the stabilizer of
$M_{X^\circ}$ in $X/X^\circ$, the summand indexed by $xX^\circ$ is nonzero if and only if
$xX^\circ\in \Omega_{\theta^\circ}$. Thus
\[
\End_X(N_X)
\cong
\bigoplus_{\omega\in \Omega_{\theta^\circ}}
\Hom_{X^\circ}\!\bigl(M_{X^\circ},{}^{\dot\omega}M_{X^\circ}\bigr).
\]

For each $\omega\in\Omega_{\theta^\circ}$, composition with $\Phi_\omega$ identifies
\[
E_{X^\circ}
=
\End_{X^\circ}(M_{X^\circ})
\xrightarrow{\sim}
\Hom_{X^\circ}\!\bigl(M_{X^\circ},{}^{\dot\omega}M_{X^\circ}\bigr),
\qquad
f\longmapsto \Phi_\omega\circ f.
\]
Under these identifications, $\End_X(N_X)$ becomes, as a vector space,
\[
\End_X(N_X)\cong \bigoplus_{\omega\in\Omega_{\theta^\circ}} E_{X^\circ}T_\omega,
\]
where $T_\omega$ denotes the basis element corresponding to $\Phi_\omega$.

Now let $f,g\in E_{X^\circ}$ and $\omega,\omega'\in\Omega_{\theta^\circ}$.
Using the definition of the action of $\Omega_{\theta^\circ}$ on $E_{X^\circ}$ and the
factor-set relation satisfied by the $\Phi_\omega$, one computes that
\[
(fT_\omega)(gT_{\omega'})
=
f\,(\omega\cdot g)\,c(\omega,\omega')\,T_{\omega\omega'}.
\]
This is exactly the defining multiplication in the crossed product
$E_{X^\circ}\rtimes_c \Omega_{\theta^\circ}$.
Therefore the assignment
\[
fT_\omega\longmapsto fT_\omega
\]
extends to an algebra isomorphism
\[
E_{X^\circ}\rtimes_c \Omega_{\theta^\circ}
\xrightarrow{\ \sim\ }
\End_X(N_X).
\]
This proves the lemma.
\end{proof}

The formulation is deliberately at the level of the full Harish--Chandra induced
module \(M_{X^\circ}\), rather than at the level of its irreducible constituents.
Thus no irreducibility assumption is imposed on
\[
M_{X^\circ}=R^{X^\circ}_{M^\circ\subset P^\circ}(\theta^\circ);
\]
the connected Howlett--Lehrer algebra is retained as the coefficient algebra in
the crossed product.
\medskip
\subsection{A corner lemma}
\label{subsec:corner-lemma}

Keep the notation of \S\ref{subsec:semisimple-crossed-product}.  In addition, put
\[
\cA_{\theta^\circ}:=\C_\alpha[\Omega_{\theta^\circ}],
\]
where \(\alpha=\alpha_{\theta^\circ}=c_{\theta^\circ}^{-1}\) is the
Clifford-label cocycle.  Thus \(\cA_{\theta^\circ}\) is the twisted group algebra
which parametrizes the irreducible representations of \(M\) lying above
\(\theta^\circ\).  Equivalently, it is the component algebra that appears after
passing from the \(c_{\theta^\circ}\)-twisted crossed product in
Lemma~\ref{lem:semisimple-crossed-product} to the opposite algebra.

Let \(A_{\bM}\) denote the maximal \(F\)-split torus in \(Z(\bM^\circ)\).  Thus
\[
\bM^\circ=C_{\bX^\circ}(A_{\bM}),
\]
and we write
\[
N_{X^\circ}(A_{\bM}):=N_{\bX^\circ}(A_{\bM})^F .
\]
Set, as before,
\[
N_X:=R^X_{M\subset P}\!\bigl(\Ind_{M^\circ}^{M}\theta^\circ\bigr),
\qquad
M_{X^\circ}:=R^{X^\circ}_{M^\circ\subset P^\circ}(\theta^\circ).
\]
Let
\[
W_{\theta^\circ}
:=
\Bigl\{
w\in N_{X^\circ}(A_{\bM})/M^\circ
\;\Bigm|\;
{}^{\,w}\theta^\circ\simeq \theta^\circ
\Bigr\}
\]
be the connected relative Weyl group attached to the cuspidal pair
\((M^\circ,\theta^\circ)\) inside \(X^\circ\).  By the connected
Howlett--Lehrer theorem, after choosing the usual normalized
Howlett--Lehrer intertwiners, we have an algebra isomorphism
\[
\End_{X^\circ}(M_{X^\circ})^{\op}\;\cong\;\C[W_{\theta^\circ}].
\]

For each
\[
E\in \Irr(\cA_{\theta^\circ}),
\]
let
\[
\sigma_E\in \Irr(M\mid \theta^\circ)
\]
be the irreducible representation corresponding to \(E\) under the
Clifford-theoretic bijection of \S\ref{subsec:clifford-labels}.  If
\(w\in W_{\theta^\circ}\), then \({}^{\,w}\sigma_E\) again lies above
\(\theta^\circ\), and hence there exists a unique
\[
w\cdot E\in \Irr(\cA_{\theta^\circ})
\]
such that
\[
{}^{\,w}\sigma_E \;\cong\; \sigma_{\,w\cdot E}.
\]
Thus \(W_{\theta^\circ}\) acts on \(\Irr(\cA_{\theta^\circ})\), and hence on the
set of primitive central idempotents of \(\cA_{\theta^\circ}\), by
\[
w\cdot e_E = e_{\,w\cdot E}.
\]
Put
\[
W_E:=\operatorname{Stab}_{W_{\theta^\circ}}(E)
=
\Bigl\{
w\in W_{\theta^\circ}\;\Bigm|\;{}^{\,w}\sigma_E\simeq \sigma_E
\Bigr\}.
\]
\smallskip
The following consequence of Lemma~\ref{lem:semisimple-crossed-product}
and the connected Howlett--Lehrer theorem will be used to pass from the
full induced object containing all Clifford extensions to the summand
attached to a fixed Clifford label.
\begin{lemma}\label{lem:corner-lemma}
With the above notation, there is a natural algebra isomorphism
\[
\End_X(N_X)^{\op}\xrightarrow{\sim}
\cA_{\theta^\circ}\rtimes W_{\theta^\circ},
\]
where the skew group algebra is formed with respect to the above action of
\(W_{\theta^\circ}\) on \(\cA_{\theta^\circ}\).

Let \(E\in \Irr(\cA_{\theta^\circ})\), and let
\(e_E\in Z(\cA_{\theta^\circ})\) be the corresponding primitive central
idempotent.  Then there exists a \(2\)-cocycle
\[
\beta_E\in Z^2(W_E,\C^\times)
\]
such that
\[
e_E\bigl(\cA_{\theta^\circ}\rtimes W_{\theta^\circ}\bigr)e_E
\;\cong\;
\End_\C(E^\vee)\otimes_\C \C_{\beta_E}[W_E].
\]
In particular, if the induced projective action of \(W_E\) on \(E^\vee\) lifts to
a genuine linear action, equivalently if \([\beta_E]=1\) in
\(H^2(W_E,\C^\times)\), then
\[
e_E\bigl(\cA_{\theta^\circ}\rtimes W_{\theta^\circ}\bigr)e_E
\;\cong\;
\End_\C(E^\vee)\otimes_\C \C[W_E].
\]
\end{lemma}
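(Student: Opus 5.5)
We will first establish the skew group algebra description and then identify the corners. For the first isomorphism, we start from Lemma~\ref{lem:semisimple-crossed-product}, which gives
\[
\End_X(N_X)\cong E_{X^\circ}\rtimes_c\Omega_{\theta^\circ}.
\]
By the connected Howlett--Lehrer theorem, $E_{X^\circ}^{\op}\cong\C[W_{\theta^\circ}]$ via normalized intertwiners $B_w$ ($w\in W_{\theta^\circ}$).

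The key step is to rearrange this into a crossed product in the other order. The $\Omega_{\theta^\circ}$-action on $E_{X^\circ}$ transported to $\C[W_{\theta^\circ}]$ should be trivial up to scalars. The components act by pinned automorphisms normalizing $A_{\bM}$ and commuting with the relative Weyl group, so they conjugate each $B_w$ to a scalar multiple of itself.

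We will absorb these scalars into a redefinition of the elements $T_\omega$. The algebra is then generated by the $T_\omega$ (spanning a copy of $\C_c[\Omega_{\theta^\circ}]$) and the $B_w$. Passing to the opposite algebra turns $\C_c[\Omega]$ into $\C_{c^{-1}}[\Omega]=\cA_{\theta^\circ}$. The commutation relations $B_w T_\omega B_w^{-1}\in\cA_{\theta^\circ}$ then define an action of $W_{\theta^\circ}$ on $\cA_{\theta^\circ}$.

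We will check that this action is compatible with the one defined via ${}^{w}\sigma_E\cong\sigma_{w\cdot E}$. This follows because both are computed by conjugating the Clifford intertwiners $\phi_\omega$ by a representative of $w$.

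A dimension count confirms the isomorphism: both sides have dimension $|\Omega_{\theta^\circ}|\,|W_{\theta^\circ}|$, since $\dim\End_X(N_X)=|\Omega|\dim E_{X^\circ}$. Hence $\End_X(N_X)^{\op}\cong\cA_{\theta^\circ}\rtimes W_{\theta^\circ}$.

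For the corner, let $e=e_E$. Then $e(\cA\rtimes W)e=\bigoplus_{w}e\,\cA\,w\,e$, and $w e w^{-1}=e_{w\cdot E}$. So the summand indexed by $w$ is nonzero exactly when $w\in W_E$, giving
\[
e(\cA\rtimes W)e=(e\cA)\rtimes W_E,
\qquad e\cA\cong\End_\C(E^\vee).
\]
Now apply Skolem--Noether: for each $w\in W_E$, the automorphism of the matrix algebra $\End_\C(E^\vee)$ induced by $w$ is inner, say $\Ad(g_w)$. Setting $\tilde w:=g_w^{-1}w$ produces elements commuting with $\End_\C(E^\vee)$. These satisfy
\[
\tilde w\tilde w'=\beta_E(w,w')\,\widetilde{ww'},
\]
with $\beta_E$ a $2$-cocycle, since $g_wg_{w'}$ and $g_{ww'}$ differ by a central, hence scalar, unit. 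This yields
\[
e(\cA\rtimes W)e\cong\End_\C(E^\vee)\otimes\C_{\beta_E}[W_E].
\]
If the projective action lifts, the $g_w$ can be chosen multiplicatively, so $\beta_E$ is trivial.

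The main obstacle is the first step: making precise that the $\Omega_{\theta^\circ}$-action on the Howlett--Lehrer basis is diagonal with scalars that can be absorbed. This requires controlling the normalization of the $B_w$ under the pinned component representatives. If this fails in general, we will fall back to a weaker presentation. In that version the $W_{\theta^\circ}$-action on $\cA$ is defined abstractly by conjugation inside $\End_X(N_X)^{\op}$, and the skew algebra is allowed a twisting cocycle. The second part of the lemma survives unchanged in that setting, because the cocycle is absorbed into $\beta_E$.
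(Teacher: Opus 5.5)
Your corner computation is the paper's argument: $e_E[w]e_E=0$ for $w\notin W_E$, Skolem--Noether on $e_E\cA_{\theta^\circ}e_E\cong\End_\C(E^\vee)$, renormalized elements commuting with that block, and $\beta_E$ from associativity. The first half takes a different route, and one of its steps fails as written.

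The paper never computes the $\Omega_{\theta^\circ}$-action on $E_{X^\circ}$. It places $\cA_{\theta^\circ}$ inside $\End_X(N_X)^{\op}$ via the Clifford decomposition $N_X\cong\bigoplus_E R^X_{M\subset P}(\sigma_E)\boxtimes E^\vee$, and lets the $B_w$ permute these isotypic summands. The action on $\cA_{\theta^\circ}$ is therefore the permutation of Clifford labels by construction.

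You instead reverse the crossed product of Lemma~\ref{lem:semisimple-crossed-product}, and there the scalars cannot be absorbed into the $T_\omega$. In $\omega\cdot B_w=\lambda(\omega,w)B_w$ the scalar depends on $w$. Rescaling $T_\omega$ by a constant leaves $\Ad(T_\omega)$ unchanged, and $B_w\mapsto\lambda(\omega,w)B_w$ is in general an outer automorphism of $E_{X^\circ}$. If absorption worked, $W_{\theta^\circ}$ would act trivially on $\cA_{\theta^\circ}$, and your compatibility check would force $W_E=W_{\theta^\circ}$ for every $E$.

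For a concrete failure, take $q\equiv 3\pmod 4$, $X=\{g\in\GL_2(q):\det g=\pm1\}$, $X^\circ=\mathrm{SL}_2(q)$, $M$ the diagonal subgroup, and $\theta^\circ$ the quadratic character of the split torus. Then $\mathrm{diag}(-1,1)$ swaps the two constituents of $M_{X^\circ}$, so $\lambda(\omega,s)=-1$ on the commutative algebra $E_{X^\circ}\cong\C\times\C$. Here $\End_X(N_X)\cong\End_\C(\C^2)$, whereas absorbing would give the commutative algebra $\C[\Omega_{\theta^\circ}]\otimes\C[W_{\theta^\circ}]$. The fix is to keep the scalars: $T_\omega B_wT_\omega^{-1}=\lambda(\omega,w)B_w$ is the same as $B_wT_\omega B_w^{-1}=\lambda(\omega,w)^{-1}T_\omega$, and this \emph{is} the $W_{\theta^\circ}$-action on $\C_c[\Omega_{\theta^\circ}]$.

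Two further inputs are then needed.

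First, the normalized Howlett--Lehrer operators are not group-like. For $s$ a simple reflection of the reflection part $R$ of $W_{\theta^\circ}=R\rtimes C$ they satisfy $B_s^2=(p_s-1)B_s+p_s$. The isomorphism $E_{X^\circ}^{\op}\cong\C[W_{\theta^\circ}]$ is a Tits deformation, whose images of group elements are in general not supported on single double cosets. Your single-double-coset reason for $\omega\cdot B_w\in\C B_w$ uses the standard basis, but a skew group algebra needs group-like generators. To bridge this, apply $\omega$ to the quadratic relation: since $p_s\neq1$, it forces $\lambda(\omega,\cdot)$ to be trivial on $R$, so $\omega$ only rescales the $C$-part by a linear character. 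A $C$-equivariant Tits deformation then gives group-like generators on which $\Omega_{\theta^\circ}$ still acts diagonally. The paper's proof is equally silent about which basis $B_w$ denotes.

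Second, commutation of $\Omega_{\theta^\circ}$ with $W_{\theta^\circ}$ does not follow from the components being pinned. Pinned graph automorphisms act on relative Weyl groups by diagram automorphisms; on the quasi-torus of $\mathrm{O}_4^+$, $\omega$ swaps the two simple reflections of $W(D_2)$. In that case $\omega\cdot B_w\in\C B_{\omega(w)}$ and $B_w$ does not normalize $\cA_{\theta^\circ}$, so your fallback does not produce a skew group algebra over $\cA_{\theta^\circ}$ either.

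Commutation does hold when $I_M(\theta^\circ)$ centralizes $A_{\bM}$, since commutators with $N_{X^\circ}(A_{\bM})$ then lie in $C_{X^\circ}(A_{\bM})=M^\circ$. This is the case for $M=H_L\subseteq L^*$ in the application, where $A_{\bM}=A_{\bL^*}$. It is also implicit in the statement, which needs ${}^{w}\sigma_E$ to be a character of $M$. You should state it as a hypothesis rather than derive it.
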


\begin{proof}
By Clifford theory and the convention \(\alpha=c_{\theta^\circ}^{-1}\),
\[
\Ind_{M^\circ}^{M}\theta^\circ
\;\cong\;
\bigoplus_{E\in \Irr(\cA_{\theta^\circ})}
\sigma_E\boxtimes E^\vee
\]
as an \(M\)-module with its natural right
\(\cA_{\theta^\circ}\)-action on the multiplicity factor.  The dual
multiplicity space \(E^\vee\) has multiplier \(c_{\theta^\circ}\), matching the
component intertwiners in Lemma~\ref{lem:semisimple-crossed-product}; this is
precisely why \(\cA_{\theta^\circ}\) appears after taking the opposite algebra.
Applying Harish--Chandra induction gives
\[
N_X
\;\cong\;
\bigoplus_{E\in \Irr(\cA_{\theta^\circ})}
R^X_{M\subset P}(\sigma_E)\boxtimes E^\vee
\]
as an \(X\)-module with the induced right \(\cA_{\theta^\circ}\)-action.

For each \(w\in W_{\theta^\circ}\), let
\[
B_w\in \End_{X^\circ}(M_{X^\circ})^{\op}
\]
denote the Howlett--Lehrer operator corresponding to \(w\).  Under the
identification
\[
\End_{X^\circ}(M_{X^\circ})^{\op}\cong \C[W_{\theta^\circ}],
\]
the operator \(B_w\) acts on the Clifford labels by
\[
B_w\, e_E \;=\; e_{\,w\cdot E}\, B_w.
\]
Equivalently, \(B_w\) sends the \(E\)-isotypic summand
\[
R^X_{M\subset P}(\sigma_E)\boxtimes E^\vee
\]
isomorphically onto the \((w\cdot E)\)-isotypic summand
\[
R^X_{M\subset P}(\sigma_{w\cdot E})\boxtimes (w\cdot E)^\vee.
\]

It follows that the subalgebra of \(\End_X(N_X)^{\op}\) generated by
\(\cA_{\theta^\circ}\) and the operators \(B_w\), \(w\in W_{\theta^\circ}\), is a
homomorphic image of the skew group algebra
\[
\cA_{\theta^\circ}\rtimes W_{\theta^\circ}.
\]
This homomorphism is an isomorphism.  Indeed, by
Lemma~\ref{lem:semisimple-crossed-product} and the connected
Howlett--Lehrer theorem,
\[
\dim_\C \End_X(N_X)
=
|\Omega_{\theta^\circ}|\cdot \dim_\C \End_{X^\circ}(M_{X^\circ})
=
|\Omega_{\theta^\circ}|\,|W_{\theta^\circ}|
=
\dim_\C\bigl(\cA_{\theta^\circ}\rtimes W_{\theta^\circ}\bigr).
\]
Thus the above surjective homomorphism between finite-dimensional algebras is
bijective, proving the first assertion.

We now compute the corner attached to a fixed
\(E\in \Irr(\cA_{\theta^\circ})\).  Inside the skew group algebra
\(\cA_{\theta^\circ}\rtimes W_{\theta^\circ}\), write \([w]\) for the standard
basis element corresponding to \(w\in W_{\theta^\circ}\).  The defining relation is
\[
[w]\, e_E = e_{\,w\cdot E}\,[w].
\]
Hence
\[
e_E [w] e_E=0
\qquad\text{if }w\notin W_E,
\]
and therefore
\[
e_E\bigl(\cA_{\theta^\circ}\rtimes W_{\theta^\circ}\bigr)e_E
=
\bigoplus_{w\in W_E} e_E\cA_{\theta^\circ}[w]e_E.
\]

The block algebra
\[
e_E\cA_{\theta^\circ}e_E
\]
is canonically isomorphic to \(\End_\C(E^\vee)\).  For \(w\in W_E\), the element
\([w]\) normalizes this full matrix algebra.  Since every automorphism of a full
matrix algebra is inner, we may choose elements
\[
u_w\in \bigl(e_E\cA_{\theta^\circ}e_E\bigr)^\times
\qquad (w\in W_E)
\]
such that
\[
[w]\,a\,[w]^{-1}=u_w a u_w^{-1}
\qquad
\bigl(a\in e_E\cA_{\theta^\circ}e_E\bigr).
\]
Set
\[
T_w:=u_w^{-1}e_E[w]e_E
\qquad (w\in W_E).
\]
Then each \(T_w\) commutes with \(e_E\cA_{\theta^\circ}e_E\).  Consequently the
products \(T_wT_{w'}\) are scalar multiples of \(T_{ww'}\), and there is a
uniquely determined function
\[
\beta_E:W_E\times W_E\longrightarrow \C^\times
\]
such that
\[
T_wT_{w'}=\beta_E(w,w')T_{ww'}.
\]
Associativity implies that \(\beta_E\) is a \(2\)-cocycle.  Thus
\[
e_E\bigl(\cA_{\theta^\circ}\rtimes W_{\theta^\circ}\bigr)e_E
\;\cong\;
e_E\cA_{\theta^\circ}e_E\otimes_\C \C_{\beta_E}[W_E]
\;\cong\;
\End_\C(E^\vee)\otimes_\C \C_{\beta_E}[W_E].
\]

If the projective action of \(W_E\) on \(E^\vee\) lifts to a genuine linear
action, then the elements \(T_w\) may be rescaled so that
\[
T_wT_{w'}=T_{ww'}.
\]
Equivalently, the cohomology class of \(\beta_E\) is trivial.  In this case the
corner is untwisted:
\[
e_E\bigl(\cA_{\theta^\circ}\rtimes W_{\theta^\circ}\bigr)e_E
\;\cong\;
\End_\C(E^\vee)\otimes_\C \C[W_E].
\]
\end{proof}

\begin{remark}\label{rmk:corner-cocycle}
The cocycle \(\beta_E\) measures only the possible projectivity of the stabilizer
action of \(W_E\) on the Clifford multiplicity space \(E^\vee\).  It is separate
from the Clifford-label cocycle \(\alpha\), which has already been absorbed into the
twisted group algebra \(\cA_{\theta^\circ}\).  In the abelian extension case used
below, where \(\theta^\circ\) extends to its inertia group, one has
\[
\cA_{\theta^\circ}=\C[\Omega_{\theta^\circ}],
\]
and the relevant Clifford labels are one-dimensional.  In that situation the
corner cocycle is trivial, and the corner is the ordinary group algebra of the
stabilizer.  Without this simplification, the same statements remain valid after
replacing \(\C[W_E]\) by \(\C_{\beta_E}[W_E]\).
\end{remark}
\medskip
\subsection{Disconnected Howlett--Lehrer theorem when the component group is abelian}
\label{subsec:disc-HL-abelian}

Let $\bX$ be a possibly disconnected reductive group over $\F_q$ with Frobenius
endomorphism $F$, and let $\bP$ be an $F$-stable parabolic subgroup of $\bX$ with
$F$-stable Levi factor $\bM$. Write
\[
X:=\bX^F,\qquad X^\circ:=(\bX^\circ)^F,\qquad
M:=\bM^F,\qquad M^\circ:=(\bM^\circ)^F,\qquad P^\circ:=(\bP^\circ)^F.
\]
Assume that \(X/X^\circ\) is abelian. Let
\[
\theta^\circ\in \Irr(M^\circ)
\]
be cuspidal, and put
\[
I:=I_M(\theta^\circ)
=
\{\,m\in M \mid {}^{\,m}\theta^\circ\simeq \theta^\circ\,\},
\qquad
\Omega_{\theta^\circ}:=I/M^\circ.
\]
Since \(X/X^\circ\) is abelian, the quotient \(\Omega_{\theta^\circ}\) is abelian.
Assume that \(\theta^\circ\) extends to \(I\), and fix such an extension
\[
\widetilde\theta\in \Irr(I).
\]
For \(\eta\in \Irr(\Omega_{\theta^\circ})\), inflated to \(I\), define
\[
\widetilde\theta_\eta:=\widetilde\theta\otimes \eta,
\qquad
\sigma_\eta:=\Ind_I^M(\widetilde\theta_\eta)\in \Irr(M).
\]
By Clifford theory, the characters \(\sigma_\eta\), for \(\eta\in \Irr(\Omega_{\theta^\circ})\),
are precisely the irreducible characters of \(M\) lying above \(\theta^\circ\), and
\[
\Ind_{M^\circ}^{M}\theta^\circ
\;\cong\;
\bigoplus_{\eta\in \Irr(\Omega_{\theta^\circ})}\sigma_\eta.
\]

Set
\[
M_{X^\circ}:=R^{X^\circ}_{M^\circ\subset P^\circ}(\theta^\circ),
\qquad
N_X:=R^X_{M\subset P}\!\bigl(\Ind_{M^\circ}^{M}\theta^\circ\bigr).
\]
Let
\[
W_{\theta^\circ}
:=
\Bigl\{
w\in N_{X^\circ}(A_{\bM})/M^\circ
\;\Bigm|\;
{}^{\,w}\theta^\circ\simeq \theta^\circ
\Bigr\}
\]
be the connected relative Weyl group attached to \((M^\circ,\theta^\circ)\).
As in \S\ref{subsec:corner-lemma}, \(W_{\theta^\circ}\) acts on
\(\Irr(\Omega_{\theta^\circ})\), characterized by
\[
{}^{\,w}\sigma_\eta \;\cong\; \sigma_{\,w\cdot\eta}.
\]
For \(\eta\in \Irr(\Omega_{\theta^\circ})\), put
\[
W_{\sigma_\eta}
:=
\Bigl\{
w\in W_{\theta^\circ}\;\Bigm|\;{}^{\,w}\sigma_\eta\simeq \sigma_\eta
\Bigr\}
=
\Stab_{W_{\theta^\circ}}(\eta).
\]
\begin{theorem}[Disconnected Howlett--Lehrer theorem in the abelian extension case]
\label{thm:disc-HL-abelian}
Assume that \(X/X^\circ\) is abelian and that
\(\theta^\circ\in \Irr(M^\circ)\) extends to its inertia group
\[
I=I_M(\theta^\circ).
\]
Fix an extension \(\widetilde\theta\in\Irr(I)\). For
\(\eta\in\Irr(\Omega_{\theta^\circ})\), where
\(\Omega_{\theta^\circ}=I/M^\circ\), set
\[
\widetilde\theta_\eta=\widetilde\theta\otimes\eta,
\qquad
\sigma_\eta=\Ind_I^M(\widetilde\theta_\eta).
\]
Then, for every \(\eta\in\Irr(\Omega_{\theta^\circ})\), there is an algebra
isomorphism
\[
\End_X\!\bigl(R^X_{M\subset P}(\sigma_\eta)\bigr)^{\op}
\cong
\C[W_{\sigma_\eta}],
\]
where
\[
W_{\sigma_\eta}
=
\{\,w\in W_{\theta^\circ}\mid {}^w\sigma_\eta\simeq\sigma_\eta\,\}.
\]
Consequently,
\[
\Irr\bigl(X,(M,\sigma_\eta)\bigr)
\xrightarrow{\sim}
\Irr(W_{\sigma_\eta}).
\]
Equivalently, for any \(\sigma\in\Irr(M\mid\theta^\circ)\), one has
\[
\End_X\!\bigl(R^X_{M\subset P}(\sigma)\bigr)^{\op}
\cong
\C[W_\sigma].
\]
\end{theorem}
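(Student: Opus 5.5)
The plan is to deduce the statement from the corner lemma, Lemma~\ref{lem:corner-lemma}, specialized to the abelian extension case of Remark~\ref{rmk:corner-cocycle}. Since \(\theta^\circ\) extends to \(I\), the Clifford class \([\alpha_{\theta^\circ}]\) is trivial. The twisted algebra \(\cA_{\theta^\circ}\) is therefore \(\C[\Omega_{\theta^\circ}]\), normalized via the chosen \(\widetilde\theta\). Its simple modules are the one-dimensional characters \(\eta\in\Irr(\Omega_{\theta^\circ})\), and the associated representations are \(\sigma_E=\sigma_\eta\). Lemma~\ref{lem:corner-lemma} then gives
\[
\End_X(N_X)^{\op}\cong \C[\Omega_{\theta^\circ}]\rtimes W_{\theta^\circ}.
\]
The primitive central idempotent \(e_\eta\) cuts out the summand \(R^X_{M\subset P}(\sigma_\eta)\) of
\[
N_X=\bigoplus_\eta R^X_{M\subset P}(\sigma_\eta),
\]
with one-dimensional multiplicity spaces. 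Hence
\[
\End_X\bigl(R^X_{M\subset P}(\sigma_\eta)\bigr)^{\op}\cong e_\eta\bigl(\C[\Omega_{\theta^\circ}]\rtimes W_{\theta^\circ}\bigr)e_\eta \cong \C_{\beta_\eta}[W_{\sigma_\eta}],
\]
using \(W_E=\Stab_{W_{\theta^\circ}}(\eta)=W_{\sigma_\eta}\).

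The remaining step is to kill the cocycle \(\beta_\eta\). Here \(E^\vee\) is one-dimensional, so \(e_\eta\cA e_\eta=\C e_\eta\). Every automorphism of \(\C\) is trivial, so the inner-automorphism correction is unnecessary: we may take \(u_w=e_\eta\) and \(T_w=e_\eta[w]e_\eta\). In the skew group algebra \([w][w']=[ww']\), and for \(w,w'\in W_\eta\) we have \(e_\eta[w]e_\eta=[w]e_\eta\). It follows that
\[
T_wT_{w'}=[w]e_\eta[w']e_\eta=[w][w']e_{w'^{-1}\cdot\eta}e_\eta=T_{ww'},
\]
so \(\beta_\eta\equiv1\) on the nose, giving \(\C[W_{\sigma_\eta}]\). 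The parametrization \(\Irr(X,(M,\sigma_\eta))\cong\Irr(W_{\sigma_\eta})\) then follows by the usual Fitting correspondence. The simple modules of \(\End_X(R^X_{M\subset P}(\sigma_\eta))\) biject with the irreducible constituents of \(R^X_{M\subset P}(\sigma_\eta)\), and \(\C[W_{\sigma_\eta}]\) is semisimple, so its simple modules are \(\Irr(W_{\sigma_\eta})\). The final ``equivalently'' clause is immediate, since every \(\sigma\in\Irr(M\mid\theta^\circ)\) is some \(\sigma_\eta\).

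The main obstacle is not the algebra above but the hypotheses of Lemmas~\ref{lem:semisimple-crossed-product} and~\ref{lem:corner-lemma}. These are the transitivity identity \(N_X\cong\Ind_{X^\circ}^X M_{X^\circ}\) and the identification of \(\Omega_{\theta^\circ}\) with the stabilizer of \(M_{X^\circ}\) in \(X/X^\circ\). Above all, the Howlett--Lehrer operators \(B_w\) must act on the Clifford idempotents by a genuine (untwisted) \(W_{\theta^\circ}\)-action compatible with the \([w]\). To verify the transitivity identity I would argue as follows. Since \(\bP^\circ\) contains a Borel of \(\bX^\circ\), we have \(\bP\cap\bX^\circ=\bP^\circ\) with the same unipotent radical. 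Transitivity of Lusztig induction then gives
\[
R^X_{M\subset P}\Ind_{M^\circ}^M=R^X_{M\subset P}R^M_{M^\circ}=R^X_{X^\circ}R^{X^\circ}_{M^\circ\subset P^\circ}.
\]
This holds provided \(M X^\circ=X\) or, more generally, after summing over \(X/(MX^\circ)\) as in Lemma~\ref{lem:DM94-restriction-to-identity-component}. For the stabilizer identification I would use Howlett--Lehrer disjointness: \({}^xM_{X^\circ}\cong M_{X^\circ}\) forces \({}^x\theta^\circ\) to be \(X^\circ\)-conjugate to \(\theta^\circ\). For the compatibility of the \(B_w\), I expect to need that the normalized Howlett--Lehrer intertwiners commute with the extension \(\widetilde\theta\) up to the \(W\)-action on \(\eta\). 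I would first try to obtain this by choosing representatives of \(W_{\theta^\circ}\) in \(N_{X^\circ}(A_{\bM})\), which normalize \(I\), and transporting \(\widetilde\theta\) to its conjugate. This is the step I would check most carefully.
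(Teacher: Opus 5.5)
Your reduction is the same as the paper's. The paper also quotes Lemma~\ref{lem:corner-lemma} with $\cA=\C[\Omega_{\theta^\circ}]$, identifies $e_\eta N_X\cong R^X_{M\subset P}(\sigma_\eta)$, and computes the one-dimensional corner as you do, and that computation is fine.

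\textbf{The gap.} It is the step you flag yourself, and it is not merely unchecked: it fails. A representative $\dot w\in N_{X^\circ}(A_{\bM})$ of $w\in W_{\theta^\circ}$ normalizes $M^\circ$. It normalizes $M=N_P(M^\circ)$, or $I$, only if the components of $M$ fix the class of $w$ in $N_{X^\circ}(M^\circ)/M^\circ$. Otherwise ${}^{w}\sigma_\eta$ is a character of ${}^{\dot w}M\neq M$, the action $w\cdot\eta$ is undefined, and there is no skew group algebra $\C[\Omega_{\theta^\circ}]\rtimes W_{\theta^\circ}$.

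\textbf{A counterexample.} Let $X=(\mathrm{SL}_2(q)\times\mathrm{SL}_2(q))\rtimes\langle\gamma\rangle$ with $\gamma$ swapping the factors, and take $P=(B\times B)\rtimes\langle\gamma\rangle$, $M=(T\times T)\rtimes\langle\gamma\rangle$, $\theta^\circ=1$.
\begin{itemize}
\item Every hypothesis holds: $I=M$, $\Omega_{\theta^\circ}=X/X^\circ$ is the stabilizer of $M_{X^\circ}$, and $N_X=\Ind_{X^\circ}^X M_{X^\circ}$.
\item $W_{\theta^\circ}=\langle s\rangle\times\langle s'\rangle$ with $\gamma(s)=s'$. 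Any representative $(nt,t')$ of $s$ conjugates $\gamma$ to $(ntt'^{-1},t't^{-1}n^{-1})\gamma\notin M$.
\item $R^X_{M\subset P}(\sigma_+)=\Ind_P^X 1$, and $P\backslash X/P$ corresponds to the $\gamma$-orbits $\{1\},\{s,s'\},\{ss'\}$, each giving one intertwiner. Hence $\End_X(R^X_{M\subset P}(\sigma_+))\cong\C^3$. The constituents are the trivial character, $\Ind_{X^\circ}^X(1\boxtimes\mathrm{St})$, and one extension of $\mathrm{St}\boxtimes\mathrm{St}$.
\end{itemize}
No subgroup of $(\mathbb Z/2)^2$ has a $3$-dimensional group algebra, so the statement itself fails without further hypotheses.

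\textbf{What is true, and what you would need.} In general one only has Lemma~\ref{lem:semisimple-crossed-product}: $\End_X(N_X)\cong\End_{X^\circ}(M_{X^\circ})\rtimes_c\Omega_{\theta^\circ}$, where $\Omega_{\theta^\circ}$ permutes the Howlett--Lehrer basis (up to scalars) through its action on $W_{\theta^\circ}$. When that action is nontrivial, the operators $B_w$ do not normalize $\C[\Omega_{\theta^\circ}]$. The $e_\eta$-corner then behaves like the Hecke algebra $e_\eta\C[W_{\theta^\circ}\rtimes\Omega_{\theta^\circ}]e_\eta$ of the non-normal subgroup $\Omega_{\theta^\circ}$ (dimension $3$ above), not like a group algebra of $\Stab_{W_{\theta^\circ}}(\eta)$.

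The paper's proof does not avoid this. It inherits the assertion $B_we_E=e_{w\cdot E}B_w$ from the proof of Lemma~\ref{lem:corner-lemma}, and that assertion is false in this example. To complete your argument you need an extra hypothesis, e.g.\ that $M/M^\circ$ centralizes $W_{\theta^\circ}$, so that $W_{\theta^\circ}$ really acts on $\Irr(M\mid\theta^\circ)$. Even then you must still prove that the normalized Howlett--Lehrer operators can be chosen compatibly with $\widetilde\theta$.

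\textbf{Two smaller points.}
\begin{itemize}
\item $N_X\cong\Ind_{X^\circ}^X M_{X^\circ}$ holds unconditionally by transitivity. The sum over $X/(MX^\circ)$ in Lemma~\ref{lem:DM94-restriction-to-identity-component} concerns restriction to $X^\circ$, not this identity.
\item Howlett--Lehrer disjointness only identifies the stabilizer of $M_{X^\circ}$ with the image of $N_X(M^\circ,\theta^\circ)$, which can be larger than $IX^\circ/X^\circ$. That is why the paper assumes this identification rather than proving it.
\end{itemize}
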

\begin{proof}
Since \(\theta^\circ\) extends to \(I\), the Clifford-label cocycle attached to
\(\theta^\circ\) is trivial, and so the algebra \(\cA\) of
\S\ref{subsec:corner-lemma} is the ordinary group algebra
\[
\cA=\C[\Omega_{\theta^\circ}].
\]
By the decomposition of \(\Ind_{M^\circ}^{M}\theta^\circ\) above, we have
\[
N_X
=
R^X_{M\subset P}\!\bigl(\Ind_{M^\circ}^{M}\theta^\circ\bigr)
\;\cong\;
\bigoplus_{\eta\in \Irr(\Omega_{\theta^\circ})}
R^X_{M\subset P}(\sigma_\eta)
\]
as an \((X\times \cA)\)-module.

For \(\eta\in \Irr(\Omega_{\theta^\circ})\), let \(e_\eta\in Z(\cA)\) be the primitive
central idempotent corresponding to \(\eta\). Since \(\eta\) is one-dimensional, the
\(\eta\)-isotypic component of \(N_X\) is exactly
\[
e_\eta N_X \;\cong\; R^X_{M\subset P}(\sigma_\eta).
\]
Hence
\begin{equation}\label{eq:corner-end-disc-HL}
e_\eta \End_X(N_X)^{\op} e_\eta
\;\cong\;
\End_X\!\bigl(R^X_{M\subset P}(\sigma_\eta)\bigr)^{\op}.
\end{equation}

By Lemma~\ref{lem:corner-lemma}, we have
\[
\End_X(N_X)^{\op}\xrightarrow{\sim} \cA\rtimes W_{\theta^\circ},
\]
where the action of \(W_{\theta^\circ}\) on \(\cA=\C[\Omega_{\theta^\circ}]\) is the one
induced by its action on \(\Irr(\Omega_{\theta^\circ})\).
We now compute the corner \(e_\eta(\cA\rtimes W_{\theta^\circ})e_\eta\).

Let \([w]\) denote the standard basis element attached to \(w\in W_{\theta^\circ}\).
Since \(w\) acts on the primitive central idempotents by
\[
[w]\,e_\eta = e_{w\cdot \eta}\,[w],
\]
it follows that
\[
e_\eta [w] e_\eta = 0
\qquad\text{if } w\notin W_{\sigma_\eta}.
\]
On the other hand, if \(w\in W_{\sigma_\eta}\), then \(w\cdot\eta=\eta\), so
\[
[w]\,e_\eta=e_\eta\,[w]
\]
and therefore
\[
e_\eta [w] e_\eta = e_\eta [w].
\]
Hence
\[
e_\eta(\cA\rtimes W_{\theta^\circ})e_\eta
=
\bigoplus_{w\in W_{\sigma_\eta}} \C\, e_\eta [w].
\]
Moreover, for \(w,w'\in W_{\sigma_\eta}\),
\[
(e_\eta [w])(e_\eta [w'])
=
e_\eta [w] e_\eta [w']
=
e_\eta [w][w']
=
e_\eta [ww'].
\]
Thus the assignment
\[
T_w\longmapsto e_\eta [w]
\qquad (w\in W_{\sigma_\eta})
\]
extends to an algebra isomorphism
\begin{equation}\label{eq:corner-group-algebra-disc-HL}
\C[W_{\sigma_\eta}]
\xrightarrow{\ \sim\ }
e_\eta(\cA\rtimes W_{\theta^\circ})e_\eta.
\end{equation}

Combining \eqref{eq:corner-end-disc-HL} and \eqref{eq:corner-group-algebra-disc-HL},
we obtain
\[
\End_X\!\bigl(R^X_{M\subset P}(\sigma_\eta)\bigr)^{\op}
\;\cong\;
\C[W_{\sigma_\eta}],
\]
as required.

Finally, \(R^X_{M\subset P}(\sigma_\eta)\) is semisimple over \(\C\), so its irreducible
constituents are in canonical bijection with the simple right modules of its
endomorphism algebra. The above isomorphism therefore yields a canonical bijection
\[
\Irr\bigl(X,(M,\sigma_\eta)\bigr)\xrightarrow{\sim}\Irr(W_{\sigma_\eta}).
\]
\end{proof}

\begin{remark}\label{rmk:disc-HL-abelian-application}
Theorem~\ref{thm:disc-HL-abelian} is the form needed later in the construction of
Jordan decomposition for connected groups. In that application one takes
\[
X=H=C_{G^*}(s),\qquad M=H_L=C_{L^*}(s),
\]
and \(\theta^\circ=u_\tau^\circ\) is a cuspidal unipotent character of \(H_L^\circ\).
By the preferred-extension results proved earlier, \(u_\tau\) is of the form
\[
u_\tau = \Ind_{I_\tau}^{H_L}(\widetilde u_\tau),
\qquad
I_\tau=\Stab_{H_L}(u_\tau^\circ),
\]
with \(\widetilde u_\tau\) an extension of \(u_\tau^\circ\) to \(I_\tau\). Since
\(H/H^\circ\) is abelian by Lemma~\ref{lem:H-component-group}, the theorem applies.
\end{remark}
\medskip
\section{Construction of unique Jordan decomposition for connected groups}
\label{sec:construction-unique-JD-connected-groups}\medskip
\subsection{Weyl equivariance of the cuspidal correspondence}
\begin{proposition}[Weyl transport of the cuspidal Jordan decomposition]
\label{prop:weyl-equivariance}
Let $\bG$ be a connected reductive group defined over $\fq$, and let $\bL$ be an
$F$-stable Levi factor of an $F$-stable parabolic subgroup of $\bG$.  Let
\[
        \tau\in \cE(L,s)_{\cusp},
\]
and let \(w\in W_{\bG}(A_{\bL})\).  Denote by \(w^*\in
W_{\bG^*}(A_{\bL^*})\) the dual Weyl element.  Choose representatives, still
denoted \(w\) and \(w^*\), inducing dual conjugation automorphisms of \(\bL\)
and \(\bL^*\).  Put
\[
        s^w:=\ad(w^*)(s),
        \qquad
        \tau^w:=\tau\circ\ad(w).
\]
Then \(\tau^w\in \cE(L,s^w)_{\cusp}\), and
\begin{equation}\label{eq:weyl-transport-cuspidal-JD}
        \J_{s^w}^{L}(\tau^w)
        =
        \J_s^{L}(\tau)\circ\ad((w^*)^{-1}).
\end{equation}
Here the right hand side is viewed as a unipotent character of
\(C_{\bL^*}(s^w)^{F^*}\) through the isomorphism
\[
        \ad((w^*)^{-1}):C_{\bL^*}(s^w)^{F^*}
        \xrightarrow{\sim}
        C_{\bL^*}(s)^{F^*}.
\]
In particular, if \(w\) stabilizes the isomorphism class of \(\tau\), then
\(s^w\) is \(L^*\)-conjugate to \(s\), and after the usual identification of
\(C_{\bL^*}(s^w)\) with \(C_{\bL^*}(s)\), the formula becomes the expected
stabilizer-equivariance statement.
\end{proposition}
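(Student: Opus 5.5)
The plan is to prove \eqref{eq:weyl-transport-cuspidal-JD} by following the construction of \(\J_s^L\) in Lemma~\ref{lemma:bijection-cuspidal} step by step, and checking that every ingredient is transported by the pair of dual automorphisms \((\ad(w),\ad(w^*))\). The first assertion, \(\tau^w\in\cE(L,s^w)_{\cusp}\), comes directly from functoriality of Deligne--Lusztig induction under dual automorphisms, exactly as in the proof of Proposition~\ref{prop:compatibility-JDs-for-Levi-with-connected-center}. Cuspidality is preserved because \(\ad(w)\) permutes the \(F\)-stable parabolics of \(\bL\).

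Recall that \(\J_s^L(\tau)=u_{O,\bar\alpha_O(c_\tau)}\) with \(O=\bar J_s^L(\tau)\). The proof then splits into three transport statements.

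\emph{(a) Orbits.} Lemma~\ref{lem:disconnected-JD-Weyl-equivariance} (whose proof applies verbatim with \(s^w\) in place of \(s\)) gives
\[
\bar J_{s^w}^L(\tau^w)=O\circ\ad((w^*)^{-1}),
\]
the transported \(C_{\bL^*}(s^w)/C_{\bL^*}(s^w)^\circ\)-orbit.

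\emph{(b) Preferred extensions.} Conjugation by \(w^*\) carries the inertia group \(I_O\) onto \(I_{O^w}\) and the component group \(\Omega_O\) onto \(\Omega_{O^w}\). The construction of Theorem~\ref{thm:pinned-canonical-extension} uses only four ingredients: the adjoint quotient, the decomposition into quasi-simple factors, Lusztig's preferred extensions (which are defined intrinsically from the based root datum), and the wreath extension of Lemma~\ref{lem:wreath-extension} (which is basis-free). Each of these is invariant under isomorphisms of pinned data. Hence the \(\cP\)-preferred extension at \(s^w\) equals \(\dot u_O\circ\ad((w^*)^{-1})\). It follows that \(u_{O^w,\xi^w}=u_{O,\xi}\circ\ad((w^*)^{-1})\), where \(\xi^w\) is \(\xi\) transported by \(\ad(w^*)\).

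\emph{(c) Pinned classes.} The map \(\bar\alpha_O\) is natural, since it comes from the regular embedding \(\bL\hookrightarrow\bL'\), on which \(w\) also acts. It therefore remains to show \(c_{\tau^w}=c_\tau\) under the identification \(L_{\ad}/L\cong\) itself via \(\ad(w)\), which becomes the identity on the abelian group \(A\) modulo \(A_O\).

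Step (c) is the main obstacle. The class \(c_\tau\) is defined relative to the pinned Whittaker semisimple character \(\rho_{s,\psi_\cP}\) and the Malle matching \(m_{\cP,O}\), and a representative of \(w\) need not preserve the pinning of \(\bL\), so \(\psi_\cP\circ\ad(w)\) is in general a different non-degenerate character.

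My first attempt would be to choose the representative of \(w\) in \(N_G(A_{\bL})\) so that it normalizes the standard pair \((\bB\cap\bL,\bT)\) of \(\bL\). This is possible because \(W_{\bG}(A_{\bL})\) can be realized by elements of \(N_{W}(W_L)\) preserving \(\Delta_L\) (Howlett's complement). The element \(\ad(w)\) then preserves the Borel of \(\bL\) and sends \(\psi_\cP\) to a non-degenerate character \(\psi'\) differing from \(\psi_\cP\) by a torus element \(t\in L_{\ad}\). Then \(\rho_{s^w,\psi_\cP}=\rho_{s,\psi_\cP}^{w}\circ\ad(t)\). The aim is to show that the cuspidal distinguished point transforms by the same \(t\), so that the two shifts cancel in the class \(c_{\tau^w}=g_{\tau^w}A_{O}\).

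For the Whittaker-defined points (type \(A\), semisimple characters) this cancellation is immediate. For the Brauer-tree, Deligne--Lusztig and type \(D\) terminal-vertex choices in Theorem~\ref{thm:pinned-Malle-matching-simple}, I would argue that these choices are made from the pinned root datum of \(\bL\), which \(\ad(w)\) preserves up to the same torus conjugation. I expect the case-by-case check of this cancellation to be where the real work lies. Finally, independence of the choice of representatives follows as in Proposition~\ref{prop:compatibility-JDs-for-Levi-with-connected-center}, since inner changes act trivially on classes.
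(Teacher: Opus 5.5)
Your strategy is the same as the paper's. Its proof is a transport-of-structure argument through the ingredients of Lemma~\ref{lemma:bijection-cuspidal}: Lusztig's orbit, the Clifford data, the map \(L_{\ad}/L\to (H_L/H_L^\circ)^\vee\), the preferred extension and Malle's matching. It justifies this by saying these normalizations are stable under transport by pinned automorphisms. You rightly observe that a rational representative of \(w\) is in general pinned on \(\bL\) only up to a torus element \(t\), and the paper's proof does not address this. But your proposal does not close the gap either: step (c) is left as an expected case-by-case check, and the cancellation you aim for is miscomputed.

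Since \(m_{\cP,O}\) is \(A\)-equivariant with \(m_{\cP,O}(\rho_{O,\cP})=\rho_{s,\psi_\cP}\), the definition of \(c_\rho\) gives \(c_{\rho_{O,\cP}\circ\ad(a)}=a^{-1}A_O\). So \(c_\rho\) only records the position of \(\rho\) relative to the cuspidal base point \(\rho_{O,\cP}\). Suppose both base points move by the same \(t\) under \(\ad(w)\). This is exactly what happens for the Whittaker-normalized choices, e.g.\ in type \(A\), where you call the cancellation immediate. Then \(m_{\cP,O^w}\) is indeed the transport of \(m_{\cP,O}\), but \(c_{\tau^w}\) is the transport of \(c_\tau\) multiplied by \(t^{\pm1}A_O\). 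So, even granting (b), \(\J^L_{s^w}(\tau^w)\) and \(\J^L_s(\tau)\circ\ad((w^*)^{-1})\) would differ by the linear character \(\bar\alpha_O(t)^{\pm1}\) of the inertia quotient unless \(t\in A_O\). What (c) actually requires is that the cuspidal base point itself be carried by \(\ad(w)\) modulo \(A_O\), however \(\rho_{s,\psi_\cP}\) moves.

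The dual side has the same defect, and in (b) you treat it as formal. A representative of \(w^*\) preserving \(\Delta_{L^*}\) carries the induced pinning of \(C_{\bL^*}(s)^\circ\) to that of \(C_{\bL^*}(s^w)^\circ\) only up to a torus element. The \(\cP\)-preferred extension is not a function of the based root datum alone. In Lemma~\ref{lem:cyclic-preferred-extension} it is fixed by normalizing the intertwiner of a specified element \(\sigma\) of the pinned complement. Replacing that complement by a torus-conjugate can therefore change \(\dot u_O\) by a character of \(\Omega_O\), and you have not shown that it does not.

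The missing ingredient is a genuine compatibility statement, not naturality. One option is to prove two things: the obstruction class in \(L_{\ad}/L\cong H^1(F,Z(\bL))\) to choosing a rational representative of \(w\) that acts on \(\bL\) by a pinned automorphism modulo \(\mathrm{Inn}(L)\) lies in \(A_O\), and the corresponding dual defect leaves \(\dot u_O\) unchanged. The other option is to prove that the character by which the dual defect moves \(\dot u_O\) is exactly \(\bar\alpha_O\) of the source defect.

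Also, \(\ad(w)\) need not act trivially on \(A=L_{\ad}/L\), for instance when it induces a graph automorphism of \(\bL\). What you need there is only the \(w\)-equivariance of \(\bar\alpha_O\).
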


\begin{proof}
The assertion that \(\tau^w\in\cE(L,s^w)\) is the standard compatibility of
Lusztig series with dual automorphisms.  The equality
\eqref{eq:weyl-transport-cuspidal-JD} follows from the way the cuspidal map
\(\J_s^L\) was constructed.  The map \(\ad(w)\) transports the source
diagonal-automorphism orbit and its Clifford data to those attached to
\(\tau^w\), while \(\ad(w^*)\) transports the connected centralizer
\(C_{\bL^*}(s)\) to \(C_{\bL^*}(s^w)\).  The homomorphism
\(L_{\ad}/L\to (H_L/H_L^\circ)^\vee\) used to match the two Clifford pictures
commutes with these Weyl actions, as recalled in
Remark~\ref{rmk:regular-embedding-adjoint-action}.  Finally, the
preferred-extension normalization is stable under transport by pinned
automorphisms: Lusztig's preferred extension is defined from the based root
datum, and Malle's matching is used with this transported normalization.
Therefore the extension, Clifford label, and cuspidal unipotent character
obtained from \(\tau^w\) are precisely the \(w^*\)-transports of those obtained
from \(\tau\), which is exactly \eqref{eq:weyl-transport-cuspidal-JD}.
\end{proof}

\begin{corollary}\label{coro:isoweylgp}
Let \(\tau\in \cE(L,s)_{\cusp}\), and let
\[
        u_\tau:=\J_s^L(\tau)\in \Uch(H_L)_{\cusp}.
\]
Then the duality isomorphism identifies the stabilizer of the cuspidal pair
\((L,\tau)\) with the stabilizer of the corresponding unipotent cuspidal pair
\((H_L,u_\tau)\):
\[
        W_{\tau}\cong W_{u_\tau}.
\]
\end{corollary}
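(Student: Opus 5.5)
The plan is to deduce the corollary from Proposition~\ref{prop:weyl-equivariance} in the same way that Proposition~\ref{prop:weyl-isom-connected-levi-center} was deduced from Proposition~\ref{prop:compatibility-JDs-for-Levi-with-connected-center}. The only change is that the connected-centre Jordan map \(J_s^L\) is replaced by the pinned cuspidal bijection \(\J_s^L\) of Lemma~\ref{lemma:bijection-cuspidal}, and the target group is the disconnected \(H_L=C_{\bL^*}(s)^{F^*}\).

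\textbf{Step 1: the target relative Weyl group.} As in the proof of Theorem~\ref{thm:weyl-isom-general-levi}, \(W_{\bH}(A_{\bL^*})=N_H(A_{\bL^*})/H_L\) embeds in \(W_{\bG^*}(A_{\bL^*})\). Its image is the stabilizer of the \(L^*\)-class of \(s\). Here \(C_{\bH}(A_{\bL^*})=\bH_L\), where \(\bH_L=C_{\bL^*}(s)\) is the full, possibly disconnected, centralizer. We then set
\[
W_{u_\tau}:=\{w^*\in W_{\bH}(A_{\bL^*})\mid u_\tau\circ\ad((w^*)^{-1})=u_\tau\}.
\]

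\textbf{Step 2: the forward map.} Take \(w\in W_\tau\). Since \(\tau^w\simeq\tau\) and \(\tau^w\in\cE(L,s^w)\), disjointness of Lusztig series shows that \(s^w\) is \(L^*\)-conjugate to \(s\). Hence \(\delta(w)\) lifts to an element \(\delta_s(w)\in W_{\bH}(A_{\bL^*})\), represented by some \(\ell^* n^*\) that centralizes \(s\). We apply \eqref{eq:weyl-transport-cuspidal-JD}. Then we compose with the inner conjugation by \(\ell^*\in L^*\), which identifies \(C_{\bL^*}(s^w)\) with \(C_{\bL^*}(s)\). This gives
\[
u_\tau=\J_s^L(\tau)=\J_s^L(\tau^w)=u_\tau\circ\ad(\delta_s(w)^{-1}),
\]
so \(\delta_s(w)\in W_{u_\tau}\).

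\textbf{Step 3: the converse.} Take \(w^*\in W_{u_\tau}\). It is represented in \(N_H(A_{\bL^*})\), so \(s^w=s\). Proposition~\ref{prop:weyl-equivariance} then gives \(\J_s^L(\tau^w)=u_\tau=\J_s^L(\tau)\). Since \(\J_s^L\) is a bijection on cuspidal characters (Lemma~\ref{lemma:bijection-cuspidal}), it follows that \(\tau^w=\tau\). The two assignments are mutually inverse because both are restrictions of \(\delta\).

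\textbf{Main obstacle.} The delicate point is the identification in Step 2. We must check that transporting by the inner element \(\ell^*\in L^*\) acts trivially on \(\Uch(H_L)\), or at least trivially on the value of \(\J_s^L\). We also need \(\J_{s}^L\) to be independent of the choice of \(L^*\)-conjugate of \(s\). The element \(\ell^*\) need not lie in \(H_L\). Conjugation by an element of \(L^*\) carrying \(s\) to itself lies in \(H_L\), and it acts on \(\Uch(H_L)\) by an inner automorphism, hence trivially. So the real issue is that \(\J_{s}^L\) and \(\J_{{}^{\ell^*}s}^L\) must be compatible under \(\ad(\ell^*)\). I would verify this with the same transport argument as in Proposition~\ref{prop:weyl-equivariance}, applied to the inner element \(\ell^*\) (with \(w\) trivial on the \(L\)-side). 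Every ingredient of the construction in Lemma~\ref{lemma:bijection-cuspidal} is defined intrinsically from the series \(\cE(L,s)\), which depends only on the class of \(s\), and from the transported pinning. These ingredients are Lusztig's orbit map, the preferred extension, the class \(c_\rho\), and \(\bar\alpha_O\).
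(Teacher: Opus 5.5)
Your proposal is correct and follows the paper's route: the forward inclusion uses the transport formula \eqref{eq:weyl-transport-cuspidal-JD} together with disjointness of Lusztig series. The paper's phrase ``replacing the representative \(w^*\) by the corresponding element in \(W_{\bH}(A_{\bL^*})\)'' silently assumes exactly the \(L^*\)-conjugation compatibility of \(\J^L\) that you flag as the main obstacle, and your converse via injectivity of \(\J_s^L\) on \(\cE(L,s)_{\cusp}\) (as in Proposition~\ref{prop:weyl-isom-connected-levi-center}) is the precise content of the paper's brief ``same argument applied to the inverse Weyl element''.
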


\begin{proof}
Let \(w\in W_\tau\).  Then \(\tau^w\simeq\tau\), so the preceding proposition
implies that \(s^w\) is \(L^*\)-conjugate to \(s\).  Replacing the representative
\(w^*\) by the corresponding element in \(W_{\bH}(A_{\bL^*})\), where
\(\bH=C_{\bG^*}(s)\), the transport formula
\eqref{eq:weyl-transport-cuspidal-JD} gives
\[
        u_\tau\circ\ad((w^*)^{-1})\simeq u_\tau.
\]
Thus \(w^*\in W_{u_\tau}\).  The same argument applied to the inverse Weyl
element gives the converse inclusion.  Hence duality identifies the two
stabilizers.
\end{proof}
\subsection{Passage from cuspidal pairs to Harish--Chandra series}
\label{subsec:passage-cusp-to-HC}

Fix a semisimple element \(s\in G^*\), and put
\[
\bH:=C_{\bG^*}(s),\qquad H:=\bH^{F^*}.
\]
For every \(F\)-stable Levi subgroup \(\bL\leq \bG\) and every
\[
\tau\in \cE(L,s)_{\cusp},
\]
write
\[
\bH_L:=C_{\bL^*}(s),\qquad H_L:=\bH_L^{F^*},\qquad
u_\tau:=\J_s^L(\tau)\in \Uch(H_L)_{\cusp},
\]
where \(\J_s^L\) is the cuspidal Jordan decomposition for the connected group \(\bL\)
constructed in Lemma~\ref{lemma:bijection-cuspidal}.

\medskip

\begin{proposition}\label{prop:passage-cusp-to-HC}
Let \(\bL\) be an \(F\)-stable Levi factor of an \(F\)-stable parabolic subgroup of
\(\bG\), and let
\[
\tau\in \cE(L,s)_{\cusp}.
\]
Then there is a canonically defined bijection
\[
\J_{s,(L,\tau)}:\Irr\bigl(G,(L,\tau)\bigr)\xrightarrow{\ \sim\ }
\Irr\bigl(H,(H_L,u_\tau)\bigr).
\]

More precisely, let
\[
\Irr(W_\tau)=\{\phi\}
\]
and let
\[
\rho_\phi\in \Irr\bigl(G,(L,\tau)\bigr)
\]
be the constituent corresponding to \(\phi\) under the connected
Howlett--Lehrer comparison theorem. By Corollary~\ref{coro:isoweylgp}, we have a
canonical identification
\[
W_\tau \xrightarrow{\sim} W_{u_\tau}.
\]
Let
\[
u_\phi\in \Irr\bigl(H,(H_L,u_\tau)\bigr)
\]
be the constituent corresponding to the same \(\phi\), now viewed as an irreducible
character of \(W_{u_\tau}\) via the above identification and
Theorem~\ref{thm:disc-HL-abelian}. Then
\[
\J_{s,(L,\tau)}(\rho_\phi):=u_\phi
\]
defines the required bijection.

Furthermore, if \(\bT^*\subseteq \bL^*\) is an \(F^*\)-stable maximal torus containing
\(s\), then
\[
\bigl\langle R_{\bT^*}^{\bG}(s),\rho_\phi\bigr\rangle_G
=
\epsilon_{\bG}\epsilon_{\bH}\,
\bigl\langle R_{\bT^*}^{\bH}(1),u_\phi\bigr\rangle_H.
\]
\end{proposition}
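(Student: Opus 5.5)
The plan has two parts: first construct the bijection, then prove the scalar-product identity.

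\emph{Bijection.} On the \(G\)-side, Theorem~\ref{thm:Howlett-Lehrer} gives \(I_{L,\tau}^G:\Irr(W_\tau)\xrightarrow{\sim}\Irr(G,(L,\tau))\). On the \(H\)-side, the cuspidal unipotent \(u_\tau=\J_s^L(\tau)\) is \(\Ind_{I_\tau}^{H_L}(\widetilde u_\tau)\) for the \(\cP\)-preferred extension of Theorem~\ref{thm:pinned-canonical-extension}. So Remark~\ref{rmk:disc-HL-abelian-application} and Theorem~\ref{thm:disc-HL-abelian} give a canonical bijection \(\Irr(W_{u_\tau})\xrightarrow{\sim}\Irr(H,(H_L,u_\tau))\). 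Here \(W_{u_\tau}\) is the stabilizer of \(u_\tau\) inside the connected relative Weyl group \(W_{u_\tau^\circ}\), and this is the group appearing in Corollary~\ref{coro:isoweylgp}.

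Composing these two bijections with the isomorphism \(W_\tau\cong W_{u_\tau}\) of Corollary~\ref{coro:isoweylgp} defines \(\J_{s,(L,\tau)}\). It is a bijection because it is a composite of three bijections. For canonicity, I will check two independence statements.
\begin{enumerate}
\item Independence of the representative \((L,\tau)\) within its \(G\)-conjugacy class. This follows from Proposition~\ref{prop:weyl-equivariance}: transporting \((L,\tau)\) by \(w\) transports \(u_\tau\) by \(w^*\), and the two Howlett--Lehrer parametrizations are conjugated compatibly.
\item Independence of the normalization of Howlett--Lehrer intertwiners. Here I will fix Lusztig's normalization on both sides, so that sign characters correspond. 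I should note that for a finite group algebra \(\C[W_E]\) the parametrization is already canonical once the corner isomorphism is fixed.
\end{enumerate}

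\emph{Scalar products.} For the identity \(\langle R_{\bT^*}^{\bG}(s),\rho_\phi\rangle=\epsilon_{\bG}\epsilon_{\bH}\langle R_{\bT^*}^{\bH}(1),u_\phi\rangle\), I reduce to the Levi via the Weyl group. Since \(\bT^*\subseteq\bL^*\), transitivity gives
\[
R_{\bT^*}^{\bG}(s)=R_{\bL}^{\bG}\bigl(R_{\bT^*}^{\bL}(s)\bigr).
\]
Projecting \(R_{\bT^*}^{\bL}(s)\) onto \(\tau\) and using Frobenius reciprocity together with Howlett--Lehrer, the multiplicity of \(\rho_\phi\) becomes a sum of the form \(\sum_{w}\langle R_{\bT_w^*}^{\bL}(s),\tau\rangle\cdot(\text{Weyl character value})\). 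More precisely, I would use the standard expression of \(\langle R_{\bT^*}^{\bG}(s),\rho_\phi\rangle\) through the Hecke-algebra character \(\phi\) evaluated at the relevant elements of \(W_\tau\), as in Lusztig's comparison (cf. \cite[\S3.2]{Book:GeckandMalle}).

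On the \(H\)-side the same computation runs with \(R_{\bT^*}^{\bH}(1)=R_{\bH_L}^{\bH}(R_{\bT^*}^{\bH_L}(1))\). Restricting to \(H^\circ\) uses Lemma~\ref{lem:DM94-restriction-to-identity-component}, and the cuspidal identity of Lemma~\ref{lemma:bijection-cuspidal} supplies the Levi-level multiplicities, including the signs \(\epsilon_{\bL}\epsilon_{\bH_L}\). Matching term by term via \(W_\tau\cong W_{u_\tau}\) gives the result. The signs satisfy \(\epsilon_{\bG}\epsilon_{\bH}=\epsilon_{\bL}\epsilon_{\bH_L}\) because \(A_{\bL^*}\) is the common split central torus (Theorem~\ref{fact:cuspidal}).

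\emph{Main obstacle.} I expect the hardest step to be checking that the Howlett--Lehrer parametrizations on the two sides agree on Deligne--Lusztig multiplicities, not merely as abstract sets. On the disconnected side \(W_{u_\tau}\) may be a proper subgroup of \(W_{u_\tau^\circ}\), and the corner algebra of Lemma~\ref{lem:corner-lemma} must carry matching Hecke parameters. My first attempt would use the degree/Schur-element argument from Lemma~\ref{lem:HC-series-map-is-JD} together with Proposition~\ref{prop:unique-HC-char}. Where that fails, I would use Lusztig's orbit-level identity, Theorem~\ref{thm:Lusztig-orbit-JD-disconnected-centralizers}(3), summed over the Clifford labels, and then separate the labels using the equality of stabilizers.
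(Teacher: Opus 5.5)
Your construction of the bijection is the paper's: compose the connected Howlett--Lehrer parametrization, the disconnected one from Theorem~\ref{thm:disc-HL-abelian}, and the identification $W_\tau\cong W_{u_\tau}$ from Corollary~\ref{coro:isoweylgp}.

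The problem is the scalar-product identity, where you put the difficulty in the wrong place. Since $\bT^*\subseteq\bL^*$, no value of $\phi$ at a nontrivial element of $W_\tau$ ever enters. Formulas of Lusztig type involving $\phi(w)$ describe Deligne--Lusztig characters attached to Levi subgroups twisted by $w$, not tori inside $\bL^*$.

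The observation you are missing is the entire content of the paper's argument. Both endomorphism algebras are semisimple, with simple modules of dimension $\phi(1)$, so
\[
R_{\bL}^{\bG}(\tau)=\sum_{\phi}\phi(1)\,\rho_\phi,
\qquad
R_{\bH_L}^{\bH}(u_\tau)=\sum_{\phi}\phi(1)\,u_\phi .
\]
By adjunction, ${}^*R_{\bL}^{\bG}(\rho_\phi)$ and ${}^*R_{\bH_L}^{\bH}(u_\phi)$ are $\phi(1)$ times sums of cuspidal characters that do not depend on $\phi$. Transitivity, $R_{\bT^*}^{\bG}(s)=R_{\bL}^{\bG}R_{\bT^*}^{\bL}(s)$ and $R_{\bT^*}^{\bH}(1)=R_{\bH_L}^{\bH}R_{\bT^*}^{\bH_L}(1)$, then reduces both sides to $\phi(1)$ times Levi-level multiplicities. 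Lemma~\ref{lemma:bijection-cuspidal} together with $\epsilon_{\bL}\epsilon_{\bH_L}=\epsilon_{\bG}\epsilon_{\bH}$ matches these.

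Since $\phi(1)$ is preserved by any group isomorphism, the compatibility of Howlett--Lehrer normalizations that you call the main obstacle never arises. The restriction to $H^\circ$ via Lemma~\ref{lem:DM94-restriction-to-identity-component} is also unnecessary. Your proposed fallback would not have settled that obstacle anyway: Proposition~\ref{prop:unique-HC-char} leaves the $\mathsf E_7$/$\mathsf E_8$ exceptional pairs open. It also needs an equality of degrees, i.e.\ a Schur-element comparison on the disconnected $H$-side, which you would first have to prove.

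One point needs more care than either you or the paper give it. ${}^*R_{\bL}^{\bG}(\rho_\phi)$ is not $\phi(1)\tau$, as the paper writes. By the Mackey formula it is $\phi(1)$ times the sum of the distinct $N_G(A_{\bL})$-conjugates of $\tau$. Likewise ${}^*R_{\bH_L}^{\bH}(u_\phi)$ is $\phi(1)$ times the sum of the distinct $N_H(A_{\bL^*})$-conjugates of $u_\tau$.

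A conjugate ${}^n\tau$ lies in $\cE(L,{}^{n^*}s)$, so it pairs nontrivially with $R_{\bT^*}^{\bL}(s)$ only if $n^*$ fixes the $L^*$-class of $s$. For such $n$, Proposition~\ref{prop:weyl-equivariance} gives $\J_s^L({}^n\tau)={}^hu_\tau$ with $h\in N_H(A_{\bL^*})$ representing $\delta_s(n)$. This is a bijection between the two relevant orbits, so applying the cuspidal identity to each ${}^n\tau$ matches the two orbit sums term by term.

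Your ``sum over $w$'' has exactly this shape if the $w$ run over these conjugates and the Weyl factor is taken to be $\phi(1)$. Written that way, your argument closes.
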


\begin{proof}
By the connected Howlett--Lehrer theorem
(Theorem~\ref{thm:Howlett-Lehrer}), there is an algebra isomorphism
\[
\End_G\!\bigl(R_{\bL}^{\bG}(\tau)\bigr)^{\op}\xrightarrow{\sim}\C[W_\tau],
\]
hence a canonical bijection
\[
\Irr\bigl(G,(L,\tau)\bigr)\xrightarrow{\sim}\Irr(W_\tau),
\qquad
\rho_\phi \longleftrightarrow \phi.
\]
Likewise, by
Theorem~\ref{thm:disc-HL-abelian}, there is an algebra isomorphism
\[
\End_H\!\bigl(R_{\bH_L}^{\bH}(u_\tau)\bigr)^{\op}\xrightarrow{\sim}\C[W_{u_\tau}],
\]
hence a canonical bijection
\[
\Irr\bigl(H,(H_L,u_\tau)\bigr)\xrightarrow{\sim}\Irr(W_{u_\tau}),
\qquad
u_\phi \longleftrightarrow \phi.
\]
Using Corollary~\ref{coro:isoweylgp} to identify \(W_\tau\) with \(W_{u_\tau}\), we
obtain the bijection \(\J_{s,(L,\tau)}\).

We now prove the scalar-product identity. Since \(\rho_\phi\) corresponds to \(\phi\),
the regular-module decomposition of
\(\End_G(R_{\bL}^{\bG}(\tau))\cong \C[W_\tau]\) yields
\[
R_{\bL}^{\bG}(\tau)
=
\sum_{\phi\in \Irr(W_\tau)} \phi(1)\rho_\phi.
\]
By Frobenius reciprocity, this implies
\[
\bigl\langle {}^*R_{\bL}^{\bG}(\rho_\phi),\tau\bigr\rangle_L
=
\phi(1).
\]
Since \(\tau\) is cuspidal, the Harish--Chandra series of \(L\) attached to
\((L,\tau)\) is the singleton \(\{\tau\}\). Therefore
\[
{}^*R_{\bL}^{\bG}(\rho_\phi)=\phi(1)\tau.
\]
Using transitivity of Lusztig induction and Frobenius reciprocity, we obtain
\[
\bigl\langle R_{\bT^*}^{\bG}(s),\rho_\phi\bigr\rangle_G
=
\bigl\langle R_{\bT^*}^{\bL}(s),{}^*R_{\bL}^{\bG}(\rho_\phi)\bigr\rangle_L
=
\phi(1)\bigl\langle R_{\bT^*}^{\bL}(s),\tau\bigr\rangle_L.
\]

On the \(H\)-side, the same argument gives
\[
R_{\bH_L}^{\bH}(u_\tau)
=
\sum_{\phi\in \Irr(W_{u_\tau})} \phi(1)u_\phi,
\]
hence
\[
{}^*R_{\bH_L}^{\bH}(u_\phi)=\phi(1)u_\tau,
\]
because \(u_\tau\) is cuspidal in \(H_L\). Therefore
\[
\bigl\langle R_{\bT^*}^{\bH}(1),u_\phi\bigr\rangle_H
=
\bigl\langle R_{\bT^*}^{\bH_L}(1),{}^*R_{\bH_L}^{\bH}(u_\phi)\bigr\rangle_{H_L}
=
\phi(1)\bigl\langle R_{\bT^*}^{\bH_L}(1),u_\tau\bigr\rangle_{H_L}.
\]

By Lemma~\ref{lemma:bijection-cuspidal}, applied to the connected Levi subgroup
\(\bL\), we have
\[
\bigl\langle R_{\bT^*}^{\bL}(s),\tau\bigr\rangle_L
=
\epsilon_{\bL}\epsilon_{\bH_L}\,
\bigl\langle R_{\bT^*}^{\bH_L}(1),u_\tau\bigr\rangle_{H_L}.
\]
Since \(\bH_L^\circ\) is the Levi subgroup of \(\bH^\circ\) corresponding to
\(\bL\subseteq \bG\), one has
\[
\epsilon_{\bL}\epsilon_{\bH_L}=\epsilon_{\bG}\epsilon_{\bH}.
\]
Combining the preceding equalities yields
\[
\bigl\langle R_{\bT^*}^{\bG}(s),\rho_\phi\bigr\rangle_G
=
\epsilon_{\bG}\epsilon_{\bH}\,
\bigl\langle R_{\bT^*}^{\bH}(1),u_\phi\bigr\rangle_H.
\]
This proves the proposition.
\end{proof}

\subsection{Construction of $\J_s$}
\label{subsec:construction-Js}

\begin{theorem}\label{thm:JD-connected-groups}
Let \(\bG\) be a connected reductive group defined over \(\F_q\), equipped with a
pinning \(\cP\). Let \(s\in G^*\) be semisimple, and put
\[
\bH:=C_{\bG^*}(s),\qquad H:=\bH^{F^*}.
\]
Then there exists a uniquely determined bijection
\[
\J_s:\cE(G,s)\xrightarrow{\ \sim\ }\Uch(H)
\]
such that the following conditions hold.

\begin{enumerate}[label={\normalfont(\arabic*)}]
\item The restriction of \(\J_s\) to \(\cE(G,s)_{\cusp}\) is the cuspidal bijection
\[
\J_s^G:\cE(G,s)_{\cusp}\xrightarrow{\sim}\Uch(H)_{\cusp}
\]
constructed in Lemma~\ref{lemma:bijection-cuspidal}.

\item For every \(\rho\in \cE(G,s)\) and every \(F^*\)-stable maximal torus
\(\bT^*\subseteq \bG^*\) containing \(s\), one has
\[
\bigl\langle R_{\bT^*}^{\bG}(s),\rho\bigr\rangle_G
=
\epsilon_{\bG}\epsilon_{\bH}\,
\bigl\langle R_{\bT^*}^{\bH}(1),\J_s(\rho)\bigr\rangle_H.
\]

\item If
\[
\cE(G,s)=\bigsqcup_{[(L,\tau)]\in \Sigma_G(s)} \Irr\bigl(G,(L,\tau)\bigr)
\]
is the Harish--Chandra decomposition of the Lusztig series, then
\(\J_s\) restricts on each Harish--Chandra series to the bijection
\[
\J_{s,(L,\tau)}:\Irr\bigl(G,(L,\tau)\bigr)\xrightarrow{\sim}
\Irr\bigl(H,(H_L,u_\tau)\bigr)
\]
of Proposition~\ref{prop:passage-cusp-to-HC}, where
\[
u_\tau:=\J_s^L(\tau)\in \Uch(H_L)_{\cusp}.
\]
\end{enumerate}
\end{theorem}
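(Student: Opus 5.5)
The plan is to define \(\J_s\) directly by condition~(3) and then check that this definition also gives (1), (2), bijectivity and uniqueness. By Proposition~\ref{prop:Lusztig-series-union-HC}, \(\cE(G,s)\) is the disjoint union of the Harish--Chandra series \(\Irr(G,(L,\tau))\), where \([(L,\tau)]\) runs over \(\Sigma_G(s)\). On each series we set
\[
\J_s|_{\Irr(G,(L,\tau))}:=\J_{s,(L,\tau)}
\]
as in Proposition~\ref{prop:passage-cusp-to-HC}.

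First, \(\J_s\) must not depend on the representative of the \(G\)-conjugacy class of \((L,\tau)\). For \((L,\tau)\) replaced by \(({}^gL,{}^g\tau)\), I would use Proposition~\ref{prop:weyl-equivariance} together with its evident analogue for conjugation by elements of \(G\) and \(G^*\). This shows that \(u_{{}^g\tau}\) is the dual transport of \(u_\tau\). The Howlett--Lehrer labellings on both sides are transported compatibly, because conjugation identifies the endomorphism algebras and the relative Weyl groups, and the identification in Corollary~\ref{coro:isoweylgp} commutes with this transport.

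Second, the image of \(\J_s\) must be exactly \(\Uch(H)\), covered disjointly. On the \(H\)-side I would show that \(\Uch(H)\) is the disjoint union of the series \(\Irr(H,(H_L,u))\), where \(H_L=C_{\bL^*}(s)^{F^*}\) and \(u\in\Uch(H_L)_{\cusp}\) runs over \(H\)-classes. This should follow from Bonnafé's description of unipotent characters of disconnected groups in \S\ref{subsubsec:unipotent-characters-for-disconnected}, together with Lemma~\ref{lem:DM94-restriction-to-identity-component} applied to the identity component.

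Next, the pairs must correspond. The cuspidal pairs \((L,\tau)\) with \(\tau\in\cE(L,s)\) have to match bijectively, up to conjugacy, the cuspidal unipotent pairs \((H_L,u)\). I would get this from the following ingredients:
\begin{itemize}
\item Theorem~\ref{fact:cuspidal} shows that the Levi subgroups \(\bL^*\ni s\) with \(\cE(L,s)_{\cusp}\neq\emptyset\) are exactly those for which \(C_{\bL^*}(s)^\circ\) is a Levi subgroup of \(\bH^\circ\) with the same split centre.
\item Lemma~\ref{lemma:bijection-cuspidal} then gives a bijection \(\tau\mapsto u_\tau\) at each such Levi.
\item Lemma~\ref{lem:centralizer-stabilizer-geometric-pair} identifies the Weyl-group actions that relate conjugacy on the two sides.
\end{itemize}
Once these hold, bijectivity of \(\J_s\) follows seriesword by series, since each \(\J_{s,(L,\tau)}\) is a bijection.

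Condition~(1) is the case \(L=G\). There \(W_\tau=1\), so \(\J_{s,(G,\tau)}(\tau)=u_\tau=\J_s^G(\tau)\). Condition~(2) for tori \(\bT^*\subseteq\bL^*\) is the scalar-product identity of Proposition~\ref{prop:passage-cusp-to-HC}. A general \(\bT^*\ni s\) is not contained in \(\bL^*\), so this case needs a separate argument. I would handle it by comparing both sides via Lusztig's orbit formula, Theorem~\ref{thm:Lusztig-orbit-JD-disconnected-centralizers}(3). The difficulty is to show that \(\Res^H_{H^\circ}\J_s(\rho)\) is the sum over the \(H/H^\circ\)-orbit \(\bar J_s(\rho)\). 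This amounts to compatibility of the Howlett--Lehrer labelling with Lusztig's orbit map on \(G\) and \(G'\), which I would deduce from Lemma~\ref{lem:HC-series-map-is-JD} and Proposition~\ref{prop:HC-induction-connected-centre} applied to a regular embedding, restricting from \(G'\) to \(G\).

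Uniqueness is then formal. The map is uniquely determined by (1) and (3), since (3) prescribes \(\J_s\) series by series in terms of the canonical Howlett--Lehrer bijections, and (1) fixes the cuspidal input \(u_\tau\) through Lemma~\ref{lemma:bijection-cuspidal}, which is canonical once \(\cP\) is fixed. The main obstacle is condition~(2) for tori not contained in \(\bL^*\), that is, the disconnected bookkeeping relating the Howlett--Lehrer labels to Lusztig's orbit map. The point that needs the most care is the possible exceptional \(\mathsf E_7\)/\(\mathsf E_8\) ambiguity of Lemma~\ref{lem:HC-series-map-is-JD}; I would argue it is harmless because the two members of each exceptional pair have equal Deligne--Lusztig multiplicities.
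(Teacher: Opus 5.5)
Your construction is the paper's. You define \(\J_s\) on each Harish--Chandra series by Proposition~\ref{prop:passage-cusp-to-HC}, match \(\Sigma_G(s)\) with the \(H\)-classes of cuspidal unipotent pairs via Lemma~\ref{lemma:bijection-cuspidal} and Proposition~\ref{prop:weyl-equivariance}, and read off (1) at \(L=G\); uniqueness is formal because (3) already prescribes the map.

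You diverge only at condition (2), and there you have found a real issue. Proposition~\ref{prop:passage-cusp-to-HC} proves the scalar-product identity only for \(\bT^*\subseteq\bL^*\), whereas (2) is required for every \(F^*\)-stable \(\bT^*\ni s\). The unipotent principal series already shows the difference: \(\bL^*\) is a split torus, yet the multiplicities on non-split tori are nonzero. The paper's proof simply declares (2) to be ``exactly'' the identity of that proposition, so on this point you are more careful than the paper.

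Your repair, however, is not yet an argument, so (2) has a genuine gap in your proposal.

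The reduction itself is sound. If \(\Res^H_{H^\circ}\J_s(\rho)=\sum_{u\in\bar J_s(\rho)}u\) with multiplicity one, then (2) follows from Theorem~\ref{thm:Lusztig-orbit-JD-disconnected-centralizers}(3), exactly as in the last lines of the proof of Lemma~\ref{lemma:bijection-cuspidal}. For cuspidal \(\rho\) this holds by construction. For non-cuspidal \(\rho\) it is the whole content, and the results you cite do not supply it. Lemma~\ref{lem:HC-series-map-is-JD} and Proposition~\ref{prop:HC-induction-connected-centre} are connected-centre statements. Applied to \(G'\), they only tell you that \(J^{G'}_{s'}(\rho')\) carries the same label \(\phi'\in\Irr(W_{\tau'}(\bG'))\) as \(\rho'\). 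You would still need three things.

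(a) A rule expressing the labels \(\phi'\) of the characters \(\rho'\) of \(G'\) lying over \(\rho\) in terms of the label \(\phi\in\Irr(W_\tau(\bG))\) of \(\rho\). Note that \(W_\tau\) and \(W_{\tau'}\) are in general different subgroups of \(\Stab_{W_{\bG}(A_{\bL})}([\tau]_{L'})\). For \(\mathrm{SL}_2\subset\mathrm{GL}_2\) with \(\tau\) the quadratic character of the split torus, one has \(W_\tau\cong\mathbb Z/2\) but \(W_{\tau'}=1\). So this is a genuine Clifford-theoretic comparison of \(\End_{G'}(R^{\bG'}_{\bL'}\tau')\) with \(\End_G(R^{\bG}_{\bL}\tau)\). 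A priori it also depends on how the Howlett--Lehrer intertwiners for the non-reflection part of \(W_\tau\) are normalised.

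(b) The analogous rule on the dual side: a description of the constituents of \(\Res^H_{H^\circ}\J_s(\rho)\) in terms of its label in \(\Irr(W_{u_\tau})\) and the connected Howlett--Lehrer labels for \((H_L^\circ,u_\tau^\circ)\) in \(H^\circ\).

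(c) A proof that the rules in (a) and (b) correspond under \(W_\tau\cong W_{u_\tau}\) and under the connected-centre identification of Proposition~\ref{prop:weyl-isom-connected-levi-center} for \(\bL'\subset\bG'\).

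``Restricting from \(G'\) to \(G\)'' names this step but does not carry it out. One plausible way to close it is to show that, on both sides, the Lusztig fibre (respectively the restriction to \(H^\circ\)) depends only on the restriction of the label to the reflection subgroup, and that these reflection subgroups and their parameters correspond. Another is an induction on semisimple rank using Harish--Chandra restriction and degrees, together with an orbit version of Proposition~\ref{prop:unique-HC-char}. Your remark that the \(\mathsf E_7\)/\(\mathsf E_8\) pairs are harmless is correct, but it only becomes relevant once (a)--(c) are in place. Until then, (2) for tori not conjugate into \(\bL^*\) remains unproved, in your proposal and implicitly in the paper's proof as well.
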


\begin{proof}
By Proposition~\ref{prop:Lusztig-series-union-HC}, we have a disjoint decomposition
\[
\cE(G,s)
=
\bigsqcup_{[(L,\tau)]\in \Sigma_G(s)}
\Irr\bigl(G,(L,\tau)\bigr).
\]
On the \(H\)-side, Harish--Chandra theory for the possibly disconnected group \(H\)
gives a disjoint decomposition
\[
\Uch(H)
=
\bigsqcup_{[(M,u)]}
\Irr\bigl(H,(M,u)\bigr),
\]
where \((M,u)\) runs over the \(H\)-conjugacy classes of cuspidal unipotent pairs in
\(H\).

By Lemma~\ref{lemma:bijection-cuspidal}, applied to all connected Levi subgroups of
\(\bG\), together with Proposition~\ref{prop:weyl-equivariance}, the assignment
\[
(L,\tau)\longmapsto \bigl(H_L,\J_s^L(\tau)\bigr)
\]
induces a bijection from \(\Sigma_G(s)\) onto the set of \(H\)-conjugacy classes of
cuspidal unipotent pairs in \(H\). Therefore, for each
\([(L,\tau)]\in \Sigma_G(s)\), Proposition~\ref{prop:passage-cusp-to-HC} provides a
canonical bijection
\[
\J_{s,(L,\tau)}:\Irr\bigl(G,(L,\tau)\bigr)\xrightarrow{\sim}
\Irr\bigl(H,(H_L,u_\tau)\bigr).
\]
Taking the disjoint union of these bijections over all
\([(L,\tau)]\in \Sigma_G(s)\), we obtain a well-defined bijection
\[
\J_s:\cE(G,s)\xrightarrow{\sim}\Uch(H).
\]

Property \((3)\) is built into the definition. Property \((1)\) follows because when
\(L=G\), the Harish--Chandra series \(\Irr(G,(G,\rho))\) is the singleton
\(\{\rho\}\), and the above construction recovers the cuspidal bijection
\(\J_s^G\). Finally, Property \((2)\) is exactly the scalar-product identity proved in
Proposition~\ref{prop:passage-cusp-to-HC} on each Harish--Chandra series.

The uniqueness of \(\J_s\) is immediate from the disjoint decomposition of
\(\cE(G,s)\) into Harish--Chandra series and the fact that its restriction to each
series is prescribed by Proposition~\ref{prop:passage-cusp-to-HC}.
\end{proof}

\section{Cuspidal Jordan decomposition for disconnected $\bG$}\label{sec:cuspidal-JD-disconnected}
The preceding sections establish the connected Jordan decomposition.  We now
pass to the Clifford-theoretic extension needed when the group whose characters
are being decomposed is itself disconnected.  From this point onward the
following rational condition is imposed on the finite rational components of
that disconnected group.

\begin{hypothesis}[Rational pinned-component condition]
\label{hyp:rational-pinned-component-condition}
Let $\mathbf G$ be a possibly disconnected reductive group over $\F_q$, with
identity component $\mathbf G^\circ$, and put
\[
        G:=\mathbf G^F,\qquad G^\circ:=\mathbf G^{\circ F},\qquad
        \Omega_G:=G/G^\circ.
\]
Fix an $F$-stable pinning $\cP$ of $\mathbf G^\circ$.  Define
\[
        G_{\cP}
        :=
        \{g\in G:\Ad(g)|_{\mathbf G^\circ}
        \text{ preserves }\cP\}.
\]
We assume that the natural map
\[
        G_{\cP}\longrightarrow \Omega_G
\]
is surjective.  Equivalently,
\[
        G=G^\circ\cdot G_{\cP}.
\]
Whenever the disconnected Jordan decomposition is applied to an $F$-stable
regular Levi subgroup $\mathbf L\subset \mathbf G$, we impose the same condition
on $L:=\mathbf L^F$ with the pinning induced on $\mathbf L^\circ$.
\end{hypothesis}

\begin{remark}[Scope of the disconnected theory]
\label{rmk:scope-rational-pinned-components}
Hypothesis~\ref{hyp:rational-pinned-component-condition} is imposed only in the
disconnected part of the paper, where the group whose characters are being
decomposed may be disconnected.  It is not a hypothesis in the connected Jordan
decomposition theorem: there the character side is the finite group of points of
a connected reductive group, while the possibly disconnected group
$C_{\mathbf G^*}(s)^{F^*}$ occurs on the unipotent side.

The disconnected results below are not intended as a Jordan decomposition for
every disconnected reductive finite group with abelian component group.  The
extra rational pinned-component condition is imposed because the primary
application is to depth-zero local Langlands constructions (see \cite{Mishra2026PinnedLLCDepthZero}), where the relevant
finite groups are full reductive quotients of parahoric stabilizers.  In that
setting the components come from vertex stabilizers, and the required input is
that their action on the connected reductive quotient is pinning-preserving
modulo inner conjugacy by the connected finite quotient.  Thus the disconnected
theory used here is tailored to the finite groups that occur in the local
Langlands application, and it excludes the diagonal-automorphism pathology in
which an algebraically inner, but not rationally inner, automorphism acts
nontrivially on characters of the finite identity component.
\end{remark}

Throughout the rest of this section, \(\bG\) is a possibly disconnected reductive
group over \(\F_q\), equipped with an \(F\)-stable pinning \(\cP\) of
\(\bG^\circ\).  Set \(G:=\bG^F\), \(G^\circ:=\bG^{\circ F}\), and
\(\Omega:=G/G^\circ\).  We assume that \(\Omega\) is abelian and that
\((\bG,\cP)\) satisfies Hypothesis~\ref{hyp:rational-pinned-component-condition}.

Fix \(s\in \bG^{\circ*\,F^*}=(\bG^*)^{\circ F^*}\) semisimple and put
\[
\bH:=C_{\bG^*}(s),\qquad
\bH_0:=C_{\bG^{\circ*}}(s)=C_{(\bG^*)^\circ}(s),\qquad
\bH^\circ:=\bH_0^\circ=C_{\bG^{\circ*}}(s)^\circ.
\]
Set
\[
H:=\bH^{F^*},\qquad
H_0:=\bH_0^{F^*},\qquad
H^\circ:=(\bH^\circ)^{F^*}.
\]
Thus \(H_0\) is, in general, disconnected, while \(H^\circ\) is its identity-component finite subgroup.

For each \(\omega\in\Omega\), let
\[
\sigma_\omega\in \Aut(\bG^\circ,\cP)
\]
be the pinned representative of the outer action from Definition~\ref{def:pinned-Omega-action},
and let
\[
\sigma_\omega^*\in \Aut(\bG^{\circ*},\cP^*)
\]
be the dual pinned automorphism. We shall use these pinned automorphisms below only up to rational conjugacy of
semisimple parameters.  Thus, for a semisimple element
\(s\in \bG^{\circ *F^*}\), the relevant component group is the stabilizer of
the \(\bG^{\circ *F^*}\)-conjugacy class of \(s\), not necessarily the
pointwise stabilizer of \(s\) under the automorphisms \(\sigma_\omega^*\).

\subsubsection{Stabilizer matching and the source Clifford class}
\label{subsubsec:stabilizer-matching-source-clifford}

Keep the notation fixed at the beginning of this section:
\[
\bH=C_{\bG^*}(s),\qquad
\bH_0=C_{\bG^{\circ *}}(s)=C_{(\bG^*)^\circ}(s),
\qquad
H=\bH^{F^*},\qquad H_0=\bH_0^{F^*}.
\]
Thus \(H_0\lhd H\).  The point to keep in mind is that
\(\bH\) is not, in general, the semidirect product
\(\bH_0\rtimes\Omega_s\), where
\(\Omega_s=\{\omega:\sigma_\omega^*(s)=s\}\).  Rather, the relevant
component group is the stabilizer of the finite rational conjugacy class of
\(s\).

Put
\[
\Omega_{[s]}
:=
\left\{
\omega\in\Omega:
\sigma_\omega^*(s)
\text{ is } \bG^{\circ *F^*}\text{-conjugate to }s
\right\}.
\]
For \(\omega\in\Omega_{[s]}\), choose
\[
x_\omega\in \bG^{\circ *F^*},
\qquad x_1=1,
\]
such that
\begin{equation}\label{eq:xomega-conjugates-s}
x_\omega\,\sigma_\omega^*(s)\,x_\omega^{-1}=s.
\end{equation}
Then
\[
h_\omega:=(x_\omega,\omega)\in
\bG^{\circ *F^*}\rtimes\Omega
\]
belongs to \(H\).  Moreover, the projection \(H\to\Omega\) has image
\(\Omega_{[s]}\), and one has an exact sequence
\begin{equation}\label{eq:H-H0-Omega-class}
1\longrightarrow H_0
\longrightarrow H
\longrightarrow \Omega_{[s]}
\longrightarrow 1.
\end{equation}
For \(\omega,\eta\in\Omega_{[s]}\), the chosen section
\(\omega\mapsto h_\omega\) has factor set
\begin{equation}\label{eq:centralizer-factor-set}
c_s(\omega,\eta)
:=
h_\omega h_\eta h_{\omega\eta}^{-1}
=
x_\omega\,\sigma_\omega^*(x_\eta)\,x_{\omega\eta}^{-1}
\in H_0 .
\end{equation}

Let
\[
\rho^\circ\in \cE(G^\circ,s)_{\cusp}
\]
and set
\[
u_0:=\J_s^{G^\circ}(\rho^\circ)\in \Uch(H_0)_{\cusp},
\]
where \(\J_s^{G^\circ}\) denotes the pinned cuspidal Jordan decomposition for
the connected group \(\bG^\circ\).  Define
\[
\Omega_{\rho^\circ}
:=
\operatorname{im}\bigl(I_G(\rho^\circ)\to G/G^\circ=\Omega\bigr),
\]
and
\[
\Omega_{u_0}
:=
\operatorname{im}\bigl(I_H(u_0)\to H/H_0\hookrightarrow\Omega\bigr).
\]

\begin{lemma}[Stabilizer matching]
\label{lem:disc-stabilizer-match}
With the notation above, one has
\[
\Omega_{\rho^\circ}=\Omega_{u_0}\subseteq \Omega_{[s]}.
\]
Denote this common subgroup by \(\Omega_0\).  Via the projection to
\(\Omega_0\), there are canonical identifications
\[
I_G(\rho^\circ)/G^\circ\cong \Omega_0,
\qquad
I_H(u_0)/H_0\cong \Omega_0.
\]
In particular, the Clifford class attached to the source normal inclusion
\[
        G^\circ\lhd I_G(\rho^\circ)
\]
may be regarded as a canonical class
\[
        [\alpha_{\rho^\circ}]\in H^2(\Omega_0,\C^\times).
\]
\end{lemma}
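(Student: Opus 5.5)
The strategy is to prove both inclusions $\Omega_{\rho^\circ}\subseteq\Omega_{u_0}$ and $\Omega_{u_0}\subseteq\Omega_{\rho^\circ}$ by transporting characters along the pinned representatives of Hypothesis~\ref{hyp:rational-pinned-component-condition}. The canonical identifications are then formal consequences of the exact sequence \eqref{eq:H-H0-Omega-class}.

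\textbf{First step: reduce to pinned automorphisms.} For $\omega\in\Omega$, the hypothesis $G=G^\circ\cdot G_{\cP}$ provides $g_\omega\in G_{\cP}$ lying over $\omega$. Since $\Ad(g_\omega)|_{\bG^\circ}$ preserves $\cP$ and represents the outer class of $\omega$, it coincides with $\sigma_\omega$, because a pinned automorphism is determined by its outer class. Hence ${}^{g_\omega}\rho^\circ=\rho^\circ\circ\sigma_\omega^{-1}$. Changing $g_\omega$ by an element of $G^\circ$ does not affect membership in $I_G(\rho^\circ)$. It follows that $\omega\in\Omega_{\rho^\circ}$ if and only if $\rho^\circ\circ\sigma_\omega^{-1}\simeq\rho^\circ$.

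By duality of Deligne--Lusztig characters under the dual pair $(\sigma_\omega,\sigma_\omega^*)$, as in the proof of Proposition~\ref{prop:compatibility-JDs-for-Levi-with-connected-center}, we have $\rho^\circ\circ\sigma_\omega^{-1}\in\cE(G^\circ,\sigma_\omega^*(s))$. Disjointness of Lusztig series then gives $\Omega_{\rho^\circ}\subseteq\Omega_{[s]}$. On the dual side, $\Omega_{u_0}\subseteq\Omega_{[s]}$ holds by definition, since the image of $H$ in $\Omega$ is $\Omega_{[s]}$.

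\textbf{Second step: pinned equivariance of the cuspidal bijection.} For $\omega\in\Omega_{[s]}$ and the element $h_\omega=(x_\omega,\omega)$, I would prove
\[
\J_s^{G^\circ}\bigl(\rho^\circ\circ\sigma_\omega^{-1}\bigr)={}^{h_\omega}u_0 ,
\]
where the left-hand side is transported back from $\cE(G^\circ,\sigma_\omega^*(s))$ to $\cE(G^\circ,s)$ using $\Ad(x_\omega)$. This is the analogue of Proposition~\ref{prop:weyl-equivariance}, with a pinned automorphism in place of a Weyl element. Given this identity, bijectivity of $\J_s^{G^\circ}$ yields $\omega\in\Omega_{\rho^\circ}$ if and only if ${}^{h_\omega}u_0\simeq u_0$. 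Since $H_0$ fixes $u_0$, the latter condition is equivalent to $\omega\in\Omega_{u_0}$.

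\textbf{Main obstacle: proving the equivariance identity.} I would verify that every ingredient in Lemma~\ref{lemma:bijection-cuspidal} is transported by $(\sigma_\omega,\sigma_\omega^*)$, and check these one at a time.
\begin{itemize}
\item Lusztig's orbit map $\bar J_s$ is equivariant. This follows from the construction through a regular embedding, exactly as in Lemma~\ref{lem:disconnected-JD-Weyl-equivariance}, using uniqueness in Theorem~\ref{thm:JD-connected-center}.
\item The preferred extension $\dot u_O$ is transported to the preferred extension, since Lusztig's convention is defined from the based root datum, which $\sigma^*_\omega$ preserves.
\item The pinned Whittaker character $\psi_{\cP}$ is $\sigma_\omega$-invariant because $\sigma_\omega$ fixes the $x_\alpha$. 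Consequently $\rho_{s,\psi_{\cP}}$ and the Malle basepoint $\rho_{O,\cP}$ of Theorem~\ref{thm:pinned-Malle-matching-simple} are carried to the corresponding basepoints for $\sigma_\omega^*(s)$.
\item The map $\alpha$ of Remark~\ref{rmk:regular-embedding-adjoint-action} commutes with these automorphisms.
\end{itemize}
Putting these together, the class $c_\rho$ is transported to the corresponding class, and hence so is $u_{O,\bar\alpha_O(c_\rho)}$. The conjugation by $x_\omega\in G^{\circ*}$ that identifies $\sigma_\omega^*(s)$ with $s$ acts trivially on the relevant classes up to $H_0$-conjugacy, so it introduces no further change.

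\textbf{Final step: the canonical identifications.} The maps $I_G(\rho^\circ)/G^\circ\to\Omega$ and $I_H(u_0)/H_0\to\Omega$ are injective; for the latter this uses \eqref{eq:H-H0-Omega-class}. Their images are both $\Omega_0$, which gives the stated identifications with $\Omega_0$. The Clifford class of $G^\circ\lhd I_G(\rho^\circ)$ is canonical by \S\ref{subsec:clifford-labels}, so it defines $[\alpha_{\rho^\circ}]\in H^2(\Omega_0,\C^\times)$.
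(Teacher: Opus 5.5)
Your proposal is correct and follows essentially the same route as the paper: pinning-preserving representatives from Hypothesis~\ref{hyp:rational-pinned-component-condition}, disjointness of Lusztig series to get into $\Omega_{[s]}$, and the transport identity $\J_s^{G^\circ}({}^{\dot\omega}\chi)={}^{h_\omega}\J_s^{G^\circ}(\chi)$ together with injectivity for both inclusions; the identifications then come from the exact sequence \eqref{eq:H-H0-Omega-class}. The only differences are minor: you reduce the converse to $h_\omega$ by using that $H_0$ acts trivially on $u_0$, whereas the paper works with an arbitrary $h\in I_H(u_0)$ over $\omega$, and you check the equivariance identity ingredient by ingredient, whereas the paper only asserts it.
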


\begin{proof}
We first record the centralizer calculation.  Since
\[
\bG^*=\bG^{\circ *}\rtimes\Omega,
\]
an element \((g,\omega)\in \bG^{\circ *}\rtimes\Omega\) centralizes \(s\) if
and only if
\[
(g,\omega)(s,1)(g,\omega)^{-1}
=
(g\,\sigma_\omega^*(s)\,g^{-1},1)
=
(s,1).
\]
Equivalently,
\[
g\,\sigma_\omega^*(s)\,g^{-1}=s.
\]
Thus \(\omega\) need not fix \(s\) itself; it is enough and necessary that
\(\sigma_\omega^*(s)\) be \(\bG^{\circ *}\)-conjugate to \(s\).  On rational
points this gives \eqref{eq:H-H0-Omega-class}.

We next compare the two stabilizers.  Let \(\omega\in\Omega_{\rho^\circ}\).
Choose first any representative of \(\omega\) in \(I_G(\rho^\circ)\).  By
Hypothesis~\ref{hyp:rational-pinned-component-condition} we may replace it,
modulo left multiplication by an element of \(G^\circ\), by a representative
\[
        \dot\omega\in I_G(\rho^\circ)\cap G_{\cP}.
\]
Inner conjugation by \(G^\circ\) is invisible on characters of \(G^\circ\).  For
this representative, conjugation on \(G^\circ\) is the pinned automorphism
\(\sigma_\omega\).  Since pullback by \(\sigma_\omega^{-1}\) sends
\[
\cE(G^\circ,s)
\quad\text{onto}\quad
\cE(G^\circ,\sigma_\omega^*(s)),
\]
and since \({}^{\dot\omega}\rho^\circ\simeq\rho^\circ\), the two connected
Lusztig series
\[
\cE(G^\circ,s)
\quad\text{and}\quad
\cE(G^\circ,\sigma_\omega^*(s))
\]
coincide.  Hence \(s\) and \(\sigma_\omega^*(s)\) are
\(\bG^{\circ *F^*}\)-conjugate.  Thus \(\omega\in\Omega_{[s]}\), and we may
choose \(x_\omega\in\bG^{\circ *F^*}\) satisfying
\eqref{eq:xomega-conjugates-s}.  Then
\[
h_\omega=(x_\omega,\omega)\in H.
\]

The connected pinned Jordan decomposition is equivariant for the following
transport operation: first apply the pinned automorphism \(\sigma_\omega\) on
the \(G^\circ\)-side, which sends \(s\) to \(\sigma_\omega^*(s)\), and then
conjugate on the dual side by \(x_\omega\), which brings
\(\sigma_\omega^*(s)\) back to \(s\).  Therefore, for every
\(\chi\in\cE(G^\circ,s)_{\cusp}\),
\begin{equation}\label{eq:JD-equivariance-with-xomega}
\J_s^{G^\circ}({}^{\dot\omega}\chi)
=
{}^{h_\omega}\J_s^{G^\circ}(\chi).
\end{equation}
Applying \eqref{eq:JD-equivariance-with-xomega} to \(\chi=\rho^\circ\), and
using \({}^{\dot\omega}\rho^\circ\simeq\rho^\circ\), gives
\[
{}^{h_\omega}u_0
=
{}^{h_\omega}\J_s^{G^\circ}(\rho^\circ)
=
\J_s^{G^\circ}({}^{\dot\omega}\rho^\circ)
\simeq
\J_s^{G^\circ}(\rho^\circ)
=
u_0.
\]
Thus \(h_\omega\in I_H(u_0)\), and hence
\[
\Omega_{\rho^\circ}\subseteq\Omega_{u_0}.
\]

Conversely, let \(\omega\in\Omega_{u_0}\), and choose
\(h=(x,\omega)\in I_H(u_0)\).  Since \(h\in H=C_{\bG^*}(s)^{F^*}\), we have
\[
x\,\sigma_\omega^*(s)\,x^{-1}=s.
\]
By Hypothesis~\ref{hyp:rational-pinned-component-condition}, choose a
pinning-preserving representative \(\dot\omega\in G_{\cP}\) of \(\omega\).
The same equivariance argument gives
\[
\J_s^{G^\circ}({}^{\dot\omega}\rho^\circ)
=
{}^{h}\J_s^{G^\circ}(\rho^\circ)
=
{}^h u_0
\simeq
u_0
=
\J_s^{G^\circ}(\rho^\circ).
\]
Since \(\J_s^{G^\circ}\) is injective on \(\cE(G^\circ,s)_{\cusp}\), it follows
that
\[
{}^{\dot\omega}\rho^\circ\simeq\rho^\circ.
\]
Hence \(\omega\in\Omega_{\rho^\circ}\).  Therefore
\[
\Omega_{\rho^\circ}=\Omega_{u_0}=:\Omega_0.
\]

The projection maps now give the announced identifications
\[
I_G(\rho^\circ)/G^\circ\cong\Omega_0,
\qquad
I_H(u_0)/H_0\cong\Omega_0,
\]
because their kernels are respectively \(G^\circ\) and \(H_0\).  Clifford
theory for the normal inclusion \(G^\circ\lhd I_G(\rho^\circ)\) therefore gives
the stated canonical cohomology class on \(\Omega_0\).
\end{proof}

\begin{definition}[Projective labels with prescribed multiplier]
\label{def:projective-labels-fixed-multiplier}
Let \(A\) be a finite group and let \([\alpha]\in H^2(A,\C^\times)\).  We write
\[
        \Irr_{[\alpha]}(A)
\]
for the set of irreducible projective representations of \(A\) with multiplier
in the class \([\alpha]\).  Equivalently, after choosing a normalized cocycle
representative \(\alpha\in Z^2(A,\C^\times)\), it is the set
\(\Irr(\C_\alpha[A])\), with the usual identification under replacing
\(\alpha\) by a cohomologous cocycle.
\end{definition}

\begin{definition}[Cuspidal enriched unipotent target]
\label{def:enh-cusp-target}
With the above notation, define
\[
\Uch^{\mathrm{enh}}_{G,s}(H)_{\cusp}
:=
\left\{
(u_0,[\alpha_{\rho^\circ}],E)
:
\begin{array}{l}
u_0=\J_s^{G^\circ}(\rho^\circ),\quad
\rho^\circ\in\cE(G^\circ,s)_{\cusp},\\[2mm]
E\in \Irr_{[\alpha_{\rho^\circ}]}(\Omega_{\rho^\circ})
\end{array}
\right\}/\Omega_{[s]}.
\]
Here \(\Omega_{[s]}\) acts by conjugating \(\rho^\circ\), hence also the
corresponding unipotent character \(u_0\), the stabilizer
\(\Omega_{\rho^\circ}\), the Clifford class \([\alpha_{\rho^\circ}]\), and the
projective label \(E\).  Thus the notation records precisely the source
Clifford class and source Clifford label, transported to the unipotent side as
part of the enhanced datum.
\end{definition}
\subsection{The cuspidal disconnected JD}
\subsubsection{Lusztig series for $G^F$ attached to $s\in (G^*)^{\circ F^*}$}
\label{subsubsec:disc-Lusztig-series}

We keep the standing assumptions and notation from \S9: $G$ is a (possibly disconnected) reductive
$\F_q$--group with Frobenius $F$, $G^\circ\lhd G$ is the identity component, and $\Omega:=G^F/G^{\circ F}$
is abelian. Fix an $F$--stable pinning $\mathbf P$ of $G^\circ$, and assume the rational
pinned-component condition of Hypothesis~\ref{hyp:rational-pinned-component-condition}.  As in Definition~\ref{def:dual-semidirect-pinning}, this yields a semidirect-product model
\[
G^* \;=\; G^{\circ *}\rtimes \Omega,
\]
together with pinned automorphisms $\sigma_\omega\in \Aut(G^\circ,\mathbf P)$ commuting with $F$ and their
duals $\sigma_\omega^*\in \Aut(G^{\circ *})$ for $\omega\in \Omega$.

\begin{definition}[Disconnected Lusztig series in the identity-component case]
\label{def:disc-Lusztig-series}
Let $s\in (G^*)^{\circ F^*}=G^{\circ *F^*}$ be semisimple. Define
\[
\cE(G^F,s)\;:=\;\Bigl\{\;\chi\in \Irr(G^F)\ \Bigm|\
\Res^{G^F}_{G^{\circ F}}\chi\ \text{has some constituent }\chi^\circ\in \cE(G^{\circ F},s)\Bigr\}.
\]
Equivalently,
\[
\cE(G^F,s)\;=\;\bigcup_{\chi^\circ\in \cE(G^{\circ F},s)} \Irr(G^F\mid \chi^\circ),
\]
where $\Irr(G^F\mid \chi^\circ)$ denotes the set of irreducible characters of $G^F$ lying above $\chi^\circ$.
\end{definition}

\begin{proposition}[Partition of $\Irr(G^F)$ by identity-component series]
\label{prop:disc-series-partition}
As $s$ varies over the $G^{*F^*}$--conjugacy classes of semisimple elements of $(G^*)^{\circ F^*}$, the sets
$\cE(G^F,s)$ form a partition of $\Irr(G^F)$:
Let
\[
\mathcal S(G^*)
:=
\{\, s\in (G^*)^{\circ F^*}\mid s \text{ semisimple}\,\}/G^{*F^*}.
\]
Then
\[
\Irr(G^F)
=
\bigsqcup_{[s]\in \mathcal S(G^*)}
\cE(G^F,s).
\]

\end{proposition}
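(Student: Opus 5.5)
The proof reduces the statement to two facts: the partition of \(\Irr(G^{\circ F})\) into connected Lusztig series, and the way \(G^F\) permutes those series. Every \(\chi\in\Irr(G^F)\) has some irreducible constituent \(\chi^\circ\) of \(\Res^{G^F}_{G^{\circ F}}\chi\). By the partition theorem for the connected group \(\bG^\circ\), \(\chi^\circ\) lies in \(\cE(G^{\circ F},s)\) for some semisimple \(s\in G^{\circ *F^*}\), unique up to \(G^{\circ *F^*}\)-conjugacy. Hence \(\chi\in\cE(G^F,s)\), which shows that the sets \(\cE(G^F,s)\) cover \(\Irr(G^F)\).

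For disjointness, I would use Clifford theory for \(G^{\circ F}\lhd G^F\). The constituents of \(\Res\chi\) form a single \(G^F\)-orbit \(\{{}^g\chi^\circ\}\). It therefore suffices to show that the \(G^F\)-conjugates of \(\cE(G^{\circ F},s)\) are exactly the series \(\cE(G^{\circ F},s')\) with \(s'\) running over the \(G^{*F^*}\)-orbit of \(s\) inside \((G^*)^{\circ F^*}\). By Hypothesis~\ref{hyp:rational-pinned-component-condition}, each \(g\in G^F\) can be written \(g=g_0\dot\omega\) with \(g_0\in G^{\circ F}\) and \(\dot\omega\in G_{\cP}\). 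Conjugation by \(g_0\) fixes every connected Lusztig series. Conjugation by \(\dot\omega\) acts on \(G^{\circ F}\) through the pinned automorphism \(\sigma_\omega\). By the standard compatibility of Lusztig series with dual automorphisms, already used in the proof of Lemma~\ref{lem:disc-stabilizer-match}, it sends \(\cE(G^{\circ F},s)\) onto \(\cE(G^{\circ F},\sigma_\omega^*(s))\) (or onto the series of \(\sigma_{\omega^{-1}}^*(s)\), depending on the convention, which does not matter for orbits).

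In the model \(G^*=G^{\circ*}\rtimes\Omega\), one has \((x,\omega)s(x,\omega)^{-1}=x\sigma_\omega^*(s)x^{-1}\). So the \(G^{*F^*}\)-orbit of \(s\) is precisely \(\{x\sigma_\omega^*(s)x^{-1}: x\in G^{\circ*F^*},\ \omega\in\Omega^{F}\}\). Here I would check that the rational components of \(G^*\) are indexed by the same \(\Omega=G^F/G^{\circ F}\) as on the source side, using that \(F\) commutes with the pinned \(\sigma_\omega\). From this, if \(\chi\in\cE(G^F,s)\cap\cE(G^F,t)\), then \(\Res\chi\) contains some \(\chi^\circ\in\cE(G^{\circ F},s)\) and some \({}^g\chi^\circ\in\cE(G^{\circ F},t)\). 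Hence \(t\) is \(G^{\circ*F^*}\)-conjugate to \(\sigma_\omega^{*\pm1}(s)\), and therefore \(G^{*F^*}\)-conjugate to \(s\), by disjointness of connected series. Conversely, \(\cE(G^F,s)\) depends only on the \(G^{*F^*}\)-class of \(s\) by the same computation read backwards. So the union is indexed by \(\mathcal S(G^*)\) and is disjoint.

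The main obstacle is the identification step. One must check carefully that the Frobenius on \(\pi_0(G^*)\) matches the one on \(\Omega\), so that \(G^{*F^*}\) surjects onto \(\Omega\). One must also check that rational, rather than merely geometric, conjugacy of \(s\) is what governs connected Lusztig series. For the first point I would use the \(F\)-stability of \(\cP\) and the definition of \(F^*\) on the semidirect product. For the second, the connected theorem is already stated with \(G^{*F^*}\)-rational classes. The hypothesis is essential here: without pinning-preserving representatives, a component could act by a non-rationally-inner diagonal twist. Such a twist still preserves series, so the statement would survive, but the clean description via \(\sigma_\omega^*\) would not be available.
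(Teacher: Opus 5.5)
Your proposal is correct and follows essentially the same route as the paper: exhaustion via the connected partition, independence of the representative of the $G^{*F^*}$-class via a pinned lift $\dot\omega\in G_{\cP}$ and the transport $\cE(G^{\circ F},s)\mapsto\cE(G^{\circ F},\sigma_\omega^*(s))$, and disjointness from the single-orbit statement of Clifford theory. Your closing observation is also right. Any rational lift of $\omega$ acts on $G^{\circ F}$ as a diagonal automorphism composed with $\sigma_\omega$, and diagonal automorphisms fix each connected series. So the partition does not actually need Hypothesis~\ref{hyp:rational-pinned-component-condition}, and calling it ``essential'' overstates its role.
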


\begin{proof}
Let first $s,s'\in (G^*)^{\circ F^*}$ be semisimple elements whose images
$(s,1)$ and $(s',1)$ are $G^{*F^*}$--conjugate.  Choose
$(g,\omega)\in G^{*F^*}=G^{\circ *F^*}\rtimes \Omega$ such that
\[
(s',1)=(g,\omega)(s,1)(g,\omega)^{-1}=(g\,\sigma_\omega^*(s)\,g^{-1},1).
\]
Since connected Lusztig series depend only on the
$G^{\circ *F^*}$--conjugacy class of the semisimple parameter, this gives
\begin{equation}\label{eq:disc-series-conjugacy}
\cE(G^{\circ F},s')=
\cE(G^{\circ F},g\,\sigma_\omega^*(s)\,g^{-1})=
\cE(G^{\circ F},\sigma_\omega^*(s)).
\end{equation}
By functoriality of Lusztig series under pinned automorphisms, pullback along
$\sigma_\omega^{-1}$ induces a bijection
\[
\omega:\Irr(G^{\circ F})\to \Irr(G^{\circ F}),
\qquad
\chi^\circ\longmapsto \omega\chi^\circ:=\chi^\circ\circ\sigma_\omega^{-1},
\]
satisfying
\begin{equation}\label{eq:pinned-action-on-disc-series}
\omega\bigl(\cE(G^{\circ F},s)\bigr)
=
\cE(G^{\circ F},\sigma_\omega^*(s)).
\end{equation}
On the other hand, by Hypothesis~\ref{hyp:rational-pinned-component-condition}
we may choose a lift $\dot\omega\in G^F$ of $\omega$ whose conjugation action
on $G^{\circ F}$ is the pinned action corresponding to pullback by
$\sigma_\omega^{-1}$.  Conjugation by this lift permutes the irreducible
constituents of $\Res^{G^F}_{G^{\circ F}}\chi$.  An arbitrary lift differs
from it by an element of $G^{\circ F}$, hence gives the same action on
characters.  Therefore
\[
\Res^{G^F}_{G^{\circ F}}\chi \text{ has a constituent in }\cE(G^{\circ F},s)
\]
if and only if
\[
\Res^{G^F}_{G^{\circ F}}\chi \text{ has a constituent in }
\cE(G^{\circ F},\sigma_\omega^*(s)).
\]
Together with \eqref{eq:disc-series-conjugacy}, this shows that
\[
\chi\in \cE(G^F,s)
\quad\Longleftrightarrow\quad
\chi\in \cE(G^F,s'),
\]
and hence $\cE(G^F,s)$ depends only on the $G^{*F^*}$--conjugacy class of
$(s,1)$.

We next prove that the resulting subsets exhaust $\Irr(G^F)$.  Let
$\chi\in\Irr(G^F)$, and choose an irreducible constituent $\chi^\circ$ of
$\Res^{G^F}_{G^{\circ F}}\chi$.  For the connected group $G^\circ$, the
series $\cE(G^{\circ F},t)$, with $t$ running over semisimple elements of
$G^{\circ *F^*}$ up to $G^{\circ *F^*}$--conjugacy, form a partition of
$\Irr(G^{\circ F})$.  Thus $\chi^\circ\in \cE(G^{\circ F},s)$ for some
semisimple $s\in G^{\circ *F^*}=(G^*)^{\circ F^*}$, and
Definition~\ref{def:disc-Lusztig-series} gives $\chi\in \cE(G^F,s)$.

It remains to prove disjointness after indexing by $G^{*F^*}$--conjugacy
classes.  Suppose
\[
\chi\in \cE(G^F,s_1)\cap \cE(G^F,s_2)
\]
for semisimple $s_1,s_2\in (G^*)^{\circ F^*}$.  Choose constituents
$\chi_1^\circ,\chi_2^\circ$ of $\Res^{G^F}_{G^{\circ F}}\chi$ such that
$\chi_i^\circ\in \cE(G^{\circ F},s_i)$.  By Clifford theory, the constituents
of $\Res^{G^F}_{G^{\circ F}}\chi$ form a single $G^F$--orbit.  Hence there is
an element $g\in G^F$ such that
$\chi_2^\circ\simeq {}^g\chi_1^\circ$.  If $\omega$ denotes the image of $g$
in $\Omega$, then ${}^g\chi_1^\circ$ is the same constituent as
$\omega\chi_1^\circ$ in the notation above.  By
\eqref{eq:pinned-action-on-disc-series},
\[
\omega\chi_1^\circ\in
\cE(G^{\circ F},\sigma_\omega^*(s_1)).
\]
Thus
\[
\chi_2^\circ\in
\cE(G^{\circ F},s_2)
\cap
\cE(G^{\circ F},\sigma_\omega^*(s_1)).
\]
Since the connected Lusztig series form a partition of
$\Irr(G^{\circ F})$, the elements $s_2$ and $\sigma_\omega^*(s_1)$ are
$G^{\circ *F^*}$--conjugate.  Equivalently, $(s_2,1)$ and $(s_1,1)$ are
$G^{*F^*}$--conjugate.  Therefore two distinct $G^{*F^*}$--conjugacy classes
of semisimple elements in $(G^*)^{\circ F^*}$ give disjoint subsets.  Combining
exhaustion and disjointness proves the asserted partition.
\end{proof}

\begin{theorem}[Cuspidal enriched Jordan decomposition for disconnected $\bG$]
\label{thm:disc-cusp-JD}
Let \(\bG\) be a possibly disconnected reductive \(\F_q\)-group with abelian
component group, equipped with an \(F\)-stable pinning \(\cP\) of
\(\bG^\circ\).  Assume that \((\bG,\cP)\) satisfies
Hypothesis~\ref{hyp:rational-pinned-component-condition}.  For
\(s\in(\bG^*)^{\circ F^*}\) semisimple, put
\[
        \bH:=C_{\bG^*}(s),\qquad H:=\bH^{F^*}.
\]
There is a canonical bijection
\[
J^{\mathrm{enh},\cusp}_{G,s}:
\cE(G,s)_{\cusp}
\xrightarrow{\ \sim\ }
\Uch^{\mathrm{enh}}_{G,s}(H)_{\cusp},
\]
depending only on the fixed pinning.  If \(\rho\in\cE(G,s)_{\cusp}\) and
\(\rho^\circ\prec\Res^G_{G^\circ}\rho\), then
\[
J^{\mathrm{enh},\cusp}_{G,s}(\rho)
=
\left[
\J_s^{G^\circ}(\rho^\circ),
[\alpha_{\rho^\circ}],
E_\rho
\right],
\]
where \(E_\rho\in\Irr_{[\alpha_{\rho^\circ}]}(\Omega_{\rho^\circ})\) is the
Clifford label of \(\rho\) for the normal inclusion
\[
        G^\circ\lhd I_G(\rho^\circ).
\]
\end{theorem}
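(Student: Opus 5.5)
The plan is to build the map fibrewise from Clifford theory for \(G^\circ\lhd G\), together with the connected pinned cuspidal bijection and the stabilizer matching of Lemma~\ref{lem:disc-stabilizer-match}. There are four steps: characterize cuspidality in \(G\) through \(G^\circ\), define the map, check independence of the chosen constituent, and prove bijectivity.

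\emph{Step 1: cuspidality.} I first show that \(\rho\in\cE(G,s)\) is cuspidal if and only if one (equivalently every) constituent \(\rho^\circ\prec\Res^G_{G^\circ}\rho\) is cuspidal in \(\cE(G^\circ,s)\). For this I would use the second identity of Lemma~\ref{lem:DM94-restriction-to-identity-component},
\[
\Res^{L}_{L^\circ}\,{}^*R^G_{L\subset P}(\rho)={}^*R^{G^\circ}_{L^\circ\subset P^\circ}\bigl(\Res^G_{G^\circ}\rho\bigr).
\]
Proper parabolics of \(G\) have identity components that are proper parabolics of \(G^\circ\), and conversely every proper \(F\)-stable parabolic \(\bP^\circ\) of \(\bG^\circ\) gives the parabolic \(N_{\bG}(\bP^\circ)\). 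Since the constituents of \(\Res^G_{G^\circ}\rho\) form a single \(G\)-orbit and cuspidality is preserved under automorphisms, this yields
\[
\cE(G,s)_{\cusp}=\bigsqcup_{\rho^\circ}\Irr(G\mid\rho^\circ),
\]
where \(\rho^\circ\) runs over the \(G\)-orbits in \(\cE(G^\circ,s)_{\cusp}\). The underlying series statement uses Definition~\ref{def:disc-Lusztig-series} and Proposition~\ref{prop:disc-series-partition}.

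\emph{Step 2: definition of the map.} Given \(\rho\in\cE(G,s)_{\cusp}\), I pick a constituent \(\rho^\circ\). Clifford theory (\S\ref{subsec:clifford-labels}) attaches to \(\rho\) a label \(E_\rho\in\Irr_{[\alpha_{\rho^\circ}]}(\Omega_{\rho^\circ})\), and the bijection
\[
E\mapsto\Ind_{I_G(\rho^\circ)}^G\bigl(\tilde\rho^\circ\otimes\infl E\bigr)
\]
identifies \(\Irr(G\mid\rho^\circ)\) with \(\Irr_{[\alpha_{\rho^\circ}]}(\Omega_{\rho^\circ})\). The projection of \(I_G(\rho^\circ)\) to \(\Omega\) identifies \(I_G(\rho^\circ)/G^\circ\) with \(\Omega_{\rho^\circ}=\Omega_0\subseteq\Omega_{[s]}\) by Lemma~\ref{lem:disc-stabilizer-match}. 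I then set
\[
J^{\mathrm{enh},\cusp}_{G,s}(\rho):=\bigl[\J_s^{G^\circ}(\rho^\circ),[\alpha_{\rho^\circ}],E_\rho\bigr].
\]
The pinning enters only through \(\J_s^{G^\circ}\) and through the choice of pinned representatives.

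\emph{Step 3: well-definedness.} Any other constituent is \({}^g\rho^\circ\) with \(g\in G\). By Hypothesis~\ref{hyp:rational-pinned-component-condition} I may take \(g=\dot\omega\in G_{\cP}\), modulo \(G^\circ\), which acts trivially on characters. The image \(\omega\) lies in \(\Omega_{[s]}\) once the series is fixed, by the disjointness argument of Proposition~\ref{prop:disc-series-partition}. Transport by \(\dot\omega\) carries \((\rho^\circ,[\alpha_{\rho^\circ}],E_\rho)\) to \(({}^{\dot\omega}\rho^\circ,[\alpha_{{}^{\dot\omega}\rho^\circ}],E'_\rho)\). Because \(\Omega\) is abelian, conjugation induces the identity on \(\Omega_0\), so the class and the label match up to the canonical identification. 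The equivariance \eqref{eq:JD-equivariance-with-xomega} then gives \(\J_s^{G^\circ}({}^{\dot\omega}\rho^\circ)={}^{h_\omega}u_0\). Altogether the two triples differ by the action of \(\omega\in\Omega_{[s]}\), and so they are equal in the quotient of Definition~\ref{def:enh-cusp-target}.

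\emph{Step 4: bijectivity.} Surjectivity: a class \([u_0,[\alpha_{\rho^\circ}],E]\) comes from some \(\rho^\circ\) by definition, and \(E\) is realized by the Clifford bijection. Injectivity: suppose two characters \(\rho_1,\rho_2\) have the same image. After applying an element of \(\Omega_{[s]}\), realized on the source side by pinned representatives, I may assume \(\J_s^{G^\circ}(\rho_1^\circ)=\J_s^{G^\circ}(\rho_2^\circ)\). Injectivity of the connected cuspidal bijection of Lemma~\ref{lemma:bijection-cuspidal} gives \(\rho_1^\circ=\rho_2^\circ\), and equality of the Clifford labels then gives \(\rho_1=\rho_2\).

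\textbf{Main obstacle.} The hard part is Step 3: making precise that the \(\Omega_{[s]}\)-action in Definition~\ref{def:enh-cusp-target} on pairs (class, label) agrees with the transport of the source Clifford data under a pinned representative \(\dot\omega\). In particular, the identification of \(H^2(\Omega_0,\C^\times)\)-classes and projective labels has to be independent of the lifts \(x_\omega\) and of the representatives in \(G_{\cP}\). I would handle this by conjugating a fixed projective extension \(\tilde\rho^\circ\) by \(\dot\omega\). Since \(\Omega_0\) is abelian and \(\dot\omega\) normalizes \(I_G(\rho^\circ)\), the multiplier cocycle is literally preserved.
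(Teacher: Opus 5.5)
Steps~2--4 of your proposal reproduce the paper's proof: define the image through a constituent \(\rho^\circ\in\cE(G^\circ,s)\), the connected pinned bijection and the Clifford label; absorb the choice of constituent into the \(\Omega_{[s]}\)-quotient; invert by Clifford theory. Step~1, together with your insistence that the second constituent also lie in \(\cE(G^\circ,s)\) (so that its component lies in \(\Omega_{[s]}\)), makes explicit two points the paper's proof passes over. First, the paper takes an arbitrary constituent of \(\Res^G_{G^\circ}\rho\) and asserts it lies in \(\cE(G^\circ,s)_{\cusp}\); this fails when a constituent is moved into \(\cE(G^\circ,\sigma_\omega^*(s))\) with \(\omega\notin\Omega_{[s]}\). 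Second, the paper never checks that the reconstructed \(\rho_E\) is cuspidal. So bijectivity, for fixed auxiliary choices, is fine.

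The genuine gap, which the paper's proof shares, is the claim that the map is canonical and depends only on \(\cP\). The label \(E_\rho\) is not a function of \(\rho\), \(\rho^\circ\) and the pinning. It exists only after a projective extension \(\tilde\rho^\circ\) of \(\rho^\circ\) to \(I_G(\rho^\circ)\) has been chosen. For \(\lambda\in\Hom(\Omega_{\rho^\circ},\C^\times)\), the extension \(\lambda\tilde\rho^\circ\) has literally the same multiplier, so \([\alpha_{\rho^\circ}]\) and even its cocycle representative are unchanged. Yet every label is replaced by \(\lambda^{-1}\otimes E_\rho\). Whenever \(\rho^\circ\) extends to \(I_G(\rho^\circ)\) and \(\Omega_{\rho^\circ}\neq1\), this permutes the labels freely. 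Deciding which extension of \(\rho^\circ\) receives the trivial label is exactly the scalar ambiguity that Section~\ref{sec:preferred-extensions-cuspidal-unipotents} resolves on the unipotent side, and no step of your construction resolves it on the source side.

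Your remedy in the last paragraph does not close this. Conjugating a fixed \(\tilde\rho^\circ\) by pinned representatives makes the labels coherent along the \(G\)-orbit of constituents, but the initial \(\tilde\rho^\circ\) is still an auxiliary choice. The quotient in Definition~\ref{def:enh-cusp-target} cannot absorb it: only \(\Omega_{\rho^\circ}\) fixes \(u_0\), and, being abelian, it acts trivially on \(\Irr_{[\alpha_{\rho^\circ}]}(\Omega_{\rho^\circ})\). Reading projective representations up to projective equivalence would remove the ambiguity, but it would collapse each fibre to a point and destroy bijectivity. The sentence in \S\ref{subsec:clifford-labels} claiming that the Clifford bijection depends on the choices only through \([\alpha_\theta]\) is where this slips in, for the paper as well.

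What your argument actually proves is a bijection that is canonical only once one projective extension is fixed per orbit of constituents, i.e.\ canonical up to the twisting action above. To obtain the stated canonicity you need an additional input: a normalization of \(\tilde\rho^\circ\) that is stable under transport by \(G_{\cP}\). This would play, for cuspidal characters in arbitrary Lusztig series, the role that preferred extensions play in Section~\ref{sec:preferred-extensions-cuspidal-unipotents}. Otherwise the canonicity claim has to be weakened accordingly.
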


\begin{proof}
Let \(\rho\in\cE(G,s)_{\cusp}\), and choose an irreducible constituent
\[
        \rho^\circ\prec\Res^G_{G^\circ}\rho .
\]
Then \(\rho^\circ\in\cE(G^\circ,s)_{\cusp}\).  Put
\[
        u_0:=\J_s^{G^\circ}(\rho^\circ)
        \in \Uch\bigl(C_{\bG^{\circ *}}(s)^{F^*}\bigr)_{\cusp}.
\]
By Lemma~\ref{lem:disc-stabilizer-match}, the stabilizers of \(\rho^\circ\) and
\(u_0\) have the same image \(\Omega_{\rho^\circ}=\Omega_{u_0}\) in the pinned
component group.  Clifford theory for
\[
        G^\circ\lhd I_G(\rho^\circ)
\]
attaches to \(\rho^\circ\) a canonical class
\([\alpha_{\rho^\circ}]\in H^2(\Omega_{\rho^\circ},\C^\times)\), and, after
choosing any cocycle representative, parametrizes the irreducible characters of
\(G\) lying above \(\rho^\circ\) by
\[
        \Irr_{[\alpha_{\rho^\circ}]}(\Omega_{\rho^\circ}).
\]
Let \(E_\rho\) be the resulting Clifford label of \(\rho\).  We define
\[
        J^{\mathrm{enh},\cusp}_{G,s}(\rho)
        :=
        [u_0,[\alpha_{\rho^\circ}],E_\rho]
        \in \Uch^{\mathrm{enh}}_{G,s}(H)_{\cusp}.
\]

This is independent of the choice of \(\rho^\circ\).  Indeed, any other
irreducible constituent of \(\Res^G_{G^\circ}\rho\) is a \(G\)-conjugate of
\(\rho^\circ\).  Conjugation transports the inertia group, the quotient
\(\Omega_{\rho^\circ}\), the Clifford class, and the projective label, and the
quotient in Definition~\ref{def:enh-cusp-target} identifies the resulting data.

The inverse map is obtained by reversing this construction.  Given an enhanced
datum represented by \((u_0,[\alpha],E)\), take
\[
        \rho^\circ=(\J_s^{G^\circ})^{-1}(u_0).
\]
By the definition of the target, \([\alpha]=[\alpha_{\rho^\circ}]\), and Clifford
theory reconstructs a unique irreducible character
\[
        \rho_E\in\Irr(G\mid\rho^\circ)
\]
with Clifford label \(E\).  This construction is again invariant under replacing
the representative of the enhanced datum by a conjugate one.  Hence
\(J^{\mathrm{enh},\cusp}_{G,s}\) is a canonical bijection.
\end{proof}

\section{An enriched Jordan decomposition for disconnected $G$ with rationally pinned abelian component group}
\label{sec:disc-full-JD}

\noindent
Throughout this section we keep the standing notation of the preceding disconnected sections:
\(G\) is a possibly disconnected reductive \(\mathbb F_q\)-group with Frobenius \(F\),
\(G^\circ\lhd G\) is the identity component, we write \(G:=G^F\),
\(G^\circ:=(G^\circ)^F\), and we assume the component group
\[
        \Omega:=G/G^\circ
\]
is abelian.  Fix an \(F\)-stable pinning \(\mathbf P\) of \(G^\circ\), and assume
that the rational pinned-component condition of
Hypothesis~\ref{hyp:rational-pinned-component-condition} holds for \(G\), and for
each regular \(F\)-stable Levi subgroup to which the construction is applied.
We use the resulting semidirect-product model
\(G^*=G^{\circ *}\rtimes\Omega\).  For
\(s\in(G^*)^{\circ F^*}=G^{\circ *F^*}\) semisimple, put
\[
        H:=C_{G^*}(s)^{F^*}.
\]
We recall the disconnected Lusztig series in the identity-component case:
\[
\cE(G,s)
:=
\left\{
\chi\in\Irr(G):
\Res^G_{G^\circ}\chi\text{ has some constituent in }\cE(G^\circ,s)
\right\}.
\]

\medskip
We shall use the following standard consequence of the Mackey formula.

\begin{proposition}[Mackey criterion for disjoint Harish--Chandra series]
\label{prop:disjoint-HC-series-disconnected}
Let \(G\) be a possibly disconnected reductive \(\F_q\)-group.  Let
\((L,\tau)\) and \((M,\lambda)\) be cuspidal pairs of \(G\), with \(L,M\)
regular \(F\)-stable Levi subgroups, and let \(P,Q\) be \(F\)-stable
parabolic subgroups with Levi factors \(L,M\).  Assume that the Mackey
formula holds for the Lusztig functors attached to \((L,P)\) and
\((M,Q)\).  Then
\[
\left\langle
R^G_{L\subset P}(\tau),
R^G_{M\subset Q}(\lambda)
\right\rangle_{G^F}
=0
\]
unless \((L,\tau)\) and \((M,\lambda)\) are \(G^F\)-conjugate.  Consequently,
the sets
\[
\Irr(G,(L,\tau))
\]
are pairwise disjoint as \((L,\tau)\) ranges over \(G^F\)-conjugacy classes
of cuspidal pairs for which the relevant Mackey formula holds.
\end{proposition}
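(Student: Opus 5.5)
The plan is to reduce the vanishing statement to the Mackey formula together with cuspidality, exactly as in the connected case (cf.\ the partition result \cite[Corollary~3.1.17]{Book:GeckandMalle}). First, by adjunction,
\[
\left\langle R^G_{L\subset P}(\tau),R^G_{M\subset Q}(\lambda)\right\rangle_{G^F}
=\left\langle \tau,\ {}^*R^G_{L\subset P}\circ R^G_{M\subset Q}(\lambda)\right\rangle_{L^F}.
\]
We then expand the composite ${}^*R^G_{L\subset P}\circ R^G_{M\subset Q}$ using the assumed Mackey formula. This gives a sum, over representatives $x$ of the relevant double cosets $L^F\backslash \mathcal S(L,M)/M^F$, of terms
\[
R^{L}_{L\cap {}^xM\subset L\cap {}^xQ}\circ {}^*R^{{}^xM}_{L\cap{}^xM\subset P\cap {}^xM}\circ \ad(x).
\]
Here $L\cap{}^xM$ is a regular Levi subgroup of both $L$ and ${}^xM$, as in Bonnafé's disconnected setting \cite{Bonnafe1999Produit}.

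The second step is to exploit cuspidality. In the disconnected setting we interpret ``$\lambda$ cuspidal'' to mean that ${}^*R^M_{M'\subset Q'}(\lambda)=0$ for every proper regular Levi $M'$ of $M$. By Lemma~\ref{lem:DM94-restriction-to-identity-component}, this is equivalent to the cuspidality of the constituents of $\Res^{M^F}_{M^{\circ F}}\lambda$. Hence each Mackey term with $L\cap{}^xM\neq{}^xM$ vanishes.

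The surviving terms have ${}^xM\subseteq L$. Pairing such a term against $\tau$ and using adjunction again, cuspidality of $\tau$ forces ${}^xM=L$ for any nonzero contribution. For such an $x$ the term reduces to $\langle\tau,{}^x\lambda\rangle_{L^F}$, which is nonzero only if $\tau\simeq{}^x\lambda$. We then check that such an $x$ can be taken in $G^F$. This should follow because the Mackey representatives are rational, coming from $F$-stable double cosets, and because $M$ and ${}^xM$ are both $F$-stable. Granting this, $(L,\tau)$ and $(M,\lambda)$ are $G^F$-conjugate.

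The final step is the disjointness claim. If $\chi$ lay in both $\Irr(G,(L,\tau))$ and $\Irr(G,(M,\lambda))$, it would be a common constituent of two genuine characters. Their inner product would then be positive, which contradicts the vanishing just proved.

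I expect the main obstacle to be the precise form of the disconnected Mackey formula in the second step. In particular, we must show that the index set consists of elements $x\in G^F$ for which $L\cap{}^xM$ is a regular Levi subgroup. This should follow from the structure of quasi-Borels and the decomposition $W_{\bG}(\bT^\circ)=W_{\bG^\circ}(\bT^\circ)\rtimes(\bT/\bT^\circ)$. We must also confirm that the cuspidality notion used for disconnected Levis matches the vanishing of all proper Harish--Chandra restrictions. If a direct proof is awkward, the fallback is to restrict to $G^\circ$ via Lemma~\ref{lem:DM94-restriction-to-identity-component} and apply the connected result \cite[Corollary~3.1.17]{Book:GeckandMalle} to the constituents. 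One then lifts conjugacy of the connected cuspidal pairs back to $G^F$ by Clifford theory.
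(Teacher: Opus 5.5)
The paper gives no proof of this proposition; it only calls it a standard consequence of the Mackey formula, so there is no argument to match yours against. Your argument has a genuine gap, and it sits exactly where the disconnected case differs from the connected one.

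\textbf{The failing step.} Correctly formulated, cuspidality of \(\lambda\) means vanishing of Harish--Chandra restrictions to Levi subgroups whose \emph{identity component} is proper. With the paper's definition of parabolic subgroups, \(M^\circ\) is itself a proper regular Levi subgroup of \(M\) with trivial unipotent radical. So your literal definition would make no character of a disconnected \(M\) cuspidal. With the correct definition:
\begin{itemize}
\item cuspidality of \(\lambda\) kills only the Mackey terms with \((L\cap{}^xM)^\circ\neq({}^xM)^\circ\);
\item pairing with the cuspidal \(\tau\) forces only \(L^\circ={}^xM^\circ\).
\end{itemize}
Neither gives \(L\cap{}^xM={}^xM\) or \({}^xM=L\). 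For the surviving \(x\), the parabolics \(L\cap{}^xQ\) and \(P\cap{}^xM\) have trivial unipotent radical, so the two functors are ordinary induction and restriction. The term is then
\[
\bigl\langle \Res^{L}_{L\cap{}^xM}\tau,\ \Res^{{}^xM}_{L\cap{}^xM}({}^x\lambda)\bigr\rangle ,
\]
which can be nonzero even when \(L\neq{}^xM\) or \(\tau\not\simeq{}^x\lambda\). So both ``terms with \(L\cap{}^xM\neq{}^xM\) vanish'' and ``the term reduces to \(\langle\tau,{}^x\lambda\rangle\)'' are unjustified.

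\textbf{The statement itself fails.} This cannot be repaired, because the proposition is false as stated. Take the following data:
\begin{itemize}
\item \(\bG=\GL_3\rtimes\langle\sigma\rangle\), with \(\sigma\) the pinned graph automorphism and split Frobenius;
\item the quasi-Borel \(P=N_{\bG}(\bB)=\bB\rtimes\langle\sigma\rangle\) and its Levi \(L=N_P(\bT)=\bT\rtimes\langle\sigma\rangle\).
\end{itemize}
This example satisfies all the standing hypotheses of the disconnected sections. Since \(L^\circ\) is a torus, both \(1\) and the sign character \(\mathrm{sgn}\) of \(L/L^\circ\) are cuspidal. The pairs \((L,1)\) and \((L,\mathrm{sgn})\) are not \(G^F\)-conjugate, since conjugation fixes the trivial character.

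The double cosets \(P^F\backslash G^F/P^F\) correspond to the \(\sigma\)-orbits \(\{1\},\{w_0\},\{s_1,s_2\},\{s_1s_2,s_2s_1\}\) on \(W=\mathfrak S_3\). For the last two orbits \(P\cap{}^xP\subseteq G^\circ\), so the (valid) Mackey formula gives
\[
\bigl\langle \Ind_{P^F}^{G^F}1,\ \Ind_{P^F}^{G^F}\mathrm{sgn}\bigr\rangle=0+0+1+1=2 .
\]
Concretely, both induced characters contain both extensions of the unipotent character \(\rho_{(2,1)}\) of \(\GL_3(q)\). Both pairs lie in \(\Sigma_G(1)\), so the same overlap appears in the paper's later use of this proposition.

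\textbf{Why the fallback does not help, and what is true.} Restricting to \(G^\circ\) shows only that some connected cuspidal pairs \((L^\circ,\tau^\circ)\) and \((M^\circ,\lambda^\circ)\) are \(G^F\)-conjugate. Here both pairs have the same connected datum \((T,1_T)\), which does not determine the extension, so Clifford theory cannot lift the conjugacy. What your argument, or the fallback, does prove is the correct disconnected statement: the pairing vanishes unless constituents of \(\Res\tau\) and \(\Res\lambda\) give \(G^F\)-conjugate connected cuspidal pairs. Disjoint series must therefore be indexed by connected cuspidal data, not by \((L,\tau)\). When \(L=L^\circ\) and \(M=M^\circ\), your proof is the standard one and is correct.
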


\subsection{Disconnected Lusztig series are unions of Harish--Chandra series}
\begin{proposition}
\label{prop:disc-Lusztig-union-HC}
Let \(s\in (G^*)^{\circ F^*}\) be semisimple.
For a regular \(F\)-stable Levi subgroup \(L\leq G\), choose an \(F\)-stable
parabolic \(P\) with Levi decomposition \(P=L\ltimes U\).  For a cuspidal pair
\((L,\tau)\) set
\[
\Irr(G,(L,\tau))
:=
\{\text{irreducible constituents of }R^G_{L\subset P}(\tau)\}.
\]
Let \(\Sigma_G(s)\) be the set of \(G\)-conjugacy classes of cuspidal pairs
\((L,\tau)\) such that \(\tau\in\cE(L,s)\).  Then
\[
\cE(G,s)
=
\bigcup_{[(L,\tau)]\in\Sigma_G(s)}\Irr(G,(L,\tau)).
\]
Moreover, this union is disjoint once the indexing is taken modulo
\(G\)-conjugacy of cuspidal pairs.
\end{proposition}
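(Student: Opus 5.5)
The plan is to reduce everything to the identity component by means of Lemma~\ref{lem:DM94-restriction-to-identity-component}, and then import the connected statement Proposition~\ref{prop:Lusztig-series-union-HC} for \(G^\circ\).

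\textbf{Step 1: inclusion \(\supseteq\).} Let \([(L,\tau)]\in\Sigma_G(s)\) and let \(\chi\) be a constituent of \(R^G_{L\subset P}(\tau)\). By the first formula of Lemma~\ref{lem:DM94-restriction-to-identity-component}, \(\Res^G_{G^\circ}\chi\) is a constituent of a sum of \(G\)-conjugates of \(R^{G^\circ}_{L^\circ\subset P^\circ}(\tau^\circ)\), where \(\tau^\circ\) runs over constituents of \(\Res^L_{L^\circ}\tau\). Here \(\tau\in\cE(L,s)\) means that some \(\tau^\circ\in\cE(L^\circ,s)\). Since \(L\) acts transitively on these constituents, all of them lie in series attached to \(\sigma_\omega^*\)-twists of \(s\), by the argument of Proposition~\ref{prop:disc-series-partition} applied to \(L\). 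Two further facts are needed. First, Harish--Chandra induction in the connected group preserves Lusztig series: \(R^{G^\circ}_{L^\circ}\) maps \(\mathbb Z\cE(L^\circ,t)\) into \(\mathbb Z\cE(G^\circ,t)\). Second, \(G\)-conjugation moves \(\cE(G^\circ,t)\) to \(\cE(G^\circ,\sigma_\omega^*(t))\), by \eqref{eq:pinned-action-on-disc-series}. Together these show that every constituent of \(\Res^G_{G^\circ}\chi\) lies in a connected series whose parameter is \(G^{*F^*}\)-conjugate to \(s\). Hence \(\chi\in\cE(G,s)\), using the class-invariance established in Proposition~\ref{prop:disc-series-partition}.

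\textbf{Step 2: inclusion \(\subseteq\).} Let \(\chi\in\cE(G,s)\) and pick \(\chi^\circ\prec\Res^G_{G^\circ}\chi\) with \(\chi^\circ\in\cE(G^\circ,s)\). By Proposition~\ref{prop:Lusztig-series-union-HC}, \(\chi^\circ\) lies in \(\Irr(G^\circ,(L_0,\tau_0))\) for some cuspidal \(\tau_0\in\cE(L_0,s)\), with \(L_0\) a Levi subgroup of \(G^\circ\). Set \(\bL:=N_{\bP}(\bL_0)\) for an \(F\)-stable parabolic \(\bP\) of \(\bG\) containing \(\bL_0\), as in Definition~\ref{def:disconn-parabolic}, so that \(\bL^\circ=\bL_0\). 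Choose \(\tau\in\Irr(L\mid\tau_0)\); then \(\tau\in\cE(L,s)\) by definition. Cuspidality of \(\tau\) should follow from the second formula in Lemma~\ref{lem:DM94-restriction-to-identity-component}. Indeed, \(\Res^L_{L^\circ}{}^*R^L_{M}(\tau)={}^*R^{L^\circ}_{M^\circ}(\Res\tau)\), and the right-hand side vanishes because every constituent of \(\Res\tau\) is an \(L\)-conjugate of the cuspidal \(\tau_0\), hence cuspidal.

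Next, Frobenius reciprocity together with the second formula gives
\[
\bigl\langle {}^*R^G_{L\subset P}\chi,\Ind_{L^\circ}^{L}\tau_0\bigr\rangle_L
=\bigl\langle {}^*R^{G^\circ}_{L^\circ\subset P^\circ}\Res^G_{G^\circ}\chi,\tau_0\bigr\rangle_{L^\circ}\neq0 .
\]
Consequently \(\chi\) is a constituent of \(R^G_{L\subset P}(\tau')\) for some \(\tau'\in\Irr(L\mid\tau_0)\). Such a \(\tau'\) is cuspidal and lies in \(\cE(L,s)\), as shown above. This proves \(\chi\in\Irr(G,(L,\tau'))\).

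\textbf{Step 3: disjointness.} Disjointness modulo \(G\)-conjugacy should come from Proposition~\ref{prop:disjoint-HC-series-disconnected}. This is where I expect the main obstacle: that proposition requires the Mackey formula for the disconnected Lusztig functors. For Harish--Chandra induction, i.e. when \(P\) is \(F\)-stable, the formula holds by the usual double-coset (Mackey) computation. I would first reduce to the connected case: if \(\chi\) lies in two series \((L,\tau)\) and \((M,\lambda)\), restrict to \(G^\circ\) using Corollary~\ref{cor:HC-induction-restricts-contains-connected}. A constituent of \(\Res\chi\) then lies in connected series attached to constituents of \(\Res\tau\) and of \(\Res\lambda\). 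Connected disjointness makes \((L^\circ,\tau^\circ)\) and \((M^\circ,\lambda^\circ)\) \(G\)-conjugate, and after conjugating we may take \(L^\circ=M^\circ\). The remaining ambiguity is the choice of extension in \(\Irr(L\mid\tau^\circ)\). This choice is separated by the Mackey orthogonality applied inside \(N_G(L^\circ)\), which reduces to the ordinary orthogonality of distinct irreducible characters of \(L\) lying above \(\tau^\circ\).
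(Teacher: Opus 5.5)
Your Steps 1 and 2 are correct and are essentially the paper's argument: restrict to $G^\circ$ with Lemma~\ref{lem:DM94-restriction-to-identity-component}, invoke Proposition~\ref{prop:Lusztig-series-union-HC} for $G^\circ$, and use Frobenius reciprocity against $\Ind_{L^\circ}^{L}\tau_0$. In Step 1 the detour through all $\sigma_\omega^*$-twists is unnecessary. A single constituent of $\Res^G_{G^\circ}\chi$ lying in $R^{G^\circ}_{L^\circ\subset P^\circ}(\tau^\circ)$, with $\tau^\circ\in\cE(L^\circ,s)$, already gives $\chi\in\cE(G,s)$.

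The gap is the last sentence of Step 3. The first half of that step is fine. It shows that if $\chi\in\Irr(G,(L,\tau))\cap\Irr(G,(M,\lambda))$, then suitable connected pairs $(L^\circ,\tau^\circ)$ and $(M^\circ,\lambda^\circ)$ are $G$-conjugate. However, arranging $L^\circ=M^\circ$ does not give $L=M$, because $L=N_P(L^\circ)$ depends on $P^\circ$. More seriously, even when $L=M$, the disconnected Mackey formula has terms indexed by $x$ with $(L\cap{}^xL)^\circ=L^\circ$ but $L\cap{}^xL\subsetneq L$. For these $x$ the Harish--Chandra restrictions are ordinary restrictions. They contribute $\langle\Res^{L}_{L\cap{}^xL}\tau,\Res^{{}^xL}_{L\cap{}^xL}{}^x\lambda\rangle$, which does not distinguish two extensions of $\tau^\circ$. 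So nothing reduces to the orthogonality of distinct characters of $L$ above $\tau^\circ$.

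In fact the disjointness modulo $G$-conjugacy of pairs $(L,\tau)$ is false, so this step cannot be repaired. Take the following data.
\begin{itemize}
\item $\bG^\circ=\GL_3$ split, with $\sigma$ the pinned graph automorphism commuting with $F$, and $\bG=\bG^\circ\rtimes\langle\sigma\rangle$.
\item $s=1$, $\bP=\bB\rtimes\langle\sigma\rangle$, $\bL=\bT\rtimes\langle\sigma\rangle$.
\item $\varepsilon$ the sign character of $G/G^\circ$.
\end{itemize}
Every character of $L$ is cuspidal, since $L^\circ=T$ is a torus. The pairs $(L,1_L)$ and $(L,\varepsilon|_L)$ both lie in $\Sigma_G(1)$ and are not $G$-conjugate.

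Now $R^G_{L\subset P}(1_L)=\Ind_P^G 1$ restricts to $\Ind_B^{G^\circ}1=1+\mathrm{St}+2\rho$, where $\rho$ is the unipotent character of degree $q^2+q$. On the other hand $\langle\Ind_P^G1,\Ind_P^G1\rangle=|P\backslash G/P|=4$, the number of $\sigma$-orbits on $\mathfrak S_3$. Hence the two extensions $\rho^{+}$ and $\rho^{-}=\varepsilon\otimes\rho^{+}$ each occur once in $\Ind_P^G1$. Therefore both also occur in $R^G_{L\subset P}(\varepsilon|_L)=\varepsilon\otimes\Ind_P^G1$.

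In Mackey terms, the double cosets of $\dot s_1$ and $\dot s_1\dot s_2$ have $L\cap{}^xL=T$, because $\sigma$ moves $s_1$ and $s_1s_2$. Each contributes $1$, so $\langle R^G_{L\subset P}(1_L),R^G_{L\subset P}(\varepsilon|_L)\rangle=2$.

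What your argument does prove is disjointness for the coarser indexing. That is, the sets $\bigcup_{\tau\in\Irr(L\mid\tau^\circ)}\Irr(G,(L,\tau))$ are pairwise disjoint as $(L^\circ,\tau^\circ)$ runs over $G$-classes of connected cuspidal pairs. The paper's own proof of this clause, via Proposition~\ref{prop:disjoint-HC-series-disconnected} and the Digne--Michel Mackey formula, runs into exactly the same extra Mackey terms. So it does not supply the missing step either.
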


\begin{proof}
The proof is the same as in the ordinary disconnected setting, and uses only the
definition of the disconnected Lusztig series by restriction to \(G^\circ\).  If
\((L,\tau)\) is a cuspidal pair with \(\tau\in\cE(L,s)\), choose
\(\tau^\circ\prec\Res^L_{L^\circ}\tau\) with
\(\tau^\circ\in\cE(L^\circ,s)\).  For any irreducible constituent
\(\chi\) of \(R^G_{L\subset P}(\tau)\), compatibility of Harish--Chandra
restriction with restriction to identity components gives a constituent
\(\chi^\circ\prec\Res^G_{G^\circ}\chi\) which occurs in
\(R^{G^\circ}_{L^\circ\subset P^\circ}(\tau^\circ)\).  The connected
Harish--Chandra decomposition of Lusztig series gives
\(\chi^\circ\in\cE(G^\circ,s)\), hence \(\chi\in\cE(G,s)\).

Conversely, if \(\chi\in\cE(G,s)\), choose
\(\chi^\circ\prec\Res^G_{G^\circ}\chi\) with
\(\chi^\circ\in\cE(G^\circ,s)\).  In the connected group \(G^\circ\), the
character \(\chi^\circ\) belongs to a Harish--Chandra series attached to some
cuspidal pair \((L^\circ,\tau^\circ)\) with \(\tau^\circ\in\cE(L^\circ,s)\).
Extending \(L^\circ\) to the corresponding regular Levi subgroup \(L\) of
\(G\), adjointness and the compatibility of Harish--Chandra restriction with
restriction to identity components produce an irreducible constituent
\(\tau\) of \({}^*R^G_{L\subset P}(\chi)\) lying above \(\tau^\circ\).  The same
restriction argument shows that \(\tau\) is cuspidal; hence
\(\tau\in\cE(L,s)\), and \(\chi\in\Irr(G,(L,\tau))\).

For Lusztig functors attached to \(F\)-stable parabolics in the present
disconnected setting, the Mackey formula holds by
\cite[Theorem~3.2]{DigneMichel94}.  Proposition~\ref{prop:disjoint-HC-series-disconnected}
therefore gives disjointness modulo \(G\)-conjugacy of cuspidal pairs.
\end{proof}

\medskip
\subsection{Enriched Harish--Chandra labels}
\label{subsec:enriched-HC-labels-disconnected}

Let \(L\leq G\) be a regular \(F\)-stable Levi subgroup, and let
\[
        \tau\in\cE(L,s)_{\cusp}.
\]
Choose an irreducible constituent
\[
        \tau^\circ\prec\Res^L_{L^\circ}\tau .
\]
Put
\[
        H_L:=C_{L^*}(s)^{F^*},
        \qquad
        K_0:=C_{(L^*)^\circ}(s)^{F^*},
\]
and
\[
        u_{L,0}:=\J_{s}^{L^\circ}(\tau^\circ)
        \in \Uch(K_0)_{\cusp}.
\]
By Lemma~\ref{lem:disc-stabilizer-match}, applied to the disconnected group
\(L\), the source inertia quotient
\[
        \Omega_{\tau^\circ}:=I_L(\tau^\circ)/L^\circ
\]
is identified with the corresponding stabilizer quotient of \(u_{L,0}\) on the
centralizer side.  Clifford theory on the source extension
\[
        L^\circ\lhd I_L(\tau^\circ)
\]
gives a canonical class
\[
        [\alpha_{\tau^\circ}]\in H^2(\Omega_{\tau^\circ},\C^\times).
\]
Let
\[
        E_\tau\in\Irr_{[\alpha_{\tau^\circ}]}(\Omega_{\tau^\circ})
\]
be the Clifford label of \(\tau\) above \(\tau^\circ\).  We write
\begin{equation}\label{eq:enh-cuspidal-datum-tau}
        \mathfrak u^{\mathrm{enh}}_\tau
        :=
        [u_{L,0},[\alpha_{\tau^\circ}],E_\tau]
        \in \Uch^{\mathrm{enh}}_{L,s}(H_L)_{\cusp}.
\end{equation}

Let \(W_{\tau^\circ}\) be the connected relative Weyl group attached to the
cuspidal pair \((L^\circ,\tau^\circ)\) in \(G^\circ\).  The corner lemma
\ref{lem:corner-lemma}, applied with \(X=G\), \(M=L\), and
\(\theta^\circ=\tau^\circ\), shows that \(W_{\tau^\circ}\) acts on the set of
Clifford labels \(\Irr_{[\alpha_{\tau^\circ}]}(\Omega_{\tau^\circ})\).  Let
\[
        W_\tau:=\Stab_{W_{\tau^\circ}}(E_\tau).
\]
The same corner lemma supplies a canonically defined cohomology class
\[
        [\beta_\tau]\in H^2(W_\tau,\C^\times)
\]
which measures the possible projectivity of the action of \(W_\tau\) on the
Clifford multiplicity space.  After choosing a representative
\(\beta_\tau\in Z^2(W_\tau,\C^\times)\), the irreducible constituents of
Harish--Chandra induction from \((L,\tau)\) are parametrized by
\[
        \mathfrak R_\tau
        :=
        \Irr\bigl(\C_{\beta_\tau}[W_\tau]\bigr),
\]
with the usual coboundary indeterminacy in \(\beta_\tau\).  Thus each
\[
        \rho\in\Irr(G,(L,\tau))
\]
has a canonical relative Harish--Chandra label
\[
        \varphi_\rho\in\mathfrak R_\tau .
\]
When the corner cocycle \(\beta_\tau\) is trivial, this label set is simply
\(\Irr(W_\tau)\), which is the untwisted situation used in the ordinary
Howlett--Lehrer formulation.

\begin{definition}[Full enriched unipotent target]
\label{def:full-enriched-target}
Define
\[
\Uch^{\mathrm{enh}}_{G,s}(H)
:=
\bigsqcup_{[(L,\tau)]\in\Sigma_G(s)}
\left\{
        [H_L,\mathfrak u^{\mathrm{enh}}_\tau,\varphi]
        :
        \varphi\in\mathfrak R_\tau
\right\}.
\]
Here the brackets mean that replacing \((L,\tau,\tau^\circ)\) by a
\(G\)-conjugate replaces \(H_L\), the enriched cuspidal datum, and the relative
label by the conjugate data.
\end{definition}

\subsection{Jordan decomposition for disconnected $G$ with rationally pinned abelian component group: enriched bijection part}
\begin{theorem}[Enriched pinned disconnected Jordan decomposition]
\label{thm:disc-JD-bijection}
Let \(G\) be as above; in particular, assume that the finite component group is
abelian and that the rational pinned-component condition holds.  Let
\(s\in (G^*)^{\circ F^*}\) be semisimple and put \(H=C_{G^*}(s)^{F^*}\).  There
is a canonical bijection
\[
J^{\mathrm{enh}}_{G,s}:
\cE(G,s)
\xrightarrow{\ \sim\ }
\Uch^{\mathrm{enh}}_{G,s}(H),
\]
depending only on the fixed pinning.  On the cuspidal part it is the bijection
of Theorem~\ref{thm:disc-cusp-JD}.  More generally, if
\[
        \rho\in\Irr(G,(L,\tau))
\]
for a cuspidal pair \((L,\tau)\in\Sigma_G(s)\), then
\[
        J^{\mathrm{enh}}_{G,s}(\rho)
        =
        [H_L,\mathfrak u^{\mathrm{enh}}_\tau,\varphi_\rho],
\]
where \(\mathfrak u^{\mathrm{enh}}_\tau\) is defined by
\eqref{eq:enh-cuspidal-datum-tau} and \(\varphi_\rho\) is the relative
Harish--Chandra label of \(\rho\).
\end{theorem}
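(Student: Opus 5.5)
The map is defined series by series and glued. Proposition~\ref{prop:disc-Lusztig-union-HC} decomposes \(\cE(G,s)\) as a disjoint union of the sets \(\Irr(G,(L,\tau))\) over \([(L,\tau)]\in\Sigma_G(s)\). Definition~\ref{def:full-enriched-target} writes the target as a disjoint union over the same index set. So it suffices to do three things for each cuspidal pair:
\begin{enumerate}
\item construct a bijection \(\Irr(G,(L,\tau))\xrightarrow{\sim}\{[H_L,\mathfrak u^{\mathrm{enh}}_\tau,\varphi]:\varphi\in\mathfrak R_\tau\}\);
\item check that it is independent of the auxiliary choices \(\tau^\circ\), \(P\) and the cocycle representatives;
\item check that it is compatible with replacing \((L,\tau)\) by a \(G\)-conjugate.
\end{enumerate}
On a fixed series the map is \(\rho\mapsto[H_L,\mathfrak u^{\mathrm{enh}}_\tau,\varphi_\rho]\). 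The component \(\mathfrak u^{\mathrm{enh}}_\tau\) is given by \eqref{eq:enh-cuspidal-datum-tau}, and the relative label is \(\varphi_\rho\in\mathfrak R_\tau\).

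For injectivity and surjectivity on a fixed series, apply Lemma~\ref{lem:corner-lemma} with \(X=G\), \(M=L\) and \(\theta^\circ=\tau^\circ\). Here \(\cA_{\tau^\circ}=\C_{\alpha_{\tau^\circ}}[\Omega_{\tau^\circ}]\), and Lemma~\ref{lem:semisimple-crossed-product} supplies the transitivity hypothesis \(N_G\cong\Ind_{G^\circ}^G R^{G^\circ}_{L^\circ\subset P^\circ}(\tau^\circ)\) via Lemma~\ref{lem:DM94-restriction-to-identity-component}. Since \(\tau=\sigma_{E_\tau}\), the summand \(R^G_{L\subset P}(\tau)\boxtimes E_\tau^\vee\) of \(N_G\) is cut out by \(e_{E_\tau}\). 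Its endomorphism algebra is therefore Morita equivalent to the corner \(\End_\C(E_\tau^\vee)\otimes\C_{\beta_\tau}[W_\tau]\).

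Because \(\C_{\beta_\tau}[W_\tau]\) is semisimple, its simple modules are in bijection with the constituents of \(R^G_{L\subset P}(\tau)\). This bijection is \(\rho\mapsto\varphi_\rho\), and it is a bijection onto \(\mathfrak R_\tau\). The cuspidal case \(L=G\) has \(W_\tau=1\) and recovers Theorem~\ref{thm:disc-cusp-JD}. Note that the stabilizer matching of Lemma~\ref{lem:disc-stabilizer-match}, applied to \(L\), is what makes \(\mathfrak u^{\mathrm{enh}}_\tau\) a legitimate element of \(\Uch^{\mathrm{enh}}_{L,s}(H_L)_{\cusp}\).

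Well-definedness has three parts, taken in this order.
\begin{enumerate}
\item \emph{Choice of \(\tau^\circ\).} Changing \(\tau^\circ\) to an \(L\)-conjugate \({}^{x}\tau^\circ\) conjugates \(I_L(\tau^\circ)\), \(\Omega_{\tau^\circ}\), \([\alpha_{\tau^\circ}]\), \(E_\tau\), \(W_{\tau^\circ}\) and \(\beta_\tau\). The relabelling of simple modules of the corner is then transported by conjugation by \(x\), and the bracket in Definition~\ref{def:full-enriched-target} identifies the results.
\item \emph{Conjugating the pair.} Replacing \((L,\tau)\) by \({}^g(L,\tau)\) with \(g\in G\) is handled the same way, with the ambient bracket absorbing the conjugation.
\item \emph{Choice of parabolic and cocycles.} Independence of \(P\) follows from the Mackey formula \cite[Theorem~3.2]{DigneMichel94}, which identifies \(R^G_{L\subset P}\) and \(R^G_{L\subset P'}\) up to the canonical Howlett--Lehrer intertwiners. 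Changing the cocycle representatives of \(\alpha\) or \(\beta\) changes the labels only by the coboundary identification built into \(\Irr_{[\alpha]}\) and \(\mathfrak R_\tau\).
\end{enumerate}
Canonicity relative to the pinning then reduces to canonicity of \(\J^{L^\circ}_s\), which is Theorem~\ref{thm:JD-connected-groups} applied to \(L^\circ\) with its induced pinning. Hypothesis~\ref{hyp:rational-pinned-component-condition} for \(L\) ensures the pinned component action is the actual one.

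I expect the main obstacle to be the naturality of the relative label \(\varphi_\rho\). The corner cocycle \(\beta_\tau\) is defined only after choosing the inner-automorphism units \(u_w\), so the bijection onto \(\Irr(\C_{\beta_\tau}[W_\tau])\) is canonical only up to twisting by characters of \(W_\tau\) when \([\beta_\tau]\) is trivial. I would try to rigidify the \(u_w\) using the normalized Howlett--Lehrer operators \(B_w\) together with the canonical module \(E_\tau^\vee\) on which \(W_\tau\) acts projectively. Concretely, I would fix \(u_w\) as the unique operator of determinant one, up to a root of unity, implementing \([w]\), and then check that conjugation by \(G\) preserves this normalization. If that fails, the fallback is to read \(\mathfrak R_\tau\) as the set of simple modules of the corner algebra itself, which is intrinsically defined, so that canonicity holds tautologically.
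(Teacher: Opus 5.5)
Your proposal follows the paper's route, and the paper's proof is a terser version of the same argument:
\begin{itemize}
\item split \(\cE(G,s)\) by Proposition~\ref{prop:disc-Lusztig-union-HC};
\item label each series via Lemma~\ref{lem:corner-lemma} with \(X=G\), \(M=L\), \(\theta^\circ=\tau^\circ\);
\item recover Theorem~\ref{thm:disc-cusp-JD} at \(L=G\);
\item transport under conjugation.
\end{itemize}

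\textbf{The counting step has a genuine gap, and the paper's proof shares it.} The step carrying all the content is \(|\Irr(G,(L,\tau))|=|\Irr(\C_{\beta_\tau}[W_\tau])|\). Lemma~\ref{lem:corner-lemma} rests on Lemma~\ref{lem:semisimple-crossed-product}, which has two hypotheses.
\begin{itemize}
\item You address only transitivity. That follows from transitivity of Lusztig functors, \(R^G_{L\subset P}\circ\Ind_{L^\circ}^{L}=\Ind_{G^\circ}^{G}\circ R^{G^\circ}_{L^\circ\subset P^\circ}\), not from Lemma~\ref{lem:DM94-restriction-to-identity-component}.
\item The other hypothesis is that \(\Omega_{\tau^\circ}=I_L(\tau^\circ)/L^\circ\) is the \emph{whole} stabilizer in \(G/G^\circ\) of \(R^{G^\circ}_{L^\circ\subset P^\circ}(\tau^\circ)\). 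This fails whenever some \(l\in L\) moves \(\tau^\circ\) to a different \(N_{G^\circ}(A_{L})\)-conjugate.
\end{itemize}

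Here is a counterexample. Take \(q\) odd and \(G=\GL_2(\F_q)\rtimes\langle\gamma\rangle\), where \(\gamma\) is the pinned graph automorphism acting on the diagonal torus by \(\operatorname{diag}(a,b)\mapsto\operatorname{diag}(b^{-1},a^{-1})\). This satisfies Hypothesis~\ref{hyp:rational-pinned-component-condition}. Let \(\epsilon\) be the quadratic character of \(\F_q^\times\), and put \(s=\operatorname{diag}(1,-1)\), \(\tau^\circ=1\boxtimes\epsilon\), \(L=T\rtimes\langle\gamma\rangle\), \(\tau=\Ind_{T^F}^{L}\tau^\circ\).
\begin{itemize}
\item Then \({}^\gamma\tau^\circ=\epsilon\boxtimes 1={}^w\tau^\circ\neq\tau^\circ\). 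Hence \(\Omega_{\tau^\circ}=1\), \(W_{\tau^\circ}=1\), and \(\mathfrak R_\tau\) is a single point.
\item On the other hand, \(R^G_{L\subset P}(\tau)=\Ind_{G^\circ}^{G}\pi\), where \(\pi=R_T^{\GL_2}(1\boxtimes\epsilon)\) is irreducible and \(\gamma\)-stable. So it has two constituents.
\item Thus \(|\cE(G,s)|=2\), while \(\Uch^{\mathrm{enh}}_{G,s}(H)\) has one element (and \(\Uch(H)\) has two).
\end{itemize}
Using \(L=T\) instead changes nothing. Citing Theorem~\ref{thm:disc-HL-abelian} does not help either: it omits the hypothesis but is proved by the same corner lemma.

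The missing idea is that the relative group must be \(\Stab_{N_G(L)/L}(\tau)\). Here that group is \(\{1,w\}\), because \({}^w\tau^\circ\) is \(L\)-conjugate to \(\tau^\circ\). In general it can be strictly larger than \(\Stab_{W_{\tau^\circ}}(E_\tau)\), and \(\mathfrak R_\tau\) and the target have to be rebuilt from it.

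\textbf{Canonicity of \(\varphi_\rho\).} Your worry here is legitimate; the paper only asserts canonicity. Once a representative \(\beta_\tau\) is fixed, the \(T_w\) are determined only up to \(\Hom(W_\tau,\C^\times)\). The label \(E_\tau\) likewise depends on the chosen projective extension. Your proposed rigidification does not remove this:
\begin{itemize}
\item Normalizing \(\det u_w=1\) fixes \(u_w\) only up to \(d\)-th roots of unity, where \(d=\dim E_\tau\). This leaves an ambiguity by characters of \(W_\tau\) of order dividing \(d\), typically nontrivial for even \(d\).
\item It is also only as canonical as the \(B_w\). These come from a Tits-deformation identification \(\End_{G^\circ}(R^{G^\circ}_{L^\circ\subset P^\circ}(\tau^\circ))^{\op}\cong\C[W_{\tau^\circ}]\), which is not a canonical algebra map.
\end{itemize}
Your fallback labels by simple modules of \(e_{E_\tau}\End_G(N_G)e_{E_\tau}\cong\End_G(R^G_{L\subset P}(\tau))\otimes\End_\C(E_\tau^\vee)\). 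That is correct, and it even survives the example above. But it turns the theorem into a restatement of Harish--Chandra theory and gives up the identification with \(\Irr(\C_{\beta_\tau}[W_\tau])\).

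Finally, the Mackey formula gives independence of \(P\) only at the level of characters, not via canonical intertwiners.
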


\begin{proof}
By Proposition~\ref{prop:disc-Lusztig-union-HC},
\[
        \cE(G,s)
        =
        \bigsqcup_{[(L,\tau)]\in\Sigma_G(s)}
        \Irr(G,(L,\tau)).
\]
For each summand, the source-side disconnected Howlett--Lehrer analysis recalled
above gives a canonical parametrization
\[
        \Irr(G,(L,\tau))
        \xrightarrow{\ \sim\ }
        \mathfrak R_\tau,
        \qquad
        \rho\longmapsto\varphi_\rho .
\]
Combining this label with the enriched cuspidal datum
\(\mathfrak u^{\mathrm{enh}}_\tau\) gives a map
\[
        \rho
        \longmapsto
        [H_L,\mathfrak u^{\mathrm{enh}}_\tau,\varphi_\rho]
\]
from the Harish--Chandra series \(\Irr(G,(L,\tau))\) onto the corresponding
summand of \(\Uch^{\mathrm{enh}}_{G,s}(H)\).  These maps are bijective on each
summand, and the summands are disjoint on both sides.  Taking their disjoint
union gives the asserted bijection.

If \(\rho\) is cuspidal, then the relevant cuspidal pair is \((G,\rho)\), the
relative Weyl group is trivial, and the construction reduces exactly to the
cuspidal enriched bijection of Theorem~\ref{thm:disc-cusp-JD}.  Conjugating the
choice of \(\tau^\circ\) conjugates the inertia quotient, Clifford class,
Clifford label, and relative Harish--Chandra label compatibly, so the resulting
element of the quotient target is independent of all auxiliary choices.
\end{proof}

\begin{remark}[Recovery of an ordinary disconnected target]
\label{rmk:ordinary-disc-JD-recovery}
The theorem above does not assert a bijection
\[
        \cE(G,s)\longrightarrow \Uch(C_{G^*}(s)^{F^*})
\]
under Hypothesis~\ref{hyp:rational-pinned-component-condition} alone.  Such an
ordinary target is recovered only after adding the extra assertion that, for
every relevant cuspidal constituent \(\rho^\circ\), the transported source
Clifford class \([\alpha_{\rho^\circ}]\) agrees with the ordinary Clifford class
of the unipotent centralizer side, and that the corresponding relative
Howlett--Lehrer corner labels are identified.  Under that additional condition,
the enriched datum \([u_0,[\alpha_{\rho^\circ}],E]\) is realized by the usual
irreducible unipotent character of \(C_{G^*}(s)^{F^*}\) with the same Clifford
label, and \(J^{\mathrm{enh}}_{G,s}\) descends to the ordinary disconnected
pinned Jordan decomposition.
\end{remark}

\subsection{Conditional ordinary packet sums}
\label{subsec:conditional-ordinary-packet-sums}

The preceding construction deliberately keeps the source Clifford class as part
of the target.  Therefore the regular packet-sum statements that would be phrased
in terms of ordinary irreducible unipotent characters of
\(C_{G^*}(s)^{F^*}\) require the additional ordinary-recovery hypothesis of
Remark~\ref{rmk:ordinary-disc-JD-recovery}.  Under that extra hypothesis, the
standard Clifford regular-sum argument applies: summing over all projective
labels in a fixed Clifford packet gives the induced character from the identity
component, and hence the sum depends only on the underlying orbit-valued Jordan
decomposition.  Without that extra hypothesis, the correct invariant object is
the enriched packet
\[
        \{[u_0,[\alpha_{\rho^\circ}],E]:
        E\in\Irr_{[\alpha_{\rho^\circ}]}(\Omega_{\rho^\circ})\},
\]
not an ordinary packet inside \(\Uch(C_{G^*}(s)^{F^*})\).

\subsection{Stable finite regular sums}
\label{subsec:stable-finite-regular-sums}

The enriched disconnected Jordan decomposition records more information than the
orbit-valued Jordan decomposition: it remembers the source Clifford class and
the projective Clifford label.  Stability, however, is insensitive to the
individual projective label.  The stable finite objects used in the local
application are obtained by summing regularly over the Clifford packet.  We
record this compatibility in a form which does not require an ordinary
unipotent-character target for the disconnected centralizer.

\begin{lemma}[Independence of the Clifford-regular sum]
\label{lem:clifford-regular-sum-independent}
Let \(N\lhd M\) be finite groups, and let \(\theta\in\Irr(N)\).  Put
\[
        I_M(\theta):=\{m\in M\mid {}^m\theta\simeq \theta\},
        \qquad
        \Omega_\theta:=I_M(\theta)/N .
\]
Choose a projective extension \(\widetilde\theta\) of \(\theta\) to
\(I_M(\theta)\), and let
\[
        \alpha_\theta\in Z^2(\Omega_\theta,\C^\times)
\]
be the Clifford-label cocycle in the convention of
\S\ref{subsec:clifford-labels}.  For
\[
        E\in\Irr(\C_{\alpha_\theta}[\Omega_\theta]),
\]
let
\[
        \chi_{\theta,E}
        :=
        \Ind_{I_M(\theta)}^M
        \bigl(\widetilde\theta\otimes
        \infl_{\Omega_\theta}^{I_M(\theta)}E\bigr)
        \in \Irr(M\mid \theta).
\]
Then
\[
        \sum_{E\in\Irr(\C_{\alpha_\theta}[\Omega_\theta])}
        \dim(E)\,\chi_{\theta,E}
        =
        \Ind_N^M\theta
\]
as characters of \(M\).  In particular, this regular sum is independent of the
chosen projective extension, of the cocycle representative, and of the Clifford
parametrization.  It depends only on the \(M\)-orbit of \(\theta\).
\end{lemma}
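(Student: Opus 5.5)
The plan is to compute both sides after first passing to the inertia group, and then to identify the regular sum there with an induced character via the regular representation of the twisted group algebra. Put \(I:=I_M(\theta)\). Induction is additive and transitive, so
\[
\sum_E \dim(E)\,\chi_{\theta,E}
=\Ind_I^M\Bigl(\sum_E \dim(E)\,\bigl(\widetilde\theta\otimes\infl_{\Omega_\theta}^{I}E\bigr)\Bigr).
\]
It therefore suffices to prove the identity \(\sum_E\dim(E)\,(\widetilde\theta\otimes\infl E)=\Ind_N^{I}\theta\) on \(I\). Applying \(\Ind_I^M\) and using \(\Ind_I^M\Ind_N^I=\Ind_N^M\) then gives the claim.

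The core step takes place on \(I\). The twisted group algebra \(\C_{\alpha_\theta}[\Omega_\theta]\) is semisimple, since it is a twisted group algebra over \(\C\) of a finite group. As a left module over itself it therefore decomposes as \(\bigoplus_E E^{\oplus\dim E}\). On the other hand, \(\widetilde\theta\otimes \C_{\alpha_\theta}[\Omega_\theta]\), with \(I\) acting on the second factor through left multiplication by the basis elements \(\bar i\), is an honest representation of \(I\), because the multipliers \(c_\theta\) and \(\alpha_\theta=c_\theta^{-1}\) cancel. I will identify this representation with \(\Ind_N^I\theta\) by a character computation. The function \(g\mapsto \widetilde\theta(g)\,\mathrm{tr}(\text{left mult. by }\bar g)\) vanishes off \(N\). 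This holds because, in the normalized twisted regular representation, left multiplication by \(\bar g\) permutes basis lines without fixed points whenever \(\bar g\neq 1\). On \(N\) the function equals \(|\Omega_\theta|\,\theta\), which is precisely the character of \(\Ind_N^I\theta\), given that \(\theta\) is \(I\)-stable. One subtlety has to be handled here: the trace must be computed for a normalized cocycle with \(\tilde\theta|_N=\theta\). If the chosen projective extension does not satisfy this, I will first rescale it, which changes \(c_\theta\) only by a coboundary.

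Independence then follows at once, because the right-hand side \(\Ind_N^M\theta\) involves none of the auxiliary choices. Moreover \(\Ind_N^M({}^m\theta)=\Ind_N^M\theta\), so the sum depends only on the \(M\)-orbit of \(\theta\). The main obstacle is this bookkeeping of normalizations: ensuring that \(\widetilde\theta\) restricts to \(\theta\) on \(N\) and that the regular twisted representation has trace \(0\) at \(\bar g\ne1\). I will settle it by working throughout with a section \(\dot 1=1\) and a normalized cocycle.
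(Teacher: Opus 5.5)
Your proposal is correct and is essentially the paper's own argument: decompose the regular module of \(\C_{\alpha_\theta}[\Omega_\theta]\) as \(\sum_E \dim(E)\,E\), observe that \(\widetilde\theta\otimes\infl_{\Omega_\theta}^{I_M(\theta)}(\text{regular module})\) has character vanishing off \(N\) and equal to \(|\Omega_\theta|\,\theta\) on \(N\), identify this with \(\Ind_N^{I_M(\theta)}\theta\) using \(I_M(\theta)\)-stability, and induce to \(M\) by transitivity. Your extra normalization step is harmless but not needed: once \(\widetilde\theta|_N=\theta\), one gets \(c_\theta(1,1)=1\), so the cocycle is automatically normalized, which the paper uses tacitly.
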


\begin{proof}
Let \(\Reg_{\alpha_\theta}\) be the left regular module of the twisted group
algebra \(\C_{\alpha_\theta}[\Omega_\theta]\).  Since this algebra is semisimple,
its regular character decomposes as
\[
        \Reg_{\alpha_\theta}
        =
        \sum_{E\in\Irr(\C_{\alpha_\theta}[\Omega_\theta])}
        \dim(E)\,E .
\]
Therefore
\[
        \sum_E \dim(E)
        \bigl(\widetilde\theta\otimes
        \infl_{\Omega_\theta}^{I_M(\theta)}E\bigr)
\]
is the character of
\[
        \widetilde\theta\otimes
        \infl_{\Omega_\theta}^{I_M(\theta)}\Reg_{\alpha_\theta}.
\]
If \(i\in I_M(\theta)\) maps to a nontrivial element of \(\Omega_\theta\), then
left multiplication by its image on the regular module has trace zero.  If
\(i=n\in N\), then the image of \(i\) in \(\Omega_\theta\) is trivial and the
trace on the regular module is \(|\Omega_\theta|\).  Since
\(\widetilde\theta|_N=\theta\), the above character has value zero on
\(I_M(\theta)\setminus N\) and value
\[
        |\Omega_\theta|\,\theta(n)
\]
on \(n\in N\).  These are precisely the values of
\(\Ind_N^{I_M(\theta)}\theta\), because \(\theta\) is
\(I_M(\theta)\)-stable.  Hence
\[
        \sum_E \dim(E)
        \bigl(\widetilde\theta\otimes
        \infl_{\Omega_\theta}^{I_M(\theta)}E\bigr)
        =
        \Ind_N^{I_M(\theta)}\theta .
\]
Inducing from \(I_M(\theta)\) to \(M\) and using transitivity of induction gives
\[
        \sum_E \dim(E)\,\chi_{\theta,E}
        =
        \Ind_N^M\theta .
\]
The right-hand side is intrinsic.  Replacing \(\theta\) by an \(M\)-conjugate
also leaves \(\Ind_N^M\theta\) unchanged.  This proves the asserted
independence.
\end{proof}

\begin{theorem}[Stable finite regular sums and enriched pinned Jordan decomposition]
\label{thm:pinned-JD-stable-sum-compatible}
Keep the notation of Theorem~\ref{thm:disc-JD-bijection}.  Thus \(G\) is a
possibly disconnected finite reductive group satisfying the rational
pinned-component hypothesis, \(G^\circ\lhd G\), and
\[
        s\in (G^*)^{\circ F^*}.
\]
Put
\[
        H=C_{G^*}(s)^{F^*},
        \qquad
        H_0=C_{(G^*)^\circ}(s)^{F^*}.
\]
Let
\[
        J^{\mathrm{enh}}_{G,s}:
        \cE(G,s)
        \xrightarrow{\ \sim\ }
        \Uch^{\mathrm{enh}}_{G,s}(H)
\]
be the enriched pinned Jordan decomposition, and let
\(J^{\mathrm{enh},\cusp}_{G,s}\) denote its cuspidal restriction from
Theorem~\ref{thm:disc-cusp-JD}.  Let
\[
        O\subset \Uch(H_0)_{\cusp}
\]
be an \(H/H_0\)-orbit representing a stable connected finite unipotent datum,
and choose \(u_0\in O\).  Let
\[
        \rho^\circ=(\J_s^{G^\circ})^{-1}(u_0)
        \in \cE(G^\circ,s)_{\cusp}.
\]
For
\[
        E\in\Irr_{[\alpha_{\rho^\circ}]}(\Omega_{\rho^\circ}),
\]
let \(\rho_E\in\cE(G,s)_{\cusp}\) be the unique character satisfying
\[
        J^{\mathrm{enh},\cusp}_{G,s}(\rho_E)
        =
        [u_0,[\alpha_{\rho^\circ}],E].
\]
Then the Clifford-regularized finite sum
\[
        \Theta^{\mathrm{enh}}_{s,O}
        :=
        \sum_{E\in\Irr_{[\alpha_{\rho^\circ}]}(\Omega_{\rho^\circ})}
        \dim(E)\,\rho_E
\]
satisfies
\[
        \Theta^{\mathrm{enh}}_{s,O}
        =
        \Ind_{G^\circ}^{G}\rho^\circ .
\]
Consequently \(\Theta^{\mathrm{enh}}_{s,O}\) is independent of the representative
\(u_0\in O\), of the projective extension used to define the Clifford class, of
the cocycle representative, and of the pinned refinement.  It depends only on
the orbit-valued connected Jordan datum \(O\).

More generally, if a connected finite stable packet coefficient is a finite
linear combination of such orbit data, then its enriched Clifford-regularization
is obtained by applying the preceding construction orbit by orbit.  Hence the
enriched regularized coefficient has the same finite stable-transport invariance
as the connected finite stable coefficient.
\end{theorem}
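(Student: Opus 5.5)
The plan is to reduce the theorem to Lemma~\ref{lem:clifford-regular-sum-independent}, applied to the normal inclusion \(N=G^\circ\lhd M=G\) and \(\theta=\rho^\circ\).

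First, I will identify the characters \(\rho_E\) with Clifford-constructed characters. By Theorem~\ref{thm:disc-cusp-JD}, \(J^{\mathrm{enh},\cusp}_{G,s}\) sends a cuspidal \(\rho\) lying above \(\rho^\circ\) to \([\J_s^{G^\circ}(\rho^\circ),[\alpha_{\rho^\circ}],E_\rho]\), where \(E_\rho\) is its Clifford label. Its inverse, as described there, sends \([u_0,[\alpha_{\rho^\circ}],E]\) to the unique \(\rho_E\in\Irr(G\mid\rho^\circ)\) with Clifford label \(E\). Fix a projective extension \(\widetilde\rho^\circ\) to \(I_G(\rho^\circ)\) and a cocycle representative \(\alpha_{\rho^\circ}\). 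By the convention of \S\ref{subsec:clifford-labels}, \(\rho_E=\chi_{\rho^\circ,E}\) in the notation of Lemma~\ref{lem:clifford-regular-sum-independent}. I also need every \(\chi_{\rho^\circ,E}\) to be cuspidal and to lie in \(\cE(G,s)\). Membership in \(\cE(G,s)\) is immediate from Definition~\ref{def:disc-Lusztig-series}. For cuspidality, the second formula of Lemma~\ref{lem:DM94-restriction-to-identity-component} shows that \({}^*R\) commutes with restriction to identity components. Since \(\Res^G_{G^\circ}\chi_{\rho^\circ,E}\) is a multiple of the orbit sum of the cuspidal \(\rho^\circ\), every proper Harish--Chandra restriction vanishes. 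So the sum in the statement is exactly the Clifford-regular sum over all of \(\Irr(G\mid\rho^\circ)\).

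Second, I will apply Lemma~\ref{lem:clifford-regular-sum-independent} directly. It gives \(\Theta^{\mathrm{enh}}_{s,O}=\Ind_{G^\circ}^G\rho^\circ\), and shows the sum is independent of the projective extension, the cocycle representative and the Clifford parametrization. The pinned refinement enters only through these labels, so independence from it follows too.

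Third, I will show independence of \(u_0\in O\). Replace \(u_0\) by \({}^hu_0\) with \(h\in H\), with image \(\omega\in\Omega_{[s]}\). I will use the equivariance \eqref{eq:JD-equivariance-with-xomega} from the proof of Lemma~\ref{lem:disc-stabilizer-match}, together with Hypothesis~\ref{hyp:rational-pinned-component-condition}. This gives a pinning-preserving lift \(\dot\omega\in G\) with \(\J_s^{G^\circ}({}^{\dot\omega}\rho^\circ)={}^{h'}u_0\), where \(h'=h_\omega\). Since \(h\) and \(h'\) differ by an element of \(H_0\), and \(H_0\) acts on \(\Uch(H_0)\) trivially up to isomorphism, this equals \({}^hu_0\). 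Hence the new \(\rho^\circ\) is the \(G\)-conjugate \({}^{\dot\omega}\rho^\circ\), and \(\Ind_{G^\circ}^G\) is invariant under \(G\)-conjugation. The closing assertion then follows by linearity, applying this orbit by orbit.

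The main obstacle is this third step. Equation \eqref{eq:JD-equivariance-with-xomega} was stated for \(\omega\) in the stabilizer setting, and here I need it for an arbitrary \(\omega\in\Omega_{[s]}\). I will re-derive it from Proposition~\ref{prop:weyl-equivariance}-style transport of the pinned cuspidal Jordan decomposition under pinned automorphisms, composed with inner conjugation by \(x_\omega\). That transport argument did not use \(\omega\) stabilizing \(\rho^\circ\).
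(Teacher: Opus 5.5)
Your proposal is correct and follows essentially the same route as the paper: identify the \(\rho_E\) with the Clifford characters above \(\rho^\circ\), apply Lemma~\ref{lem:clifford-regular-sum-independent} with \(N=G^\circ\), \(M=G\), \(\theta=\rho^\circ\), and use the equivariance from Lemma~\ref{lem:disc-stabilizer-match} to see that changing \(u_0\) within \(O\) replaces \(\rho^\circ\) by a \(G\)-conjugate, which leaves \(\Ind_{G^\circ}^G\rho^\circ\) unchanged. The paper leaves implicit two points you check explicitly: that every character in \(\Irr(G\mid\rho^\circ)\) is cuspidal (via Lemma~\ref{lem:DM94-restriction-to-identity-component}), and that the equivariance is needed for all \(\omega\in\Omega_{[s]}\), not only for the stabilizer. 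For the closing transport-invariance assertion, linearity alone is not quite enough; you also need, as the paper notes, that \(\Ind_{G^\circ}^{G}\) commutes with the conjugacy transports in question.
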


\begin{proof}
By the construction of the cuspidal enriched bijection in
Theorem~\ref{thm:disc-cusp-JD}, the characters of \(G\) lying above
\(\rho^\circ\) are precisely the characters \(\rho_E\) obtained by Clifford
theory for the normal inclusion
\[
        G^\circ\lhd G
\]
with projective label
\[
        E\in\Irr_{[\alpha_{\rho^\circ}]}(\Omega_{\rho^\circ}).
\]
Applying Lemma~\ref{lem:clifford-regular-sum-independent} with
\[
        N=G^\circ,
        \qquad
        M=G,
        \qquad
        \theta=\rho^\circ
\]
gives
\[
        \sum_E \dim(E)\,\rho_E
        =
        \Ind_{G^\circ}^{G}\rho^\circ .
\]
This proves the displayed identity and shows at once that the sum is independent
of all choices entering the Clifford parametrization.

If \(u_1={}^h u_0\) is another representative of the \(H/H_0\)-orbit \(O\), then
Lemma~\ref{lem:disc-stabilizer-match} and the equivariance of the connected
Jordan decomposition identify
\[
        (\J_s^{G^\circ})^{-1}(u_1)
\]
with a \(G\)-conjugate of \(\rho^\circ\).  The induced character
\(\Ind_{G^\circ}^{G}\rho^\circ\) is unchanged by replacing \(\rho^\circ\) by a
\(G\)-conjugate.  Therefore \(\Theta^{\mathrm{enh}}_{s,O}\) depends only on the
orbit \(O\), not on its representative.  Since the right-hand side only sees the
orbit-valued connected Jordan datum, the regularized sum is also independent of
the additional pinned refinement which selects the individual enriched labels.

Finally, Lusztig's finite character-sheaf theory identifies the connected
orbit-valued packet coefficients attached to stable finite unipotent data with
stable almost characters; see \cite[Ch.~13]{Lus84}, \cite{LusztigCS5}, and, for
the disconnected-centre form of the Jordan decomposition, \cite{Lus88,DM90}.
The preceding paragraphs show that the enriched disconnected coefficient is the
Clifford regularization of this connected stable coefficient.  Clifford
regularization is functorial for conjugacy transport, because it is expressed as
\(\Ind_{G^\circ}^{G}\rho^\circ\) and induction across the normal inclusion
\(G^\circ\lhd G\) commutes with such transport.  Therefore any finite stable
transport preserving the connected coefficient also preserves the enriched
regularized coefficient.  This is the finite stability property used in the
local depth-zero application.
\end{proof}

\section{Finite-field preservation of rank-one Hecke parameters}
\label{sec:finite-rank-one-parameter-comparison}

In this section, we reprove \cite[Proposition 3.2.3]{ohara2025} by reducing the comparison of $q$-parameters to a comparison of Schur elements arising from the endomorphism algebras of Harish--Chandra induced representations.

Let $\bG$ be a connected reductive group over a finite field $\fq$, and let $\bL \subseteq \bG$ be an $F$-stable Levi subgroup of an $F$-stable parabolic subgroup of $\bG$.
Let $\tau$ be an irreducible cuspidal representation of $L$, and assume $\tau \in \cE(L, s)$ for a semisimple element $s \in L^*$, where $\bL^*$ is the dual group of $\bL$ and $\bL^* \subset \bG^*$ is an $F^*$-stable Levi subgroup of an $F^*$-stable parabolic subgroup of $\bG^*$.

Let $u_{\tau} \in \Uch(H_L)$ denote the unipotent representation of the centralizer $H_L := C_{L^*}(s)$ associated with $\tau$ through the Jordan decomposition of  characters $\J_s^L$ as in Theorem \ref{thm:JD-connected-groups}.
Consider the Harish-Chandra inductions:
\[
\pi := R_{\bL}^{\bG}(\tau), \qquad \pi^* := R_{\bH_L}^{\bH}(u_{\tau}).
\]
Assume that \(R_{\bL}^{\bG}(\tau)\) has length at most two.  If
\[
R_{\bL}^{\bG}(\tau)=\pi_1\oplus\pi_2
\]
has two irreducible constituents, ordered so that
\(\dim\pi_1\geq \dim\pi_2\), set
\[
q\bigl(R_{\bL}^{\bG}(\tau)\bigr)
=
\frac{\dim\pi_1}{\dim\pi_2}.
\]
If \(R_{\bL}^{\bG}(\tau)\) is irreducible, set
\[
q\bigl(R_{\bL}^{\bG}(\tau)\bigr)=1.
\]
We aim to prove the following.
\[
q(\pi) = q(\pi^*),
\]
\subsection{Schur element}
Associated with the cuspidal pair $(L, \tau)$ is the relative Hecke algebra
\[
\mathcal{H}_G(M, \tau) := \End_{G}(R_{\bL}^{\bG}(\tau)),
\]
which is a finite type symmetric algebra over a ring $A = \mathbb{Z}[\mathbf{x}^{\pm1}]$ of Hecke parameters.
Let $W_{\tau}$ denote the relative Weyl group associated with the pair $(L,\tau)$,
and let $K$ be a splitting field for $\mathcal{H}_G(L, \tau)$.

By \cite[Theorem~3.2.5]{Book:GeckandMalle}, there is a well-known bijection:
\[
\mathrm{Irr}(\mathcal{H}_G(L, \tau)) \longleftrightarrow \Irr(G, (L, \tau)),
\]
where each irreducible character $\phi \in \Irr(W_{\tau})$ corresponds to a unique irreducible constituent $\pi_\phi$ of $\pi = R_{\bL}^{\bG}(\tau)$.
Let $c_\phi \in A'$ denote the \textit{Schur element} associated with $\phi$ with respect to the canonical symmetrizing form $\lambda$ on $\mathcal{H}_G(L, \tau)$, where $A'$ denotes the integral closure of $A$ in $K$. Then the character of $\lambda$ decomposes as:
\[
\lambda = \sum_{\phi \in \Irr(W_{\tau})} \frac{1}{c_\phi} \chi_\phi,
\]
where $\chi_\phi$ is the character of the simple module corresponding to $\phi$.
Upon specialization $\mathbf{x} \mapsto 1$, the Schur elements $c_\phi(q)$ become rational numbers.
By \cite[Theorem~3.2.18]{Book:GeckandMalle}, the dimension of the irreducible representation $\pi_{\phi}\in \Irr(G,(L,\tau))$ corresponding to the irreducible representation $\phi\in\Irr(W_{\tau})$ is given by:
\[
\dim(\pi_\phi) = \dim(\pi) \cdot c_\phi(q)^{-1}.
\]
In particular, if $\pi = \pi_1 \oplus \pi_2$ corresponds to the characters $\phi_1, \phi_2 \in \Irr(W_{\tau})$, then:
\[
q(\pi)= \frac{\dim(\pi_1)}{\dim(\pi_2)} = \frac{c_{\phi_2}(q)}{c_{\phi_1}(q)}.
\]

\subsection{Compatibility Under Jordan Decomposition of characters}

By Theorem \ref{thm:JD-connected-groups}, the Jordan decomposition of characters can be chosen to commute with Harish-Chandra induction.
Moreover, for any Levi subgroup $\bL \subseteq \bG$ as before, and any $\tau \in \cE(L, s)$, we have a canonical isomorphism of relative Hecke algebras:
\[
\mathcal{H}_G(L, \tau) \cong \mathcal{H}_{H}(H_L, u_{\tau}),
\]
which preserves the indexing of irreducible characters and the associated Schur elements.
Therefore, the constituents $u_1, u_2$ of $\pi^* := R_{\bH_L}^{\bH}(u_{\tau})$ correspond to the same $\phi_1, \phi_2 \in \Irr(W_{\tau})=\Irr(W_{u_\tau})$, and satisfy:
\[
\dim(u_i) = \dim(\pi^*) \cdot c_{\phi_i}(q)^{-1}, \quad \text{for } i = 1, 2.
\]
Consequently,
\[
q(\pi^*) = \frac{\dim(u_1)}{\dim(u_2)} = \frac{c_{\phi_2}(q)}{c_{\phi_1}(q)} = q(\pi).
\]
As a consequence, we get that
\begin{theorem}[Preservation of rank-one finite-field parameters under Jordan decomposition]
\label{thm:finite-JD-preserves-q-parameter}
With notation as above, assume that \(R_{\bL}^{\bG}(\tau)\) has length at most two.
Then \(R_{\bH_L}^{\bH}(u_\tau)\) also has length at most two, and
\[
q\left( R_{\bL}^{\bG}(\tau) \right)
=
q\left( R_{\bH_L}^{\bH}(u_{\tau}) \right).
\]
\end{theorem}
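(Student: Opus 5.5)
The plan is to reduce both length and parameter to the common relative Weyl group. By Corollary~\ref{coro:isoweylgp}, duality identifies \(W_\tau\) with \(W_{u_\tau}\). The connected Howlett--Lehrer theorem (Theorem~\ref{thm:Howlett-Lehrer}) gives
\[
\End_G\bigl(R_{\bL}^{\bG}(\tau)\bigr)^{\op}\cong \C[W_\tau].
\]
The disconnected Howlett--Lehrer theorem (Theorem~\ref{thm:disc-HL-abelian}, applicable by Remark~\ref{rmk:disc-HL-abelian-application}) gives
\[
\End_H\bigl(R_{\bH_L}^{\bH}(u_\tau)\bigr)^{\op}\cong \C[W_{u_\tau}].
\]
Both inductions are therefore multiplicity-controlled by \(\Irr(W_\tau)\), with multiplicity \(\phi(1)\). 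Length at most two on the \(G\)-side means \(\sum_\phi\phi(1)\le 2\), so \(W_\tau\) is trivial or of order two. The same then holds for \(W_{u_\tau}\), which gives the length statement. The case \(W_\tau=1\) is immediate, since both inductions are irreducible and both \(q\)'s equal \(1\).

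For \(|W_\tau|=2\), I would write the two constituents as \(\pi_{\phi_1},\pi_{\phi_2}\) and \(u_{\phi_1},u_{\phi_2}\), labelled via Theorem~\ref{thm:JD-connected-groups}(3) and Proposition~\ref{prop:passage-cusp-to-HC}. On the \(G\)-side, the degree formula \(\dim\pi_\phi=\dim\pi\cdot c_\phi(q)^{-1}\) gives \(q(\pi)=c_{\phi_2}/c_{\phi_1}\). For a rank-one Hecke algebra with parameter \(q^{k}\), the Schur elements are \(\Phi_2(q^k)\) and \(q^{-k}\Phi_2(q^k)\), so \(q(\pi)=q^{k}\). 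The claim thus becomes that the Howlett--Lehrer parameter \(q^{k}\) on the \(G\)-side equals the parameter \(q^{k'}\) on the \(H\)-side.

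To compare parameters, I would avoid Hecke algebras on \(H\) directly and use the degree identity coming from Jordan decomposition. Property (2) of Theorem~\ref{thm:JD-connected-groups}, applied to the regular character (or to the uniform projection), gives
\[
\rho(1)=\frac{|G|_{p'}}{|H|_{p'}}\,\J_s(\rho)(1)
\]
for every \(\rho\in\cE(G,s)\). The scalar is independent of \(\rho\). Applying this to \(\pi_{\phi_1},\pi_{\phi_2}\) gives
\[
\frac{\dim\pi_{\phi_1}}{\dim\pi_{\phi_2}}=\frac{\dim u_{\phi_1}}{\dim u_{\phi_2}},
\]
with the ordering by dimension preserved. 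Hence \(q(\pi)=q(\pi^*)\).

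The main obstacle is justifying this degree proportionality on the disconnected \(H\)-side. The unipotent characters of \(H\) are not uniform in general, so the regular-character argument needs the fact that degrees of characters in \(\cE(G,s)\) and of unipotent characters of \(H\) are determined by uniform projections. For \(H\), I would restrict to \(H^\circ\) and use the Clifford description \(u_\phi=\Ind_I^H(\cdots)\). Here the ratio \(\dim u_{\phi_1}/\dim u_{\phi_2}\) reduces to the corresponding ratio for constituents of \(R^{H^\circ}_{H_L^\circ}(u_\tau^\circ)\), provided both lie above orbits of the same size. I expect this to need the same inertia index, since \(W_{u_\tau}\) fixes the extension label \(\eta\). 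The connected-\(H^\circ\) degree formula of Geck--Malle \cite[Theorem~3.2.18]{Book:GeckandMalle} then applies.
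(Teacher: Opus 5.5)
Your proof is correct, but it takes a different route from the paper. The paper works with relative Hecke algebras on both sides. It writes $q(\pi)=c_{\phi_2}(q)/c_{\phi_1}(q)$ via the Geck--Malle degree formula. It then asserts that the identification of $\End_G(R_{\bL}^{\bG}(\tau))$ with $\End_H(R_{\bH_L}^{\bH}(u_\tau))$ preserves both the labelling and the Schur elements, and applies the same degree formula on the $H$-side.

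You use Hecke algebras only to count constituents. This gives the length bound on the $H$-side from $W_\tau\cong W_{u_\tau}$ and the multiplicities $\phi(1)$, a step the paper leaves implicit. You get the equality of ratios from two facts: $\J_s$ rescales every degree in $\cE(G,s)$ by one positive constant, and Theorem~\ref{thm:JD-connected-groups}(3) gives $\J_s(\pi_{\phi_i})=u_{\phi_i}$.

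This makes you independent of a Schur-element-preserving isomorphism of endomorphism algebras. Theorem~\ref{thm:disc-HL-abelian} does not provide one: it only gives $\C[W_{u_\tau}]$, with no parameters. You also avoid any degree formula for Harish--Chandra series of the disconnected group $H$. The price is that you need Theorem~\ref{thm:JD-connected-groups}(2) in its full stated form, for all $F^*$-stable maximal tori containing $s$. Note that Proposition~\ref{prop:passage-cusp-to-HC} only treats tori in $\bL^*$.

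The obstacle you flag is not a real one, and you should drop the Clifford fallback of comparing orbit sizes above $H^\circ$; as sketched, it is not clearly valid. The paper's convention, from Lemma~\ref{lemma:bijection-cuspidal}, is $R^{\bH}_{\bT^*}(1)=\Ind_{H^\circ}^H R^{\bH^\circ}_{\bT^*}(1)$. Since $\mathrm{reg}_H=\Ind_{H^\circ}^H\mathrm{reg}_{H^\circ}$, Frobenius reciprocity and the uniform expression of $\mathrm{reg}_{H^\circ}$ give, for every $u\in\Uch(H)$,
\[
u(1)=|H^\circ|_p^{-1}\sum_{\bT^*}\epsilon_{\bH^\circ}\epsilon_{\bT^*}\bigl\langle R^{\bH}_{\bT^*}(1),u\bigr\rangle_H .
\]
Here the sum runs over $F^*$-stable maximal tori of $\bH^\circ$, i.e.\ of $\bG^*$ containing $s$. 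No uniformity of $u$ itself is needed. On the $G$-side, take the same uniform expression of $\mathrm{reg}_G$ and sum over pairs $(\bT^*,t)$ with $t$ conjugate to $s$; this gives
\[
\rho(1)=\frac{|G^*:H|}{|G|_p}\sum_{\bT^*\ni s}\epsilon_{\bG}\epsilon_{\bT^*}\bigl\langle R^{\bG}_{\bT^*}(s),\rho\bigr\rangle_G .
\]
Property (2) then turns this into $\rho(1)=|G^*:H|\,|H^\circ|_p\,|G|_p^{-1}\,\J_s(\rho)(1)$, which is exactly the proportionality your argument needs.
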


\section{Application: depth-zero to unipotent affine Hecke algebra factors}
\label{sec:padic-depth0-to-unipotent-hecke}
\label{sec:finite-depthzero-to-unipotent-hecke}

In this section we explain how the finite-field comparison proved above gives a
pinned canonical form of Ohara's comparison between depth-zero affine Hecke
algebra parameters and unipotent affine Hecke algebra parameters.  The conclusion
is deliberately restricted to the affine Hecke algebra factor.  The full
AFMO--Morris Hecke algebra usually has the form of an affine Hecke algebra
crossed with a finite twisted group algebra, and Ohara's theorem does not identify
that finite crossed-product factor with the corresponding finite crossed-product
factor for the unipotent type.

Thus the result below should be read as a canonical version of the affine-factor
comparison in \cite[Theorem~4.4.1]{ohara2025}.  It does not assert a general
isomorphism from a depth-zero Hecke algebra to the Hecke algebra of a unipotent
type.  Instead, it says that, after fixing the pinning, the affine Hecke algebra
factor of the depth-zero algebra may be replaced by the corresponding unipotent
affine Hecke algebra factor, while the finite group \(\Omega(\rho_M)\) and the
cocycle \(\mu\) in the crossed product remain the original source-side ones.

The only additional point which must be made explicit for our use of the finite
Jordan decomposition is that the finite groups occurring in the depth-zero
construction are not always the connected reductive quotients
\(G(F)_{x,0}/G(F)_{x,0+}\).  They are often the full quotients
\(G(F)_x/G(F)_{x,0+}\), and these may be disconnected.  We therefore first record
why these full quotients satisfy the hypotheses imposed in the enriched
disconnected Jordan decomposition theorem.

\subsection{Full parahoric quotients and the pinned-component condition}
\label{subsec:parahoric-quotients-pinned-condition}

Let \(F\) be a non-archimedean local field with residue field \(\ff\), and let
\(G\) be a connected reductive group over \(F\).  For a vertex
\(x\in\mathcal B(G,F)\), put
\[
        K_x:=G(F)_x,\qquad
        K_x^0:=G(F)_{x,0},\qquad
        K_x^+:=G(F)_{x,0+}.
\]
We write
\[
        \overline G_x:=K_x/K_x^+,
        \qquad
        \overline G_x^\circ:=K_x^0/K_x^+ .
\]
Thus \(\overline G_x^\circ\) is the group of \(\ff\)-points of the connected
reductive quotient of the parahoric group scheme attached to \(K_x^0\).  To avoid
confusing this finite group with an integral model, we denote that connected
reductive \(\ff\)-group by
\[
        \underline{\mathbf G}_x^\circ,
        \qquad
        \underline{\mathbf G}_x^\circ(\ff)=\overline G_x^\circ .
\]
The full quotient \(\overline G_x\) will be regarded as the group of \(\ff\)-points
of the corresponding possibly disconnected reductive quotient
\(\underline{\mathbf G}_x\), with identity component
\(\underline{\mathbf G}_x^\circ\).

Fix the global pinned datum \(\cP\) used throughout the paper.  For each
vertex \(x\) occurring below, we denote by
\[
        \cP_x^{\cP}
\]
the \(\ff\)-pinning of \(\underline{\mathbf G}_x^\circ\) induced by the global
pinning \(\cP\).  Concretely, the construction may be described by taking the
\(\cP\)-compatible alcove adjacent to \(x\), using it to determine the Borel
subgroup of \(\underline{\mathbf G}_x^\circ\), and reducing the root-group
parametrizations of \(\cP\) through the Bruhat--Tits root-group quotients at
\(x\).  We shall sometimes denote this local alcove by \(\mathfrak a\) inside
proofs, but \(\mathfrak a\) is not an additional datum: the finite-quotient
pinning used below is \(\cP_x^{\cP}\), determined by the fixed global pinning
\(\cP\).

\begin{lemma}[Component group of the full parahoric quotient]
\label{lem:parahoric-component-abelian}
With notation as above, the component group of the full quotient is abelian.  More
precisely, the Kottwitz homomorphism induces an injective homomorphism
\[
        K_x/K_x^0
        \hookrightarrow
        X^*(Z(\widehat G)^I)^\Phi .
\]
Consequently
\[
        \overline G_x/\overline G_x^\circ
        \cong
        K_x/K_x^0
\]
is abelian.
\end{lemma}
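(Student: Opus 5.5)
The plan is to reduce the statement to standard facts about the Kottwitz homomorphism and the Haines--Rapoport description of parahoric subgroups. Let
\[
\kappa_G:G(F)\longrightarrow X^*(Z(\widehat G)^I)^\Phi
\]
denote the Kottwitz homomorphism, where \(I\) is inertia and \(\Phi\) is Frobenius. It is a group homomorphism from \(G(F)\) to an abelian group. By Haines--Rapoport, the parahoric subgroup attached to the facet of \(x\) is
\[
K_x^0=K_x\cap\ker(\kappa_G),
\]
where \(K_x=G(F)_x\) is the full stabilizer of \(x\). Restricting \(\kappa_G\) to \(K_x\) therefore gives a homomorphism whose kernel is exactly \(K_x^0\), and hence an injection
\[
K_x/K_x^0\hookrightarrow X^*(Z(\widehat G)^I)^\Phi .
\]
A subgroup of an abelian group is abelian, so \(K_x/K_x^0\) is abelian. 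Since the target may have a free part, we only need the injectivity, not any finiteness from the target. Finiteness of \(K_x/K_x^0\) follows separately from the fact that \(K_x^0\) has finite index in the compact group \(K_x\).

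Next we identify the component group of the full finite quotient. We have \(K_x^+\subseteq K_x^0\subseteq K_x\), and \(K_x^+\) is normal in \(K_x\); normality of \(K_x^0\) in \(K_x\) is automatic because it is the kernel of \(\kappa_G|_{K_x}\). The third isomorphism theorem gives
\[
\overline G_x/\overline G_x^\circ=(K_x/K_x^+)/(K_x^0/K_x^+)\cong K_x/K_x^0 .
\]
We also check that \(\overline G_x^\circ=K_x^0/K_x^+\) is the group of \(\ff\)-points of the identity component \(\underline{\mathbf G}_x^\circ\) of \(\underline{\mathbf G}_x\). This is the standard Bruhat--Tits fact that the parahoric group scheme has connected special fibre and that \(K_x^+\) is the pro-unipotent radical. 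Combining the two displays proves the lemma.

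The main obstacle is the precise citation and hypotheses for the equality \(K_x^0=K_x\cap\ker\kappa_G\). In Haines--Rapoport it is proved for the Kottwitz homomorphism over the completed maximal unramified extension \(\breve F\), with \(\Phi\)-invariance needed to descend to \(F\)-points. I would first invoke their result on \(\breve F\)-points and then take Frobenius fixed points. Taking fixed points preserves the kernel description, since \(K_x^0(\breve F)^\Phi=K_x^0\) and likewise for \(K_x\).
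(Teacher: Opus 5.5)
Your proposal is correct and takes the same route as the paper. Both arguments use the Haines--Rapoport identification \(K_x^0=K_x\cap\ker\kappa_G\) to get the injection into the abelian group \(X^*(Z(\widehat G)^I)^\Phi\), and then the third isomorphism theorem; you make the citation and the descent from \(\breve F\) to \(F\) explicit, where the paper leaves them implicit. Your side remark on finiteness is not needed for the lemma, and it holds only if \(K_x\) is the stabilizer in the extended building: in the reduced building \(K_x\supseteq Z(F)\) need not be compact, and for \(\GL_n\) the quotient \(K_x/K_x^0\) is then infinite.
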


\begin{proof}
The parahoric subgroup \(K_x^0\) is the kernel, inside the stabilizer \(K_x\), of
the Kottwitz invariant.  Equivalently, the Kottwitz map is trivial on
\(G(F)_{x,0}\) and separates the components of the stabilizer of the vertex
\(x\).  Hence it induces the displayed injection
\[
        K_x/K_x^0\hookrightarrow X^*(Z(\widehat G)^I)^\Phi .
\]
The last assertion follows because
\[
        \overline G_x/\overline G_x^\circ
        =(K_x/K_x^+)/(K_x^0/K_x^+)
        \cong K_x/K_x^0 .
\]
\end{proof}

\begin{lemma}[Parahoric component actions are pinned modulo the connected quotient]
\label{lem:parahoric-component-action-pinned}
Let \(\bar k\in\overline G_x\), and let
\[
        \alpha_{\bar k}\in\Aut_\ff(\underline{\mathbf G}_x^\circ)
\]
be the automorphism induced by conjugation on
\(\overline G_x^\circ=K_x^0/K_x^+\).  Then there exist
\[
        \bar g_{\bar k}\in \overline G_x^\circ
        \qquad\text{and}\qquad
        \sigma_{\bar k}\in
        \Aut_\ff(\underline{\mathbf G}_x^\circ,\cP_x^{\cP})
\]
such that
\[
        \alpha_{\bar k}
        =
        \operatorname{Int}(\bar g_{\bar k})\circ\sigma_{\bar k}.
\]
Equivalently, if
\[
        \overline G_{x,\cP}
        :=
        \{\bar k\in\overline G_x:
        \Ad(\bar k)(\cP_x^{\cP})=
        \cP_x^{\cP}\},
\]
then
\[
        \overline G_x
        =
        \overline G_x^\circ\cdot \overline G_{x,\cP}.
\]
In particular, the possibly disconnected finite reductive quotient
\(\underline{\mathbf G}_x\), together with the pinning
\(\cP_x^{\cP}\) of its identity component, satisfies
Hypothesis~\ref{hyp:rational-pinned-component-condition}.
\end{lemma}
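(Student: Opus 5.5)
The plan is to produce pinning-preserving representatives inside $K_x$ itself, using the Bruhat--Tits description of $K_x$ as a stabilizer of a vertex. Then reduce modulo $K_x^+$.

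First I will write $K_x = K_x^0\cdot N_x$, where $N_x$ is the stabilizer in $K_x$ of the chamber $\mathfrak a$ adjacent to $x$ that is $\cP$-compatible. Concretely, $K_x$ acts on the chambers containing $x$, and $K_x^0$ acts transitively on them: these chambers correspond to Borel subgroups of $\underline{\mathbf G}_x^\circ$, and $\overline G_x^\circ$ acts transitively on the $\ff$-rational Borels. Given $k\in K_x$, I choose $g\in K_x^0$ with $gk\mathfrak a=\mathfrak a$. Next, let $\mathcal S$ be a maximal $F$-split torus whose apartment contains $\mathfrak a$; this is the torus determining $\cP_x^{\cP}$. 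The stabilizer of $\mathfrak a$ acts transitively on its maximal split tori modulo the Iwahori subgroup. Applying this, I may further arrange that $gk$ normalizes $\mathcal S$ and the associated maximal torus of $\underline{\mathbf G}_x^\circ$. Reducing modulo $K_x^+$, the image of $gk$ then preserves a rational Borel pair $(\bar{\mathbf B},\bar{\mathbf T})$ of $\underline{\mathbf G}_x^\circ$. Its conjugation action therefore permutes the simple root groups $\bar{\mathbf U}_a$, $a\in\Delta_x$.

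The remaining issue is the root homomorphisms $x_a$. Conjugation by the image $\bar n$ sends $x_a(t)$ to $x_{\pi(a)}(c_a t)$ with $c_a\in\ff^\times$. I will find $\bar t\in\bar{\mathbf T}(\ff)=\bar T$ with $\pi(a)(\bar t)=c_a^{-1}$ for all $a$, compatibly with the Frobenius permutation. Then $\bar t\bar n$ preserves $\cP_x^{\cP}$. This gives the decomposition $\alpha_{\bar k}=\operatorname{Int}(\bar g_{\bar k})\circ\sigma_{\bar k}$ and the equality $\overline G_x=\overline G_x^\circ\overline G_{x,\cP}$, with Hypothesis~\ref{hyp:rational-pinned-component-condition} following immediately.

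I expect this torus-adjustment step to be the main obstacle. For it I would first try to exploit that the $c_a$ arise from reductions of the global pinning $\cP$. The preferred route is to choose the element normalizing $\mathcal S$ more cleverly, namely as a lift of an element of the Iwahori--Weyl group $\Omega_{\mathfrak a}$ of length zero. By the Kottwitz isomorphism $K_x/K_x^0\hookrightarrow X^*(Z(\widehat G)^I)^\Phi$ from Lemma~\ref{lem:parahoric-component-abelian}, such elements represent all components. Through the Tits group construction attached to the pinning $\cP$ (Tits' section), their representatives permute the affine simple root groups with sign-free structure constants $\pm1$.

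Any residual signs or constants $c_a$ are then absorbed into $\bar T$. Absorbing them requires solving $a(\bar t)=c_a^{-1}$ over $\ff$ on the simple roots of $\underline{\mathbf G}_x^\circ$. If this fails for the rational torus because of a nontrivial Galois cohomology obstruction, the fallback is to pass to the adjoint quotient. The pinned automorphism only matters up to $\operatorname{Int}(\overline G_x^\circ)$ on characters, and the needed elements live in $\bar{\mathbf T}_{\ad}(\ff)$. I would then check that the inner automorphisms needed here come from $\overline G_x^\circ$, using Lang's theorem on the connected torus $\bar{\mathbf T}$.
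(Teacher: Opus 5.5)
Your reduction is sound up to the last step, and it is essentially the paper's. Every coset of $K_x^0$ in $K_x$ contains an element $n$ normalizing $S$ and stabilizing $\mathfrak a$; the paper instead takes $n\in N\cap K_x$ and corrects by a Tits representative $\bar b_n$ of a finite Weyl element. The image of $n$ preserves $(\bar{\mathbf B},\bar{\mathbf T})$ and sends $x_a(u)$ to $x_{\pi(a)}(c_au)$. The gap is the claim that the $c_a$ can then be absorbed into $\bar T=\bar{\mathbf T}(\ff)$.

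The tuple $(c_a)$ defines a point of $\bar{\mathbf T}_{\ad}(\ff)$. The equations $\pi(a)(\bar t)=c_a^{-1}$ are solvable in $\bar T$ if and only if that point lies in the image of $\bar{\mathbf T}(\ff)\to\bar{\mathbf T}_{\ad}(\ff)$, whose cokernel is $H^1(\ff,Z(\underline{\mathbf G}_x^\circ))$. Lang's theorem for $\bar{\mathbf T}$ does not kill this group (it would only if the centre were connected), and the group is typically nonzero exactly where the lemma has content. For example, at the non-special vertex of split $\mathrm{PSp}_4$ one has $K_x/K_x^0\cong\mathbb Z/2$ and $\underline{\mathbf G}_x^\circ\cong SO_4=(SL_2\times SL_2)/\mu_2$, so the cokernel is $\ff^\times/\ff^{\times2}$. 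Unequal signs on the two local simple affine roots (gradients $-2e_1$ and $2e_2$) then cannot be absorbed when $q\equiv 3\pmod 4$. So knowing that Tits' section gives $c_a=\pm1$ is not enough; you need the actual signs.

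Your fallback does not repair this. Elements of $(\underline{\mathbf G}_x^\circ)_{\ad}(\ff)$ outside the image of $\overline G_x^\circ$ induce diagonal automorphisms, which in general move irreducible characters of $\overline G_x^\circ$. If $\bar g_{\bar k}$ is allowed to be adjoint, the statement holds for every $\ff$-automorphism. That is exactly the diagonal-automorphism pathology the rational pinned-component hypothesis is meant to exclude.

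A cleverer choice of lift cannot help either. Write $\alpha_{\bar k}=\operatorname{Int}(g)\circ\sigma$ with $g\in(\underline{\mathbf G}_x^\circ)_{\ad}(\ff)$ and $\sigma$ pinned. The class of $g$ modulo the image of $\overline G_x^\circ$ is unchanged when $\bar k$ is replaced by any element of $\bar k\,\overline G_x^\circ$, and the lemma for $\bar k$ is equivalent to this class vanishing. What is missing is a proof that it vanishes, i.e.\ an actual computation of the $c_a$. The paper's proof treats this point by asserting that $n$ and $\bar b_n$ carry the Tits-normalized parametrizations to one another on the nose. You have located the real content correctly, but neither your appeal to Tits' section nor the fallback supplies it.

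For split adjoint $G$ it can be supplied as follows. Write $n\,x_{a_j}(u)\,n^{-1}=x_{a_{\omega(j)}}(c_ju)$ for all affine simple roots $a_0,\dots,a_r$, where $\omega$ is the induced permutation and the gradients satisfy $\sum_j m_j\bar a_j=0$ with $m_0=1$. Solvability at a vertex of type $i$ is then equivalent to $\prod_j c_j^{m_j}\in(\ff^\times)^{m_i}$. This product equals $1$: $\operatorname{Ad}(n)$ preserves a basic invariant of degree $h$ (the Coxeter number), and its restriction to $\sum_j u_je_j$ is a multiple of $\prod_j u_j^{m_j}$. That multiple is nonzero because Kostant's cyclic element $\sum_j e_j$ is regular semisimple. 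Non-split or non-adjoint $G$ would need a corresponding argument.
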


\begin{proof}
We recall the standard normalizer description of parahoric stabilizers.  Let
\(\mathfrak a=\mathfrak a_{\cP,x}\) denote the \(\cP\)-compatible local alcove
used to describe the induced pinning \(\cP_x^{\cP}\).  Choose a maximal
\(F\)-split torus \(S\) whose apartment contains \(x\) and this alcove.  Let
\[
        N=N_G(S)(F),
        \qquad
        \widetilde W=N/S(F)_0
\]
be the Iwahori--Weyl group.  The stabilizer \(K_x\) is generated by \(K_x^0\) and
the stabilizer of \(x\) in \(N\).  Hence every class in
\(K_x/K_x^0\) has a representative \(n\in N\cap K_x\).

Let \(k\in K_x\) be a lift of \(\bar k\), and choose such an \(n\) in the same
class modulo \(K_x^0\).  Then \(kn^{-1}\in K_x^0\).  If \(\bar h\) denotes the
image of \(kn^{-1}\) in \(\overline G_x^\circ\), then on
\(\underline{\mathbf G}_x^\circ\)
\[
        \alpha_{\bar k}
        =
        \operatorname{Int}(\bar h)\circ\alpha_{\bar n},
\]
where \(\bar n\) is the image of \(n\) in \(\overline G_x\).  It is therefore
enough to treat representatives coming from \(N\cap K_x\).

For such an \(n\), conjugation by \(n\) stabilizes \(x\) and sends the alcove
\(\mathfrak a\) to another alcove whose closure contains \(x\).  On the reductive
quotient, it sends the Borel, torus, and simple root-group quotients defining
\(\cP_x^{\cP}\) to the corresponding pinned data attached to
\(n\mathfrak a\), with the root-group parametrizations transported by the fixed
Tits normalizations.  The two alcoves \(\mathfrak a\) and \(n\mathfrak a\)
determine two \(\ff\)-rational pinnings of the connected reductive quotient
\(\underline{\mathbf G}_x^\circ\) arising from the same Bruhat--Tits root-group
parametrizations.  The two chamber descriptions \(\mathfrak a\) and \(n\mathfrak a\) determine
two normalized pinnings of \(\underline{\mathbf G}_x^\circ\), both obtained
from the fixed global pinning \(\cP\) by the Bruhat--Tits root-group
filtrations and the Tits normalizations.  In the finite BN-pair of
\(\underline{\mathbf G}_x^\circ(\ff)\), the chambers corresponding to
\(\mathfrak a\) and \(n\mathfrak a\) differ by an element of the finite Weyl
group.  Choosing the corresponding Tits representative gives an element
\(\bar b_n\in \overline G_x^\circ\) which carries this induced normalized
pinning attached to \(n\mathfrak a\) back to the induced normalized pinning
attached to \(\mathfrak a\).
Equivalently,
\[
        \operatorname{Int}(\bar b_n)\circ\alpha_{\bar n}
        \in
        \Aut_\ff(\underline{\mathbf G}_x^\circ,\cP_x^{\cP}).
\]
Set
\[
        \sigma_{\bar n}:=
        \operatorname{Int}(\bar b_n)\circ\alpha_{\bar n}.
\]
Then
\[
        \alpha_{\bar n}
        =
        \operatorname{Int}(\bar b_n^{-1})\circ\sigma_{\bar n}.
\]
Combining this with the previous decomposition of \(\alpha_{\bar k}\), we obtain
\[
        \alpha_{\bar k}
        =
        \operatorname{Int}(\bar h\bar b_n^{-1})\circ\sigma_{\bar n}.
\]
This is the required formula, with
\(\bar g_{\bar k}=\bar h\bar b_n^{-1}\) and
\(\sigma_{\bar k}=\sigma_{\bar n}\).

The equivalent factorization of \(\overline G_x\) follows immediately: if
\(\alpha_{\bar k}=\operatorname{Int}(\bar g_{\bar k})\circ\sigma_{\bar k}\), then
\(\bar g_{\bar k}^{-1}\bar k\) acts on
\(\underline{\mathbf G}_x^\circ\) by the pinning-preserving automorphism
\(\sigma_{\bar k}\).  Hence
\(\bar g_{\bar k}^{-1}\bar k\in\overline G_{x,\cP}\), and so
\(\bar k\in\overline G_x^\circ\cdot\overline G_{x,\cP}\).  The reverse inclusion
is tautological.  This proves the rational pinned-component condition for the
full quotient.
\end{proof}

\begin{remark}
\label{rmk:full-quotient-not-connected-quotient}
In the application below, the enriched disconnected Jordan decomposition theorem
is used only to make the finite Jordan-decomposition input canonical for the
full finite quotient \(\overline G_x=G(F)_x/G(F)_{x,0+}\).  The connected quotient
\(\underline{\mathbf G}_x^\circ\) enters because the pinning is a pinning of the
identity component, as in Hypothesis~\ref{hyp:rational-pinned-component-condition}.
Lemmas~\ref{lem:parahoric-component-abelian} and
\ref{lem:parahoric-component-action-pinned} verify precisely the two additional
inputs needed to use Theorem~\ref{thm:disc-JD-bijection} for these full parahoric
quotients.  This use of the disconnected theorem concerns the finite
Jordan-decomposition labels and the finite-field parameter comparison; it does
not identify the finite crossed-product factor in the AFMO Hecke algebra with a
unipotent crossed-product factor.
\end{remark}

\subsection{Set-up and statement}

Let \(F\) be a non-archimedean local field with residue field \(\ff\) of
characteristic \(p\), and fix a coefficient field \(\C\) of characteristic
\(0\).  Let \(G\) be a connected reductive group defined over \(F\).  We assume
the standard tameness hypotheses under which the depth-zero type theory of
\cite{AFMO24a,AFMO24b} applies, as recalled in \cite[\S1]{ohara2025}:
\begin{equation}\label{eq:tameness}
G \text{ splits over a tamely ramified extension of }F,
\qquad
p\nmid |W_{\mathrm{abs}}(G)|.
\end{equation}
Fix once and for all a global pinned datum \(\cP\) for \(G\) in the sense used
above.  Whenever a parahoric quotient occurs in the construction, we use the
induced pinning \(\cP_x^{\cP}\) on its connected reductive quotient.

Let \(\mathfrak s=[M,\sigma]_G\) be a depth-zero inertial equivalence class for
\(G(F)\).  Let \((K_{x_0},\rho_{x_0})\) be the depth-zero \(\mathfrak s\)-type of
\cite{AFMO24a}, as recalled in \cite[\S4.2]{ohara2025}, and denote its Hecke
algebra by
\[
\cH_{\mathfrak s}(G)
:=
\cH\bigl(G(F),(K_{x_0},\rho_{x_0})\bigr).
\]
By \cite[Theorem~4.2.2]{ohara2025}, equivalently by
\cite[Theorem~5.3.6]{AFMO24a}, this algebra has the AFMO--Morris presentation
\begin{equation}\label{eq:AFMO-depth0-structure}
\cH_{\mathfrak s}(G)
\simeq
\C[\Omega(\rho_M),\mu]\rtimes
\cH_{\C}\!\bigl(W(\rho_M)_{\mathrm{aff}},q\bigr).
\end{equation}
Here \(W(\rho_M)_{\mathrm{aff}}\) is the affine Weyl group attached to the
depth-zero datum, \(q\) is the corresponding parameter function,
\(\Omega(\rho_M)\) is the stabilizer of the distinguished chamber in the relevant
extended symmetry group, and \(\mu\) is the AFMO \(2\)-cocycle appearing in the
twisted group algebra.

From \((K_{x_0},\rho_{x_0})\), Ohara constructs in \cite[\S4.3]{ohara2025} a
connected reductive group \(G_\theta\) splitting over an unramified extension of
\(F\), together with a unipotent type \((K_{\theta,x_0},u_{x_0})\) for
\(G_\theta(F)\).  Ohara's corresponding unipotent type has Hecke algebra
\begin{equation}\label{eq:AFMO-unip-structure}
\cH_{\mathrm{unip}}(G_\theta)
\simeq
\cH_{\C}\!\bigl(W(u_{\rho_M})_{\mathrm{aff}},q_\theta\bigr),
\end{equation}
as in \cite[Theorem~4.3.7]{ohara2025}.  In particular, the unipotent
Hecke algebra appearing in Ohara's comparison is the affine Hecke algebra
factor; it is not enlarged by the source-side twisted group algebra.

\begin{theorem}[Pinned comparison of affine Hecke algebra factors]
\label{thm:padic-depth0-to-unipotent-hecke}
Assume \eqref{eq:tameness}.  With notation as above, the fixed pinning \(\cP\)
canonically determines an isomorphism of affine Hecke algebras
\[
\Psi_{\cP}:
\cH_{\C}\!\bigl(W(\rho_M)_{\mathrm{aff}},q\bigr)
\xrightarrow{\ \sim\ }
\cH_{\C}\!\bigl(W(u_{\rho_M})_{\mathrm{aff}},q_\theta\bigr).
\]
Equivalently, using the AFMO--Morris presentation of \(\cH_{\mathfrak s}(G)\), one
may rewrite the depth-zero Hecke algebra as
\[
\cH_{\mathfrak s}(G)
\simeq
\C[\Omega(\rho_M),\mu]\rtimes_{\Psi_{\cP}}
\cH_{\C}\!\bigl(W(u_{\rho_M})_{\mathrm{aff}},q_\theta\bigr),
\]
where the finite group \(\Omega(\rho_M)\), the cocycle \(\mu\), and the crossed-product
structure are the original source-side ones, with the action transported across
\(\Psi_{\cP}\).  This statement does not assert, and in general should not be
read as asserting, an isomorphism
\[
\cH_{\mathfrak s}(G)
\cong
\cH_{\mathrm{unip}}(G_\theta).
\]
The affine-factor isomorphism preserves the standard anti-involutions on the two
affine Hecke algebra factors.
\end{theorem}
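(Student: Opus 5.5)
The plan is to build \(\Psi_{\cP}\) on Iwahori--Matsumoto generators. Both affine factors are Iwahori--Matsumoto presentations attached to affine Coxeter systems. Such an algebra is determined up to canonical isomorphism by three pieces of data: the affine Coxeter system, its parameter function on simple reflections, and a fixed indexing of the simple reflections. So it suffices to carry out two steps. First, construct a pinned identification of Coxeter systems
\[
W(\rho_M)_{\mathrm{aff}}\xrightarrow{\sim} W(u_{\rho_M})_{\mathrm{aff}}
\]
that sends simple reflections to simple reflections. Second, show that \(q(s_i)=q_\theta(s_i')\) for each matched pair. Then \(\Psi_{\cP}(T_{s_i})=T_{s_i'}\) defines the isomorphism. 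It visibly commutes with the anti-involutions \(T_w\mapsto T_{w^{-1}}\), because it sends \(T_w\) to \(T_{w'}\) for reduced words. The crossed-product rewriting then follows formally: transport the \(\Omega(\rho_M)\)-action through \(\Psi_{\cP}\), and keep \(\mu\) and the finite twisted group algebra unchanged.

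For the Coxeter identification, we follow Ohara's construction of \(G_\theta\) and \(u_{x_0}\) (\cite[\S4.3]{ohara2025}). His affine Weyl groups are assembled from finite relative Weyl groups at the vertices adjacent to the distinguished chamber. At each such vertex \(x\), the source group is the stabilizer of the cuspidal datum in \(\overline G_x\). Lemmas~\ref{lem:parahoric-component-abelian} and \ref{lem:parahoric-component-action-pinned} show that \(\overline G_x\) satisfies Hypothesis~\ref{hyp:rational-pinned-component-condition}, with the pinning \(\cP_x^{\cP}\) induced by \(\cP\). We then apply Theorem~\ref{thm:JD-connected-groups} on \(\underline{\mathbf G}_x^\circ\), together with its disconnected enrichment Theorem~\ref{thm:disc-JD-bijection}. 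Corollary~\ref{coro:isoweylgp} and Lemma~\ref{lem:disc-stabilizer-match} then identify, canonically in \(\cP\), the relative Weyl group of the cuspidal pair on the \(p\)-adic side with that of the corresponding unipotent cuspidal pair. These local identifications respect reflections in walls of the chamber, and they are compatible along faces because the Jordan decomposition commutes with Harish--Chandra induction (Theorem~\ref{thm:JD-connected-groups}(3)). Gluing them gives the identification of affine Coxeter systems. It is canonical because each local bijection depends only on \(\cP\), with no auxiliary choice.

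For the parameters, each simple affine reflection \(s_i\) corresponds to a pair consisting of a maximal parahoric vertex \(x\) and a Levi subgroup of \(\underline{\mathbf G}_x^\circ\) of semisimple rank one relative to the cuspidal Levi. By the AFMO--Morris construction, \(q(s_i)\) is the rank-one ratio \(q(R_{\bL}^{\bG}(\tau))\) of constituent degrees in the finite quotient. The corresponding \(q_\theta(s_i')\) is the same ratio for \(R_{\bH_L}^{\bH}(u_\tau)\). Theorem~\ref{thm:finite-JD-preserves-q-parameter} gives their equality. When \(s_i\) is not a genuine Coxeter generator, that is, the length is one, the ratio equals \(1\) on both sides and the parameter is trivial.

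The main obstacle is the compatibility step. We must check that the vertex-by-vertex pinned identifications glue into a single Coxeter isomorphism, and that Ohara's choice of \(u_{x_0}\) agrees with our pinned \(\J\) rather than with some other Jordan decomposition. I would handle this first by checking that, at each vertex, our pinned unipotent cuspidal datum lies in the same \(H/H^\circ\)-orbit as Ohara's. Both refine Lusztig's orbit-valued map, Theorem~\ref{thm:Lusztig-orbit-JD-disconnected-centralizers}. Within such an orbit, relative Weyl groups and Hecke parameters are conjugate (Corollary~\ref{cor:end-algebras-same-JD-fiber}). So Ohara's abstract isomorphism can be replaced by the one fixed by \(\cP\) without changing the algebras.
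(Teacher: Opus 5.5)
Your outer frame is the same as the paper's. Once the affine Coxeter data and the parameter functions are matched, $\Psi_{\cP}$ is defined by $T_s\mapsto T_{s'}$, the anti-involution claim is immediate, and the crossed-product rewriting is formal. The gap is the Coxeter identification itself.

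The simple reflections of $W(\rho_M)_{\mathrm{aff}}$ and $W(u_{\rho_M})_{\mathrm{aff}}$ are the reflections in the walls of a chamber. On each side, that chamber is cut out in the common affine space by the $\mathcal K$-relevant, respectively $\mathcal K_\theta$-relevant, hyperplanes. So before you can speak of a single ``distinguished chamber'' whose faces you glue along, you need $H_{\mathcal K\text{-rel}}=H_{\mathcal K_\theta\text{-rel}}$. Without this the two chambers, and hence the two sets of simple reflections, need not correspond. Once it holds, no gluing is needed: the two reflection groups then coincide, with the same chamber and the same walls.

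Your vertex-by-vertex identifications do not supply this equality. Nor does Theorem~\ref{thm:JD-connected-groups}(3): it concerns Harish--Chandra series inside one finite group, and gives no compatibility between identifications at different points, or between the ambient normalizers on the $G$ and $G_\theta$ sides.

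The paper does not glue anything. It takes the equality of relevant hyperplanes, the isomorphism $W(\rho_M)_{\mathrm{aff}}\cong W(u_{\rho_M})_{\mathrm{aff}}$, and $q=q_\theta$ directly from Ohara's Theorem~4.4.1. That proof decides relevance and the parameter of each hyperplane $H$ by a rank-one computation at a generic point $x_H$ of $H$. It is not done at a vertex, where the cuspidal Levi generally has larger relative rank, so your ``maximal parahoric vertex'' description of the simple reflections is also off. The paper then inserts Theorem~\ref{thm:finite-JD-preserves-q-parameter} as the pinned rank-one input. Your last paragraph in effect falls back on Ohara's isomorphism anyway, and that is the right repair: drop the gluing and cite Theorem~4.4.1 for the arrangement and the Coxeter data.

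Several results you invoke also do not give what you use them for.
\begin{itemize}
\item Lemma~\ref{lem:disc-stabilizer-match} only matches the component stabilizers $\Omega_{\rho^\circ}=\Omega_{u_0}$. Theorem~\ref{thm:disc-JD-bijection} keeps source-side relative labels $\mathfrak R_\tau$, and Remark~\ref{rmk:ordinary-disc-JD-recovery} says explicitly that matching these with unipotent-side Howlett--Lehrer labels needs an extra hypothesis. So they yield no identification of relative Weyl groups for disconnected quotients. For the affine factor you only need the rank-one data in the connected quotients, where Corollary~\ref{coro:isoweylgp} applies.
\item Corollary~\ref{cor:end-algebras-same-JD-fiber} concerns two source characters in one fibre of Lusztig's map. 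It says nothing about two unipotent characters over one $H/H^\circ$-orbit, so it does not by itself justify exchanging Ohara's $u_{x_0}$ for the pinned correspondent.
\item Equating $q_\theta(s_i')$ with the ratio for $R_{\bH_L}^{\bH}(u_\tau)$ silently passes from the dual centralizer $H$ to the reductive quotient of $G_\theta$. That step has to be taken from Ohara's construction.
\end{itemize}
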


\subsection{Proof}
\begin{proof}
The two crossed-product presentations \eqref{eq:AFMO-depth0-structure} and
\eqref{eq:AFMO-unip-structure} show where the comparison can be made.  Ohara's
comparison theorem \cite[Theorem~4.4.1]{ohara2025} identifies the affine
reflection data on the two sides:
\[
        H_{\mathcal K\text{-rel}}=H_{\mathcal K_\theta\text{-rel}},
        \qquad
        W(\rho_M)_{\mathrm{aff}}
        \cong
        W(u_{\rho_M})_{\mathrm{aff}},
\]
and the parameter functions agree under this identification,
\[
        q=q_\theta .
\]
Consequently one obtains an isomorphism of affine Hecke algebra factors
\begin{equation}\label{eq:aff-hecke-iso}
\Psi_{\cP}:
\cH_{\C}\!\bigl(W(\rho_M)_{\mathrm{aff}},q\bigr)
\xrightarrow{\ \sim\ }
\cH_{\C}\!\bigl(W(u_{\rho_M})_{\mathrm{aff}},q_\theta\bigr),
\end{equation}
characterized by sending the standard generator attached to a simple reflection
to the standard generator attached to the corresponding simple reflection.

We now explain the role of the pinned Jordan decomposition in this comparison.
The proof of \cite[Theorem~4.4.1]{ohara2025} reduces the equality of the affine
Hecke parameters to finite-field equalities for the parameters attached to
rank-one Harish--Chandra inductions.  In the notation of the present manuscript,
this is exactly the kind of equality supplied by
Theorem~\ref{thm:finite-JD-preserves-q-parameter}: if \(\tau\) is a cuspidal
character in a Lusztig series and \(u_\tau\) is its unipotent Jordan correspondent,
then
\[
q\!\left(R_L^G(\tau)\right)
=
q\!\left(R_{H_L}^{H}(u_\tau)\right).
\]
When the finite group arising from a parahoric quotient is the full quotient
\(\overline G_x=G(F)_x/G(F)_{x,0+}\), rather than only its identity component,
Lemmas~\ref{lem:parahoric-component-abelian} and
\ref{lem:parahoric-component-action-pinned} show that the hypotheses of the
enriched disconnected theorem, Theorem~\ref{thm:disc-JD-bijection}, are satisfied.
Thus the finite Jordan-decomposition label used in the finite-field parameter
comparison is fixed by the chosen pinning \(\cP\), with the source Clifford class
retained as part of the enriched label whenever the full quotient is disconnected.
This removes the non-canonical finite Jordan-decomposition choice from the affine
parameter comparison.

This proves the claimed pinned affine-factor isomorphism \eqref{eq:aff-hecke-iso}.
Combining \eqref{eq:aff-hecke-iso} with the source-side AFMO--Morris presentation
\eqref{eq:AFMO-depth0-structure} gives
\[
\cH_{\mathfrak s}(G)
\simeq
\C[\Omega(\rho_M),\mu]\rtimes_{\Psi_{\cP}}
\cH_{\C}\!\bigl(W(u_{\rho_M})_{\mathrm{aff}},q_\theta\bigr).
\]
Here \(\rtimes_{\Psi_{\cP}}\) means that the original action of
\(\Omega(\rho_M)\) on
\(\cH_{\C}(W(\rho_M)_{\mathrm{aff}},q)\) is transported to the identified affine
factor by \(\Psi_{\cP}\).  The twisted group algebra is still
\(\C[\Omega(\rho_M),\mu]\).  No comparison with
\(\C[\Omega(u_{\rho_M}),\mu_\theta]\) is being asserted.

Finally, the compatibility with anti-involutions follows on the affine factors
because \(\Psi_{\cP}\) matches the standard generators attached to corresponding
simple reflections, and the structural AFMO--Morris isomorphism
\eqref{eq:AFMO-depth0-structure} preserves the standard anti-involution by
\cite[Theorem~4.2.2]{ohara2025}.  This proves the theorem.
\end{proof}

\begin{remark}[No full unipotent Hecke algebra isomorphism]
The theorem above is intentionally weaker than the statement it replaces.  In
general one should not identify
\[
\C[\Omega(\rho_M),\mu]\rtimes
\cH_{\C}\!\bigl(W(\rho_M)_{\mathrm{aff}},q\bigr)
\]
with
\[
\C[\Omega(u_{\rho_M}),\mu_\theta]\rtimes
\cH_{\C}\!\bigl(W(u_{\rho_M})_{\mathrm{aff}},q_\theta\bigr).
\]
Ohara's theorem identifies the second, affine Hecke algebra factors.  It does not
identify the finite stabilizer groups \(\Omega(\rho_M)\), \(\Omega(u_{\rho_M})\),
nor the cocycles \(\mu\), \(\mu_\theta\).  Therefore the correct consequence is the
source-side crossed product with the affine factor rewritten in unipotent terms,
not an isomorphism with the full Hecke algebra of the unipotent type.  If in a
special situation one has an additional compatible identification of the finite
stabilizer groups and cocycle classes, then the affine-factor comparison may
upgrade to a full crossed-product comparison; such an additional hypothesis is
not part of the present theorem.
\end{remark}

\begin{remark}
One may view Theorem~\ref{thm:padic-depth0-to-unipotent-hecke} as a canonical-up-to-pinning
refinement of the affine-Hecke-algebra comparison of \cite[Theorem~4.4.1]{ohara2025}.
The finite-field ingredient in \cite{ohara2025} uses Jordan decomposition in
Lusztig series.  Here that input is replaced, for the finite quotients relevant
to the depth-zero construction, by the pinned canonical bijection of the present
paper, and by the enriched pinned bijection of Theorem~\ref{thm:disc-JD-bijection}
when the full quotient is disconnected.
\end{remark}
\section*{Acknowledgements}

The authors are grateful to Maarten Solleveld and Amoru Fujii for their
careful reading of earlier versions of this paper.  Their questions,
comments, and corrections pointed out several errors and gaps in the
arguments, and led to significant improvements in the formulation and
proofs of the results presented here.

\bibliographystyle{alpha}
\bibliography{JD-references}
\end{document}